\theoremstyle{plain}
\newtheorem{theorem}{Theorem}[section]
\newtheorem{corollary}[theorem]{Corollary}
\newtheorem{proposition}[theorem]{Proposition}
\newtheorem{lemma}[theorem]{Lemma}
{\theoremstyle{remark}

\newtheorem{remark}[theorem]{Remark}}
{\theoremstyle{definition}
\newtheorem{definition}[theorem]{Definition}

}
\numberwithin{equation}{section}
\numberwithin{table}{section}
\numberwithin{figure}{section}
\newcommand{\rom}{\renewcommand{\labelenumi}{{\rm (\roman{enumi})}}%
\renewcommand{\itemsep}{0pt}}
\newcommand{\Z}{\mathbb{Z}}
\newcommand{\R}{\mathbb{R}}
\newcommand{\C}{\mathbb{C}}
\newcommand{\e}{\varepsilon}
\newcommand{\fS}{\mathfrak{S}}
\newcommand{\ha}{\widehat{\alpha}}
\newcommand{\tF}{\widetilde{F}}
\newcommand{\tO}{\widetilde{O}}
\newcommand{\bU}{\mathbb{U}}
\newcommand{\hT}{\widehat{T}}
\newcommand{\mo}{\makebox[0.8em]{$-1$}}
\newcommand{\Ca}{$C^*$-al\-ge\-bra }
\newcommand{\Csa}{$C^*$-sub\-al\-ge\-bra }
\newcommand{\CsA}{$C^*$-sub\-al\-ge\-bra}
\newcommand{\shom}{$*$-ho\-mo\-mor\-phism }
\DeclareMathOperator{\id}{id}
\DeclareMathOperator{\Ad}{Ad}
\DeclareMathOperator{\coker}{coker}
\newcommand{\Conv}{\mathop{\scalebox{1.3}{\raisebox{-0.2ex}{$\ast$}}}}
\begin{document}

\title{On the magic square C*-algebra of size 4}

\author[Takeshi KATSURA]{Takeshi KATSURA}
\address{Department of Mathematics\\
Faculty of Science and Technology\\
Keio University\\
3-14-1 Hiyoshi, Kouhoku-ku, Yokohama\\
223-8522 JAPAN}
\email{katsura@math.keio.ac.jp}

\author[Masahito OGAWA]{Masahito OGAWA}
\address{Library \& Information center\\
Yokohama City University\\
22-2 Seto, Kanazawa-ku, Yokohama\\
236-0027 JAPAN}

\author[Airi TAKEUCHI]{Airi TAKEUCHI}
\address{Karlsruhe Institute of Technology\\
Department of Mathematics\\
76128 Karlsruhe, Germany}
\email{airi.takeuchi@partner.kit.edu}

\subjclass[2020]{Primary 46L05; Secondary 46L55, 46L80}

\keywords{C*-algebra, magic square C*-algebra, twisted crossed product, K-theory}

\begin{abstract}
In this paper, we investigate the structure of 
the magic square C*-algebra $A(4)$ of size 4. 
We show that a certain twisted crossed product of $A(4)$ 
is isomorphic to the homogeneous C*-algebra $M_4(C(\R P^3))$. 
Using this result, we show that $A(4)$ is 
isomorphic to the fixed point algebra of $M_4(C(\R P^3))$ 
by a certain action. 
From this concrete realization of $A(4)$, 
we compute the K-groups of $A(4)$ and their generators. 
\end{abstract}

\maketitle

\setcounter{section}{-1}

\section{Introduction}

Let $n=1,2,\ldots$. 
The magic square C*-algebra $A(n)$ of size $n$ is 
the underlying C*-algebra of the quantum group $A_s(n)$ 
defined by Wang in \cite{W} as a free analogue of 
the symmetric group $\fS_n$. 
In \cite[Proposition~1.1]{BC}, 
it is claimed that for $n=1,2,3$, 
$A(n)$ is isomorphic to $\C^{n!}$, 
and hence commutative and finite dimensional. 
We give the proof of this fact in Proposition~\ref{Prop:123}. 
In \cite[Proposition~1.2]{BM} 
it is proved that for $n \geq 4$ 
$A(n)$ is non-commutative and infinite dimensional. 
We see that 
for $n \geq 5$ $A(n)$ is not exact (Proposition~\ref{Prop:nonexact}). 
Something interesting happens for $A(4)$ (see \cite{BM,BB,BC}). 
In \cite{BM}, 
Banica and Moroianu constructed a \shom from $A(4)$ to $M_4(C(SU(2)))$ 
by using the Pauli matrices, 
and showed that it is faithful in some weak sense. 
In \cite{BC}, 
Banica and Collins showed that the \shom above is in fact faithful 
by using integration techniques. 
We reprove this fact in Corollary~\ref{Cor:faithful}. 
Our method uses a twisted crossed product. 
The following is the first main result. 

\medskip
\noindent
{\bfseries Theorem A} (Theorem~\ref{MainThm1}){\bfseries .} 
The twisted crossed product $A(4) \rtimes_{\alpha}^{\text{tw}} (K \times K)$ 
is isomorphic to $M_4(C(\R P^3))$. 
\medskip

The notation in this theorem is explained in Section~\ref{Sec:twist}. 
From this theorem, 
we see that the magic square C*-algebra $A(4)$ of size $4$ 
is isomorphic to a \Csa of the homogeneous \Ca $M_4(C(\R P^3))$. 
The next theorem, which is the second main result, expresses 
this \Csa as a fixed point algebra of $M_4(C(\R P^3))$. 

\medskip
\noindent
{\bfseries Theorem B} (Theorem~\ref{MainThm2}){\bfseries .} 
The fixed point algebra $M_4(C(\R P^3))^{\beta}$ of the action $\beta$ is 
isomorphic to $A(4)$. 
\medskip

See Section~\ref{Sec:Act} for the definition of the action $\beta$. 
Since $\beta$ is concrete, 
we can analyze $M_4(C(\R P^3))^{\beta}$ very explicitly. 
In particular, we can compute the K-groups of $M_4(C(\R P^3))^{\beta}$ 
explicitly. 
As a corollary we get the following which is the third main result. 


\medskip
\noindent
{\bfseries Theorem C} (Theorem~\ref{MainThm3}){\bfseries .} 
We have $K_0(A(4)) \cong \Z^{10}$ and $K_1(A(4)) \cong \Z$.
More specifically, $K_0(A(4))$ is generated by $\{[p_{i,j}]_0\}_{i,j=1}^4$, 
and $K_1(A(4))$ is generated by $[u]_1$.

The positive cone $K_0(A(4))_+$ of $K_0(A(4))$ is generated 
by $\{[p_{i,j}]_0\}_{i,j=1}^4$ as a monoid. 
\medskip

Note that $\{p_{i,j}\}_{i,j=1}^4$ is the generating set of $A(4)$ 
consisting of projections, and $u$ is the defining unitary 
(see Definition~\ref{Def:defuni}). 
We should remark that the computation 
$K_0(A(4)) \cong \Z^{10}$ and $K_1(A(4)) \cong \Z$ 
and that $K_0(A(4))$ is generated by $\{[p_{i,j}]_0\}_{i,j=1}^4$
were already obtained by Voigt in \cite{V} 
by using Baum-Connes conjecture for quantum groups. 
In fact, Voigt got the corresponding results for $A(n)$ with $n \geq 4$. 
Theorem~C gives totally different proofs for the results by Voigt in \cite{V} 
by analyzing the structure of $A(4)$ directly 
which seems not to be applied to $A(n)$ for $n > 4$. 
That $K_1(A(4))$ is generated by $[u]_1$ was not obtained in \cite{V}, 
and is a new result. 
Combining this result with 
the computation that $K_1(A(n)) \cong \Z$ for $n \geq 4$ in \cite{V}
and the easy fact that 
the surjection $A(n) \to A(4)$ in Corollary~\ref{Cor:ntom} for $n \geq 4$ 
sends the defining unitary to the direct sum of the defining unitary 
and the units, 
we obtain that $K_1(A(n)) \cong \Z$ is generated by the $K_1$ class 
of the defining unitary for $n \geq 4$. 
We would like to thank Christian Voigt for the discussion 
about this observation. 

This paper is organized as follows. 
In Section~\ref{Sec:Def},
we define magic square C*-algebras $A(n)$ 
and their abelianizations $A^{\text{ab}}(n)$. 
In Section~\ref{Sec:general}, 
we investigate $A(n)$ for $n \neq 4$. 
From Section~\ref{Sec:twist}, we study $A(4)$. 
In Section~\ref{Sec:twist}, 
we introduce the twisted crossed product 
$A(4) \rtimes_{\alpha}^{\text{tw}} (K \times K)$, 
and state Theorem~A. 
We give the proof of Theorem~A 
from Section~\ref{Sec:RP3} to Section~\ref{Sec:Inv}. 
In Section~\ref{Sec:Act}, 
we state and prove Theorem~B. 
From Section~\ref{Sec:quotient} to Section~\ref{Sec:KA}, 
we prove Theorem~C. 

\medskip

\noindent
{\bfseries Acknowledgments.} 
The first author thank Junko Muramatsu 
for helping the research in the beginning of the research. 
The authors are grateful to Makoto Yamashita 
for calling attention to \cite{V}, 
and to Christian Voigt for the discussion on the results in \cite{V}. 
The first author was supported by JSPS KAKENHI Grant Number JP18K03345.

\section{Definitions of and basic facts on magic square C*-algebras}
\label{Sec:Def} 

\begin{definition}\label{Def:magic}
Let $n=1,2,\ldots$. 
The {\em magic square C*-algebra of size $n$} is 
the universal unital C*-algebra $A(n)$ generated by $n \times n$ projections 
$\{p_{i,j}\}_{i,j=1}^n$ satisfying 
\begin{align*}
\sum_{i=1}^n p_{i,j} =1 
&\quad (j=1, 2, \ldots , n), &
\sum_{j=1}^n p_{i,j} =1 
&\quad (i=1, 2, \ldots , n). 
\end{align*}
\end{definition}

\begin{remark}
The magic square C*-algebra $A(n)$ is 
the underlying C*-algebra of the quantum group $A_s(n)$ 
defined by Wang in \cite{W} as a free analogue of 
the symmetric group $\fS_n$. 
\end{remark}

We fix a positive integer $n$. 
Let $\fS_n$ be the symmetric group of degree $n$ 
whose element is considered to be a bijection on the set 
$\{1,2,\ldots, n\}$. 

\begin{definition}\label{Def:symm}
By the universality of $A(n)$, 
there exists an action 
$\alpha \colon \fS_n \times \fS_n 
\curvearrowright A(n)$ 
defined by 
\[
\alpha_{(\sigma,\mu)}(p_{i,j})=p_{\sigma(i),\mu(j)}
\]
for $(\sigma,\mu) \in \fS_n \times \fS_n$ 
and $i,j =1, 2, \ldots , n$. 
\end{definition}

\begin{definition}
Let $A^{\text{ab}}(n)$ be 
the universal unital C*-algebra generated by $n \times n$ projections 
$\{p_{i,j}\}_{i,j=1}^n$ satisfying the relations in Definition~\ref{Def:magic} 
and 
\[
p_{i,j}p_{k,l}=p_{k,l}p_{i,j} \qquad (i,j,k,l =1, 2, \ldots , n).
\]
\end{definition}

The following lemma follows immediately from the definitions. 

\begin{lemma}
The C*-algebra $A^{\text{ab}}(n)$ is the abelianization of $A(n)$. 
More specifically, there exists a natural surjection 
$A(n) \twoheadrightarrow A^{\text{ab}}(n)$ 
sending each projection $p_{i,j}$ to $p_{i,j}$, 
and every $*$-homomorphism from $A(n)$ to an abelian C*-algebra 
factors through this surjection. 
\end{lemma}

\begin{proposition}\label{Prop:Aab}
The abelian C*-algebra $A^{\text{ab}}(n)$ is 
isomorphic to the C*-algebra $C(\fS_n)$ 
of continuous functions on the discrete set $\fS_n$. 
\end{proposition}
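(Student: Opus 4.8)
The plan is to write down an explicit $*$-homomorphism $\phi\colon A^{\text{ab}}(n)\to C(\fS_n)$ on generators and then prove it is bijective, viewing $C(\fS_n)$ as $\C^{n!}$, the algebra of all functions on the finite discrete set $\fS_n$.

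First I would build $\phi$. For $i,j\in\{1,\dots,n\}$ let $E_{i,j}\in C(\fS_n)$ be the $\{0,1\}$-valued function $E_{i,j}(\sigma)=\delta_{i,\sigma(j)}$. Each $E_{i,j}$ is a projection, the $E_{i,j}$ commute, and they satisfy the magic square relations of Definition~\ref{Def:magic}: for fixed $j$ exactly one of $E_{1,j}(\sigma),\dots,E_{n,j}(\sigma)$ equals $1$ since $\sigma(j)$ is a single value, and for fixed $i$ exactly one of $E_{i,1}(\sigma),\dots,E_{i,n}(\sigma)$ equals $1$ because the bijection $\sigma$ has a unique $j$ with $\sigma(j)=i$. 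By the universal property of $A^{\text{ab}}(n)$ this yields a unital $*$-homomorphism $\phi$ with $\phi(p_{i,j})=E_{i,j}$.

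Next, surjectivity: since $\fS_n$ is finite and discrete, $C(\fS_n)$ is spanned by the point masses $\mathbf{1}_{\{\sigma\}}$, and for each $\sigma_0$ one has $\prod_{j=1}^{n}E_{\sigma_0(j),j}=\mathbf{1}_{\{\sigma_0\}}$ (its value at $\tau$ is $\prod_j\delta_{\sigma_0(j),\tau(j)}$, which is $1$ precisely when $\tau=\sigma_0$), so the image of $\phi$ contains all point masses. For injectivity I would use that the commutative C*-algebra $A^{\text{ab}}(n)$ has enough characters to separate points, together with the claim that every character $\chi$ of $A^{\text{ab}}(n)$ equals $\ev_\sigma\circ\phi$ for some $\sigma\in\fS_n$ (here $\ev_\sigma$ denotes evaluation at $\sigma$). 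To prove the claim, note that $\chi(p_{i,j})$ is a projection in $\C$, hence $0$ or $1$, and applying $\chi$ to the relations shows the $\{0,1\}$-matrix $\big(\chi(p_{i,j})\big)_{i,j}$ has all row sums and all column sums equal to $1$; such a matrix is a permutation matrix, so $\chi(p_{i,j})=\delta_{i,\sigma(j)}$ for a unique $\sigma\in\fS_n$. Since $\ev_\sigma(\phi(p_{i,j}))=E_{i,j}(\sigma)=\delta_{i,\sigma(j)}$ as well, $\chi$ and $\ev_\sigma\circ\phi$ agree on the generators and therefore coincide. Hence $\ker\phi\subseteq\bigcap_\chi\ker\chi=\{0\}$ and $\phi$ is an isomorphism.

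I do not expect a real obstacle: the argument is elementary, and the only things needing care are fixing one consistent convention relating permutation matrices to elements of $\fS_n$, checking both families of magic square relations for the scalars $\delta_{i,\sigma(j)}$ (the column relation is immediate; the row relation uses that $\sigma$ is a bijection), and using the fact — implicit in the statement — that $\fS_n$ is given the discrete topology, which is what forces the answer to be $\C^{n!}$ rather than a function algebra on a continuum.
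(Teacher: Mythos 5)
Your proposal is correct and follows essentially the same route as the paper: both arguments come down to identifying the character space of $A^{\text{ab}}(n)$ with $\fS_n$ via the observation that a character sends the generators to a $\{0,1\}$-matrix with all row and column sums equal to $1$, i.e.\ a permutation matrix. The paper invokes the Gelfand theorem at that point, while you phrase it as injectivity plus surjectivity of an explicit homomorphism; the content is the same, and you merely fill in the detail the paper labels ``easy to see.''
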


\begin{proof}
For each $\sigma \in \fS_n$, 
we define a character $\chi_\sigma$ of $A^{\text{ab}}(n)$ by 
\[
\chi_\sigma(p_{i,j})=
\begin{cases}
1 & (i = \sigma(j))\\
0 & (i \neq \sigma(j)).
\end{cases}
\]
Note that such a character $\chi_\sigma$ uniquely exists 
by the universality of $A^{\text{ab}}(n)$. 
It is easy to see that any character of $A^{\text{ab}}(n)$ 
is in the form of $\chi_\sigma$ for some $\sigma \in \fS_n$. 
This shows that $A^{\text{ab}}(n)$ is isomorphic to $C(\fS_n)$ 
by the Gelfand theorem. 
\end{proof}

We can compute minimal projections of $A^{\text{ab}}(n)$ 
as follows.

\begin{proposition}\label{Prop:Aabp}
For $\sigma \in \fS_n$, we set 
\[
p_\sigma \coloneqq p_{\sigma(1),1}p_{\sigma(2),2}\cdots p_{\sigma(n),n} 
\in A^{\text{ab}}(n).
\]
Then $\{p_\sigma\}_{\sigma \in \fS_n}$ is the set of 
minimal projections of $A^{\text{ab}}(n)$. 
\end{proposition}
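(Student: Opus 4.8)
The plan is to transport the question to $C(\fS_n)$ via the isomorphism of Proposition~\ref{Prop:Aab} and exploit the fact that the minimal projections of $C(\fS_n)$ are exactly the point-mass indicator functions $\delta_\tau$, $\tau \in \fS_n$. So it suffices to show that $p_\sigma$ corresponds to $\delta_\sigma$ under this isomorphism, and that $\sigma \mapsto p_\sigma$ sweeps out all of them without repetition.

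First I would observe that $p_\sigma$ is a projection: in $A^{\text{ab}}(n)$ the generators $p_{i,j}$ pairwise commute, so the product $p_{\sigma(1),1}p_{\sigma(2),2}\cdots p_{\sigma(n),n}$ is independent of the order of the factors and is itself a self-adjoint idempotent. To identify the corresponding function on $\fS_n$, I would evaluate each character $\chi_\tau$ from the proof of Proposition~\ref{Prop:Aab}. Since $\chi_\tau$ is multiplicative,
\[
\chi_\tau(p_\sigma) = \prod_{j=1}^n \chi_\tau(p_{\sigma(j),j}),
\]
and $\chi_\tau(p_{\sigma(j),j})$ equals $1$ when $\tau(j)=\sigma(j)$ and $0$ otherwise. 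Hence this product is $1$ precisely when $\tau=\sigma$ and $0$ for every other $\tau$; under the Gelfand isomorphism $A^{\text{ab}}(n) \cong C(\fS_n)$ this says exactly that $p_\sigma$ is the indicator function of the singleton $\{\sigma\}$. In particular $p_\sigma \neq 0$ and it is a minimal projection.

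To finish, since $\fS_n$ is finite and discrete, every minimal projection of $C(\fS_n)$ is $\delta_\tau$ for a unique $\tau \in \fS_n$, and the correspondence $\sigma \mapsto \delta_\sigma$ shows that $\{p_\sigma\}_{\sigma \in \fS_n}$ is precisely the set of minimal projections, with distinct $\sigma$ giving distinct projections. I do not expect any genuine obstacle here; the only points deserving a sentence are that the product defining $p_\sigma$ is well defined because the $p_{i,j}$ commute, and that one should invoke the explicit formula for $\chi_\tau$ established earlier rather than re-deriving it.
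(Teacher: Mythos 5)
Your proposal is correct and follows essentially the same route as the paper: both verify that $p_\sigma$ is a projection using commutativity and then compute $\chi_{\tau}(p_\sigma)=\delta_{\tau,\sigma}$ with the characters from Proposition~\ref{Prop:Aab} to identify $p_\sigma$ with the indicator function of $\{\sigma\}$ under the Gelfand isomorphism. Your write-up merely spells out a bit more explicitly why this character computation yields the minimality and exhaustiveness claims.
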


\begin{proof}
Since $A^{\text{ab}}(n)$ is commutative, 
$p_\sigma$ is a projection for every $\sigma \in \fS_n$. 
For $\sigma \in \fS_n$, let $\chi_\sigma$ be the character defined 
in the proof of Proposition~\ref{Prop:Aab}. 
Then we have 
\[
\chi_{\sigma'}(p_\sigma)=
\begin{cases}
1 & (\sigma' = \sigma)\\
0 & (\sigma' \neq \sigma)
\end{cases}
\]
for $\sigma,\sigma' \in \fS_n$. 
This shows that $\{p_\sigma\}_{\sigma \in \fS_n}$ is the set of 
minimal projections of $A^{\text{ab}}(n)$. 
\end{proof}

For each $\sigma \in \fS_n$, 
we can define a character $\chi_\sigma$ of $A(n)$ 
by the same formula as in the proof of Proposition~\ref{Prop:Aab} 
(or to be the composition of the character $\chi_\sigma$ 
in the proof of Proposition~\ref{Prop:Aab} and the natural surjection 
$A(n) \twoheadrightarrow A^{\text{ab}}(n)$). 
With these characters we have the following 
as a corollary of of Proposition~\ref{Prop:Aab} 
(It is easy to show it directly). 

\begin{corollary}
The set of all characters of the magic square C*-algebra $A(n)$ 
is $\{\chi_\sigma\mid \sigma \in \fS_n\}$ 
whose cardinality is $n!$. 
\end{corollary}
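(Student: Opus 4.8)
The plan is to reduce everything to Proposition~\ref{Prop:Aab} via the abelianization. A character of $A(n)$ is a nonzero $*$-homomorphism $\chi\colon A(n)\to\C$, and since $\C$ is abelian the lemma preceding Proposition~\ref{Prop:Aab} shows that $\chi$ factors as $\chi=\bar\chi\circ\pi$, where $\pi\colon A(n)\twoheadrightarrow A^{\text{ab}}(n)$ is the natural surjection and $\bar\chi$ is a character of $A^{\text{ab}}(n)$, uniquely determined because $\pi$ is surjective. Conversely $\bar\chi\mapsto\bar\chi\circ\pi$ sends characters of $A^{\text{ab}}(n)$ to characters of $A(n)$ and is injective, again by surjectivity of $\pi$. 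So I would first record that composition with $\pi$ gives a bijection between the set of characters of $A^{\text{ab}}(n)$ and the set of characters of $A(n)$.

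Next I would invoke Proposition~\ref{Prop:Aab}: $A^{\text{ab}}(n)\cong C(\fS_n)$, whose characters, by the Gelfand theorem, are precisely the point evaluations $\ev_\sigma$ for $\sigma\in\fS_n$. Tracing $\ev_\sigma$ through the isomorphism of Proposition~\ref{Prop:Aab} and then through $\pi$, one sees that it corresponds exactly to the character $\chi_\sigma$ of $A(n)$ defined by the formula in the proof of Proposition~\ref{Prop:Aab}. Hence $\{\chi_\sigma\mid\sigma\in\fS_n\}$ exhausts the characters of $A(n)$. For the cardinality statement it remains only to check these are pairwise distinct, which is immediate: if $\sigma\neq\sigma'$ then $\sigma(j)\neq\sigma'(j)$ for some $j$, and then $\chi_\sigma(p_{\sigma(j),j})=1$ while $\chi_{\sigma'}(p_{\sigma(j),j})=0$. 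Thus the character set has cardinality $|\fS_n|=n!$.

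There is no genuine obstacle here, but I would also sketch the promised direct argument, since it clarifies where the bijectivity of $\sigma$ comes from. Given a character $\chi$ of $A(n)$, each $\chi(p_{i,j})$ is a projection of $\C$, hence lies in $\{0,1\}$; for each fixed $j$ the relation $\sum_i p_{i,j}=1$ forces exactly one index $i$, call it $\sigma(j)$, with $\chi(p_{i,j})=1$, defining a map $\sigma\colon\{1,\dots,n\}\to\{1,\dots,n\}$. The relation $\sum_j p_{i,j}=1$ then shows $\sigma$ is injective (two columns $j\neq j'$ with $\sigma(j)=\sigma(j')=i$ would give $\chi(p_{i,j})=\chi(p_{i,j'})=1$, violating the row relation for that $i$), hence bijective on the finite set $\{1,\dots,n\}$, and by construction $\chi=\chi_\sigma$. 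The only point requiring a moment of care is this injectivity step; the rest is bookkeeping.
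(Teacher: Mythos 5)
Your proof is correct and follows exactly the route the paper intends: the corollary is stated as a consequence of Proposition~\ref{Prop:Aab} via the factorization of characters through the abelianization, and the paper also remarks that a direct argument is easy, which you supply as well. Both the factorization argument and the direct argument (including the injectivity of $\sigma$ from the row relation) are carried out correctly.
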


\section{General results on magic square C*-algebras}\label{Sec:general}

\begin{proposition}\label{Prop:123}
For $n=1,2,3$, 
$A(n)$ is commutative. 
Hence the surjection $A(n)\twoheadrightarrow A^{\text{ab}}(n)$ 
is an isomorphism for $n=1,2,3$.
\end{proposition}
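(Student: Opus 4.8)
The plan is to dispose of $n=1,2$ immediately and to concentrate all the work on $n=3$. For $n=1$ the single generator satisfies $p_{1,1}=1$, so $A(1)=\C$. For $n=2$ the defining relations force $p_{2,1}=p_{1,2}=1-p_{1,1}$ and then $p_{2,2}=1-p_{1,2}=p_{1,1}$, so $A(2)$ is generated by the one projection $p_{1,1}$ and is commutative. In either case the second assertion follows formally: a commutative unital C*-algebra satisfying the relations of Definition~\ref{Def:magic} also satisfies those defining $A^{\text{ab}}(n)$, so universality produces a $*$-homomorphism $A^{\text{ab}}(n)\to A(n)$ fixing each $p_{i,j}$, which is a two-sided inverse of the canonical surjection.

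For $n=3$ I would first extract the orthogonality built into the axioms: since each column sum $\sum_{i}p_{i,j}=1$ is a sum of projections, the entries of a fixed column are pairwise orthogonal, and the same holds for each row. Consequently two generators $p_{i,j}$ and $p_{k,l}$ automatically commute unless $i\neq k$ and $j\neq l$, i.e. unless they are the two entries of a diagonal or an anti-diagonal of some $2\times 2$ sub-block.

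The crux is to compute one product in two ways. Take the sub-block in rows $\{2,3\}$, columns $\{2,3\}$. On one hand $(p_{2,2}+p_{2,3})(p_{3,2}+p_{3,3})=(1-p_{2,1})(1-p_{3,1})=1-p_{2,1}-p_{3,1}=p_{1,1}$, using the row relations, the column-$1$ orthogonality $p_{2,1}p_{3,1}=0$, and the column-$1$ sum; on the other hand, expanding the left side and deleting the column-orthogonal terms $p_{2,2}p_{3,2}$ and $p_{2,3}p_{3,3}$ gives $p_{2,2}p_{3,3}+p_{2,3}p_{3,2}=p_{1,1}$. Running the identical computation with the roles of rows and columns interchanged yields $p_{2,2}p_{3,3}+p_{3,2}p_{2,3}=p_{1,1}$. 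Subtracting, $p_{2,3}p_{3,2}=p_{3,2}p_{2,3}$, so the anti-diagonal entries commute; hence $p_{2,3}p_{3,2}$ is self-adjoint, so $p_{2,2}p_{3,3}=p_{1,1}-p_{2,3}p_{3,2}$ is self-adjoint, i.e. $p_{2,2}p_{3,3}=p_{3,3}p_{2,2}$, and the diagonal entries commute as well.

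It remains to pass from this one block to all of them. Given $i\neq k$ and $j\neq l$, choose $\sigma,\mu\in\fS_3$ with $\sigma(2)=i,\ \sigma(3)=k,\ \mu(2)=j,\ \mu(3)=l$ and apply the automorphism $\alpha_{(\sigma,\mu)}$ of Definition~\ref{Def:symm} to the two commutation relations just established; since $\alpha_{(\sigma,\mu)}(p_{2,2})=p_{i,j}$ and so on, the diagonal pair $p_{i,j},p_{k,l}$ and the anti-diagonal pair $p_{i,l},p_{k,j}$ of the block in rows $\{i,k\}$, columns $\{j,l\}$ commute. Together with the row/column orthogonality this shows all generators of $A(3)$ commute, so $A(3)$ is commutative, and the isomorphism $A(3)\cong A^{\text{ab}}(3)$ follows as in the case $n\leq 2$. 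I do not anticipate a genuine obstacle: the argument is elementary, and the only point requiring care is the bookkeeping in the two-fold expansion — one must invoke column orthogonality for one grouping and row orthogonality for the transposed grouping — together with the small observation that self-adjointness of $p_{2,3}p_{3,2}$ is precisely what upgrades commutativity of the anti-diagonal to commutativity of the diagonal.
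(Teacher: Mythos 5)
Your argument is correct, and its overall architecture coincides with the paper's: dispose of $n=1,2$ by hand, reduce $n=3$ to a single commutation relation using row/column orthogonality together with the $\fS_3\times\fS_3$ action of Definition~\ref{Def:symm}, and establish that one relation by an algebraic identity combined with taking adjoints. The central identity, however, is genuinely different. The paper telescopes: substituting $p_{1,1}=1-p_{1,2}-p_{1,3}$ and then $p_{1,3}=1-p_{2,3}-p_{3,3}$ yields $p_{1,1}p_{2,2}=p_{3,3}p_{2,2}$, and cycling this identity through the three diagonal entries and applying $*$ closes the loop. You instead expand $p_{1,1}=(1-p_{2,1})(1-p_{3,1})=(1-p_{1,2})(1-p_{1,3})$ in two ways to get $p_{2,2}p_{3,3}+p_{2,3}p_{3,2}=p_{1,1}=p_{2,2}p_{3,3}+p_{3,2}p_{2,3}$, deduce first that the anti-diagonal pair commutes, and then upgrade to the diagonal pair via self-adjointness of $p_{1,1}-p_{2,3}p_{3,2}$. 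Your route has the mild advantage of making the symmetric role of rows and columns explicit and of isolating the pleasant intermediate identity $p_{2,2}p_{3,3}+p_{2,3}p_{3,2}=p_{1,1}$; the paper's is marginally shorter. Both proofs use the same inputs (mutual orthogonality within rows and columns, the involution, and the symmetry action), and your handling of the abelianization statement via universality is the standard completion of the paper's one-line ``hence.''
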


\begin{proof}
For $n=1$ and $n=2$, 
it is easy to see $A(1)\cong \C$ and $A(2)\cong \C^2$. 
To show that $A(3)$ is commutative, 
it suffices to show $p_{1,1}$ commutes with $p_{2,2}$. 
In fact if $p_{1,1}$ commutes with $p_{2,2}$, 
we can see that $p_{1,1}$ commutes with $p_{2,3}$, $p_{3,2}$ and $p_{3,3}$ 
using the action $\alpha$ defined in Definition~\ref{Def:symm}. 
Then $p_{1,1}$ commutes with every generators 
because $p_{1,1}$ is orthogonal to and hence commutes with 
$p_{1,2}$, $p_{1,3}$, $p_{2,1}$ and $p_{3,1}$. 
Using the action $\alpha$ again, 
we see that every generators 
commutes with every generators. 

Now we are going to show that $p_{1,1}$ commutes with $p_{2,2}$. 
We have 
\begin{align*}
p_{1,1}p_{2,2}
=(1-p_{1,2}-p_{1,3})p_{2,2}
&=p_{2,2}-p_{1,3}p_{2,2}\\
&=p_{2,2}-(1-p_{2,3}-p_{3,3})p_{2,2}
=p_{3,3}p_{2,2}.
\end{align*}
By symmetry, we have $p_{2,2}p_{3,3}=p_{1,1}p_{3,3}$ 
and $p_{3,3}p_{1,1}=p_{2,2}p_{1,1}$. 
Hence we get
\begin{align*}
p_{1,1}p_{2,2}=p_{3,3}p_{2,2}=(p_{2,2}p_{3,3})^*
=(p_{1,1}p_{3,3})^*=p_{3,3}p_{1,1}=p_{2,2}p_{1,1}.
\end{align*}
This completes the proof.
\end{proof}

\begin{proposition}\label{Prop:freeprod}
Let $n_1,n_2,\ldots,n_k$ be positive integers, 
and set $n=\sum_{j=1}^kn_j$. 
There exists a surjection from $A(n)$ to 
the unital free product $\Conv_{j=1}^kA(n_j)$. 
\end{proposition}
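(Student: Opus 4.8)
The plan is to build the surjection $A(n)\twoheadrightarrow \Conv_{j=1}^k A(n_j)$ by invoking the universal property of $A(n)$: I will produce $n\times n$ projections in the free product $B\coloneqq\Conv_{j=1}^kA(n_j)$ satisfying the magic square relations of Definition~\ref{Def:magic}, and then check that these projections generate $B$. Write each index $i\in\{1,\ldots,n\}$ uniquely as belonging to the $a$-th block of size $n_a$, i.e. partition $\{1,\ldots,n\}$ into consecutive intervals $I_1,\ldots,I_k$ with $|I_a|=n_a$, and let $q^{(a)}_{s,t}$ (for $s,t\in I_a$, suitably re-indexed to $1,\ldots,n_a$) denote the canonical generating projections of the $a$-th free factor $A(n_a)\subseteq B$. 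Define $P_{i,j}\in B$ by
\[
P_{i,j}=
\begin{cases}
q^{(a)}_{i,j} & \text{if } i,j \text{ both lie in the same block } I_a,\\
0 & \text{otherwise.}
\end{cases}
\]

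The key step is then to verify the two families of summation relations. Fix a column index $j$, and say $j\in I_a$. Then $P_{i,j}=0$ unless $i\in I_a$, so $\sum_{i=1}^n P_{i,j}=\sum_{i\in I_a}q^{(a)}_{i,j}=1_{A(n_a)}=1_B$, using the column relation inside the factor $A(n_a)$ and the fact that the free product is unital (all factors share the unit $1_B$). The row relations are identical by symmetry. Hence the $P_{i,j}$ are $n\times n$ projections in $B$ satisfying the defining relations of $A(n)$, so by universality there is a unital $*$-homomorphism $\varphi\colon A(n)\to B$ with $\varphi(p_{i,j})=P_{i,j}$.

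It remains to see that $\varphi$ is surjective, i.e. that the $P_{i,j}$ generate $B$ as a unital \CA. This is immediate: for each block $a$, the projections $\{P_{i,j}\}_{i,j\in I_a}=\{q^{(a)}_{i,j}\}$ are exactly the canonical generators of the $a$-th free factor $A(n_a)$, so the image of $\varphi$ contains every free factor, and since $B$ is generated by its factors, $\varphi$ is onto. I do not expect any genuine obstacle here; the only point requiring a little care is the bookkeeping of the block decomposition and confirming that the ``off-block'' entries being $0$ is consistent with the relations — which, as the computation above shows, it is, precisely because within each column (resp.\ row) all the nonzero entries lie in a single block and already sum to $1$.
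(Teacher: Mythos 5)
Your proposal is correct and follows exactly the same route as the paper's proof: send the block-diagonal generators to the generators of the corresponding free factors and the off-block generators to $0$, then invoke universality. You simply spell out the verification of the magic square relations and the surjectivity, which the paper leaves implicit.
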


\begin{proof}
The desired surjection is obtained 
by sending the generators $\{p_{i,j}\}_{i,j=1}^{n_1}$ of $A(n)$ 
to the generators of $A(n_1) \subset \Conv_{j=1}^kA(n_j)$, 
the generators $\{p_{i,j}\}_{i,j=n_1+1}^{n_1+n_2}$ of $A(n)$ 
to the generators of $A(n_2) \subset \Conv_{j=1}^kA(n_j)$ 
and so on, and by sending the other generators of $A(n)$ to $0$. 
\end{proof}

\begin{corollary}\label{Cor:n+1ton}
Let $n$ be a positive integer. 
There exists a surjection from $A(n+1)$ to $A(n)$. 
\end{corollary}

\begin{proof}
This follows from Proposition~\ref{Prop:freeprod} 
because $A(n) * A(1) \cong A(n) * \C \cong A(n)$. 
\end{proof}

\begin{corollary}\label{Cor:ntom}
Let $n,m $ be positive integers with $n \geq m$. 
There exists a surjection from $A(n)$ to $A(m)$. 
\end{corollary}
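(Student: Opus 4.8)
The plan is to reduce everything to the one-step statement already in hand, Corollary~\ref{Cor:n+1ton}, and iterate. Set $d=n-m\ge 0$ and induct on $d$. When $d=0$ we have $n=m$ and the identity $\mathrm{id}\colon A(n)\to A(m)$ is a (trivially surjective) $*$-homomorphism. When $d\ge 1$ we have $n-1\ge m$, so the induction hypothesis furnishes a surjection $A(n-1)\twoheadrightarrow A(m)$; composing it with the surjection $A(n)\twoheadrightarrow A(n-1)$ provided by Corollary~\ref{Cor:n+1ton} gives a surjection $A(n)\twoheadrightarrow A(m)$, which closes the induction. The only thing one uses beyond Corollary~\ref{Cor:n+1ton} is that a composition of surjective $*$-homomorphisms is again surjective, which is immediate.

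Alternatively, the map can be produced in a single step directly from Proposition~\ref{Prop:freeprod}. If $n>m$, apply that proposition with $k=2$, $n_1=m$, $n_2=n-m$ to get a surjection $A(n)\twoheadrightarrow A(m)\Conv A(n-m)$, and then map the unital free product onto $A(m)$ using the universal property of $\Conv$ applied to $\mathrm{id}_{A(m)}$ on the first factor and, on the second factor, any character of $A(n-m)$ composed with the unital embedding $\C\hookrightarrow A(m)$ (such a character exists since $A(n-m)$ has $(n-m)!\ge 1$ characters). The composite $A(n)\twoheadrightarrow A(m)\Conv A(n-m)\to A(m)$ is surjective because it restricts to the identity on the copy of $A(m)$. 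The case $n=m$ is again trivial.

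I do not expect any genuine obstacle here: all the content is already packaged in Proposition~\ref{Prop:freeprod} and Corollary~\ref{Cor:n+1ton}, and the statement is a purely formal consequence. The one bookkeeping point worth stating explicitly in the write-up is that the "surjection" in these earlier results is a genuine (unital) $*$-homomorphism with dense — hence, being a $*$-homomorphism between C*-algebras, closed and therefore all of — range, so compositions behave as expected. I would present the inductive version as the proof, since it is the shortest and uses the weakest input.
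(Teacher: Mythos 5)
Your primary (inductive) argument is exactly the paper's proof, which simply iterates Corollary~\ref{Cor:n+1ton}; the proposal is correct and takes the same approach, merely spelled out in more detail. The alternative route via Proposition~\ref{Prop:freeprod} is also valid but unnecessary.
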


\begin{proof}
This follows from Corollary~\ref{Cor:n+1ton}. 
\end{proof}

\begin{proposition}\label{Prop:nonexact}
For $n\geq 5$, 
$A(n)$ is not exact. 
\end{proposition}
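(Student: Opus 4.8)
The plan is to exhibit a non-exact $C^*$-algebra as a quotient of $A(n)$; since exactness passes to quotients, this forces $A(n)$ itself to be non-exact, and almost everything needed is already in hand. First I would reduce to the case $n=5$: by Corollary~\ref{Cor:ntom} there is a surjection $A(n)\twoheadrightarrow A(5)$ for every $n\geq 5$, so it suffices to show that $A(5)$ is not exact.

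To produce a convenient quotient of $A(5)$, I apply Proposition~\ref{Prop:freeprod} with $n_1=2$ and $n_2=3$, obtaining a surjection $A(5)\twoheadrightarrow A(2)\Conv A(3)$. By Proposition~\ref{Prop:123} and Proposition~\ref{Prop:Aab} we have $A(2)\cong\C^2$ and $A(3)\cong C(\fS_3)\cong\C^6$. Composing with a surjection $\C^6\twoheadrightarrow\C^3$ and using that the unital free product is functorial for $*$-homomorphisms, I get a surjection $A(5)\twoheadrightarrow\C^2\Conv\C^3$. Identifying $\C^2$ with $C^*(\Z_2)$ and $\C^3$ with $C^*(\Z_3)$, and using that the unital free product of full group $C^*$-algebras is the full group $C^*$-algebra of the free product of the groups, this quotient is isomorphic to $C^*(\Z_2\ast\Z_3)\cong C^*(\mathrm{PSL}_2(\Z))$.

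Next I would invoke two classical facts: $\mathrm{PSL}_2(\Z)$ contains a free subgroup isomorphic to $\mathbb{F}_2$ of finite index, and, for any inclusion $H\leq G$ of discrete groups, the canonical unital $*$-homomorphism $C^*(H)\to C^*(G)$ of full group $C^*$-algebras is isometric; one inequality comes from restricting representations of $G$ to $H$, and the other from inducing, since any representation $\pi$ of $H$ embeds in the restriction to $H$ of $\mathrm{Ind}_H^G\pi$. Hence $C^*(\mathbb{F}_2)$ is a $C^*$-subalgebra of $C^*(\mathrm{PSL}_2(\Z))\cong\C^2\Conv\C^3$. Since $C^*(\mathbb{F}_2)$ is not exact --- it has the local lifting property but is non-nuclear, so it cannot be exact by Kirchberg's theorem --- and exactness passes to $C^*$-subalgebras, the algebra $\C^2\Conv\C^3$ is not exact. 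Consequently $A(5)$, which surjects onto $\C^2\Conv\C^3$, is not exact, and therefore neither is $A(n)$ for any $n\geq 5$.

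The step I expect to be the main obstacle is the one used as a black box above: the non-exactness of $C^*(\mathbb{F}_2)$, equivalently of the unital full free product $\C^2\Conv\C^3$ of finite-dimensional commutative $C^*$-algebras. I would simply cite this as a known result. (Alternatively one can bypass the subgroup $\mathbb{F}_2$ entirely: $\C^2\Conv\C^3$ is a unital full free product of nuclear $C^*$-algebras, hence has the local lifting property, while being non-nuclear because $\Z_2\ast\Z_3$ is non-amenable, so Kirchberg's theorem again forbids exactness.) All the remaining ingredients --- the reduction to $n=5$, the identifications of $A(2)$ and $A(3)$, the functoriality of the unital free product, and the classical facts $\mathrm{PSL}_2(\Z)\cong\Z_2\ast\Z_3$ and the existence of a finite-index copy of $\mathbb{F}_2$ in it --- are routine.
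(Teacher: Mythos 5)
Your proof is correct and follows essentially the same route as the paper: reduce to $n=5$ via Corollary~\ref{Cor:ntom}, surject $A(5)$ onto $A(2)\Conv A(3)\cong \C^2\Conv\C^6$ via Proposition~\ref{Prop:freeprod}, and invoke Kirchberg's theorem that quotients of exact C*-algebras are exact. The only difference is cosmetic: where the paper simply cites \cite[Proposition~3.7.11]{BO} for the non-exactness of $\C^2\Conv\C^6$, you further quotient onto $\C^2\Conv\C^3\cong C^*(\mathrm{PSL}_2(\Z))$ and justify its non-exactness through the finite-index copy of $\mathbb{F}_2$ and the known non-exactness of $C^*(\mathbb{F}_2)$, which is a valid unwinding of the same citation.
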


\begin{proof}
Note that an image of an exact C*-algebra is exact 
(see \cite[Corollary~9.4.3]{BO}). 
By Corollary~\ref{Cor:ntom}, 
it suffices to show that $A(5)$ is not exact. 
By Proposition~\ref{Prop:freeprod}, 
there exists a surjection from $A(5)$ to 
$A(2)*A(3) \cong \C^2 * \C^6$ which is not exact 
(see \cite[Proposition~3.7.11]{BO}). 
This completes the proof. 
\end{proof}

The \Ca $A(4)$ is not commutative, but is exact, 
in fact is subhomogeneous 
(Corollary~\ref{Cor:faithful}). 
From the next section, 
we investigate the structure of $A(4)$.

\section{Twisted crossed product}\label{Sec:twist}

We denote elements $\sigma \in \fS_4$ by 
$(\sigma(1)\sigma(2)\sigma(3)\sigma(4))$. 
We define the Klein (four) group $K$ by 
\[
K \coloneqq \{ t_1, t_2, t_3, t_4\} \subset \fS_4
\]
where $t_1$ is the identity $(1234)$ of $\fS_4$, 
$t_2=(2143)$, $t_3=(3412)$ and $t_4=(4321)$. 
The group $K$ is isomorphic to $(\Z/2\Z)\times (\Z/2\Z)$. 

We choose the indices so that we have 
$t_it_j=t_{t_i(j)}$ for $i,j=1,2,3,4$. 
Note that we have $t_i(j)=t_j(i)$ for $i,j=1,2,3,4$.

\begin{definition}
Define unitaries $c_1,c_2,c_3,c_4$ in $M_2(\C)$ by 
\begin{equation*}
c_1 \coloneqq
\begin{pmatrix}
1 & 0 \\
0 & 1 
\end{pmatrix}, 
\quad
c_2 \coloneqq
\begin{pmatrix}
\sqrt{-1} & 0 \\
0 & -\sqrt{-1}
\end{pmatrix}, 
\quad
c_3 \coloneqq
\begin{pmatrix}
0 & 1 \\
-1 & 0 
\end{pmatrix}, 
\quad
c_4 \coloneqq
\begin{pmatrix}
0 & \sqrt{-1} \\
\sqrt{-1} & 0 
\end{pmatrix}.
\end{equation*}
\end{definition}

The unitaries $c_1,c_2,c_3,c_4$ are called 
the Pauli matrices. 

\begin{definition}
Put $\omega=(1342) \in \mathfrak{S}_4$.
Define the map $\varepsilon\colon \{1,2,3,4\}^2 \to \{1,-1\}$ by
\begin{equation*}
\varepsilon(i,j) \coloneqq 
\begin{cases}
1 & \text{($i=1$ or $j=1$ or $\omega(i)=j$)} \\
-1 & \text{(otherwise)},
\end{cases}
\end{equation*}
for each $i,j = 1,2,3,4$.
\end{definition}

\begin{table}[htb]
\begin{center}
\begin{tabular}{|c||c|c|c|c|} \hline
$i\diagdown j$ & $1$ &$2$  & $3$ & $4$ \\ \hline \hline
$1$& 1 & 1 & 1 & 1  \\ \hline
$2$& 1 & $-1$ & 1 & $-1$ \\ \hline
$3$& 1 & $-1$ & $-1$ & 1 \\ \hline
$4$& \,1\, & 1 & $-1$& $-1$ \\ \hline
\end{tabular}
\end{center}
\caption{Values of $\varepsilon(i,j)$}
\label{table:cocycle}
\end{table} 

We have the following calculation which can be proved straightforwardly.

\begin{lemma}\label{lem:pauli}
For $i,j=1,2,3,4$, 
we have $c_i c_j = \varepsilon(i,j) c_{t_i (j)}$.
\end{lemma}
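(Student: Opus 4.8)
\textbf{Proof plan for Lemma~\ref{lem:pauli}.}
The plan is to verify the identity $c_i c_j = \varepsilon(i,j)\, c_{t_i(j)}$ by a finite case check, but organized so that only a handful of the sixteen products genuinely need to be computed. First I would dispose of the cases involving the index $1$: since $c_1$ is the identity matrix and $t_1$ is the identity permutation, the identity reads $c_1 c_j = c_j$ and $c_i c_1 = c_i$, which matches $\varepsilon(1,j)=\varepsilon(i,1)=1$ from the first row and column of Table~\ref{table:cocycle}. This leaves the nine products $c_i c_j$ with $i,j \in \{2,3,4\}$.

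Next I would record the ``square'' cases $c_i^2$. A direct matrix multiplication gives $c_2^2 = c_3^2 = c_4^2 = -I = -c_1$, which agrees with $\varepsilon(i,i) = -1$ for $i=2,3,4$ and $t_i(i) = 1$ (indeed $t_i t_i = t_1$ in $K$). That accounts for three more of the nine. For the remaining six off-diagonal products with $i \neq j$ in $\{2,3,4\}$, I would compute, say, $c_2 c_3$, $c_3 c_2$, $c_2 c_4$, $c_4 c_2$, $c_3 c_4$, $c_4 c_3$ directly as $2\times 2$ matrices. One finds the anti-commuting Pauli-type behavior: for instance $c_2 c_3 = c_4$ while $c_3 c_2 = -c_4$, and similarly for the other pairs, so that $c_i c_j = \pm c_k$ where $\{i,j,k\} = \{2,3,4\}$ and the sign is $\varepsilon(i,j)$ as tabulated. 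Here one must check that the index $k$ of the resulting matrix is exactly $t_i(j)$: since $t_2, t_3, t_4$ are the three non-identity elements of the Klein group $K$ and the chosen labeling satisfies $t_i t_j = t_{t_i(j)}$, the product of two distinct non-identity elements is the third one, so $t_i(j)$ is precisely the element of $\{2,3,4\}$ different from both $i$ and $j$; this matches the observed matrix product.

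The computation is entirely routine — the only point requiring a moment's care is bookkeeping: one must confirm, against Table~\ref{table:cocycle} and against the multiplication rule $t_i t_j = t_{t_i(j)}$ fixed in Section~\ref{Sec:twist}, that the sign $\varepsilon(i,j)$ and the index $t_i(j)$ produced by each matrix product agree with the stated formula, rather than (say) with $t_j(i)$ or with a transposed sign convention. Since $K$ is abelian we have $t_i(j) = t_j(i)$, so the index is symmetric in $i,j$; the asymmetry of the statement lives entirely in the sign $\varepsilon$, which is indeed not symmetric (e.g.\ $\varepsilon(3,4) = -1$ but $\varepsilon(4,3) = 1$), consistent with $c_3 c_4 = -c_4 c_3$. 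I do not anticipate any genuine obstacle here; the lemma is a direct verification, and the statement ``which can be proved straightforwardly'' in the text is accurate.
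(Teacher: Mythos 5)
Your plan is exactly the verification the paper has in mind (the paper offers no written proof beyond ``can be proved straightforwardly''), and the reduction to the cases $i,j\in\{2,3,4\}$, the squares, and the six mixed products is sound. One correction before you execute it: your illustrative signs are transposed --- since $\omega=(1342)$ gives $\omega(3)=4$, Table~\ref{table:cocycle} has $\varepsilon(3,4)=1$ and $\varepsilon(4,3)=-1$, not the other way around, and indeed the matrix products are $c_3c_4=c_2$ and $c_4c_3=-c_2$, so the lemma checks out with the table's values; with your swapped values the verification would appear to fail. With that bookkeeping fixed, the argument is complete and agrees with the paper.
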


From this lemma and the computation $t_it_j=t_{t_i(j)}$, 
we have the following lemma 
which means that 
$K^2 \ni (t_i,t_j)\mapsto \varepsilon(i,j) \in \{1,-1\}$ 
becomes a cocycle of $K$. 

\begin{lemma}\label{lem:cocycle}
For $i,j,k=1,2,3,4$, 
we have $\varepsilon(i, j) \varepsilon(t_i(j), k) 
= \varepsilon(i, t_j(k)) \varepsilon(j, k)$.
\end{lemma}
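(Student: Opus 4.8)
The plan is to deduce Lemma~\ref{lem:cocycle} directly from Lemma~\ref{lem:pauli} together with the identity $t_it_j=t_{t_i(j)}$, by computing the product $c_ic_jc_k$ in two different ways using associativity in $M_2(\C)$. Since the $c_i$ are invertible, any scalar identity forced by two different parenthesizations of a triple product must hold, and this is exactly what we want.

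First I would expand $(c_ic_j)c_k$. By Lemma~\ref{lem:pauli}, $c_ic_j=\varepsilon(i,j)c_{t_i(j)}$, and then $c_{t_i(j)}c_k=\varepsilon(t_i(j),k)c_{t_{t_i(j)}(k)}$, so
\[
(c_ic_j)c_k=\varepsilon(i,j)\,\varepsilon(t_i(j),k)\,c_{t_{t_i(j)}(k)}.
\]
Next I would expand $c_i(c_jc_k)$ in the same way: $c_jc_k=\varepsilon(j,k)c_{t_j(k)}$ and then $c_ic_{t_j(k)}=\varepsilon(i,t_j(k))c_{t_i(t_j(k))}$, giving
\[
c_i(c_jc_k)=\varepsilon(i,t_j(k))\,\varepsilon(j,k)\,c_{t_i(t_j(k))}.
\]
The two expressions are equal by associativity. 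To conclude, I need the two subscripts to agree, i.e. $t_{t_i(j)}(k)=t_i(t_j(k))$; this is immediate from $t_it_j=t_{t_i(j)}$ applied to $k$, since $(t_it_j)(k)=t_i(t_j(k))$ and $t_{t_i(j)}(k)$ is the left-hand side rewritten. Once the subscripts match, comparing the scalar coefficients of the common unit vector $c_{t_i(t_j(k))}$ (which is a nonzero matrix) yields
\[
\varepsilon(i,j)\,\varepsilon(t_i(j),k)=\varepsilon(i,t_j(k))\,\varepsilon(j,k),
\]
which is precisely the claimed identity.

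There is essentially no obstacle here: the only thing to be careful about is the bookkeeping with the indices $t_i(j)$ versus $t_{t_i(j)}(k)$, and the fact that one genuinely uses the relation $t_it_j=t_{t_i(j)}$ (already recorded in the excerpt) to identify the two resulting group elements. An alternative, more pedestrian route would be to verify the identity for all $4^3=64$ triples $(i,j,k)$ from Table~\ref{table:cocycle}, but the associativity argument above is cleaner and makes transparent why the statement says $\varepsilon$ is a group $2$-cocycle on $K$. I would therefore present the associativity proof and remark that it is the cocycle identity for $\varepsilon$ viewed via $(t_i,t_j)\mapsto\varepsilon(i,j)$.
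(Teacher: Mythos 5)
Your proof is correct and is exactly the argument the paper gives (the paper's proof is the one-line instruction to compute $c_ic_jc_k$ in the two ways $(c_ic_j)c_k$ and $c_i(c_jc_k)$, which you have carried out in full). No issues.
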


\begin{proof}
Compute $c_i c_j c_k$ in the two ways, 
namely $(c_i c_j) c_k$ and $c_i (c_j c_k)$. 
\end{proof}

Hence the following definition makes sense. 
Let us denote by the same symbol $\alpha$ 
the restriction of the action 
$\alpha\colon \fS_4\times \fS_4\curvearrowright A(4)$
to $K\times K \subset \fS_4\times \fS_4$.

\begin{definition}
Let $A(4) \rtimes_{\alpha}^{\text{tw}} (K \times K)$ 
be the twisted crossed product of the action $\alpha$ and 
the cocycle 
\[
(K \times K)^2\ni ((t_i,t_j),(t_k,t_l))\mapsto 
\varepsilon(i,k)\varepsilon(j,l) \in \{1,-1\}. 
\]
\end{definition}

By definition, 
$A(4) \rtimes_{\alpha}^{\text{tw}} (K \times K)$ 
is the universal $C^*$-algebra generated by the unital subalgebra $A(4)$ and 
unitaries $\{u_{i,j}\}_{i,j=1}^4$ such that 
\begin{align*}
u_{i,j}xu_{i,j}^*=\alpha_{(t_i,t_j)}(x)\qquad 
\text{for all $i,j$ and all $x \in A(4)$} 
\end{align*}
and 
\begin{align*}
u_{i,j}u_{k,l}=\varepsilon(i,k)\varepsilon(j,l)u_{t_i(k),t_j(l)}
\quad \text{for all $i,j,k,l$.} 
\end{align*}
We denote by $\mathcal{R}_{\text{u}}$ the latter relation. 
The former relation is equivalent to the relation
\begin{align*}
u_{i,j}p_{k,l}=p_{t_i(k),t_j(l)}u_{i,j}\qquad 
\text{for all $i,j,k,l$} 
\end{align*}
which is denoted by $\mathcal{R}_{\text{up}}$. 

Recall that $A(4)$ is the universal unital \Ca 
generated by the set $\{p_{i,j}\}_{i,j=1}^4$ of projections 
satisfying the following relation 
denoted by $\mathcal{R}_{\text{p}}$
\begin{align*}
\sum_{i=1}^4 p_{i,j} =1 
&\quad (j=1, 2, 3 , 4), &
\sum_{j=1}^4 p_{i,j} =1 
&\quad (i=1, 2, 3 , 4). 
\end{align*}

The following is the first main theorem. 

\begin{theorem}\label{MainThm1}
The twisted crossed product $A(4) \rtimes_{\alpha}^{\text{tw}} (K \times K)$ 
is isomorphic to $M_4(C(\R P^3))$. 
\end{theorem}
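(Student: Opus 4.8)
The plan is to construct an explicit isomorphism $\Phi \colon A(4) \rtimes_{\alpha}^{\text{tw}} (K \times K) \to M_4(C(\R P^3))$ by exhibiting, inside $M_4(C(\R P^3))$, a copy of $A(4)$ together with unitaries satisfying $\mathcal{R}_{\text{u}}$ and $\mathcal{R}_{\text{up}}$, and then checking the map so obtained is bijective. First I would fix the model $\R P^3 = SU(2)/\{\pm 1\}$ and work with $SU(2)$-valued functions that are invariant (up to the natural $\pm 1$ action that will also appear in Theorem~B), identifying $M_4(C(\R P^3))$ with a suitable subalgebra of $M_4(C(SU(2)))$. The four Pauli matrices $c_1,c_2,c_3,c_4$ give, by left multiplication, a global $2\times 2$ frame, so $M_2(C(SU(2)))$ can be described in terms of the functions $g \mapsto g$ and the constants $c_i$; tensoring a second copy of this structure (using the right regular representation, or a second set of coordinates) produces the $M_4$. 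Concretely, for the unitaries I would set $u_{i,j}$ to be (a scalar multiple of) $c_i \otimes c_j \in M_2(\C) \otimes M_2(\C) = M_4(\C)$, viewed as a constant function; Lemma~\ref{lem:pauli} together with the factorization of the cocycle as $\varepsilon(i,k)\varepsilon(j,l)$ makes $\mathcal{R}_{\text{u}}$ automatic, so the only real content on the unitary side is that conjugation by $u_{i,j}$ implements the geometric symmetry that will correspond, on the $A(4)$ side, to $\alpha_{(t_i,t_j)}$.

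The main work is to produce the generating projections $p_{k,l} \in M_4(C(\R P^3))$. Following Banica--Moroianu's \shom $A(4)\to M_4(C(SU(2)))$ built from the Pauli matrices, each $p_{k,l}$ should be a rank-one projection-valued function on $SU(2)$ of the form $p_{k,l}(g) = \xi_{k,l}(g)\xi_{k,l}(g)^*$ where $\xi_{k,l}(g)$ is obtained by applying the appropriate Pauli matrix (indexed via $t_k, t_l$) to a fixed cyclic vector and then acting by $g$; one checks the magic-square relations $\mathcal{R}_{\text{p}}$ hold because, for fixed column $l$, the four vectors $\{\xi_{k,l}(g)\}_{k=1}^4$ form an orthonormal basis of $\C^4$ (this is where the orthogonality relations $c_i^* c_j = \pm c_{t_i(j)}$ and the structure of $K$ enter), and similarly for rows. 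This simultaneously gives a well-defined \shom $\Phi$ from the twisted crossed product, since one then verifies $\mathcal{R}_{\text{up}}$: conjugating $p_{k,l}$ by $c_i\otimes c_j$ permutes the frame vectors according to $t_i, t_j$, matching $p_{t_i(k),t_j(l)}$ up to the scalars already absorbed.

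Surjectivity of $\Phi$ should follow by showing the image contains enough elements to generate $M_4(C(\R P^3))$: the unitaries $u_{i,j}$ give all of $M_4(\C)$ (the Pauli matrices span $M_2(\C)$, so their tensor products span $M_4(\C)$), and the matrix entries of the $p_{k,l}$, combined with these constant matrix units, produce all coordinate functions on $\R P^3$ — i.e. the entries of $g\otimes g \in M_4$, which generate $C(\R P^3)$ as these are exactly the $SO(3)$-type coordinate functions descending from $SU(2)$. For injectivity I would compare the twisted crossed product with its target via a dimension/$K$-theory-free argument: since $K\times K$ is finite of order $16$, the crossed product is a direct summand situation, and it suffices to check that $\Phi$ does not kill the canonical conditional-expectation image of $A(4)$, i.e. that $\Phi|_{A(4)}$ is faithful — but faithfulness of $\Phi|_{A(4)}$ is not assumed (indeed Corollary~\ref{Cor:faithful} is meant to be deduced from this theorem), so instead I would argue injectivity directly on the crossed product: produce a candidate inverse by defining a \shom $M_4(C(\R P^3)) \to A(4) \rtimes_{\alpha}^{\text{tw}} (K\times K)$ on generators (matrix units via products of the $u_{i,j}$, coordinate functions via the entries of $\sum_{k} \text{(something)} p_{k,l}$) and check the two composites are the identity on generators.

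The hard part will be the bookkeeping that makes the candidate inverse well-defined: one must verify that the proposed preimages of the matrix units and of the coordinate functions of $\R P^3$ actually satisfy the defining relations of $M_4(C(\R P^3))$ inside the twisted crossed product — in particular that the preimages of the $SU(2)$-coordinates satisfy the $SU(2)$ (hence, after quotienting, $\R P^3$) relations purely as a consequence of $\mathcal{R}_{\text{p}}$, $\mathcal{R}_{\text{u}}$, $\mathcal{R}_{\text{up}}$. This is essentially a presentation-matching computation; I expect it to be the technical heart, and it is presumably why the authors devote Sections~\ref{Sec:RP3} through~\ref{Sec:Inv} to the proof, splitting it into the construction of $\Phi$, the construction of the inverse on $M_4(C(\R P^3))$, and the verification that they are mutually inverse.
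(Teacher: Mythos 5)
Your proposal follows essentially the same route as the paper: the unitaries $c_i\otimes c_j$ are exactly the paper's $U_{i,j}$ (acting by $x\mapsto c_ixc_j^*$ on $M_2(\C)\cong\C^4$), the rank-one Pauli-frame projections are the paper's $P_{i,j}=U_{i,j}P_{1,1}U_{i,j}^*$ with $P_{1,1}=\sum f_{i,j}e_{i,j}$, and the inverse built from matrix units (linear combinations of the $u_{i,j}$) and coordinate functions (the elements $\sum_k E_{k,i}p_{1,1}E_{j,k}$), checked to be a two-sided inverse on generators against a finite presentation of $M_4(C(\R P^3))$, is precisely Sections~\ref{Sec:RP3}--\ref{Sec:Inv}. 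The only content you leave open is the relation-checking you correctly identify as the technical heart, so the approach matches the paper's.
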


We finish the proof of this theorem in the end of Section~\ref{Sec:Inv}. 

To prove this theorem, 
we start with finite presentation of the \Ca $C(\R P^3)$ in the next section.

\section{Real projective space $\R P^3$}\label{Sec:RP3}

\begin{definition}
We set an equivalence relation $\sim$ on the manifold 
\[
S^3 \coloneqq \Big\{a=(a_1,a_2,a_3,a_4) \in \R^4\ \Big|\ \sum_{i=1}^4a_i^2=1\Big\}
\]
so that $a\sim b$ if and only if $a=b$ or $a=-b$. 
The quotient space $S^3/{\sim}$ is the real projective space $\R P^3$ 
of dimension 3. 
The equivalence class of $(a_1,a_2,a_3,a_4) \in S^3$ is denoted as 
$[a_1,a_2,a_3,a_4] \in \R P^3$. 
\end{definition}

\begin{definition}
For $i,j = 1,2,3,4$, we define a continuous function 
$f_{i,j}$ on $\R P^3$ by $f_{i,j}([a_1,a_2,a_3,a_4])=a_ia_j$ 
for $[a_1,a_2,a_3,a_4] \in \R P^3$. 
\end{definition}

Note that $f_{i,j}$ is a well-defined continuous function. 

\begin{lemma}\label{Lem:f}
The functions $\{f_{i,j}\}_{i,j=1}^4$ satisfy the following relation 
\begin{align*}
&f_{i,j}=f_{i,j}^*=f_{j,i}\quad \text{for all $i,j$,} \\
&f_{i,j}f_{k,l}=f_{i,k}f_{j,l} \quad \text{for all $i,j,k,l$,} \\
&\sum_{i=1}^4 f_{i,i}=1. 
\end{align*}
\end{lemma}

\begin{proof}
This follows from easy computation. 
\end{proof}

\begin{definition}
We denote by $\mathcal{R}_{\text{f}}$ 
the relation in Lemma~\ref{Lem:f}. 
\end{definition}

\begin{proposition}
The \Ca $C(\R P^3)$ is the universal unital \Ca 
generated by elements $\{f_{i,j}\}_{i,j=1}^4$ satisfying $\mathcal{R}_{\textrm{f}}$. 
\end{proposition}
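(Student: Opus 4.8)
The plan is to show that the universal unital \Ca $B$ generated by elements $\{f_{i,j}\}_{i,j=1}^4$ satisfying $\mathcal{R}_{\textrm{f}}$ is commutative, to identify its Gelfand spectrum with $\R P^3$, and to check that the resulting isomorphism $B\to C(\R P^3)$ carries each generator to the function $f_{i,j}$. First, $B$ makes sense: in any unital \Ca satisfying $\mathcal{R}_{\textrm{f}}$, the case $k=i$, $l=j$ of the middle relation gives $f_{i,j}f_{i,j}=f_{i,i}f_{j,j}$, which together with $\sum_k f_{k,k}=1$ forces $0\le f_{i,i}\le 1$ and hence $\|f_{i,j}\|^2=\|f_{i,i}f_{j,j}\|\le 1$; so the relations are admissible, and $B\ne 0$ since $\pi$ below will be onto $C(\R P^3)$. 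By Lemma~\ref{Lem:f} there is a unital \shom $\pi\colon B\to C(\R P^3)$ with $\pi(f_{i,j})=f_{i,j}$, and $\pi$ is surjective by the Stone--Weierstrass theorem, because the functions $f_{i,j}$ separate the points of $\R P^3$: if $a_ia_j=b_ib_j$ for all $i,j$ then $a_i^2=b_i^2$ for all $i$, and comparing off-diagonal entries forces $b=\pm a$. Finally $B$ is commutative, since $f_{i,j}=f_{j,i}$ and $f_{i,j}f_{k,l}=f_{i,k}f_{j,l}$ give $f_{k,l}f_{i,j}=f_{k,i}f_{l,j}=f_{i,k}f_{j,l}=f_{i,j}f_{k,l}$.

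Being commutative and unital, $B\cong C(X)$ where $X$ is its (compact Hausdorff) character space, and the heart of the argument is the identification $X\cong\R P^3$. A character $\chi$ of $B$ is determined by the real numbers $b_{i,j}:=\chi(f_{i,j})$, which satisfy exactly $b_{i,j}=b_{j,i}$, $b_{i,j}b_{k,l}=b_{i,k}b_{j,l}$, and $\sum_i b_{i,i}=1$; conversely every such array arises from a character, by the universal property of $B$. The claim is that these arrays are precisely those of the form $b_{i,j}=a_ia_j$ with $a=(a_1,a_2,a_3,a_4)\in S^3$, two points $a$ and $-a$ giving the same array. Indeed, from $b_{i,j}^2=b_{i,i}b_{j,j}\ge 0$ no two diagonal entries have strictly opposite signs, and since they sum to $1$ they are all nonnegative; putting $a_i:=\sqrt{b_{i,i}}$ gives $a\in S^3$ and $b_{i,j}^2=(a_ia_j)^2$, so $b_{i,j}=\eta_{i,j}\,a_ia_j$ for a sign $\eta_{i,j}\in\{1,-1\}$ whenever $a_ia_j\ne 0$ (and $b_{i,j}=0$ otherwise). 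Taking $k=i$ in $b_{i,j}b_{k,l}=b_{i,k}b_{j,l}$ yields $\eta_{i,j}\eta_{i,l}=\eta_{j,l}$ on the indices with nonvanishing coordinates, so fixing some $i_0$ with $a_{i_0}\ne 0$ and setting $s_j:=\eta_{i_0,j}$ (and $s_j:=1$ when $a_j=0$) we obtain $b_{i,j}=s_is_j\,a_ia_j$ for all $i,j$; replacing $a$ by $(s_1a_1,\dots,s_4a_4)\in S^3$ makes $b_{i,j}=a_ia_j$. Hence the map $\R P^3\to X$ sending $[a]$ to the character determined by $f_{i,j}\mapsto a_ia_j$ is a well-defined continuous bijection between compact Hausdorff spaces, and therefore a homeomorphism.

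It follows that $\pi$ is an isomorphism: by the previous paragraph every character of $B$ equals $\ev_{[a]}\circ\pi$ for some $[a]\in\R P^3$, so any element of $\ker\pi$ is annihilated by all characters of $B$ and hence vanishes, $B$ being commutative; combined with surjectivity of $\pi$, this gives $B\cong C(\R P^3)$ via $f_{i,j}\mapsto f_{i,j}$, which is the assertion. I expect the main obstacle to be the sign-consistency step in the spectrum computation --- upgrading the a priori independent signs $\eta_{i,j}$ to a product form $s_is_j$ --- together with the bookkeeping for the degenerate arrays in which some coordinates $a_i$ vanish; the remaining ingredients (commutativity of $B$, Stone--Weierstrass, and the fact that a continuous bijection of compact Hausdorff spaces is a homeomorphism) are routine.
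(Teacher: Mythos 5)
Your proof is correct, and its overall architecture coincides with the paper's: both show the universal algebra is commutative via $f_{i,j}f_{k,l}=f_{i,k}f_{j,l}=f_{k,i}f_{l,j}=f_{k,l}f_{i,j}$, and both reduce the statement to identifying the character space with $\R P^3$. The one step you handle differently is the ``surjectivity'' direction, i.e.\ showing every character $\chi$ comes from a point $[a]\in\R P^3$. The paper picks $i_0$ with $\chi(f_{i_0,i_0})>0$ (positivity coming from $f_{i_0,i_0}=\sum_i f_{i_0,i}f_{i_0,i}^*$) and defines $a_i=\chi(f_{i_0,i})/\sqrt{\chi(f_{i_0,i_0})}$, after which $\sum_i a_i^2=1$ and $\chi(f_{i,j})=a_ia_j$ drop out in two lines with no case analysis. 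You instead set $a_i=\sqrt{\chi(f_{i,i})}$ and then repair the signs, proving the consistency relation $\eta_{i,j}\eta_{i,l}=\eta_{j,l}$ and handling vanishing coordinates separately; this works (your sign argument is sound, including the implicit $\eta_{i_0,i_0}=1$), but it is exactly the bookkeeping the paper's normalization trick is designed to avoid. Two further small differences: you get surjectivity of $B\to C(\R P^3)$ from Stone--Weierstrass, where the paper gets it from injectivity of the induced map $\R P^3\to X$ plus compactness --- the underlying point-separation computation is identical --- and you add a remark on admissibility of the relations (boundedness of the generators), which the paper omits; that remark is a genuine, if minor, improvement in rigor.
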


\begin{proof}
Let $A$ be the universal unital \Ca 
generated by elements $\{f_{i,j}\}_{i,j=1}^4$ satisfying $\mathcal{R}_{\text{f}}$. 
For $i,j,k,l=1,2,3,4$, we have 
\begin{align*}
f_{i,j}f_{k,l}=f_{i,k}f_{j,l} =f_{k,i}f_{l,j} =f_{k,l}f_{i,j}.
\end{align*}
Hence $A$ is commutative. 
Thus there exists a compact set $X$ such that $A \cong C(X)$. 

By Lemma~\ref{Lem:f}, 
we have a unital \shom $A \to C(\R P^3)$. 
This induces a continuous map $\varphi \colon \R P^3 \to X$. 
It suffices to show that this continuous map is homeomorphic. 

We first show that $\varphi$ is injective. 
Take $[a_1,a_2,a_3,a_4]$ and $[b_1,b_2,b_3,b_4] \in \R P^3$ 
with $\varphi([a_1,a_2,a_3,a_4])=\varphi([b_1,b_2,b_3,b_4])$. 
Then, for $i,j=1,2,3,4$, we have $a_ia_j=b_ib_j$. 
Since $\sum_{i=1}^4a_i^2=1$, there exists $i_0$ 
such that $a_{i_0} \neq 0$. 
Set $\sigma = b_{i_0}/a_{i_0} \in \R$. 
Since $a_ia_{i_0}=b_ib_{i_0}$, 
we have $a_i=\sigma b_i$ for $i=1,2,3,4$. 
Since $\sum_{i=1}^4a_i^2=\sum_{i=1}^4b_i^2=1$, we get $\sigma=\pm 1$. 
Hence $[a_1,a_2,a_3,a_4] = [b_1,b_2,b_3,b_4]$. 
This shows that $\varphi$ is injective. 

Next we show that $\varphi$ is surjective. 
Take a unital character $\chi \colon A \to \C$ of $A$. 
To show that $\varphi$ is surjective, 
it suffices to find $[a_1,a_2,a_3,a_4] \in \R P^3$ 
such that $\chi(f_{i,j})=a_ia_j$ for all $i,j=1,2,3,4$. 
Since $\sum_{i=1}^4\chi(f_{i,i})=\chi\big(\sum_{i=1}^4f_{i,i}\big)=1$, 
there exists $i_0$ such that $\chi(f_{i_0,i_0})\neq 0$. 
Since 
\begin{align*}
f_{i_0,i_0} = f_{i_0,i_0} \sum_{i = 1}^{4} f_{i,i} 
= \sum_{i = 1}^{4}  f_{i_0,i_0} f_{i,i} = \sum_{i = 1}^{4}  f_{i_0,i} f_{i_0,i}
= \sum_{i = 1}^{4}  f_{i_0,i} f_{i_0,i}^*.
\end{align*}
we have $\chi(f_{i_0,i_0}) >0$. 
Put $a_i \coloneqq \frac{\chi(f_{i_0,i})}{\sqrt{\chi(f_{i_0,i_0})}}$. 
We have 
\begin{align*}
\sum_{i = 1}^{4} a_i^2 
&= \sum_{i = 1}^{4} \frac{\chi(f_{i_0,i})^2}{\chi(f_{i_0,i_0})} 
= \sum_{i = 1}^{4} \frac{\chi(f_{i_0,i_0}) \chi(f_{i,i})}{\chi(f_{i_0,i_0})} 
= \sum_{i = 1}^{4} \chi(f_{i,i}) = 1.
\end{align*}
We also have 
\begin{align*}
\chi(f_{i,j}) &= \frac{\chi(f_{i_0,i}) \chi(f_{i_0,j})}{\chi(f_{i_0,i_0})} 
= a_ia_j,
\end{align*}
for $i, j=1,2,3,4$. 
This shows that $\varphi$ is surjective. 

Since $\R P^3$ is compact and $X$ is Hausdorff, 
$\varphi \colon \R P^3 \to X$ is a homeomorphism. 
Thus we have shown that $A$ is isomorphic to $C(\R P^3)$. 
\end{proof}

Let $\{e_{i,j}\}_{i,j=1}^4$ be the matrix unit of $M_4(\C)$. 
Then $\{e_{i,j}\}_{i,j=1}^4$ satisfies the following relation 
denoted by $\mathcal{R}_{\text{e}}$; 
\begin{align*}
&e_{i,j}=e_{j,i}^*\quad \text{for all $i,j$,} \\
&e_{i,j}e_{k,l}=\delta_{j,k}e_{i,l} \quad \text{for all $i,j,k,l$,} \\
&\sum_{i=1}^4 e_{i,i}=1, 
\end{align*}
here $\delta_{j,k}$ is the Kronecker delta. 
It is well-known, and easy to see, that $M_4(\C)$ is the universal 
unital C*-algebra generated by $\{e_{i,j}\}_{i,j=1}^4$ 
satisfying $\mathcal{R}_{\text{e}}$. 

The \Ca $M_4(C(\R P^3)) = C(\R P^3,M_4(\C)) = C(\R P^3)\otimes M_4(\C)$ 
is the universal unital \Ca 
generated by $\{f_{i,j}\}_{i,j=1}^4$ and $\{e_{i,j}\}_{i,j=1}^4$ 
satisfying $\mathcal{R}_{\text{f}}$, $\mathcal{R}_{\text{e}}$ and 
the following relation 
denoted by $\mathcal{R}_{\text{fe}}$; 
\begin{align*}
&f_{i,j}e_{k,l}=e_{k,l}f_{i,j} \quad \text{for all $i,j,k,l$.} \\
\end{align*}

\section{Unitaries}\label{Sec:U}

\begin{definition}\label{Def:Uij}
For $i,j=1,2,3,4$, we define a unitary 
$U_{i,j}\in M_4(\C) \subset M_4(C(\R P^3))$ by 
\[
U_{i,j} \coloneqq \sum_{k=1}^4 \e(i,k)\e(k,j)e_{t_i(k),t_j(k)}
\]
\end{definition}

From a direct calculation, we have 
\begin{align*}
&U_{1,1}= 
\begin{pmatrix}
1 & 0 & 0 & 0  \\
0 & 1  & 0 & 0 \\
0 & 0 & 1 & 0  \\
0 & 0 & 0 & 1 
\end{pmatrix},&
&U_{1,2}=
\begin{pmatrix}
0 & 1 & 0 & 0 \\
-1 & 0 & 0 & 0 \\
0 & 0 & 0 & -1 \\
0 & 0 & 1 & 0 
\end{pmatrix},\\
&U_{1,3}=
\begin{pmatrix}
0 & 0 & 1 & 0 \\
0 & 0 & 0 & 1 \\
-1 & 0 & 0 & 0 \\
0 & -1 & 0 & 0 
\end{pmatrix},&
&U_{1,4}=
\begin{pmatrix}
0 & 0 & 0 & 1 \\
0 & 0 & -1 & 0 \\
0 & 1 & 0 & 0 \\
-1 & 0 & 0 & 0 
\end{pmatrix},\\
&U_{2,1}=
\begin{pmatrix}
0 & -1 & 0 & 0 \\
1 & 0 & 0 & 0 \\
0 & 0 & 0 & -1 \\
0 & 0 & 1 & 0 
\end{pmatrix},&
&U_{2,2}=
\begin{pmatrix}
1 & 0 & 0 & 0 \\
0 & 1 & 0 & 0 \\
0 & 0 & -1 & 0 \\
0 & 0 & 0 & -1 
\end{pmatrix},\\
&U_{2,3}=
\begin{pmatrix}
0 & 0 & 0 & -1 \\
0 & 0 & 1 & 0 \\
0 & 1 & 0 & 0 \\
-1 & 0 & 0 & 0 
\end{pmatrix},&
&U_{2,4}=
\begin{pmatrix}
0 & 0 & 1 & 0 \\
0 & 0 & 0 & 1 \\
1 & 0 & 0 & 0 \\
0 & 1 & 0 & 0 
\end{pmatrix},\\
&U_{3,1}=
\begin{pmatrix}
0 & 0 & -1 & 0  \\
0 & 0 & 0 & 1 \\
1 & 0 & 0 & 0 \\
0 & -1 & 0 & 0
\end{pmatrix},&
&U_{3,2}=
\begin{pmatrix}
0 & 0 & 0 & 1 \\
0 & 0 & 1 & 0 \\
0 & 1 & 0 & 0 \\
1 & 0 & 0 & 0 
\end{pmatrix},\\
&U_{3,3}=
\begin{pmatrix}
1 & 0 & 0 & 0 \\
0 & -1 & 0 & 0 \\
0 & 0 & 1 & 0  \\
0 & 0 & 0 & -1
\end{pmatrix},&
&U_{3,4}=
\begin{pmatrix}
0 & -1 & 0 & 0 \\
-1 & 0 & 0 & 0 \\
0 & 0 & 0 & 1 \\
0  & 0 & 1 & 0	
\end{pmatrix},\\
& U_{4,1}=
\begin{pmatrix}
0 & 0 & 0 & -1 \\
0 & 0 & -1 & 0 \\
0 & 1 & 0 & 0 \\
1 & 0 & 0 & 0
\end{pmatrix},&
& U_{4,2}=
\begin{pmatrix}
0 & 0 & -1 & 0 \\
0 & 0 & 0 & 1 \\
-1 & 0 & 0 & 0 \\
0 & 1 & 0 & 0 
\end{pmatrix},\\
& U_{4,3}=
\begin{pmatrix}
0 & 1 & 0 & 0 \\
1 & 0 & 0  & 0 \\
0 & 0 & 0 & 1 \\
0 & 0 & 1 & 0 
\end{pmatrix},&
& U_{4,4}=
\begin{pmatrix}
1 & 0 & 0 & 0 \\
0 & -1 & 0 & 0 \\
0 & 0 & -1 & 0 \\
0 & 0 & 0 & 1
\end{pmatrix}.
\end{align*}

We have the following. 
We denote the transpose matrix of a matrix $M$ by $M^{\mathrm{T}}$. 

\begin{proposition}\label{Prop:Udef}
For $(a_1,a_2,a_3,a_4)\in \C^4$, 
\begin{equation*}
(b_1,b_2,b_3,b_4)^{\mathrm{T}} \coloneqq U_{i,j}(a_1,a_2,a_3,a_4)^{\mathrm{T}},
\end{equation*} 
satisfies 
$\sum_{k=1}^{4} b_k c_k  = c_i \left( \sum_{k=1}^{4}a_k c_k \right) c_j^*$.
\end{proposition}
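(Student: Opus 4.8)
The plan is to use bilinearity of the asserted identity in the vector $(a_1,a_2,a_3,a_4)$ together with the Pauli multiplication rule of Lemma~\ref{lem:pauli}, which avoids checking all $16$ matrices $U_{i,j}$ by hand.

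First I would note that both sides of $\sum_{k=1}^{4} b_kc_k = c_i\big(\sum_{k=1}^{4} a_kc_k\big)c_j^*$ are $\C$-linear in $(a_1,a_2,a_3,a_4)$, the left-hand side because $(b_1,b_2,b_3,b_4)^{\mathrm{T}}=U_{i,j}(a_1,a_2,a_3,a_4)^{\mathrm{T}}$ depends linearly on the $a_k$. Hence it suffices to treat the case where $(a_1,a_2,a_3,a_4)$ is the $k$-th standard basis vector, i.e.\ to show $\sum_m b_mc_m=c_ic_kc_j^*$ with $(b_1,b_2,b_3,b_4)$ the $k$-th column of $U_{i,j}$. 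From Definition~\ref{Def:Uij}, applying $U_{i,j}=\sum_n \e(i,n)\e(n,j)e_{t_i(n),t_j(n)}$ to the $k$-th basis vector selects the index $n$ with $t_j(n)=k$, namely $n=t_j(k)$ since every element of $K$ is its own inverse. Thus this column is $\e(i,t_j(k))\e(t_j(k),j)$ times the basis vector of index $t_i(t_j(k))$, so the left-hand side equals $\e(i,t_j(k))\,\e(t_j(k),j)\,c_{t_i(t_j(k))}$.

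Next I would compute the right-hand side. From Lemma~\ref{lem:pauli} and $t_j(j)=1$ we get $c_j^2=\e(j,j)c_{t_j(j)}=\e(j,j)c_1=\e(j,j)I$, hence $c_j^*=c_j^{-1}=\e(j,j)c_j$. Applying Lemma~\ref{lem:pauli} twice,
\[
c_ic_kc_j^* = \e(j,j)\,c_ic_kc_j = \e(j,j)\e(k,j)\,c_ic_{t_k(j)} = \e(j,j)\e(k,j)\e(i,t_k(j))\,c_{t_i(t_k(j))},
\]
and since $t_k(j)=t_j(k)$ the subscript matches the one from the previous paragraph. Comparing the two expressions, the proposition reduces to the single $\{1,-1\}$-valued identity $\e(j,j)\e(k,j)=\e(t_j(k),j)$. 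This is exactly the cocycle identity of Lemma~\ref{lem:cocycle} with $(i,j,k)$ there replaced by $(k,j,j)$, after using $t_j(j)=1$ and $\e(\,\cdot\,,1)\equiv 1$; alternatively it can be read off directly from Table~\ref{table:cocycle}.

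I do not expect a genuine obstacle here: the whole argument is a short structural computation. The only places that need care are the inverse-permutation bookkeeping when reading off the $k$-th column of $U_{i,j}$, the observation $c_j^*=\e(j,j)c_j$, and the final $\pm 1$ identity; the point of the argument is that passing through Lemma~\ref{lem:pauli} keeps it from turning into a sixteen-case verification.
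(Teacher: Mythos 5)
Your proof is correct and follows essentially the same route as the paper's: both reduce to checking the identity on standard basis vectors and then verify an equality of signs using Lemma~\ref{lem:pauli} together with the cocycle identity of Lemma~\ref{lem:cocycle}. The only differences are cosmetic bookkeeping — you handle $c_j^*$ via the explicit formula $c_j^*=\e(j,j)c_j$ and index the columns by $k$, whereas the paper indexes by $t_j(k)$ and eliminates $c_j^*$ by multiplying $c_{t_i(k)}$ on the right by $c_j$ — and the resulting sign identities are equivalent specializations of the same cocycle relation.
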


\begin{proof}
For $i,j,k=1,2,3,4$, 
we have 
\begin{align*}
c_ic_{t_j(k)}&=\e(i,t_j(k))c_{t_i(t_j(k))}&
c_{t_i(k)}c_j&=\e(t_i(k),j)c_{t_j(t_i(k))}. 
\end{align*}
Hence $c_ic_{t_j(k)}c_j^*=\e(i,t_j(k))\e(t_i(k),j)^{-1}c_{t_i(k)}$. 
Since $\e(i,t_j(k))\e(k,j)=\e(i,k)\e(t_i(k),j)$, 
we have 
\begin{align*}
\e(i,t_j(k))\e(t_i(k),j)^{-1} = \e(i,k)\e(k,j)^{-1}=\e(i,k)\e(k,j)
\end{align*}
This shows that 
$U_{i,j} = \sum_{k=1}^4 \e(i,k)\e(k,j)e_{t_i(k),t_j(k)}$ 
satisfies the desired property. 
\end{proof}

\begin{proposition}\label{Prop:URu}
For $i,j,k,l = 1,2,3,4$, we have 
	\begin{equation*}
	U_{i,j} U_{k,l}=\varepsilon(i,k) \varepsilon(j ,l) U_{t_i(k), t_j(l)}.
	\end{equation*}
\end{proposition}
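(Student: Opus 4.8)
The plan is to reduce the identity to the multiplication law for the Pauli matrices via the intrinsic description of $U_{i,j}$ given in Proposition~\ref{Prop:Udef}. Concretely, I would fix an arbitrary vector $(a_1,a_2,a_3,a_4)\in\C^4$, set $A \coloneqq \sum_{m=1}^4 a_m c_m \in M_2(\C)$, and track what happens when the two unitaries act successively on this vector.

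First I would apply $U_{k,l}$: by Proposition~\ref{Prop:Udef} the output $(b_1,\ldots,b_4)^{\mathrm T} \coloneqq U_{k,l}(a_1,\ldots,a_4)^{\mathrm T}$ satisfies $\sum_m b_m c_m = c_k A c_l^*$. Applying $U_{i,j}$ to this and using Proposition~\ref{Prop:Udef} once more, the vector $(d_1,\ldots,d_4)^{\mathrm T} \coloneqq U_{i,j}U_{k,l}(a_1,\ldots,a_4)^{\mathrm T}$ satisfies $\sum_m d_m c_m = c_i c_k\, A\, c_l^* c_j^*$. Now I would rewrite the left and right factors using Lemma~\ref{lem:pauli}: $c_i c_k = \varepsilon(i,k) c_{t_i(k)}$, and, since $\varepsilon(j,l)\in\{1,-1\}\subset\R$,
\[
c_l^* c_j^* = (c_j c_l)^* = \big(\varepsilon(j,l) c_{t_j(l)}\big)^* = \varepsilon(j,l)\, c_{t_j(l)}^*.
\]
Hence $\sum_m d_m c_m = \varepsilon(i,k)\varepsilon(j,l)\, c_{t_i(k)}\, A\, c_{t_j(l)}^*$. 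On the other hand, Proposition~\ref{Prop:Udef} applied to $U_{t_i(k),t_j(l)}$ shows that $(e_1,\ldots,e_4)^{\mathrm T} \coloneqq U_{t_i(k),t_j(l)}(a_1,\ldots,a_4)^{\mathrm T}$ satisfies $\sum_m e_m c_m = c_{t_i(k)}\, A\, c_{t_j(l)}^*$, so $\sum_m\big(d_m - \varepsilon(i,k)\varepsilon(j,l) e_m\big) c_m = 0$.

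To finish, I would invoke the fact that $\{c_1,c_2,c_3,c_4\}$ is a linear basis of $M_2(\C)$ (the standard property of the Pauli matrices, also immediate from the explicit $2\times 2$ matrices), which forces $d_m = \varepsilon(i,k)\varepsilon(j,l) e_m$ for $m=1,2,3,4$. Since $(a_1,\ldots,a_4)$ was arbitrary, this gives $U_{i,j}U_{k,l} = \varepsilon(i,k)\varepsilon(j,l) U_{t_i(k),t_j(l)}$ as matrices. There is no real obstacle here: the argument is a direct substitution, and the only point needing care is the linear independence of the Pauli matrices, which is routine.
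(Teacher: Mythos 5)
Your argument is correct, and it is precisely the alternative route that the paper gestures at in the one-line remark after its proof (``One can prove this proposition using Proposition~\ref{Prop:Udef}'') but does not carry out. The paper's own proof works entirely inside $M_4(\C)$: it expands both factors via $U_{i,j}=\sum_k\e(i,k)\e(k,j)e_{t_i(k),t_j(k)}$, reindexes the sums, and then applies the cocycle identity of Lemma~\ref{lem:cocycle} three times to collapse the product of four $\e$-factors into $\e(i,k)\e(j,l)\e(t_i(k),m)\e(m,t_j(l))$. Your proof instead transports everything to $M_2(\C)$ through the intertwining relation of Proposition~\ref{Prop:Udef}, so that the composition law becomes the associativity of $A\mapsto c_ic_k\,A\,c_l^*c_j^*$ together with a single application of Lemma~\ref{lem:pauli} on each side; the cocycle identity never has to be invoked explicitly because it is already encoded in the associativity of multiplication of Pauli matrices (which is exactly how the paper proves Lemma~\ref{lem:cocycle} in the first place). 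What your approach buys is conceptual transparency and far less index bookkeeping; what it costs is the need to observe that $\{c_1,c_2,c_3,c_4\}$ is a linear basis of $M_2(\C)$ (immediate from the explicit matrices, as you note) so that the identity of images under $a\mapsto\sum_m a_mc_m$ for all $(a_1,\ldots,a_4)$ forces equality of the matrices. There is no circularity, since Proposition~\ref{Prop:Udef} is established independently of Proposition~\ref{Prop:URu}.
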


\begin{proof}
We have 
\begin{align*}
U_{i,j} U_{k,l}
&=\Big(\sum_{m=1}^4 \e(i,m)\e(m,j)e_{t_i(m),t_j(m)}\Big)
\Big(\sum_{n=1}^4 \e(k,n)\e(n,l)e_{t_k(n),t_l(n)}\Big)\\
&=\Big(\sum_{m=1}^4 \e(i,t_k(m))\e(t_k(m),j)e_{t_i(t_k(m)),t_j(t_k(m))}\Big)\\
&\phantom{=\Big(\sum_{m=1}^4 \e(i,t_k(m))\e(t_k(m),j)}
\Big(\sum_{n=1}^4 \e(k,t_j(n))\e(t_j(n),l)e_{t_k(t_j(n)),t_l(t_j(n))}\Big)\\
&=\sum_{m=1}^4 \e(i,t_k(m))\e(t_k(m),j)\e(k,t_j(m))\e(t_j(m),l)
e_{t_i(t_k(m)),t_l(t_j(m))}
\end{align*}
Since we have 
\begin{align*}
\e(i,t_k(m))\e(k,m)&=\e(i,k)\e(t_i(k),m),&
\e(k,t_j(m))\e(m,j)&=\e(k,m)\e(t_k(m),j)\\
\e(m,j)\e(t_j(m),l)&=\e(m,t_j(l))\e(j,l),
\end{align*}
we get 
\[
\e(i,t_k(m))\e(t_k(m),j)\e(k,t_j(m))\e(t_j(m),l)
=\e(i,k)\e(j,l)\e(t_i(k),m)\e(m,t_j(l)).
\]
Hence we obtain 
\begin{align*}
U_{i,j} U_{k,l}
&=\sum_{m=1}^4 \e(i,k)\e(j,l)\e(t_i(k),m)\e(m,t_j(l))
e_{t_i(t_k(m)),t_j(t_l(m))}\\
&=\e(i,k)\e(j,l)U_{t_i(k),t_j(l)}. 
\end{align*}
\end{proof}

One can prove this proposition using Proposition~\ref{Prop:Udef}. 

\section{Projections}\label{Sec:P}

\begin{definition}
We define $P_{1,1} \coloneqq 
\sum_{i,j=1}^4f_{i,j}e_{i,j} \in M_4(C(\R P^3))$. 
For $i,j =1,2,3,4$, 
we define $P_{i,j} \in M_4(C(\R P^3))$ by
\begin{equation*}
 P_{i,j} =U_{i,j} P_{1,1} U_{i,j}^*.
\end{equation*} 
\end{definition}
Note that $U_{1,1}=1$. 

\begin{proposition}\label{Prop:Pproj}
For each $i,j =1,2,3,4$, $P_{i,j}$ is a projection.
\end{proposition}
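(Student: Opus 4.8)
The plan is to reduce everything to the single element $P_{1,1}$. Since each $U_{i,j}$ is a unitary in $M_4(\C)\subset M_4(C(\R P^3))$ (this is clear from the explicit matrices, or from $\mathcal{R}_{\text{e}}$ together with the sign bookkeeping), a unitary conjugate of a projection is again a projection. So once we know $P_{1,1}$ is a projection, it follows at once that $P_{i,j}=U_{i,j}P_{1,1}U_{i,j}^*$ is a projection for every $i,j$. Thus the whole proposition comes down to checking $P_{1,1}=P_{1,1}^*$ and $P_{1,1}^2=P_{1,1}$ for $P_{1,1}=\sum_{i,j=1}^4 f_{i,j}e_{i,j}$, using only the relations $\mathcal{R}_{\text{f}}$, $\mathcal{R}_{\text{e}}$, $\mathcal{R}_{\text{fe}}$.

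For self-adjointness I would compute
\[
P_{1,1}^*=\sum_{i,j=1}^4 f_{i,j}^*e_{i,j}^*=\sum_{i,j=1}^4 f_{j,i}e_{j,i}=P_{1,1},
\]
where the first equality uses $\mathcal{R}_{\text{fe}}$ (the $f$'s and $e$'s commute, and $f_{i,j}$ is self-adjoint), the second uses $f_{i,j}=f_{j,i}$ and $e_{i,j}^*=e_{j,i}$, and the last is just relabeling the summation indices.

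For idempotency I would expand, using $\mathcal{R}_{\text{fe}}$ to move all matrix units to the right and $e_{i,j}e_{k,l}=\delta_{j,k}e_{i,l}$:
\[
P_{1,1}^2=\sum_{i,j,k,l=1}^4 f_{i,j}f_{k,l}\,e_{i,j}e_{k,l}
=\sum_{i,j,l=1}^4 f_{i,j}f_{j,l}\,e_{i,l}.
\]
The key identity is $\sum_{j=1}^4 f_{i,j}f_{j,l}=f_{i,l}$. To see this, rewrite $f_{i,j}f_{j,l}=f_{j,i}f_{j,l}$ by symmetry, then apply the relation $f_{a,b}f_{c,d}=f_{a,c}f_{b,d}$ with $(a,b,c,d)=(j,i,j,l)$ to get $f_{j,i}f_{j,l}=f_{j,j}f_{i,l}$; summing over $j$ and using $\sum_{j=1}^4 f_{j,j}=1$ gives $\sum_j f_{i,j}f_{j,l}=\big(\sum_j f_{j,j}\big)f_{i,l}=f_{i,l}$. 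Substituting back yields $P_{1,1}^2=\sum_{i,l=1}^4 f_{i,l}e_{i,l}=P_{1,1}$, which completes the argument.

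There is no real obstacle here — the only thing to be careful about is correctly applying the quadratic relation $f_{i,j}f_{k,l}=f_{i,k}f_{j,l}$ in the right form (after using the symmetry $f_{i,j}=f_{j,i}$) so that the $f_{j,j}$ factor is produced and can be summed against the unit. Everything else is routine manipulation of matrix units.
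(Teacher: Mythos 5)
Your proposal is correct and follows essentially the same route as the paper: reduce to $P_{1,1}$ via unitary conjugation, verify self-adjointness by the same index relabeling, and verify idempotency by collapsing the matrix units and using $f_{i,j}f_{j,l}=f_{i,l}f_{j,j}$ (obtained from the symmetry $f_{i,j}=f_{j,i}$ together with the quadratic relation) before summing against $\sum_j f_{j,j}=1$. Your explicit spelling-out of that key identity is exactly what the paper does implicitly.
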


\begin{proof}
It suffices to show that $P_{1,1}$ is a projection. 
We have 
\begin{align*}
P_{1,1}^*= \sum_{i,j=1}^4f_{i,j}^*e_{i,j}^*
=\sum_{i,j=1}^4f_{j,i}e_{j,i}=P_{1,1},
\end{align*}
and 
\begin{align*}
P_{1,1}^2&=\sum_{i,j=1}^4f_{i,j}e_{i,j}\sum_{k,l=1}^4f_{k,l}e_{k,l}
=\sum_{i,j,k,l=1}^4 f_{i,j}e_{i,j}f_{k,l}e_{k,l}\\
&=\sum_{i,j,l=1}^4 f_{i,j}f_{j,l}e_{i,l}
=\sum_{i,j,l=1}^4 f_{i,l}f_{j,j}e_{i,l}
=\sum_{i,l=1}^4 f_{i,l}e_{i,l}=P_{1,1}.
\end{align*}
Hence $P_{1,1}$ is a projection. 
\end{proof}

\begin{proposition}\label{Prop:PU}
The set $\{P_{i,j}\}_{i,j=1}^4$ of projections 
and the set $\{U_{i,j}\}_{i,j=1}^4$ of unitaries 
satisfy $\mathcal{R}_{\text{up}}$. 
\end{proposition}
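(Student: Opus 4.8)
The plan is to verify directly that $U_{i,j}P_{k,l}U_{i,j}^* = P_{t_i(k),t_j(l)}$ for all $i,j,k,l=1,2,3,4$, which is equivalent to the relation $\mathcal{R}_{\text{up}}$ because each $U_{i,j}$ is a unitary (multiply $\mathcal{R}_{\text{up}}$ on the right by $U_{i,j}^*$ and conversely). So the whole proposition reduces to this single conjugation identity.

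First I would substitute the definition $P_{k,l} = U_{k,l}P_{1,1}U_{k,l}^*$ to rewrite the left-hand side as
\[
U_{i,j}P_{k,l}U_{i,j}^* = U_{i,j}U_{k,l}\,P_{1,1}\,U_{k,l}^*U_{i,j}^* = (U_{i,j}U_{k,l})\,P_{1,1}\,(U_{i,j}U_{k,l})^*.
\]
Then I would invoke Proposition~\ref{Prop:URu}, which gives $U_{i,j}U_{k,l} = \varepsilon(i,k)\varepsilon(j,l)U_{t_i(k),t_j(l)}$. Since the twisting scalar $\varepsilon(i,k)\varepsilon(j,l)$ takes values in $\{1,-1\}$, it is real and equals its own inverse, so it cancels against its conjugate in the expression above, leaving exactly $U_{t_i(k),t_j(l)}\,P_{1,1}\,U_{t_i(k),t_j(l)}^*$, which is $P_{t_i(k),t_j(l)}$ by definition. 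That completes the argument.

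There is essentially no obstacle here: the computation is a two-line manipulation resting entirely on the cocycle-type identity for the $U$'s proved in Proposition~\ref{Prop:URu} together with the fact that the twisting signs are real. The only point worth a moment's care is that the index $(t_i(k),t_j(l))$ appearing in $\mathcal{R}_{\text{up}}$ is literally the same as the one produced by Proposition~\ref{Prop:URu}, so no reindexing is needed; and one notes at the outset that $P_{1,1}$ being a (selfadjoint) projection by Proposition~\ref{Prop:Pproj} is what makes the conjugate $(U_{i,j}U_{k,l})^*$ land where we want.
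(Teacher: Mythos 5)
Your argument is correct and is essentially identical to the paper's proof: both substitute $P_{k,l}=U_{k,l}P_{1,1}U_{k,l}^*$, apply Proposition~\ref{Prop:URu} to $U_{i,j}U_{k,l}$, and use that the sign $\varepsilon(i,k)\varepsilon(j,l)$ squares to $1$. Nothing further is needed.
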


\begin{proof}
This follows from the computation 
\begin{align*}
U_{i,j}P_{k,l}U_{i,j}^*
&=U_{i,j}U_{k,l}P_{1,1}U_{k,l}^*U_{i,j}^*\\
&=(\e(i,k)\e(j,l))^2U_{t_i(k),t_j(l)}P_{1,1}U_{t_i(k),t_j(l)}^*
=P_{t_i(k),t_j(l)}
\end{align*}
using Proposition~\ref{Prop:URu}. 
\end{proof}

\begin{proposition}\label{Prop:Rp}
The set $\{P_{i,j}\}_{i,j=1}^4$ of projections 
satisfies $\mathcal{R}_{\text{p}}$. 
\end{proposition}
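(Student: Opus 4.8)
The plan is to verify directly that the projections $\{P_{i,j}\}_{i,j=1}^4$ satisfy the column and row sum relations $\sum_{i=1}^4 P_{i,j}=1$ and $\sum_{j=1}^4 P_{i,j}=1$. By the symmetry built into the construction via the unitaries $U_{i,j}$, I expect to reduce everything to a single computation and then propagate it. First I would compute $\sum_{k=1}^4 P_{k,1}$. Since $P_{k,1}=U_{k,1}P_{1,1}U_{k,1}^*$, and using the explicit form $P_{1,1}=\sum_{i,j=1}^4 f_{i,j}e_{i,j}$ together with $U_{k,1}=\sum_{m=1}^4\varepsilon(k,m)\varepsilon(m,1)e_{t_k(m),m}=\sum_m \varepsilon(k,m)e_{t_k(m),m}$ (as $\varepsilon(m,1)=1$), I would get
\[
P_{k,1}=\sum_{i,j=1}^4 \varepsilon(k,i)\varepsilon(k,j)\, f_{i,j}\, e_{t_k(i),t_k(j)}
=\sum_{i,j=1}^4 \varepsilon(k,i)\varepsilon(k,j)\, f_{i,j}\, e_{t_k(i),t_k(j)}.
\]
Reindexing with $p=t_k(i)$, $q=t_k(j)$ (so $i=t_k(p)$, $j=t_k(q)$, using $t_k^2=\mathrm{id}$), the diagonal matrix-unit coefficient of $e_{p,q}$ in $\sum_{k}P_{k,1}$ becomes $\sum_{k=1}^4 \varepsilon(k,t_k(p))\varepsilon(k,t_k(q))\, f_{t_k(p),t_k(q)}$.

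The key step is then to show this sum equals $\delta_{p,q}\cdot 1$. For $p=q$ this is $\sum_k f_{t_k(p),t_k(p)}$; since $k\mapsto t_k(p)$ is a bijection of $\{1,2,3,4\}$ for each fixed $p$ (because $t_k(p)=t_p(k)$ and the $t_p$ are distinct permutations... more simply, $K$ acts freely), this sum is $\sum_{i=1}^4 f_{i,i}=1$ by $\mathcal R_{\mathrm f}$. For $p\neq q$ I would show the off-diagonal sum vanishes: using $f_{t_k(p),t_k(q)}=f_{t_k(p),t_k(p)}f_{p,q}$ — which follows from $f_{a,b}f_{c,d}=f_{a,c}f_{b,d}$ applied as $f_{t_k(p),t_k(q)}=f_{t_k(p),t_k(p)}\cdot\big(\text{something involving }f_{p,q}\big)$; concretely $f_{t_k(p),t_k(q)}f_{t_k(q),t_k(p)} $... the cleanest route is $f_{t_k(p),t_k(q)}=f_{t_k(p)\,t_k(q)}$ and the crucial sign identity $\varepsilon(k,t_k(p))\varepsilon(k,t_k(q))=\varepsilon(p,q)\cdot(\text{a }k\text{-independent or telescoping factor})$ coming from Lemma~\ref{lem:cocycle}. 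Pulling the common factor $f_{p,q}$ out front, the remaining scalar sum $\sum_{k=1}^4 (\pm 1)$ must be checked to be zero; this is a finite check over the $4\times 4$ table of $\varepsilon$ values in Table~\ref{table:cocycle}, and I expect exactly two $+1$'s and two $-1$'s for each pair $p\neq q$.

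Once $\sum_{k=1}^4 P_{k,1}=1$ is established, the remaining column sums $\sum_{k=1}^4 P_{k,j}=1$ follow by applying $\Ad(U_{1,j})$: by Proposition~\ref{Prop:PU} we have $U_{1,j}P_{k,1}U_{1,j}^*=P_{t_1(k),t_j(1)}=P_{k,t_j(1)}$, and as $k$ ranges over $\{1,2,3,4\}$ so does $t_j(1)$... more precisely, conjugating the identity $\sum_k P_{k,1}=1$ by $U_{1,j}$ gives $\sum_k P_{k,t_j(1)}=1$, and since $\{t_j(1):j=1,2,3,4\}=\{1,2,3,4\}$ this covers all columns. Symmetrically, conjugating by $U_{i,1}$ and using $U_{i,1}P_{k,1}U_{i,1}^*=P_{t_i(k),1}$ shows $\sum_k P_{k,1}=1$ is equivalent to $\sum_k P_{t_i(k),1}=1$, which is automatic; to get the \emph{row} sums $\sum_{j=1}^4 P_{1,j}=1$ I would instead run the analogous direct computation with the roles of the two indices swapped (the construction is symmetric in this sense since $f_{i,j}=f_{j,i}$ and $P_{1,1}$ is symmetric), then propagate across rows by conjugation as above. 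The main obstacle is the off-diagonal vanishing: getting the sign bookkeeping in $\varepsilon(k,t_k(p))\varepsilon(k,t_k(q))$ to reduce cleanly to $\varepsilon(p,q)$ times a sum of four signs that cancels. I would handle this by systematically invoking the cocycle identity of Lemma~\ref{lem:cocycle} to rewrite $\varepsilon(k,t_k(p))$ in terms of $\varepsilon(k,p)$ and so forth, or, failing an elegant manipulation, simply verify the $4\times 4\times 4$ finite identity by hand using Table~\ref{table:cocycle} and the multiplication table of $K$.
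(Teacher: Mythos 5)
Your reduction step coincides with the paper's: by Proposition~\ref{Prop:PU}, conjugating by the $U_{i,j}$ reduces everything to the two identities $\sum_i P_{i,1}=1$ and $\sum_j P_{1,j}=1$, which the paper then checks by writing out the eight matrices $P_{i,1},P_{1,j}$ explicitly and adding them. Your formula $P_{k,1}=\sum_{i,j}\varepsilon(k,i)\varepsilon(k,j)f_{i,j}e_{t_k(i),t_k(j)}$ and your treatment of the diagonal entries (the coefficient of $e_{p,p}$ is $\sum_k f_{t_k(p),t_k(p)}=\sum_i f_{i,i}=1$ because $k\mapsto t_k(p)$ is a bijection) are both correct.

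The gap is in the off-diagonal cancellation. The proposed identity $f_{t_k(p),t_k(q)}=f_{t_k(p),t_k(p)}f_{p,q}$ is false (the left side is a product of two coordinate functions, the right side of four), and more importantly the four terms $\varepsilon(k,t_k(p))\varepsilon(k,t_k(q))\,f_{t_k(p),t_k(q)}$, $k=1,\dots,4$, are not all multiples of a single function, so ``pulling out a common factor $f_{p,q}$ and checking that four signs sum to zero'' cannot work. For $(p,q)=(1,2)$ the four terms are $f_{1,2}$, $-f_{2,1}$, $-f_{3,4}$, $+f_{4,3}$, and $f_{1,2}\neq f_{3,4}$; a sign pattern like $+,+,-,-$ would also sum to zero as integers yet leave $2f_{1,2}-2f_{3,4}\neq 0$. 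The actual mechanism is a pairing: for $p\neq q$ let $t_m$ be the unique element of $K$ with $t_m(p)=q$; then $k\mapsto t_k(m)$ splits $\{1,2,3,4\}$ into two orbits of size two, and since $t_{t_k(m)}(p)=t_k(q)$ and $t_{t_k(m)}(q)=t_k(p)$, the two terms in each orbit involve the same function (by $f_{i,j}=f_{j,i}$) and must be shown to carry opposite signs --- that is the finite check you actually need, not a global sign count. With that repair (or by falling back on the paper's brute-force matrix computation, which sidesteps the issue entirely) your argument goes through; the row sums $\sum_j P_{1,j}$ are handled by the symmetric computation exactly as you indicate.
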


\begin{proof}
From Proposition~\ref{Prop:PU}, 
it suffices to show 
\begin{align*}
P_{1,1}+P_{1,2}+P_{1,3}+P_{1,4}&=1, &
P_{1,1}+P_{2,1}+P_{3,1}+P_{4,1}&=1. 
\end{align*}
This follows from the following direct computations 
\begin{align*}
&P_{1,1}= 
\begin{pmatrix}
f_{1,1} & f_{1,2} & f_{1,3} & f_{1,4} \\
f_{2,1} & f_{2,2} & f_{2,3} & f_{2,4} \\
f_{3,1} & f_{3,2} & f_{3,3} & f_{3,4} \\
f_{4,1} & f_{4,2} & f_{4,3} & f_{4,4}
\end{pmatrix},& &\\
&P_{1,2}=
\begin{pmatrix}
f_{2,2} & -f_{2,1} & -f_{2,4} & f_{2,3} \\
-f_{1,2} & f_{1,1} & f_{1,4} & -f_{1,3} \\
-f_{4,2} & f_{4,1} & f_{4,4} & -f_{4,3} \\
f_{3,2} & -f_{3,1} & -f_{3,4} & f_{3,3}
\end{pmatrix},&
&P_{2,1}=
\begin{pmatrix}
f_{2,2} & -f_{2,1} & f_{2,4} & -f_{2,3} \\
-f_{1,2} & f_{1,1} & -f_{1,4} & f_{1,3} \\
f_{4,2} & -f_{4,1} & f_{4,4} & -f_{4,3} \\
-f_{3,2} & f_{3,1} & -f_{3,4} & f_{3,3}
\end{pmatrix},\\
&P_{1,3}=
\begin{pmatrix}
f_{3,3} & f_{3,4} & -f_{3,1} & -f_{3,2} \\
f_{4,3} & f_{4,4} & -f_{4,1} & -f_{4,2} \\
-f_{1,3} & -f_{1,4} & f_{1,1} & f_{1,2} \\
-f_{2,3} & -f_{2,4} & f_{2,1} & f_{2,2}
\end{pmatrix},&
&P_{3,1}=
\begin{pmatrix}
f_{3,3} & -f_{3,4} & -f_{3,1} & f_{3,2} \\
-f_{4,3} & f_{4,4} & f_{4,1} & -f_{4,2} \\
-f_{1,3} & f_{1,4} & f_{1,1} & -f_{1,2} \\
f_{2,3} & -f_{2,4} & -f_{2,1} & f_{2,2}
\end{pmatrix},\\
&P_{1,4}=
\begin{pmatrix}
f_{4,4} & -f_{4,3} & f_{4,2} & -f_{4,1} \\
-f_{3,4} & f_{3,3} & -f_{3,2} & f_{3,1} \\
f_{2,4} & -f_{2,3} & f_{2,2} & -f_{2,1} \\
-f_{1,4} & f_{1,3} & -f_{1,2} & f_{1,1}
\end{pmatrix},&
&P_{4,1}=
\begin{pmatrix}
f_{4,4} & f_{4,3} & -f_{4,2} & -f_{4,1} \\
f_{3,4} & f_{3,3} & -f_{3,2} & -f_{3,1} \\
-f_{2,4} & -f_{2,3} & f_{2,2} & f_{2,1} \\
-f_{1,4} & -f_{1,3} & f_{1,2} & f_{1,1}
\end{pmatrix}.
\end{align*}
\end{proof}

By Proposition~\ref{Prop:URu}, Proposition~\ref{Prop:Pproj}, 
Proposition~\ref{Prop:PU} and Proposition~\ref{Prop:Rp}, 
we have a \shom 
$\varPhi \colon A(4) \rtimes_{\alpha}^{\text{tw}} (K \times K)
\to M_4(C(\R P^3))$ sending $p_{i,j}$ to $P_{i,j}$ and 
$u_{i,j}$ to $U_{i,j}$. 
In the next section, 
we construct the inverse map of $\varPhi$. 

\section{The inverse map}\label{Sec:Inv}

\begin{definition}
For $i,j=1,2,3,4$, 
we set 
\[
E_{i,j} \coloneqq \frac{1}{4}\sum_{k=1}^4 \e(i,k)\e(k,j)u_{t_i(k),t_j(k)} 
\in A(4) \rtimes_{\alpha}^{\text{tw}} (K \times K)
\]
\end{definition}

\begin{definition}
For $i,j=1,2,3,4$, 
we set 
\[
F_{i,j} \coloneqq \sum_{k=1}^4 E_{k,i}p_{1,1}E_{j,k}
\in A(4) \rtimes_{\alpha}^{\text{tw}} (K \times K). 
\]
\end{definition}

\begin{lemma}\label{Lem:uEu}
For $i,j=1,2,3,4$, 
we have $u_{i,1}E_{1,1}u_{1,j}=E_{i,j}$. 
For $i=1,2,3,4$, we have $u_{i,i}E_{1,1}=E_{1,1}u_{i,i}=E_{1,1}$. 
We also have $E_{1,1}^2=E_{1,1}$. 
\end{lemma}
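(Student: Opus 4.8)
The plan is to verify the three assertions about the elements $E_{i,j}$ directly from the definitions, using the relation $\mathcal{R}_{\text{u}}$ (the cocycle relation $u_{i,j}u_{k,l}=\varepsilon(i,k)\varepsilon(j,l)u_{t_i(k),t_j(l)}$), the fact that $t_it_j=t_{t_i(j)}$, the cocycle identity of Lemma~\ref{lem:cocycle}, and the relation $\mathcal{R}_{\text{up}}$ relating the $u_{i,j}$ and the $p_{k,l}$.

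First I would compute $u_{i,1}E_{1,1}u_{1,j}$. Expanding $E_{1,1}=\frac14\sum_k\varepsilon(1,k)\varepsilon(k,1)u_{t_1(k),t_1(k)}=\frac14\sum_k u_{k,k}$ (since $t_1$ is the identity and $\varepsilon(1,k)=\varepsilon(k,1)=1$), we get $u_{i,1}E_{1,1}u_{1,j}=\frac14\sum_k u_{i,1}u_{k,k}u_{1,j}$. Applying $\mathcal{R}_{\text{u}}$ twice and collecting the $\varepsilon$-factors: $u_{i,1}u_{k,k}=\varepsilon(i,k)\varepsilon(1,k)u_{t_i(k),t_1(k)}=\varepsilon(i,k)u_{t_i(k),k}$, and then $u_{t_i(k),k}u_{1,j}=\varepsilon(t_i(k),1)\varepsilon(k,j)u_{t_{t_i(k)}(1),t_k(j)}=\varepsilon(k,j)u_{t_i(k),t_k(j)}$. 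Now reindex by $m=t_i(k)$, i.e. $k=t_i(m)$ (legitimate since $t_i$ is an involution permuting $\{1,2,3,4\}$), noting $t_k(j)=t_{t_i(m)}(j)$; a short check with $t_it_{t_i(m)}=t_{t_i(t_i(m))}=t_m$ and the cocycle identity should turn the product $\varepsilon(i,k)\varepsilon(k,j)=\varepsilon(i,t_i(m))\varepsilon(t_i(m),j)$ into $\varepsilon(i,m)\varepsilon(m,j)$ — this is exactly Lemma~\ref{lem:cocycle} with the roles arranged appropriately — so that $u_{i,1}E_{1,1}u_{1,j}=\frac14\sum_m\varepsilon(i,m)\varepsilon(m,j)u_{t_i(m),t_j(m)}=E_{i,j}$. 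For $u_{i,i}E_{1,1}=E_{1,1}$, compute $u_{i,i}u_{k,k}=\varepsilon(i,k)^2 u_{t_i(k),t_i(k)}=u_{t_i(k),t_i(k)}$, and since $k\mapsto t_i(k)$ is a bijection of $\{1,2,3,4\}$ the sum $\frac14\sum_k u_{t_i(k),t_i(k)}$ equals $\frac14\sum_k u_{k,k}=E_{1,1}$; the identity $E_{1,1}u_{i,i}=E_{1,1}$ is symmetric. Finally $E_{1,1}^2$: from $u_{i,i}E_{1,1}=E_{1,1}$ for each $i$, we get $E_{1,1}^2=\frac14\sum_i u_{i,i}E_{1,1}=\frac14\sum_i E_{1,1}=E_{1,1}$.

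The main obstacle I anticipate is bookkeeping the $\varepsilon$-factors correctly through the two applications of $\mathcal{R}_{\text{u}}$ and the reindexing $k\leftrightarrow t_i(k)$; one must be careful that $\varepsilon$ is not symmetric (Table~\ref{table:cocycle}) and that $t_i(k)=t_k(i)$ so that the arguments of the second-coordinate permutations track correctly. The cleanest way to control this is to invoke Lemma~\ref{lem:cocycle} in the form $\varepsilon(i,t_i(m))\varepsilon(t_i(m),j)=\varepsilon(i,\cdot)\cdots$ rather than re-deriving cancellations by hand, and to record once and for all that $\varepsilon(1,k)=\varepsilon(k,1)=1$ and $\varepsilon(i,i)=\pm1$ with $\varepsilon(i,i)^2=1$. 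Once the first identity $u_{i,1}E_{1,1}u_{1,j}=E_{i,j}$ is established, the remaining two are short, so I would present that computation in full and leave the other two as brief consequences.
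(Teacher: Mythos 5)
Your overall route is the paper's: expand $E_{1,1}=\frac14\sum_k u_{k,k}$, apply $\mathcal{R}_{\text{u}}$ twice to get $u_{i,1}u_{k,k}u_{1,j}=\varepsilon(i,k)\varepsilon(k,j)u_{t_i(k),t_k(j)}$, and handle the second and third assertions exactly as you do. The one problem is the ``reindex by $m=t_i(k)$'' step, which is both unnecessary and incorrect as written. It is unnecessary because the expression $\frac14\sum_k\varepsilon(i,k)\varepsilon(k,j)u_{t_i(k),t_k(j)}$ is already $E_{i,j}$ verbatim: by the identity $t_k(j)=t_j(k)$ recorded in Section~\ref{Sec:twist}, the second subscript $t_k(j)$ equals $t_j(k)$, and the sum is then literally the defining formula for $E_{i,j}$ with no change of variable. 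It is incorrect as written because the term-by-term identity you invoke, $\varepsilon(i,t_i(m))\varepsilon(t_i(m),j)=\varepsilon(i,m)\varepsilon(m,j)$, is false in general: for $i=3$, $m=2$, $j=4$ one has $t_3(2)=4$, so the left side is $\varepsilon(3,4)\varepsilon(4,4)=-1$ while the right side is $\varepsilon(3,2)\varepsilon(2,4)=1$. Moreover, after substituting $k=t_i(m)$ the first subscript of $u$ becomes $t_i(t_i(m))=m$, not $t_i(m)$, so the reindexed terms cannot be matched individually with those of $\frac14\sum_m\varepsilon(i,m)\varepsilon(m,j)u_{t_i(m),t_j(m)}$ (the two sums agree only because a reindexing never changes the total). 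Deleting the reindexing and citing $t_k(j)=t_j(k)$ closes the argument; the remaining two assertions are proved exactly as in the paper.
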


\begin{proof}
We have $E_{1,1}=\frac{1}{4}\sum_{k=1}^4 u_{k,k}$. 
For $i,j=1,2,3,4$, 
we have 
\begin{align*}
u_{i,1}E_{1,1}u_{1,j}
&=\frac{1}{4}\sum_{k=1}^4 u_{i,1}u_{k,k}u_{1,j}
=\frac{1}{4}\sum_{k=1}^4 \e(i,k)\e(k,j)u_{t_i(k),t_j(k)}
=E_{i,j}.
\end{align*}
For $i=1,2,3,4$, we have
\begin{align*}
u_{i,i}E_{1,1}
&=\frac{1}{4}\sum_{k=1}^4 u_{i,i}u_{k,k}
=\frac{1}{4}\sum_{k=1}^4 \e(i,k)^2u_{t_i(k),t_i(k)}
=\frac{1}{4}\sum_{k=1}^4 u_{k,k}=E_{1,1}.
\end{align*}
Similarly, we get $E_{1,1}u_{i,i}=E_{1,1}$. 
Finally, 
we have $E_{1,1}^2=\frac{1}{4}\sum_{k=1}^4 u_{k,k}E_{1,1}=E_{1,1}$. 
\end{proof}

\begin{proposition}\label{Prop:Re}
The set $\{E_{i,j}\}_{i,j=1}^4$ satisfies $\mathcal{R}_{\text{e}}$. 
\end{proposition}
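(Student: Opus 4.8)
The plan is to verify the three families of relations in $\mathcal{R}_{\text{e}}$ directly from the definition $E_{i,j}=\frac14\sum_{k=1}^4\varepsilon(i,k)\varepsilon(k,j)u_{t_i(k),t_j(k)}$, using only Lemma~\ref{Lem:uEu} and the relation $\mathcal{R}_{\text{u}}$ satisfied by the $u_{i,j}$. The self-adjointness relation $E_{i,j}=E_{j,i}^*$ should be the easiest: taking adjoints term by term, $u_{t_i(k),t_j(k)}^*$ equals a scalar multiple of $u_{t_i(k),t_j(k)}$ by $\mathcal{R}_{\text{u}}$ (since $t_i(k)$ and $t_j(k)$ are their own inverses in the indexing, as $K\cong(\Z/2\Z)^2$ and $t_m t_m = t_{t_m(m)}=t_1$), and after bookkeeping the sign $\varepsilon(i,k)\varepsilon(k,j)=\varepsilon(j,k)\varepsilon(k,i)$ one recovers the formula for $E_{j,i}$. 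The unit relation $\sum_{i=1}^4 E_{i,i}=1$ follows from $E_{1,1}=\frac14\sum_{k=1}^4 u_{k,k}$ (established in Lemma~\ref{Lem:uEu}) together with $u_{i,1}E_{1,1}u_{1,i}=E_{i,i}$ and a computation that $\sum_{i}E_{i,i}$ telescopes, again reducing to $\mathcal{R}_{\text{u}}$ on the diagonal unitaries.

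The main work is the multiplicative relation $E_{i,j}E_{k,l}=\delta_{j,k}E_{i,l}$. Here I would factor everything through $E_{1,1}$ using Lemma~\ref{Lem:uEu}: write $E_{i,j}=u_{i,1}E_{1,1}u_{1,j}$ and $E_{k,l}=u_{k,1}E_{1,1}u_{1,l}$, so that
\[
E_{i,j}E_{k,l}=u_{i,1}E_{1,1}\,(u_{1,j}u_{k,1})\,E_{1,1}u_{1,l}.
\]
By $\mathcal{R}_{\text{u}}$ the middle unitary $u_{1,j}u_{k,1}$ is a scalar times $u_{k,j}$ (since $t_1$ is the identity), and then one must show $E_{1,1}u_{k,j}E_{1,1}$ is $E_{1,1}$ when $j=k$ and $0$ otherwise. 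The case $j=k$ is immediate from $E_{1,1}u_{j,j}E_{1,1}=E_{1,1}^2=E_{1,1}$ via Lemma~\ref{Lem:uEu}. For $j\neq k$, the key identity to establish is $E_{1,1}u_{k,j}E_{1,1}=0$; expanding one copy of $E_{1,1}$ as $\frac14\sum_m u_{m,m}$ and commuting it past $u_{k,j}$ using $\mathcal{R}_{\text{u}}$, the sum becomes $\frac14\sum_m \varepsilon(\cdots)\varepsilon(\cdots)\,u_{t_k(m),t_j(m)}\,(\text{something})E_{1,1}$, and the point is that when $j\neq k$ the indices $t_k(m)\neq t_j(m)$ range over a set forcing the accompanying signs $\varepsilon$ to cancel in pairs, killing the sum. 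Alternatively, and perhaps more cleanly, one can reindex $E_{1,1}u_{k,j}E_{1,1}=\frac{1}{16}\sum_{m,n}u_{m,m}u_{k,j}u_{n,n}$, use $\mathcal{R}_{\text{u}}$ to bring this to $\frac{1}{16}\sum_{m,n}(\pm)u_{t_m(k)t_?( \cdot),\,\cdots}$, and observe that for fixed product-index the sign sums to zero unless $j=k$.

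The step I expect to be the genuine obstacle is precisely this orthogonality $E_{1,1}u_{k,j}E_{1,1}=0$ for $j\neq k$: it is the one place where the specific values of the cocycle $\varepsilon$ (Table~\ref{table:cocycle}) and the group law $t_it_j=t_{t_i(j)}$ must conspire, rather than being a formal consequence of the crossed-product relations. Concretely I would reduce it to the character-sum statement that for $j\neq k$, $\sum_{m=1}^4 \varepsilon(m,\ast)\varepsilon(\ast,m)=0$ for the relevant slots, which is a finite check using the symmetry properties of $\varepsilon$ already recorded in Lemma~\ref{lem:cocycle} (the cocycle identity) — i.e., $E_{1,1}$ is, up to normalization, a sum over the subgroup $\{u_{m,m}\}$ which behaves like an averaging projection, and averaging a nontrivial "character" $u_{k,j}$ gives zero. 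Once this is in hand, the remaining relations in $\mathcal{R}_{\text{e}}$ are routine bookkeeping with signs, and the proof is complete.
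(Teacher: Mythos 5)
Your proposal follows essentially the same route as the paper: factor $E_{i,j}=u_{i,1}E_{1,1}u_{1,j}$ via Lemma~\ref{Lem:uEu}, reduce the multiplicative relation to the orthogonality $E_{1,1}u_{k,j}E_{1,1}=0$ for $j\neq k$, and prove that by noting that $E_{1,1}=\frac14\sum_m u_{m,m}$ averages over the diagonal subgroup and therefore kills the nontrivial character $m\mapsto\e(m,j)\e(j,m)$ when $j\neq 1$ --- this is exactly the computation carried out in the paper. One small caveat: in your term-by-term treatment of $E_{i,j}^*=E_{j,i}$ the quoted identity $\e(i,k)\e(k,j)=\e(j,k)\e(k,i)$ is false as stated (take $i=k=2$, $j=3$), and the correct bookkeeping would also require reindexing $k$, since the pairs $(t_i(k),t_j(k))$ and $(t_j(k),t_i(k))$ occur in permuted order; the paper sidesteps this by computing $E_{i,j}^*=u_{1,j}^*E_{1,1}u_{i,1}^*=u_{j,1}E_{1,1}u_{1,i}=E_{j,i}$ directly from the factorization you already have, which is the cleaner fix.
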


\begin{proof}
We have $E_{1,1}=\frac{1}{4}\sum_{k=1}^4 u_{k,k}$. 
We also have 
\begin{align*}
E_{2,2}&=\frac{1}{4}(u_{1,1}+u_{2,2}-u_{3,3}-u_{4,4})\\
E_{3,3}&=\frac{1}{4}(u_{1,1}-u_{2,2}+u_{3,3}-u_{4,4})\\
E_{4,4}&=\frac{1}{4}(u_{1,1}-u_{2,2}-u_{3,3}+u_{4,4}).
\end{align*}
Hence $\sum_{i=1}^4E_{i,i}=u_{1,1}=1$. 

It is easy to see $E_{1,1}^*=E_{1,1}$. 
For $i=1,2,3,4$, we have 
\[
E_{1,1}u_{i,1}^*=E_{1,1}u_{i,i}u_{i,1}^*=E_{1,1}u_{1,i}u_{i,1}u_{i,1}^*
=E_{1,1}u_{1,i}
\]
and $u_{1,i}^*E_{1,1}=u_{i,1}E_{1,1}$ similarly. 
Hence by Lemma~\ref{Lem:uEu}, we obtain 
\begin{align*}
E_{i,j}^*
&=(u_{i,1}E_{1,1}u_{1,j})^*
=u_{1,j}^*E_{1,1}u_{i,1}^*
=u_{j,1}E_{1,1}u_{1,i}=E_{j,i}
\end{align*}
for $i,j=1,2,3,4$. 

By Lemma~\ref{Lem:uEu}, we obtain 
\begin{align*}
E_{i,j}E_{j,k}
=u_{i,1}E_{1,1}u_{1,j}u_{j,1}E_{1,1}u_{1,k}
&=u_{i,1}E_{1,1}u_{j,j}E_{1,1}u_{1,k}\\
&=u_{i,1}E_{1,1}^2u_{1,k}
=u_{i,1}E_{1,1}u_{1,k}
=E_{i,k}
\end{align*}
for $i,j,k=1,2,3,4$. 
The proof ends if we show $E_{i,j}E_{k,l}=0$ 
for $i,j,k,l=1,2,3,4$ with $j \neq k$. 
It suffices to show $E_{1,1}u_{1,j}u_{k,1}E_{1,1}=0$ 
for $j,k=1,2,3,4$ with $j \neq k$. 
Since $u_{1,j}u_{k,1}=u_{k,j}=\e(k,t_k(j))u_{k,k}u_{1,t_k(j)}$, 
it suffices to show $E_{1,1}u_{1,j}E_{1,1}=0$ 
for $j=2,3,4$. 
For $j=2$, we get 
\begin{align*}
4E_{1,1}u_{1,2}E_{1,1}
&=\sum_{k=1}^4 u_{k,k}u_{1,2}E_{1,1}\\
&=u_{1,2}E_{1,1}+u_{1,2}u_{2,2}E_{1,1}-u_{1,2}u_{3,3}E_{1,1}-u_{1,2}u_{4,4}E_{1,1}
\\
&=0
\end{align*}
By similar computations, 
we get $E_{1,1}u_{1,3}E_{1,1}=E_{1,1}u_{1,4}E_{1,1}=0$. 
This completes the proof. 
\end{proof}

\begin{proposition}\label{Prop:Rf}
The set $\{F_{i,j}\}_{i,j=1}^4$ 
satisfy $\mathcal{R}_{\text{f}}$. 
\end{proposition}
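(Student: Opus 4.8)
The plan is to verify directly that the elements $F_{i,j} = \sum_{k=1}^4 E_{k,i}p_{1,1}E_{j,k}$ satisfy the three parts of $\mathcal{R}_{\text{f}}$, namely self-adjointness together with $F_{i,j}=F_{j,i}$, the multiplicative relation $F_{i,j}F_{k,l}=F_{i,k}F_{j,l}$, and $\sum_{i=1}^4 F_{i,i}=1$. Throughout I will use freely that $\{E_{i,j}\}$ is a system of $4\times 4$ matrix units inside $A(4)\rtimes_\alpha^{\text{tw}}(K\times K)$ (Proposition~\ref{Prop:Re}), that $E_{1,1}=\frac14\sum_k u_{k,k}$, and that $u_{i,1}E_{1,1}u_{1,j}=E_{i,j}$ with $E_{1,1}u_{i,i}=u_{i,i}E_{1,1}=E_{1,1}$ (Lemma~\ref{Lem:uEu}). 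It will also be convenient to record what $E_{k,i}p_{1,1}E_{j,k}$ is: since $E_{k,i}=u_{k,1}E_{1,1}u_{1,i}$, conjugation turns the middle $p_{1,1}$ into an element of $A(4)$ twisted by the unitaries, and I expect that $E_{k,i}p_{1,1}E_{j,k}=E_{k,k}\,\alpha_{(t_k,\cdot)}(\text{something})$ lands in $E_{k,k}$-corner; summing over $k$ then builds a ``function-valued'' element whose matrix-unit components are elements of $A(4)$.

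For self-adjointness, I would compute $F_{i,j}^*=\sum_k E_{j,k}^* p_{1,1}^* E_{k,i}^* = \sum_k E_{k,j}p_{1,1}E_{i,k}$ using $E_{a,b}^*=E_{b,a}$ and $p_{1,1}^*=p_{1,1}$; reindexing the summation variable shows this equals $F_{j,i}$. To get $F_{i,j}=F_{j,i}$ I need an extra symmetry: I would show $E_{k,i}p_{1,1}E_{j,k}=E_{k,j}p_{1,1}E_{i,k}$, which should reduce, after stripping off the outer unitaries via Lemma~\ref{Lem:uEu}, to an identity of the form $E_{1,1}u_{1,i}p_{1,1}u_{j,1}E_{1,1}=E_{1,1}u_{1,j}p_{1,1}u_{i,1}E_{1,1}$; using $\mathcal{R}_{\text{up}}$ to move $p$'s past $u$'s and the relation $\mathcal{R}_{\text{u}}$ this becomes a statement purely about $p_{1,1}$ and the Klein-group translates $p_{t(1),t'(1)}$, which one checks using $\mathcal{R}_{\text{p}}$ (the row/column sum relations) — essentially the same kind of bookkeeping as in Proposition~\ref{Prop:Rp}. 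For the sum relation $\sum_i F_{i,i}=\sum_{i,k}E_{k,i}p_{1,1}E_{i,k}=\sum_k E_{k,k}p_{1,1}E_{k,k}+\sum_{i\ne k}\cdots$, I would first handle $k=1$: $\sum_i E_{1,i}p_{1,1}E_{i,1}$. Since the $E_{i,j}$ are matrix units and $p_{1,1}$ is a projection, I expect $\sum_i E_{1,i}p_{1,1}E_{i,1}$ to be a positive element; the cleanest route is to use the explicit form $\varPhi(F_{1,1})=P_{1,1}=\sum_{i,j}f_{i,j}e_{i,j}$ as a sanity check, but since we have not yet proven $\varPhi$ is injective I should argue intrinsically: apply the conditional expectation onto $A(4)$ (averaging over the dual action of $\widehat{K\times K}$, which kills all $u_{i,j}$ with $(i,j)\ne(1,1)$) and show the sum collapses to $1$, or more directly expand $E_{i,j}$ in terms of the $u$'s and use orthogonality of characters of $K\times K$.

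The multiplicative relation $F_{i,j}F_{k,l}=F_{i,k}F_{j,l}$ is the one I expect to be the main obstacle. Expanding, $F_{i,j}F_{k,l}=\sum_{m,n}E_{m,i}p_{1,1}E_{j,m}E_{n,k}p_{1,1}E_{l,n}=\sum_m E_{m,i}p_{1,1}E_{j,k}p_{1,1}E_{l,m}$ after using $E_{j,m}E_{n,k}=\delta_{m,n}E_{j,k}$. So I must show $\sum_m E_{m,i}p_{1,1}E_{j,k}p_{1,1}E_{l,m}=\sum_m E_{m,i}p_{1,1}E_{k,k}p_{1,1}E_{l,m}$ — wait, more precisely $F_{i,k}F_{j,l}=\sum_m E_{m,i}p_{1,1}E_{k,j}p_{1,1}E_{l,m}$, so the claim is
\[
\sum_m E_{m,i}\,p_{1,1}E_{j,k}p_{1,1}\,E_{l,m}=\sum_m E_{m,i}\,p_{1,1}E_{k,j}p_{1,1}\,E_{l,m}.
\]
Stripping the outer matrix units (via $E_{m,i}=E_{m,1}E_{1,i}$ etc.) this reduces to showing $E_{1,i}p_{1,1}E_{j,k}p_{1,1}E_{l,1}=E_{1,i}p_{1,1}E_{k,j}p_{1,1}E_{l,1}$ for all $i,j,k,l$, and further, using Lemma~\ref{Lem:uEu} to write everything in terms of $E_{1,1}$ and the $u$'s, to a finite family of identities of the shape $E_{1,1}\,q_1 u\, q_2\, E_{1,1}=E_{1,1}\,q_1 u' q_2\, E_{1,1}$ where the $q$'s are Klein-translates of $p_{1,1}$ and $u,u'$ are among the $u_{1,t}$. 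The key algebraic input will be the quadratic relation $p_{1,1}p_{2,2}=p_{3,3}p_{2,2}$ and its $\alpha$-translates — i.e. the magic-square relations $\mathcal{R}_{\text{p}}$ — exactly as in the $n=3$ computation in Proposition~\ref{Prop:123}, combined with how conjugation by $u_{i,j}$ permutes the indices. I anticipate that after expanding $E_{j,k}$ as $\frac14\sum\varepsilon u_{t_j(\cdot),t_k(\cdot)}$ and commuting the $u$'s through the $p$'s, both sides become the same explicit sum over products $p_{a_1,b_1}p_{a_2,b_2}p_{a_3,b_3}$ of generators, so the identity holds termwise; the bulk of the work is organizing the index gymnastics so that the cocycle signs $\varepsilon$ cancel correctly. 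I would present this by first proving the reduction to the $E_{1,1}$-corner, then doing one representative case in detail and indicating that the rest follow by the $\alpha$-action and Lemma~\ref{Lem:uEu}.
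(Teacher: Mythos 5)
Your proposal follows essentially the same route as the paper's proof: the same reduction of $F_{i,j}=F_{j,i}$ to a single corner identity of the form $E_{1,i}p_{1,1}E_{j,1}=E_{1,j}p_{1,1}E_{i,1}$, the same reduction of $F_{i,j}F_{k,l}=F_{i,k}F_{j,l}$ to $p_{1,1}E_{j,k}p_{1,1}=p_{1,1}E_{k,j}p_{1,1}$ via the matrix-unit relations, and the same use of $\mathcal{R}_{\text{up}}$, $\mathcal{R}_{\text{u}}$ and the row/column sums in $\mathcal{R}_{\text{p}}$ to verify the resulting identities. The only real deviation is your first-choice treatment of $\sum_i F_{i,i}=1$ via a conditional expectation, which would require knowing in advance that this sum is fixed by the dual action; the paper instead writes $E_{k,i}p_{1,1}E_{i,k}=u_{k,1}E_{1,1}p_{1,i}E_{1,1}u_{1,k}$ using $u_{1,i}p_{1,1}=p_{1,i}u_{1,i}$ and then sums $\sum_i p_{1,i}=1$, which is simpler than either of your suggested routes.
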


\begin{proof}
For $i,j=1,2,3,4$, 
we have 
\begin{align*}
F_{i,j}^*
=\Big(\sum_{k=1}^4 E_{k,i}p_{1,1}E_{j,k}\Big)^*
&=\sum_{k=1}^4 E_{j,k}^*p_{1,1}^*E_{k,i}^*\\
&=\sum_{k=1}^4 E_{k,j}p_{1,1}E_{i,k}
=F_{j,i}.
\end{align*}
Next, we show $F_{i,j} =F_{j,i}$ for $i,j=1,2,3,4$. 
We are going to prove $F_{2,4} =F_{4,2}$. 
The other 5 cases can be proved similarly. 
To show that $F_{2,4} =F_{4,2}$, 
it suffices to show $E_{1,2}p_{1,1}E_{4,1}=E_{1,4}p_{1,1}E_{2,1}$ 
because it implies $E_{k,2}p_{1,1}E_{4,k}=E_{k,4}p_{1,1}E_{2,k}$ 
for $k=1,2,3,4$ by multiplying $E_{k,1}$ from left and $E_{1,k}$ from right. 
By Lemma~\ref{Lem:uEu}, we have 
\begin{align*}
4E_{1,2}p_{1,1}E_{4,1}
&=(u_{1,2}-u_{2,1}-u_{3,4}+u_{4,3})p_{1,1}u_{4,1}E_{1,1}\\
&=(p_{1,2}u_{1,2}-p_{2,1}u_{2,1}-p_{3,4}u_{3,4}+p_{4,3}u_{4,3})u_{4,1}E_{1,1}\\
&=(p_{1,2}u_{4,2}+p_{2,1}u_{3,1}-p_{3,4}u_{2,4}-p_{4,3}u_{1,3})E_{1,1}\\
&=(p_{1,2}u_{1,3}u_{4,4}-p_{2,1}u_{1,3}u_{3,3}+p_{3,4}u_{1,3}u_{2,2}-p_{4,3}u_{1,3})E_{1,1}\\
&=(p_{1,2}-p_{2,1}+p_{3,4}-p_{4,3})u_{1,3}E_{1,1}\\
4E_{1,4}p_{1,1}E_{2,1}
&= (u_{1,4}-u_{2,3}+u_{3,2}-u_{4,1})p_{1,1}u_{2,1}E_{1,1}\\
&=(p_{1,4}u_{1,4}-p_{2,3}u_{2,3}+p_{3,2}u_{3,2}-p_{4,1}u_{4,1})u_{2,1}E_{1,1}\\
&=(p_{1,4}u_{2,4}+p_{2,3}u_{1,3}-p_{3,2}u_{4,2}-p_{4,1}u_{3,1})E_{1,1}\\
&=(-p_{1,4}u_{1,3}u_{2,2}+p_{2,3}u_{1,3}-p_{3,2}u_{1,3}u_{4,4}+p_{4,1}u_{1,3}u_{3,3})E_{1,1}\\
&=(-p_{1,4}+p_{2,3}-p_{3,2}+p_{4,1})u_{1,3}E_{1,1}.
\end{align*}
Since 
\begin{align*}
p_{1,1}&+p_{1,2}+p_{1,3}+p_{1,4}+p_{3,1}+p_{3,2}+p_{3,3}+p_{3,4}\\
&=2=p_{1,1}+p_{2,1}+p_{3,1}+p_{4,1}+p_{1,3}+p_{2,3}+p_{3,3}+p_{4,3}, 
\end{align*}
we have 
\[
p_{1,2}-p_{2,1}+p_{3,4}-p_{4,3}
=-p_{1,4}+p_{2,3}-p_{3,2}+p_{4,1}. 
\]
Therefore, we obtain $E_{1,2}p_{1,1}E_{4,1} = E_{1,4}p_{1,1}E_{2,1}$. 
Thus we have proved $F_{2,4} =F_{4,2}$. 

Next we show $F_{i,j}F_{k,l}=F_{i,k}F_{j,l}$ for $i,j,k,l=1,2,3,4$, 
To show this, 
it suffices to show $p_{1,1}E_{j,k}p_{1,1}=p_{1,1}E_{k,j}p_{1,1}$ 
for $j,k=1,2,3,4$. 
We are going to prove $p_{1,1}E_{3,4}p_{1,1}=p_{1,1}E_{4,3}p_{1,1}$. 
The other 5 cases can be proved similarly. 
This follows from the following computation 
\begin{align*}
4p_{1,1}E_{3,4}p_{1,1}
&=p_{1,1}(u_{3,4}+u_{4,3}-u_{1,2}-u_{2,1})p_{1,1}\\
&=p_{1,1}(u_{3,4}+u_{4,3})p_{1,1}-p_{1,1}p_{1,2}u_{1,2}-p_{1,1}p_{2,1}u_{2,1}\\
&=p_{1,1}(u_{3,4}+u_{4,3})p_{1,1},\\
4p_{1,1}E_{4,3}p_{1,1}
&=p_{1,1}(u_{4,3}+u_{3,4}+u_{2,1}+u_{1,2})p_{1,1}\\
&=p_{1,1}(u_{3,4}+u_{4,3})p_{1,1}+p_{1,1}p_{2,1}u_{2,1}+p_{1,1}p_{1,2}u_{1,2}\\
&=p_{1,1}(u_{3,4}+u_{4,3})p_{1,1}.
\end{align*}

Finally we show $\sum_{i=1}^4 F_{i,i}=1$. 
For $i=1,2,3,4$, we have 
\begin{align*}
F_{i,i}
&=\sum_{k=1}^4 E_{k,i}p_{1,1}E_{i,k}
=\sum_{k=1}^4 u_{k,1}E_{1,1}u_{1,i}p_{1,1}u_{i,1}E_{1,1}u_{1,k}\\
&=\sum_{k=1}^4 u_{k,1}E_{1,1}p_{1,i}u_{1,i}u_{i,1}E_{1,1}u_{1,k}
=\sum_{k=1}^4 u_{k,1}E_{1,1}p_{1,i}u_{i,i}E_{1,1}u_{1,k}\\
&=\sum_{k=1}^4 u_{k,1}E_{1,1}p_{1,i}E_{1,1}u_{1,k}.
\end{align*}
Hence we obtain 
\begin{align*}
\sum_{i=1}^4 F_{i,i}
&= \sum_{i=1}^4 \sum_{k=1}^4 u_{k,1}E_{1,1}p_{1,i}E_{1,1}u_{1,k}\\
&= \sum_{k=1}^4 u_{k,1}E_{1,1}^2u_{1,k}
= \sum_{k=1}^4 u_{k,1}E_{1,1}u_{1,k}
= \sum_{k=1}^4 E_{k,k}=1 
\end{align*}
by Lemma~\ref{Lem:uEu} and Proposition~\ref{Prop:Re}. 
We are done. 
\end{proof}

\begin{proposition}\label{Prop:Rfe}
The sets $\{E_{i,j}\}_{i,j=1}^4$ and $\{F_{i,j}\}_{i,j=1}^4$ 
satisfy $\mathcal{R}_{\text{fe}}$. 
\end{proposition}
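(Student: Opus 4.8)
The relation $\mathcal{R}_{\text{fe}}$ to be checked says exactly that each $F_{i,j}$ commutes with each $E_{k,l}$. The plan is to prove this directly from the defining formula $F_{i,j}=\sum_{k=1}^4 E_{k,i}p_{1,1}E_{j,k}$ together with the matrix-unit relations $\mathcal{R}_{\text{e}}$ for $\{E_{k,l}\}$ already established in Proposition~\ref{Prop:Re}; nothing further about $p_{1,1}$, the cocycle $\varepsilon$, or the action $\alpha$ is needed. In particular, although $p_{1,1}$ does not commute with the $E$'s, $p_{1,1}$ is never moved past an $E$ in the computation.

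Concretely, fix $i,j,a,b \in \{1,2,3,4\}$. Multiplying the sum defining $F_{i,j}$ on the left by $E_{a,b}$ and using $E_{a,b}E_{k,i}=\delta_{b,k}E_{a,i}$ from $\mathcal{R}_{\text{e}}$, the product telescopes to the single term $E_{a,i}p_{1,1}E_{j,b}$. Multiplying the sum on the right by $E_{a,b}$ instead and using $E_{j,k}E_{a,b}=\delta_{k,a}E_{j,b}$, the product telescopes to the same term $E_{a,i}p_{1,1}E_{j,b}$. Hence $E_{a,b}F_{i,j}=F_{i,j}E_{a,b}$ for all $i,j,a,b$, which is precisely $\mathcal{R}_{\text{fe}}$.

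There is no substantial obstacle here; the only point requiring care is that this argument relies on Proposition~\ref{Prop:Re} having already been proved, so that $\{E_{k,l}\}$ genuinely obeys $E_{k,l}E_{m,n}=\delta_{l,m}E_{k,n}$, which is what makes both telescoping computations collapse to the common value $E_{a,i}p_{1,1}E_{j,b}$. Once $\mathcal{R}_{\text{fe}}$ is in hand, Propositions~\ref{Prop:Re}, \ref{Prop:Rf} and this one show that $\{E_{i,j}\}$ and $\{F_{i,j}\}$ satisfy $\mathcal{R}_{\text{e}}$, $\mathcal{R}_{\text{f}}$ and $\mathcal{R}_{\text{fe}}$, so the universal presentation of $M_4(C(\R P^3))$ from Section~\ref{Sec:RP3} yields a $*$-homomorphism $M_4(C(\R P^3)) \to A(4)\rtimes_{\alpha}^{\text{tw}}(K\times K)$ sending $e_{i,j}\mapsto E_{i,j}$, $f_{i,j}\mapsto F_{i,j}$, which one then verifies is inverse to $\varPhi$ to complete the proof of Theorem~\ref{MainThm1}.
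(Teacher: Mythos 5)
Your argument is correct and is exactly the paper's proof: both sides of $E_{a,b}F_{i,j}=F_{i,j}E_{a,b}$ collapse, via the matrix-unit relations of Proposition~\ref{Prop:Re}, to the common value $E_{a,i}p_{1,1}E_{j,b}$. No differences worth noting.
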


\begin{proof}
For $i,j,k,l=1,2,3,4$, 
we have $E_{i,j}F_{k,l}=F_{k,l}E_{i,j}$ because 
\begin{align*}
E_{i,j}F_{k,l}&=E_{i,j}\sum_{m=1}^4 E_{m,k}p_{1,1}E_{l,m}
=E_{i,k}p_{1,1}E_{l,j},\\
F_{k,l}E_{i,j}&=\sum_{m=1}^4 E_{m,k}p_{1,1}E_{l,m}E_{i,j}
=E_{i,k}p_{1,1}E_{l,j}
\end{align*}
by Proposition~\ref{Prop:Re}. 
\end{proof}

By Proposition~\ref{Prop:Re}, Proposition~\ref{Prop:Rf}
and Proposition~\ref{Prop:Rfe}, 
we have a \shom $\varPsi\colon M_4(C(\R P^3))\to 
A(4) \rtimes_{\alpha}^{\text{tw}} (K \times K)$ 
sending $f_{i,j}$ to $F_{i,j}$ and 
$e_{i,j}$ to $E_{i,j}$. 

We are going to see that this map $\varPsi$ is the inverse of $\varPhi$. 
We first show $\varPsi\circ \varPhi = \id_{A(4) \rtimes_{\alpha}^{\text{tw}} (K \times K)}$. 

\begin{proposition}\label{Prop:PsiPhi}
For $x \in A(4) \rtimes_{\alpha}^{\text{tw}} (K \times K)$, 
we have $\varPsi( \varPhi(x)) = x$. 
\end{proposition}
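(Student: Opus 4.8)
The plan is to check the identity $\varPsi\circ\varPhi=\id$ on the generators of $A(4) \rtimes_{\alpha}^{\text{tw}} (K \times K)$, namely on the projections $p_{i,j}$ and on the unitaries $u_{i,j}$. Since $\varPhi(u_{i,j})=U_{i,j}$ and $\varPhi(p_{i,j})=P_{i,j}$, what must be shown is $\varPsi(U_{i,j})=u_{i,j}$ for all $i,j$ and $\varPsi(P_{i,j})=p_{i,j}$ for all $i,j$. Both of these should reduce, via the covariance relations, to a single base case each.

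First I would handle the unitaries. Recall $U_{i,j}=\sum_{k}\e(i,k)\e(k,j)e_{t_i(k),t_j(k)}$, so $\varPsi(U_{i,j})=\sum_{k}\e(i,k)\e(k,j)E_{t_i(k),t_j(k)}$, which is by definition exactly $4E_{i,j}\cdot\frac14$... more precisely it equals $\sum_k \e(i,k)\e(k,j)E_{t_i(k),t_j(k)}$. Comparing with $E_{i,j}=\frac14\sum_k\e(i,k)\e(k,j)u_{t_i(k),t_j(k)}$, the claim $\varPsi(U_{i,j})=u_{i,j}$ is equivalent to the inversion formula $\sum_k\e(i,k)\e(k,j)E_{t_i(k),t_j(k)}=u_{i,j}$. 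To prove this I would use Lemma~\ref{Lem:uEu} to write $E_{t_i(k),t_j(k)}=u_{t_i(k),1}E_{1,1}u_{1,t_j(k)}$, use $\mathcal{R}_{\text{u}}$ to collapse the products of $u$'s, and use $E_{1,1}=\frac14\sum_m u_{m,m}$ together with the orthogonality relations among the $E_{m,m}$'s (established in Proposition~\ref{Prop:Re}) to see that the inner sum telescopes. It is probably cleanest to first do the base case $(i,j)=(1,1)$, i.e. verify $\sum_k E_{k,k}=u_{1,1}=1$ (already in Proposition~\ref{Prop:Re}), and then conjugate: for general $(i,j)$, apply $\varPsi$ to $U_{i,j}=U_{i,1}U_{1,1}U_{1,j}$ and use $\varPsi(U_{i,1})=u_{i,1}$, $\varPsi(U_{1,j})=u_{1,j}$, which themselves follow from the one-parameter cases. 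The one-parameter cases in turn come down to direct small computations with the explicit matrices $U_{i,1}$, $U_{1,j}$ listed in Section~\ref{Sec:U}.

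Next, for the projections, it suffices to show $\varPsi(P_{1,1})=p_{1,1}$; the general case then follows from $P_{i,j}=U_{i,j}P_{1,1}U_{i,j}^*$, the already-proved $\varPsi(U_{i,j})=u_{i,j}$, and the covariance relation $\mathcal{R}_{\text{up}}$ for $\{p_{i,j}\},\{u_{i,j}\}$, which gives $\varPsi(P_{i,j})=u_{i,j}p_{1,1}u_{i,j}^*=p_{t_i(1),t_j(1)}=p_{i,j}$ (using $t_i(1)=i$). For the base case, $\varPsi(P_{1,1})=\varPsi(\sum_{i,j}f_{i,j}e_{i,j})=\sum_{i,j}F_{i,j}E_{i,j}$, and unwinding the definitions of $F_{i,j}$ and $E_{i,j}$ in terms of $p_{1,1}$ and the $u$'s, I would reduce $\sum_{i,j}F_{i,j}E_{i,j}$ to an expression in $p_{1,1}$ and the $E_{k,l}$'s; using $E_{i,j}=E_{i,1}E_{1,j}$, $F_{i,j}=\sum_k E_{k,i}p_{1,1}E_{j,k}$, and the matrix-unit relations from Proposition~\ref{Prop:Re}, the double sum should collapse to $E_{1,1}p_{1,1}E_{1,1}$ times $\sum_i E_{i,i}=1$, and then Lemma~\ref{Lem:uEu} (specifically $E_{1,1}p_{1,1}E_{1,1}$, together with $u_{i,i}p_{1,1}=p_{1,1}u_{i,i}$ since $\alpha_{(t_i,t_i)}(p_{1,1})=p_{t_i(1),t_i(1)}=p_{1,1}$) gives $E_{1,1}p_{1,1}E_{1,1}=p_{1,1}E_{1,1}$, and summing over the remaining index recovers $p_{1,1}$.

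The main obstacle I anticipate is the bookkeeping in the telescoping step for $\varPsi(U_{i,j})=u_{i,j}$: one must carefully track the sign factors $\e(i,k)\e(k,j)$ against the cocycle identities of Lemma~\ref{lem:cocycle} and the orthogonality of the $E_{m,m}$'s, and it is easy to slip on which indices are being summed. Organizing the argument by first nailing $\varPsi(E_{i,j})$... rather $\varPsi(U_{i,j})$ in the two one-parameter families and then using multiplicativity of $\varPsi$ together with the relation $U_{i,j}=U_{i,1}U_{1,j}$ (up to a sign, which is $+1$ here since $\e(i,1)=\e(1,j)=1$) should keep the sign arithmetic under control. Everything else is routine manipulation with the relations already verified in Sections~\ref{Sec:RP3}--\ref{Sec:Inv}.
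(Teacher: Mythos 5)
Your overall strategy is the paper's: check $\varPsi\circ\varPhi=\id$ on the generators, handle the unitaries by expanding the $E$'s back into $u$'s and collapsing the resulting double sum via the cocycle identities, and reduce the projections to the single case $\varPsi(P_{1,1})=p_{1,1}$ using $P_{i,j}=U_{i,j}P_{1,1}U_{i,j}^*$ and covariance. That part is fine, and your alternative organization of the unitary computation through the one-parameter families $U_{i,1}$, $U_{1,j}$ (using $U_{i,j}=U_{i,1}U_{1,j}$ with trivial sign) would also work.

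There is, however, a genuine error in your treatment of $\varPsi(P_{1,1})$. You invoke the identity $E_{1,1}p_{1,1}E_{1,1}=p_{1,1}E_{1,1}$, justified by ``$u_{i,i}p_{1,1}=p_{1,1}u_{i,i}$ since $\alpha_{(t_i,t_i)}(p_{1,1})=p_{t_i(1),t_i(1)}=p_{1,1}$.'' But $t_i(1)=i$ (as you yourself use two lines earlier), so $\alpha_{(t_i,t_i)}(p_{1,1})=p_{i,i}$, and $u_{i,i}p_{1,1}=p_{i,i}u_{i,i}$, not $p_{1,1}u_{i,i}$. Indeed one computes
\begin{align*}
E_{1,1}p_{1,1}E_{1,1}
=\frac{1}{16}\sum_{m,n}u_{m,m}p_{1,1}u_{n,n}
=\frac{1}{16}\sum_{m,n}p_{m,m}u_{m,m}u_{n,n}
=\frac{1}{4}\Big(\sum_{m=1}^4 p_{m,m}\Big)E_{1,1},
\end{align*}
which is not $p_{1,1}E_{1,1}$. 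Fortunately this step is not needed: by the matrix-unit relations $E_{j,k}E_{i,j}=\delta_{k,i}E_{j,j}$ one gets $F_{i,j}E_{i,j}=\sum_k E_{k,i}p_{1,1}E_{j,k}E_{i,j}=E_{i,i}p_{1,1}E_{j,j}$, so the double sum collapses directly to $\sum_{i,j}E_{i,i}p_{1,1}E_{j,j}=\big(\sum_i E_{i,i}\big)p_{1,1}\big(\sum_j E_{j,j}\big)=p_{1,1}$, with no commutation of $p_{1,1}$ past $E_{1,1}$ required. This is exactly the computation already recorded in the proof of Proposition~\ref{Prop:Rfe}, which the paper simply cites at this point. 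With that correction your argument goes through.
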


\begin{proof}
For $i,j=1,2,3,4$, 
we have 
\begin{align*}
\varPsi( \varPhi(u_{i,j}))
&=\varPsi(U_{i,j})
=\sum_{k=1}^4 \e(i,k)\e(k,j)\varPsi(e_{t_i(k),t_j(k)})\\
&=\sum_{k=1}^4 \e(i,k)\e(k,j)E_{t_i(k),t_j(k)}\\
&=\frac{1}{4}\sum_{k=1}^4 \e(i,k)\e(k,j)
\sum_{m=1}^4\e(t_i(k),m)\e(m,t_j(k))u_{t_i(t_k(m)),t_j(t_k(m))}\\
&=\frac{1}{4}\sum_{k=1}^4\sum_{l=1}^4 \e(i,k)\e(k,j)
\e(t_i(k),t_k(l))\e(t_k(l),t_j(k))u_{t_i(l),t_j(l)}.
\end{align*}
Since we have 
\begin{align*}
\frac{1}{4}\sum_{k=1}^4
&\e(i,k)\e(k,j)
\e(t_i(k),t_k(l))\e(t_k(l),t_j(k))\\
&=\frac{1}{4}\sum_{k=1}^4
\e(i,k)
\e(t_i(k),t_k(l))\e(t_k(l),t_j(k))\e(k,j)\\
&=\frac{1}{4}\sum_{k=1}^4
\e(i,l)
\e(k,t_k(l))\e(t_k(l),k)\e(l,j)
=\delta_{l,1},
\end{align*}
we obtain $\varPsi(\varPhi(u_{i,j}))=u_{i,j}$. 
By the computation in the proof of Proposition~\ref{Prop:Rfe}, 
we have 
\begin{align*}
\varPsi(P_{1,1})
=\varPsi\Big(\sum_{i,j=1}^4f_{i,j}e_{i,j}\Big)
=\sum_{i,j=1}^4F_{i,j}E_{i,j}
=\sum_{i,j=1}^4 E_{i,i}p_{1,1}E_{j,j}
=p_{1,1}. 
\end{align*}
For $i,j=1,2,3,4$, 
we have 
\begin{align*}
\varPsi( \varPhi(p_{i,j}))
&=\varPsi(P_{i,j})
=\varPsi(U_{i,j})\varPsi(P_{1,1})\varPsi(U_{i,j})^*
=u_{i,j}p_{1,1}u_{i,j}^*=p_{i,j}.
\end{align*}
These show that $\varPsi( \varPhi(x)) = x$
for all $x \in A(4) \rtimes_{\alpha}^{\text{tw}} (K \times K)$. 
\end{proof}

\begin{proposition}\label{Prop:PhiPsi}
For $x \in M_4(C(\R P^3))$, 
we have $\varPhi( \varPsi(x)) = x$. 
\end{proposition}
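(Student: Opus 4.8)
The plan is to verify the equality $\varPhi(\varPsi(x)) = x$ on a generating set of $M_4(C(\R P^3))$. As recalled at the end of Section~\ref{Sec:RP3}, the \Ca $M_4(C(\R P^3))$ is generated by $\{f_{i,j}\}_{i,j=1}^4$ together with $\{e_{i,j}\}_{i,j=1}^4$, and both $\varPhi\circ\varPsi$ and $\id$ are $*$-homomorphisms, so it suffices to prove $\varPhi(E_{i,j}) = e_{i,j}$ and $\varPhi(F_{i,j}) = f_{i,j}$ for all $i,j=1,2,3,4$.

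For the first equality I would apply $\varPhi$ to the definition of $E_{i,j}$, obtaining $\varPhi(E_{i,j}) = \frac14\sum_{k=1}^4 \e(i,k)\e(k,j)U_{t_i(k),t_j(k)}$, then expand each $U_{t_i(k),t_j(k)}$ via Definition~\ref{Def:Uij}, using $t_{t_i(k)} = t_it_k$ (which follows from $t_at_b = t_{t_a(b)}$) to rewrite the matrix-unit indices as $e_{t_i(t_k(m)),t_j(t_k(m))}$, and finally substitute $l = t_k(m)$ as the inner summation variable. The coefficient of $e_{t_i(l),t_j(l)}$ then becomes $\frac14\sum_{k=1}^4 \e(i,k)\e(k,j)\e(t_i(k),t_k(l))\e(t_k(l),t_j(k))$, which is precisely the expression shown to equal $\delta_{l,1}$ in the proof of Proposition~\ref{Prop:PsiPhi}; since $t_i(1) = t_1(i) = i$, only the $l=1$ term survives and $\varPhi(E_{i,j}) = e_{i,j}$. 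In fact this is the mirror image of the computation $\varPsi(\varPhi(u_{i,j})) = u_{i,j}$ carried out there, with the roles of $e$ and $u$ interchanged, so one may alternatively just quote it.

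For the second equality, applying $\varPhi$ to $F_{i,j} = \sum_{k=1}^4 E_{k,i}p_{1,1}E_{j,k}$ and using the first equality together with $\varPhi(p_{1,1}) = P_{1,1}$ gives $\varPhi(F_{i,j}) = \sum_{k=1}^4 e_{k,i}P_{1,1}e_{j,k}$. Substituting $P_{1,1} = \sum_{a,b=1}^4 f_{a,b}e_{a,b}$ and using $\mathcal{R}_{\text{fe}}$ and $\mathcal{R}_{\text{e}}$, each summand collapses to $f_{i,j}e_{k,k}$, so $\varPhi(F_{i,j}) = f_{i,j}\sum_{k=1}^4 e_{k,k} = f_{i,j}$; this mirrors the identity $\varPsi(P_{1,1}) = p_{1,1}$ from the proof of Proposition~\ref{Prop:PsiPhi}. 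I do not expect a genuine obstacle here, since the only nontrivial bookkeeping — the expansion of $\varPhi(E_{i,j})$ — reduces verbatim to a combinatorial identity already established. Together with Proposition~\ref{Prop:PsiPhi}, this shows that $\varPhi$ and $\varPsi$ are mutually inverse $*$-isomorphisms, completing the proof of Theorem~\ref{MainThm1}.
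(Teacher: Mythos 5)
Your proposal is correct and follows essentially the same route as the paper: the paper likewise reduces to checking $\varPhi(E_{i,j})=e_{i,j}$ via the same substitution $l=t_k(m)$ and the coefficient identity $\frac14\sum_k\e(i,k)\e(k,j)\e(t_i(k),t_k(l))\e(t_k(l),t_j(k))=\delta_{l,1}$ already established in the proof of Proposition~\ref{Prop:PsiPhi}, and then computes $\varPhi(F_{i,j})=\sum_k e_{k,i}P_{1,1}e_{j,k}=f_{i,j}$ exactly as you do.
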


\begin{proof}
For $i,j=1,2,3,4$, 
we have 
\begin{align*}
\varPhi( \varPsi(e_{i,j}))
&=\varPhi(E_{i,j})
=\frac{1}{4}\sum_{k=1}^4 \e(i,k)\e(k,j)\varPhi(u_{t_i(k),t_j(k)})\\
&=\frac{1}{4}\sum_{k=1}^4 \e(i,k)\e(k,j)U_{t_i(k),t_j(k)}\\
&=\frac{1}{4}\sum_{k=1}^4 \e(i,k)\e(k,j)
\sum_{m=1}^4 \e(t_i(k),m)\e(m,t_j(k))e_{t_i(t_k(m)),t_j(t_k(m))}\\
&=\frac{1}{4}\sum_{k=1}^4 \sum_{l=1}^4 \e(i,k)\e(k,j)
\e(t_i(k),t_k(l))\e(t_k(l),t_j(k))e_{t_i(l),t_j(l)}\\
&=e_{i,j}
\end{align*}
as in the proof of Proposition~\ref{Prop:PsiPhi}. 
For $i,j=1,2,3,4$, 
we have 
\begin{align*}
\varPhi( \varPsi(f_{i,j}))
&=\varPhi(F_{i,j})
=\sum_{k=1}^4 \varPhi(E_{k,i})\varPhi(p_{1,1})\varPhi(E_{j,k})\\
&=\sum_{k=1}^4 e_{k,i}P_{1,1}e_{j,k}\\
&=\sum_{k=1}^4 e_{k,i}\Big(\sum_{l,m=1}^4f_{l,m}e_{l,m}\Big)e_{j,k}\\
&=\sum_{k=1}^4f_{i,j}e_{k,k}=f_{i,j}. 
\end{align*}
These show that $\varPhi( \varPsi(x)) = x$
for all $x \in M_4(C(\R P^3))$. 
\end{proof}

By these two propositions, 
we get Theorem~\ref{MainThm1}. 
As its corollary, we have the following. 

\begin{corollary}[{cf.\ \cite[Theorem~4.1]{BC}}]\label{Cor:faithful}
There is an injective \shom $A(4) \to M_4(C(\R P^3))$. 
\end{corollary}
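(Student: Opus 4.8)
The statement is designed to be an immediate corollary of Theorem~\ref{MainThm1}, so the plan is short. The isomorphism $\varPhi \colon A(4) \rtimes_{\alpha}^{\text{tw}} (K \times K) \xrightarrow{\ \cong\ } M_4(C(\R P^3))$ established in Propositions~\ref{Prop:PsiPhi} and~\ref{Prop:PhiPsi} carries the canonical copy of $A(4)$ inside the twisted crossed product onto the \Csa generated by $\{P_{i,j}\}_{i,j=1}^4$ (with $p_{i,j}\mapsto P_{i,j}$, the explicit matrices of Section~\ref{Sec:P}). Hence it suffices to observe that the canonical unital inclusion $\iota \colon A(4) \to A(4) \rtimes_{\alpha}^{\text{tw}} (K \times K)$ is injective, and then take $\varPhi \circ \iota$.

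First I would recall why $\iota$ is injective. Since $K \times K$ is a finite group, this is standard: choosing any faithful representation $\pi$ of $A(4)$ on a Hilbert space $H$, one forms the induced covariant representation of the pair $(\alpha,\varepsilon)$ on $\ell^2(K\times K)\otimes H$ in the usual way; its restriction to $A(4)$ is unitarily equivalent to $\bigoplus_{(s,t)\in K\times K}\big(\alpha_{(s,t)^{-1}}\!\circ\pi\big)$, which is again faithful. By the universal property defining $A(4) \rtimes_{\alpha}^{\text{tw}} (K \times K)$ this representation factors through the twisted crossed product, so $\iota$ is injective. (Equivalently, one may invoke the faithful conditional expectation onto $A(4)$ given by taking the coefficient of the identity of $K\times K$.)

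Then $\varPhi \circ \iota \colon A(4) \to M_4(C(\R P^3))$ is a composition of an injective \shom with an isomorphism, hence an injective \shom, which is exactly the assertion. I do not anticipate any genuine obstacle here; the only point needing a sentence of justification is the injectivity of the inclusion of a \Ca into its (twisted) crossed product by a finite group, and the rest is bookkeeping via Theorem~\ref{MainThm1}. I would also note in passing that this recovers, and makes completely explicit, the faithful \shom $A(4)\to M_4(C(SU(2)))$ of Banica--Moroianu and Banica--Collins referenced in the introduction, since $\R P^3 \cong SU(2)/\{\pm 1\}$ embeds appropriately; but that remark is not needed for the proof.
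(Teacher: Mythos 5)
Your proposal is correct and follows exactly the paper's own route: the paper likewise deduces the corollary from Theorem~\ref{MainThm1} together with the injectivity of the canonical map $A(4) \to A(4) \rtimes_{\alpha}^{\text{tw}} (K \times K)$, which it simply asserts. Your justification of that injectivity via the regular covariant representation (equivalently, the faithful conditional expectation) is the standard argument and fills in the only detail the paper leaves implicit.
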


\begin{proof}
This follows from Theorem~\ref{MainThm1} 
because the \shom $A(4) \to A(4) \rtimes_{\alpha}^{\text{tw}} (K \times K)$ 
is injective. 
\end{proof}

One can see that the injective \shom constructed in this corollary 
is nothing but the Pauli representation constructed in \cite{BM} 
and considered in \cite{BC}. 
Note that Banica and Collins remarked after \cite[Definition~2.1]{BC} 
that the target of the Pauli representation is replaced by $M_4(C(SO_3))$
instead of $M_4(C(SU_2))$. 
Here $SO_3$ is a homeomorphic to $\R P^3$ whereas 
$SU_2$ is a homeomorphic to $S^3$.

\section{Action}\label{Sec:Act}

One can see that the dual group of $K\times K$ is isomorphic to $K \times K$ 
using the product of the cocycle $\e$ (see below). 

\begin{table}[htb]
\begin{center}
\begin{tabular}{|c||c|c|c|c|} \hline
$i\diagdown j$ & $1$ &$2$  & $3$ & $4$ \\ \hline \hline
$1$& 1  &  1 & 1  & 1  \\ \hline
$2$& 1  &  1 &  $-1$ & $-1$ \\ \hline
$3$& 1  & $ -1$ & 1 & $-1$ \\ \hline
$4$& \,1\, &  $-1$ &  $-1$& 1 \\ \hline
\end{tabular}
\end{center}
\caption{Values of $\varepsilon(i,j)\varepsilon(j,i)$}
\label{table:cocycle2}
\end{table} 

Let $\ha\colon K\times K \curvearrowright 
A(4)\rtimes_{\alpha}^{\text{tw}} (K \times K)$ be 
the dual action of $\alpha$. 
Namely $\ha$ is determined by the following equation 
for all $i,j,k,l$
\begin{align*}
\ha_{i,j}(p_{k,l})&=p_{k,l},&
\ha_{i,j}(u_{k,l})&=\e(i,k)\e(k,i)\e(j,l)\e(l,j)u_{k,l},&
\end{align*}
where we write $\ha_{(t_i,t_j)}$ as $\ha_{i,j}$. 

For $i,j=1,2,3,4$, 
define $\sigma_{i,j}\colon \R P^3 \to \R P^3$ by 
$\sigma_{i,j}([a_1,a_2,a_3,a_4])=[b_1,b_2,b_3,b_4]$ 
for $[a_1,a_2,a_3,a_4] \in \R P^3$ where $(b_1,b_2,b_3,b_4)\in S^3$ is 
determined by 
\begin{equation*}
(b_1,b_2,b_3,b_4)^{\mathrm{T}} = U_{i,j}(a_1,a_2,a_3,a_4)^{\mathrm{T}},
\end{equation*} 
in other words
$\sum_{k=1}^{4} b_k c_k = c_i \left( \sum_{k=1}^{4}a_k c_k \right) c_j^*$ 
by Proposition~\ref{Prop:Udef}.
Let $\beta\colon K\times K \curvearrowright M_4(C(\R P^3))$
be the action determined by 
$\beta_{i,j}(F)=\Ad U_{i,j}\circ F\circ \sigma_{i,j}$ 
for $F \in M_4(C(\R P^3))=C(\R P^3,M_4(\C))$ 
where we write $\beta_{(t_i,t_j)}$ as $\beta_{i,j}$. 

\begin{proposition}\label{Prop:equivariant}
The \shom $\varPhi\colon A(4) \rtimes_{\alpha}^{\text{tw}} (K \times K)
\to M_4(C(\R P^3))$ is equivariant with respect to $\ha$ and $\beta$. 
\end{proposition}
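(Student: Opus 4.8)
The plan is to verify equivariance on generators, since both $\widehat{\alpha}$ and $\beta$ are actions by $*$-automorphisms and $\varPhi$ is a $*$-homomorphism, it suffices to check that $\varPhi(\widehat{\alpha}_{i,j}(x)) = \beta_{i,j}(\varPhi(x))$ for $x$ ranging over the generators $p_{k,l}$ and $u_{k,l}$ of $A(4)\rtimes_{\alpha}^{\mathrm{tw}}(K\times K)$, and for each fixed $(i,j)$.

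First I would treat the unitary generators. By definition $\widehat{\alpha}_{i,j}(u_{k,l}) = \varepsilon(i,k)\varepsilon(k,i)\varepsilon(j,l)\varepsilon(l,j)u_{k,l}$, so $\varPhi(\widehat{\alpha}_{i,j}(u_{k,l})) = \varepsilon(i,k)\varepsilon(k,i)\varepsilon(j,l)\varepsilon(l,j)U_{k,l}$. On the other side, $\beta_{i,j}(\varPhi(u_{k,l})) = \beta_{i,j}(U_{k,l})$; since $U_{k,l}$ is a constant matrix in $M_4(\C)\subset M_4(C(\R P^3))$, the pullback by $\sigma_{i,j}$ does nothing, so $\beta_{i,j}(U_{k,l}) = \Ad U_{i,j}(U_{k,l}) = U_{i,j}U_{k,l}U_{i,j}^*$. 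Using Proposition~\ref{Prop:URu} twice, $U_{i,j}U_{k,l}U_{i,j}^* = (\varepsilon(i,k)\varepsilon(j,l))^2 U_{t_i(k),t_j(l)}U_{i,j}^*$; one more application of Proposition~\ref{Prop:URu} (together with $t_i(t_i(k)) = k$ and the fact that $U_{i,j}^{-1} = U_{i,j}^*$) reduces this to a scalar multiple of $U_{k,l}$, and the resulting scalar must be exactly $\varepsilon(i,k)\varepsilon(k,i)\varepsilon(j,l)\varepsilon(l,j)$ — this is the same bookkeeping with $\varepsilon$ that appears throughout Section~\ref{Sec:U}, and by Table~\ref{table:cocycle2} it is the value of the character defining $\widehat{\alpha}_{i,j}$. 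So the two sides agree on $u_{k,l}$.

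Next I would treat the projection generators. We have $\widehat{\alpha}_{i,j}(p_{k,l}) = p_{k,l}$, so the left-hand side is simply $\varPhi(p_{k,l}) = P_{k,l}$. For the right-hand side we must show $\beta_{i,j}(P_{k,l}) = P_{k,l}$, i.e.\ $\Ad U_{i,j}\circ P_{k,l}\circ \sigma_{i,j} = P_{k,l}$ as functions $\R P^3 \to M_4(\C)$. It is enough to do this for $P_{1,1}$, since $P_{k,l} = U_{k,l}P_{1,1}U_{k,l}^*$ and $\Ad U_{i,j}$ commutes with left/right multiplication by the constant matrices $U_{k,l}$ modulo the same $\varepsilon$-cocycle identities already used. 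For $P_{1,1} = \sum_{r,s=1}^4 f_{r,s}e_{r,s}$, the identity to check is $U_{i,j}\,P_{1,1}(\sigma_{i,j}([a]))\,U_{i,j}^* = P_{1,1}([a])$ for every $[a]=[a_1,\dots,a_4]\in\R P^3$. By the definition of $\sigma_{i,j}$, if $(b_1,\dots,b_4)^{\mathrm T} = U_{i,j}(a_1,\dots,a_4)^{\mathrm T}$ then $P_{1,1}(\sigma_{i,j}([a])) = \sum_{r,s} b_r b_s\, e_{r,s}$, which is the rank-one matrix $bb^{\mathrm T}$ with $b = U_{i,j}a$ (viewing $a,b$ as column vectors with real entries, legitimate since the entries are real coordinates on $S^3$); likewise $P_{1,1}([a]) = aa^{\mathrm T}$. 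So the claim becomes $U_{i,j}(U_{i,j}a)(U_{i,j}a)^{\mathrm T}U_{i,j}^* \ne aa^{\mathrm T}$ in general — which shows this naive reading is wrong, so instead I should note that $\beta_{i,j}^{-1}$, not $\beta_{i,j}$, is what makes the cancellation work, or equivalently reexamine the direction of the pullback $\sigma_{i,j}$; the correct statement is $U_{i,j}^*(U_{i,j}a)(U_{i,j}a)^{\mathrm T}U_{i,j} = aa^{\mathrm T}$, which holds because $U_{i,j}$ is orthogonal (its entries are real, and it is unitary). Thus the conjugation by $U_{i,j}$ exactly undoes the linear transformation applied to the coordinates, and $\beta_{i,j}(P_{1,1}) = P_{1,1}$.

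The main obstacle is the $\varepsilon$-cocycle bookkeeping in the unitary case: one must confirm that the scalar produced by three applications of Proposition~\ref{Prop:URu} in computing $U_{i,j}U_{k,l}U_{i,j}^*$ is precisely $\varepsilon(i,k)\varepsilon(k,i)\varepsilon(j,l)\varepsilon(l,j)$ and not merely a sign that happens to differ; this is a finite check that can be organized by first reducing to the case $(i,j)$ ranging over generators of $K\times K$ and then consulting Tables~\ref{table:cocycle} and~\ref{table:cocycle2}, or by running the computation through Proposition~\ref{Prop:Udef} (the Pauli-matrix picture), where $\Ad U_{i,j}$ corresponds to $x\mapsto c_i x c_j^*$ conjugation on $\C^4\cong\spa\{c_1,c_2,c_3,c_4\}$ and the scalar is visibly the commutation constant of $c_i c_k c_i^{-1}$ with $c_k$. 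Once that scalar is pinned down the rest is routine, and the two displayed defining equations for $\widehat{\alpha}_{i,j}$ and the two defining data of $\beta_{i,j}$ match generator by generator, proving equivariance.
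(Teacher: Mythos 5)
Your overall strategy --- checking equivariance on the generators $u_{k,l}$ and $p_{k,l}$, using that the $U_{k,l}$ are constant functions, and reducing the projection case to the rank-one matrix $P_{1,1}([a])=aa^{\mathrm T}$ --- is exactly the paper's. But the execution of the projection case contains a genuine error. You assert that $U_{i,j}(U_{i,j}a)(U_{i,j}a)^{\mathrm T}U_{i,j}^*\neq aa^{\mathrm T}$ in general, conclude that ``the naive reading is wrong,'' and pivot to $\beta_{i,j}^{-1}$. This is false: by Proposition~\ref{Prop:URu} we have $U_{i,j}^2=\e(i,i)\e(j,j)U_{1,1}=\pm 1$, hence $U_{i,j}^*=\pm U_{i,j}$, $\Ad U_{i,j}=\Ad U_{i,j}^*$, and
\[
U_{i,j}(U_{i,j}a)(U_{i,j}a)^{\mathrm T}U_{i,j}^*=U_{i,j}^2\,aa^{\mathrm T}\,(U_{i,j}^2)^*=aa^{\mathrm T}.
\]
The identity you substitute, $U_{i,j}^*(U_{i,j}a)(U_{i,j}a)^{\mathrm T}U_{i,j}=aa^{\mathrm T}$, is the statement $P_{1,1}\circ\sigma_{i,j}=\Ad U_{i,j}\circ P_{1,1}$ (the lemma the paper isolates), but by the paper's definition $\beta_{i,j}(P_{1,1})([a])$ is precisely the expression you declared incorrect; so as written you prove one identity and then invoke a different one to reach the conclusion. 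The two coincide only because $U_{i,j}^2$ is a scalar --- a fact you never state, and which is also what your hand-waved reduction from $P_{k,l}$ to $P_{1,1}$ secretly needs: the paper computes $\beta_{i,j}(P_{k,l})=\Ad(U_{i,j}U_{k,l}U_{i,j})\circ P_{1,1}$ and uses that $U_{i,j}U_{k,l}U_{i,j}$ is a scalar multiple of $U_{k,l}$.

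In the unitary case your route is the paper's, but the decisive step --- that the scalar $\lambda$ in $U_{i,j}U_{k,l}U_{i,j}^*=\lambda U_{k,l}$ is exactly $\e(i,k)\e(k,i)\e(j,l)\e(l,j)$ --- is deferred to ``a finite check'' rather than carried out (and the intermediate ``$(\e(i,k)\e(j,l))^2$'' is not what one application of Proposition~\ref{Prop:URu} gives). The verification is two lines and you should include it: $U_{i,j}U_{k,l}=\e(i,k)\e(j,l)U_{t_i(k),t_j(l)}$, while $U_{k,l}U_{i,j}=\e(k,i)\e(l,j)U_{t_k(i),t_l(j)}$ and $t_k(i)=t_i(k)$, $t_l(j)=t_j(l)$; hence $U_{i,j}U_{k,l}=\e(i,k)\e(k,i)\e(j,l)\e(l,j)\,U_{k,l}U_{i,j}$, and multiplying on the right by $U_{i,j}^*$ gives the claim, matching the character defining $\ha_{i,j}$.
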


\begin{proof}
For $i,j=1,2,3,4$, 
we have 
$P_{1,1}\circ \sigma_{i,j}=\Ad U_{i,j} \circ P_{1,1}$. 
In fact for $[a_1,a_2,a_3,a_4] \in \R P^3$, 
on one hand we have 
\begin{align*}
(P_{1,1}\circ \sigma_{i,j})([a_1,a_2,a_3,a_4])
&=(b_1,b_2,b_3,b_4)^{\mathrm{T}}(b_1,b_2,b_3,b_4),\\
\end{align*}
where 
\begin{equation*}
(b_1,b_2,b_3,b_4)^{\mathrm{T}} = U_{i,j}(a_1,a_2,a_3,a_4)^{\mathrm{T}},
\end{equation*} 
and on the other hand we have
\begin{align*}
(\Ad U_{i,j} \circ P_{1,1})([a_1,a_2,a_3,a_4])
&=U_{i,j} (a_1,a_2,a_3,a_4)^{\mathrm{T}}(a_1,a_2,a_3,a_4)U_{i,j}^*
\end{align*}
here note $U_{i,j}^*=U_{i,j}^{\mathrm{T}}$ 
because the entries of $U_{i,j}$ are $-1$, $0$ or $1$. 
For $i,j,k,l=1,2,3,4$, 
we have 
\begin{align*}
\beta_{i,j}(P_{k,l})
&=\Ad U_{i,j}\circ (\Ad U_{k,l} \circ P_{1,1})\circ \sigma_{i,j}\\
&=\Ad U_{i,j}\circ \Ad U_{k,l} \circ \Ad U_{i,j} \circ P_{1,1}\\
&= \Ad (U_{i,j}U_{k,l}U_{i,j}) \circ P_{1,1}\\
&= \Ad U_{k,l} \circ P_{1,1}=P_{k,l}. 
\end{align*}
For $i,j,k,l=1,2,3,4$, 
we also have 
\begin{align*}
\beta_{i,j}(U_{k,l})
&=\Ad U_{i,j}\circ U_{k,l} \circ \sigma_{i,j}\\
&=U_{i,j}U_{k,l}U_{i,j}^* \\
&=\e(i,k)\e(j,l)U_{t_i(k),t_j(l)}U_{i,j}^* \\
&=\e(i,k)\e(j,l)\e(k,i)^{-1}\e(l,j)^{-1}U_{k,l}U_{i,j}U_{i,j}^*\\
&=\e(i,k)\e(j,l)\e(k,i)\e(l,j)U_{k,l}
\end{align*}
here note that $U_{k,l} \in M_4(C(\R P^3))$ is a constant function. 
These complete the proof. 
\end{proof}

The following is the second main theorem. 

\begin{theorem}\label{MainThm2}
The fixed point algebra $M_4(C(\R P^3))^{\beta}$ of the action $\beta$ is 
isomorphic to $A(4)$. 
\end{theorem}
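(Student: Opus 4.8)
The strategy is to exploit Theorem~\ref{MainThm1} together with Proposition~\ref{Prop:equivariant}. By Theorem~\ref{MainThm1}, $\varPhi$ is an isomorphism $A(4) \rtimes_{\alpha}^{\text{tw}} (K \times K) \xrightarrow{\ \cong\ } M_4(C(\R P^3))$, and by Proposition~\ref{Prop:equivariant} it intertwines the dual action $\ha$ with the action $\beta$. Hence $M_4(C(\R P^3))^{\beta} \cong \big(A(4) \rtimes_{\alpha}^{\text{tw}} (K \times K)\big)^{\ha}$, and it suffices to identify the fixed point algebra of the dual action $\ha$ on a twisted crossed product by the finite group $K \times K$ with the original algebra $A(4)$. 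So the whole theorem reduces to the following general principle: for a twisted action of a finite abelian group $G$ on a unital C*-algebra $B$, the fixed point algebra of the dual action $\hat{G} \curvearrowright B \rtimes^{\text{tw}} G$ is $B$ itself (sitting inside as the canonical copy). This is the twisted analogue of Takai-type duality at the level of fixed point algebras, and for a finite group it is elementary.

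\textbf{Key steps.} First I would describe the fixed point algebra concretely. Every element of $A(4) \rtimes_{\alpha}^{\text{tw}} (K \times K)$ is uniquely a sum $\sum_{k,l} x_{k,l} u_{k,l}$ with $x_{k,l} \in A(4)$ (since $K \times K$ has order $16$ this is a finite sum, and the $u_{k,l}$ form a basis of the crossed product as a right $A(4)$-module). From the formula $\ha_{i,j}(u_{k,l}) = \e(i,k)\e(k,i)\e(j,l)\e(l,j)u_{k,l}$ and $\ha_{i,j}(p_{m,n}) = p_{m,n}$, an element is $\ha$-invariant iff for every $(k,l) \ne (1,1)$ the scalar $\e(i,k)\e(k,i)\e(j,l)\e(l,j)$ equals $1$ for all $(i,j)$ — which fails, since by the nondegeneracy recorded in Table~\ref{table:cocycle2} the pairing $K \times K \to \hat{K\times K}$ is an isomorphism, so for each $(k,l) \ne (1,1)$ there is some $(i,j)$ with the sign $-1$. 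Hence $x_{k,l} = 0$ for $(k,l) \ne (1,1)$, and the fixed point algebra is exactly $\{x\,u_{1,1} : x \in A(4)\} = A(4)$, the canonical copy (recall $u_{1,1}$ is the unit). Transporting this across the isomorphism $\varPhi$ gives $M_4(C(\R P^3))^{\beta} \cong A(4)$. Concretely, under $\varPsi = \varPhi^{-1}$ the fixed point algebra $M_4(C(\R P^3))^{\beta}$ is the image of the copy of $A(4)$, i.e.\ the C*-subalgebra of $M_4(C(\R P^3))$ generated by $\{P_{i,j}\}_{i,j=1}^4$.

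\textbf{Main obstacle.} The computations are all routine; the only point that needs genuine care is verifying the nondegeneracy of the character $(k,l) \mapsto \big((i,j) \mapsto \e(i,k)\e(k,i)\e(j,l)\e(l,j)\big)$, i.e.\ that for each $(t_k,t_l) \ne (t_1,t_1)$ in $K \times K$ there exists $(t_i,t_j)$ making the sign $-1$. This factors as a product over the two copies of $K$, so it suffices to check that $(i,k) \mapsto \e(i,k)\e(k,i)$ is a nondegenerate $\{\pm 1\}$-valued bicharacter of $K$, which is immediate from Table~\ref{table:cocycle2}: each of the rows indexed by $k = 2,3,4$ contains a $-1$. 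Given this, the argument above is complete; I would also remark that since $K \times K$ is finite and $\ha$ is the dual (hence saturated) action, one can alternatively invoke the standard fact that the fixed point algebra of a dual action on a (twisted) crossed product by a finite group is canonically the original algebra, but the direct Fourier-coefficient computation is shorter and self-contained here.
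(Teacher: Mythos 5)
Your proposal is correct and follows exactly the paper's route: the paper also deduces Theorem~\ref{MainThm2} from Theorem~\ref{MainThm1} and Proposition~\ref{Prop:equivariant}, citing the fact that $\big(A(4) \rtimes_{\alpha}^{\text{tw}} (K \times K)\big)^{\ha}=A(4)$. Your Fourier-coefficient verification of that last fact (via the nondegeneracy of the pairing visible in Table~\ref{table:cocycle2}) simply supplies the detail the paper leaves as standard.
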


\begin{proof}
This follows from Theorem~\ref{MainThm1} 
and Proposition~\ref{Prop:equivariant} 
because the fixed point algebra 
$\big(A(4) \rtimes_{\alpha}^{\text{tw}} (K \times K)\big)^{\ha}$ 
of $\ha$ is $A(4)$. 
\end{proof}

\section{Quotient Space $\R P^3 / (K \times K)$}
\label{Sec:quotient}

\begin{definition}
We set $A \coloneqq M_4(C(\R P^3))^{\beta}$. 
\end{definition}

By Theorem~\ref{MainThm2}, 
the \Ca $A(4)$ is isomorphic to $A$. 
From this section, 
we compute the structure of $A$ 
and its K-groups. 

In this section, we study the quotient Space $\R P^3 / (K \times K)$ 
of $\R P^3$ by the action $\sigma$ of $K \times K$. 
In \cite{osugi}, 
it is proved that this quotient space $\R P^3 / (K \times K)$ is homeomorphic to $S^3$. 

\begin{definition}
We denote by $X$ the quotient space $\R P^3 / (K \times K)$ of 
the action $\sigma$ of $K \times K$.
We denote by $\pi\colon \R P^3 \to X$ the quotient map.
\end{definition}

We use the following lemma later. 

\begin{lemma}\label{Lem:PaPa}
For $i,j=2,3,4$ and $[a_1,a_2,a_3,a_4] \in \R P^3$ with $\sigma_{i,j}([a_1,a_2,a_3,a_4])=[a_1,a_2,a_3,a_4]$, 
we have $P_{k,l}([a_1,a_2,a_3,a_4])=P_{t_i(k),t_j(l)}([a_1,a_2,a_3,a_4])$ 
for $k,l=1,2,3,4$.
\end{lemma}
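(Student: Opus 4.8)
The plan is to reduce the statement to two facts that are already available: the multiplicativity relation for the unitaries $U_{i,j}$ (Proposition~\ref{Prop:URu}) and the $\beta$-invariance of the projections $P_{k,l}$ which was established inside the proof of Proposition~\ref{Prop:equivariant}. No new computation beyond tracking signs should be needed.

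First I would record the identity
\[
P_{t_i(k),t_j(l)} = U_{i,j}\,P_{k,l}\,U_{i,j}^{*}
\]
in $M_4(C(\R P^3))$, valid for all $i,j,k,l$. Indeed, Proposition~\ref{Prop:URu} gives $U_{t_i(k),t_j(l)} = \e(i,k)\e(j,l)\,U_{i,j}U_{k,l}$, and since the scalar $\e(i,k)\e(j,l)$ lies in $\{1,-1\}$ it squares to $1$, so
\[
P_{t_i(k),t_j(l)} = U_{t_i(k),t_j(l)}\,P_{1,1}\,U_{t_i(k),t_j(l)}^{*} = U_{i,j}U_{k,l}\,P_{1,1}\,U_{k,l}^{*}U_{i,j}^{*} = U_{i,j}\,P_{k,l}\,U_{i,j}^{*}.
\]

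Next I would invoke $\beta_{i,j}(P_{k,l}) = P_{k,l}$ from the proof of Proposition~\ref{Prop:equivariant}. Unwinding the definition $\beta_{i,j}(F) = \Ad U_{i,j}\circ F\circ\sigma_{i,j}$ and evaluating at the point $[a_1,a_2,a_3,a_4]$ yields
\[
U_{i,j}\,P_{k,l}\big(\sigma_{i,j}([a_1,a_2,a_3,a_4])\big)\,U_{i,j}^{*} = P_{k,l}([a_1,a_2,a_3,a_4]).
\]
Feeding in the hypothesis $\sigma_{i,j}([a_1,a_2,a_3,a_4]) = [a_1,a_2,a_3,a_4]$ collapses the left-hand side, so $U_{i,j}$ commutes with $P_{k,l}([a_1,a_2,a_3,a_4])$; combining this with the identity of the previous paragraph gives
\[
P_{t_i(k),t_j(l)}([a_1,a_2,a_3,a_4]) = U_{i,j}\,P_{k,l}([a_1,a_2,a_3,a_4])\,U_{i,j}^{*} = P_{k,l}([a_1,a_2,a_3,a_4]),
\]
which is exactly the assertion (the restriction $i,j\in\{2,3,4\}$ plays no role; for $i=j=1$ the claim is trivial since $\sigma_{1,1}=\id$).

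There is no real obstacle; the only point requiring attention is that all the cocycle factors $\e(\cdot,\cdot)$ are $\pm 1$ and disappear upon conjugation, so the argument is pure bookkeeping once Proposition~\ref{Prop:URu} and the $\beta$-invariance of $P_{k,l}$ are in hand. If one preferred a hands-on alternative, one could note that $P_{k,l}([a])$ is the rank-one projection onto the line $\R\,U_{k,l}a$, that the hypothesis says $U_{i,j}a = \pm a$, and that $U_{i,j}U_{k,l}a = \pm U_{k,l}U_{i,j}a = \pm U_{k,l}a$ using $t_i(k)=t_k(i)$; hence the lines $\R\,U_{t_i(k),t_j(l)}a$ and $\R\,U_{k,l}a$ coincide and the corresponding projections agree. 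The operator-algebraic route above is the cleaner one to write down.
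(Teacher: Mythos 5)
Your argument is correct and is essentially the paper's own proof: the paper likewise combines $\beta_{i,j}(P_{k,l})=P_{k,l}$ (from the proof of Proposition~\ref{Prop:equivariant}) evaluated at the fixed point with the identity $P_{t_i(k),t_j(l)}=\Ad U_{i,j}(P_{k,l})$, which is already recorded as Proposition~\ref{Prop:PU}, so you need not re-derive it from Proposition~\ref{Prop:URu}. No gaps.
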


\begin{proof}
This follows from 
\begin{align*}
P_{k,l}([a_1,a_2,a_3,a_4])
&=\beta_{i,j}(P_{k,l})([a_1,a_2,a_3,a_4])\\
&=\Ad U_{i,j} \big(P_{k,l} (\sigma_{i,j}([a_1,a_2,a_3,a_4]))\big)\\
&=\Ad U_{i,j} \big(P_{k,l} ([a_1,a_2,a_3,a_4])\big),\\
P_{t_i(k),t_j(l)}([a_1,a_2,a_3,a_4])
&=\big(\Ad U_{i,j}(P_{k,l})\big) ([a_1,a_2,a_3,a_4])\\
&=\Ad U_{i,j} \big(P_{k,l} ([a_1,a_2,a_3,a_4])\big).\qedhere
\end{align*}
\end{proof}

\begin{definition}
For each $i,j =2,3,4$, 
define 
\[
\tF_{i,j} \coloneqq \{[a_1,a_2,a_3,a_4] \in \R P^3\mid 
\sigma_{i,j}([a_1,a_2,a_3,a_4])=[a_1,a_2,a_3,a_4]\} \subset \R P^3
\]
to be the set of fixed points of $\sigma_{i,j}$, 
and define $F_{i,j} \subset X $ to be the image $\pi (\tilde{F}_{i,j})$.
\end{definition}

We have $\tilde{F}_{i,j} = \pi^{-1}(F_{i,j})$. 
The following two propositions can be proved by direct computation 
using the computation of $U_{i,j}$ after Definition~\ref{Def:Uij}

\begin{proposition}\label{Prop:}
For each $i=2,3,4$, 
$\sigma_{1,i}$ and $\sigma_{i,1}$ have no fixed points. 
\end{proposition}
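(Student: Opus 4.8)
The statement asserts that for each $i=2,3,4$, the maps $\sigma_{1,i}$ and $\sigma_{i,1}$ on $\R P^3$ have no fixed points. The plan is to reduce everything to linear algebra: a fixed point of $\sigma_{i,j}$ is, by definition, a class $[a]\in\R P^3$ with $\sigma_{i,j}([a])=[a]$, which by the formula $(b_1,b_2,b_3,b_4)^{\mathrm T}=U_{i,j}(a_1,a_2,a_3,a_4)^{\mathrm T}$ means $U_{i,j}a^{\mathrm T}=\pm a^{\mathrm T}$ for some nonzero $a\in\R^4$. Thus $\sigma_{i,j}$ has a fixed point if and only if the real matrix $U_{i,j}$ has $1$ or $-1$ as an eigenvalue, equivalently $\det(U_{i,j}-I)=0$ or $\det(U_{i,j}+I)=0$.

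So the whole proposition is a finite check: for each of the six matrices $U_{1,2},U_{1,3},U_{1,4},U_{2,1},U_{3,1},U_{4,1}$, listed explicitly after Definition~\ref{Def:Uij}, one verifies that neither $1$ nor $-1$ is an eigenvalue. First I would compute each characteristic polynomial (or, more economically, simply compute $\det(U_{i,j}\mp I)$ directly). Looking at the matrices, each of $U_{1,2},U_{1,3},U_{1,4},U_{2,1},U_{3,1},U_{4,1}$ is a signed permutation matrix whose underlying permutation is a product of two $2$-cycles (a fixed-point-free involution on indices) but with signs making the square equal to $-I$; concretely each is conjugate to a block-diagonal matrix of two $2\times 2$ rotation-by-$\pi/2$ blocks, hence has eigenvalues $\pm i$ only. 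The cleanest uniform argument is: verify by direct multiplication that $U_{1,i}^2=-I$ and $U_{i,1}^2=-I$ for $i=2,3,4$ (this can also be read off from Proposition~\ref{Prop:URu} since $t_1$ is the identity and $t_i(i)=1$, so $U_{1,i}^2=\e(1,1)\e(i,i)U_{t_1(1),t_i(i)}=\e(i,i)U_{1,1}=-U_{1,1}=-I$ using the value $\e(i,i)=-1$ from Table~\ref{table:cocycle} for $i=2,3,4$, and similarly $U_{i,1}^2=\e(i,i)\e(1,1)U_{t_i(i),t_1(1)}=-I$). Once $U^2=-I$, if $Ua^{\mathrm T}=\pm a^{\mathrm T}$ then $a^{\mathrm T}=U^2 a^{\mathrm T}=-a^{\mathrm T}$, forcing $a=0$; hence no fixed points in $\R P^3$.

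The main (and only) obstacle is a bookkeeping one: making sure the eigenvalue argument over $\R$ is airtight, i.e.\ that a fixed point of $\sigma_{i,j}$ really does force $U_{i,j}a^{\mathrm T}=+a^{\mathrm T}$ or $U_{i,j}a^{\mathrm T}=-a^{\mathrm T}$ (the $\pm$ coming from the identification $a\sim -a$ defining $\R P^3$), and that the relation $U^2=-I$ is correctly derived — either by the two-line cocycle computation above or by direct matrix multiplication of the six explicit matrices. Everything else is routine. I would therefore write the proof as: fix $i\in\{2,3,4\}$; suppose $[a]\in\tF_{1,i}$, so $U_{1,i}a^{\mathrm T}=\pm a^{\mathrm T}$; use $U_{1,i}^2=-I$ (from Proposition~\ref{Prop:URu} and Table~\ref{table:cocycle}) to get $a^{\mathrm T}=-a^{\mathrm T}$, contradiction; the case of $\sigma_{i,1}$ is identical with $U_{i,1}$ in place of $U_{1,i}$.
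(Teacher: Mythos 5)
Your proposal is correct and carries out precisely the ``direct computation using the computation of $U_{i,j}$'' that the paper invokes without writing out: the reduction of a fixed point of $\sigma_{i,j}$ to a nonzero real vector $a$ with $U_{i,j}a^{\mathrm{T}}=\pm a^{\mathrm{T}}$ is exactly the right translation of the quotient $S^3/\{\pm 1\}$. Your identity $U_{1,i}^2=U_{i,1}^2=-I$ for $i=2,3,4$ (via Proposition~\ref{Prop:URu}, $t_i(i)=1$, and $\e(i,i)=-1$) is a clean uniform packaging of the six individual matrix checks, and the resulting contradiction $a^{\mathrm{T}}=-a^{\mathrm{T}}$ finishes the argument correctly.
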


\begin{proposition}\label{Prop:2circle}
For each $i,j =2,3,4$, 
$\tF_{i,j}$ is homeomorphic to a disjoint union of two circles. 
More precisely, we have
\begin{align*}
&\tF_{2,2}= \{[a,b,0,0],[0,0,a,b] \in \R P^3 \mid a,b \in \R,\ a^2 +b^2=1 \} \\
&\tF_{2,3}= \{ [a,b,-b,a],[a,b,b,-a] \in \R P^3 \mid a,b \in \R,\ 2(a^2 +b^2)=1 \} \\
&\tF_{2,4}= \{[a,b,a,b],[a,b,-a,-b] \in \R P^3 \mid a,b \in \R,\ 2(a^2 +b^2)=1 \} \\
&\tF_{3,2}= \{[a,b,b,a], [a,b,-b,-a] \in \R P^3  \mid a,b \in \R,\ 2(a^2 +b^2)=1 \} \\
&\tF_{3,3}= \{[a,0,b,0],[0,a,0,b] \in \R P^3 \mid a,b \in \R,\ a^2 +b^2=1 \} \\
&\tF_{3,4}= \{[a,a,b,-b],[a,-a,b,b] \in \R P^3 \mid a,b \in \R,\ 2(a^2 +b^2)=1 \} \\
&\tF_{4,2}= \{[a,b,a,-b],[a,b,-a,b] \in \R P^3 \mid a,b \in \R,\ 2(a^2 +b^2)=1 \} \\
&\tF_{4,3}= \{[a,a,b,b], [a,-a,b, -b] \in \R P^3 \mid a,b \in \R,\ 2(a^2 +b^2)=1 \} \\
&\tF_{4,4}= \{ [a,0,0,b],[0,a,b,0] \in \R P^3 \mid a,b \in \R,\ a^2 +b^2=1 \} 
\end{align*}
\end{proposition}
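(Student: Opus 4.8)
The plan is to reduce everything to an eigenvalue computation for the explicit $4\times 4$ matrices $U_{i,j}$ listed after Definition~\ref{Def:Uij}. Since $\sigma_{i,j}$ is induced on $\R P^3 = S^3/{\sim}$ by the linear map $a \mapsto U_{i,j}a$ on $S^3 \subset \R^4$, a point $[a_1,a_2,a_3,a_4]$ is fixed by $\sigma_{i,j}$ if and only if $U_{i,j}a = a$ or $U_{i,j}a = -a$; that is, $a$ lies in the $(+1)$-eigenspace or the $(-1)$-eigenspace of $U_{i,j}$. So $\tF_{i,j}$ is the image in $\R P^3$ of $\big(\ker(U_{i,j}-1) \cup \ker(U_{i,j}+1)\big) \cap S^3$. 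The first step is therefore to record, for each of the nine pairs $(i,j)$ with $i,j \in \{2,3,4\}$, that $U_{i,j}$ is a real orthogonal symmetric matrix with $U_{i,j}^2 = 1$ (this follows from Proposition~\ref{Prop:URu} together with $U_{i,j}^* = U_{i,j}^{\mathrm T}$, since $\e(i,i)\e(j,j)=1$ forces $U_{i,j}^2 = U_{i,j}U_{i,j} = U_{1,1} = 1$ when $t_i(i)=t_1=1$), so that $\R^4$ splits orthogonally as the sum of its $\pm 1$-eigenspaces.

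The second step is to compute these two eigenspaces explicitly for each of the nine matrices. For a symmetric involution, $\dim\ker(U_{i,j}-1) + \dim\ker(U_{i,j}+1) = 4$, and inspection of the listed matrices shows in every case that both eigenspaces are $2$-dimensional (the trace of each $U_{i,j}$ with $i,j\ge 2$ is $0$). Concretely one solves the linear system $U_{i,j}a = \pm a$ for each pair; for instance for $U_{2,2}$ one gets $\ker(U_{2,2}-1) = \{(a,b,0,0)\}$ and $\ker(U_{2,2}+1) = \{(0,0,a,b)\}$, which after intersecting with $S^3$ and passing to $\R P^3$ gives exactly the stated description of $\tF_{2,2}$; the other eight are identical bookkeeping. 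Each $2$-dimensional eigenspace meets $S^3$ in a great circle, and the image of a great circle in $\R P^3$ under the double cover $S^3 \to \R P^3$ is again a circle (topologically $S^1$), since the antipodal map preserves the circle. The two eigenspaces are orthogonal, hence the two circles are disjoint in $\R P^3$ (a point in the image of both would lift to $a \in S^3$ with $a$ in both eigenspaces, forcing $a = 0$). This gives the claimed homeomorphism type "disjoint union of two circles", and the normalization constants ($a^2+b^2=1$ versus $2(a^2+b^2)=1$) are just the condition that the chosen spanning vectors lie on $S^3$.

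The main obstacle is purely organizational rather than conceptual: one must carry out the eigenspace computation correctly for all nine matrices and match each answer with the corresponding line in the displayed list, taking care with the signs and the index permutations $t_i$, $t_j$ that appear in the parametrizations (e.g. the crossed forms like $[a,b,-b,a]$ for $\tF_{2,3}$). There is no real difficulty beyond verifying that each listed pair of circles indeed spans the $(+1)$- and $(-1)$-eigenspaces and that together they exhaust the fixed-point set; once the involutivity of $U_{i,j}$ is in hand, the rest is routine linear algebra over $\R$, which is why the paper states that the proposition "can be proved by direct computation."
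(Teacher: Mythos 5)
Your proposal is correct and is essentially the ``direct computation'' the paper alludes to, organized cleanly through the observation that each $U_{i,j}$ with $i,j\ge 2$ is a real symmetric orthogonal involution of trace $0$ (since $\e(i,i)\e(j,j)=1$ and $t_i(i)=1$ give $U_{i,j}^2=U_{1,1}=1$), so that $\tF_{i,j}$ is the disjoint union of the images in $\R P^3$ of the two orthogonal $2$-dimensional eigenspaces, each a great circle whose antipodal quotient is again a circle. The nine eigenspace computations then match the displayed parametrizations exactly, so nothing is missing.
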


\begin{definition}
We set $\tF \coloneqq \bigcup_{i,j =2}^4 \tF_{i,j}$ and 
$F \coloneqq \bigcup_{i,j =2}^4 F_{i,j}$. 
We also set $\tO \coloneqq \R P^3 \setminus \tF$ and 
$O \coloneqq X \setminus F$. 
\end{definition}

We have $\tF=\pi^{-1}(F)$ and hence $\tO=\pi^{-1}(O)$. 
Note that $\tO$ is the set of points $[a_1,a_2,a_3,a_4] \in \R P^3$ 
such that $\sigma_{i,j}([a_1,a_2,a_3,a_4]) \neq [a_1,a_2,a_3,a_4]$ 
for all $i,j=1,2,3,4$ other than $(i,j)=(1,1)$. 
Note also that $\tF$ and $F$ are closed, and hence $\tO$ and $O$ are open. 

\begin{definition}
For each $i_2,i_3,i_4$ with $\{i_2,i_3,i_4  \}= \{2,3,4\}$,
define $\tF_{(i_2 i_3 i_4)} \subset \R P^3$ by
\begin{equation*}
\tF_{(i_2 i_3 i_4)} \coloneqq \tF_{i_2,2} \cap \tF_{i_3, 3} \cap \tF_{i_4,4},
\end{equation*}
and define $F_{(i_2 i_3 i_4)} \subset X$ to be the image $\pi (\tF_{(i_2 i_3 i_4)})$.
\end{definition}

\begin{proposition}\label{Prop:4points}
For each $i_2,i_3,i_4$ with $\{i_2,i_3,i_4  \}= \{2,3,4\}$,
we have 
\begin{equation*}
\tF_{(i_2 i_3 i_4)}= \tF_{i_2,2} \cap \tF_{i_3,3}= \tF_{i_2,2} \cap \tF_{i_4,4}= \tF_{i_3,3} \cap \tF_{i_4,4}. 
\end{equation*}
We also have 
\begin{align*}
&\tF_{(234)} = \big \{[1,0,0,0]  ,[0,1,0,0],[0,0,1,0],[0,0,0,1] \big\}, \\
&\tF_{(342)} = \left \{\Big[\frac{1}{2},\frac{1}{2},\frac{1}{2},\frac{1}{2}\Big], \Big[\frac{1}{2},\frac{1}{2},-\frac{1}{2},-\frac{1}{2}\Big],\Big[\frac{1}{2},-\frac{1}{2},-\frac{1}{2},\frac{1}{2}\Big], \Big[\frac{1}{2},-\frac{1}{2},\frac{1}{2},-\frac{1}{2}\Big] \right \}, \\
&\tF_{(423)} = \left \{\Big[-\frac{1}{2},\frac{1}{2},\frac{1}{2},\frac{1}{2}\Big], \Big[\frac{1}{2},-\frac{1}{2},\frac{1}{2},\frac{1}{2}\Big],\Big[\frac{1}{2},\frac{1}{2},-\frac{1}{2},\frac{1}{2}\Big], \Big[\frac{1}{2},\frac{1}{2},\frac{1}{2},-\frac{1}{2}\Big] \right \}, \\
&\tF_{(243)}= \left \{\Big[\frac{1}{\sqrt{2}},\frac{1}{\sqrt{2}},0,0\Big],\Big[\frac{1}{\sqrt{2}},-\frac{1}{\sqrt{2}},0,0\Big], \Big[0,0,\frac{1}{\sqrt{2}},\frac{1}{\sqrt{2}}\Big], \Big[0,0,\frac{1}{\sqrt{2}},-\frac{1}{\sqrt{2}}\Big] \right \}, \\
&\tF_{(432)}  = \left \{\Big[\frac{1}{\sqrt{2}},0,\frac{1}{\sqrt{2}},0\Big], \Big[\frac{1}{\sqrt{2}},0,-\frac{1}{\sqrt{2}},0\Big], \Big[0,\frac{1}{\sqrt{2}},0,\frac{1}{\sqrt{2}}\Big], \Big[0,\frac{1}{\sqrt{2}},0,-\frac{1}{\sqrt{2}}\Big] \right \}, \\
&\tF_{(324)} = \left \{\Big[\frac{1}{\sqrt{2}},0,0,\frac{1}{\sqrt{2}}\Big], \Big[\frac{1}{\sqrt{2}},0,0,-\frac{1}{\sqrt{2}}\Big], \Big[0,\frac{1}{\sqrt{2}},\frac{1}{\sqrt{2}},0\Big], \Big[0,\frac{1}{\sqrt{2}},-\frac{1}{\sqrt{2}},0\Big] \right \}.
\end{align*}
\end{proposition}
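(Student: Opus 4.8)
The plan is to establish the chain of equalities
$\tF_{(i_2 i_3 i_4)} = \tF_{i_2,2} \cap \tF_{i_3,3} = \tF_{i_2,2} \cap \tF_{i_4,4} = \tF_{i_3,3} \cap \tF_{i_4,4}$
first, and then read off the explicit four-point descriptions from Proposition~\ref{Prop:2circle}. The key algebraic input is that $K \times K$ is a group: if $[a] \in \R P^3$ is fixed by $\sigma_{i_2,2}$ and by $\sigma_{i_3,3}$, then it is fixed by their composite, which by Proposition~\ref{Prop:Udef} (or directly from $U_{i,j}U_{k,l} = \pm U_{t_i(k),t_j(l)}$, Proposition~\ref{Prop:URu}) equals $\sigma_{t_{i_2}(i_3),\,t_2(3)}$ up to sign, and $\pm$ acts trivially on $\R P^3$. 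Since $\{i_2,i_3,i_4\} = \{2,3,4\}$ and $K \cong (\Z/2)^2$, one checks that $t_{i_2}(i_3) = i_4$ and $t_2(3) = 4$, so being fixed by any two of $\sigma_{i_2,2}, \sigma_{i_3,3}, \sigma_{i_4,4}$ forces fixedness by the third. This gives all three pairwise intersections equal to the triple intersection.

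Next I would compute the common value explicitly. For each prescribed assignment $(i_2 i_3 i_4)$, I intersect the two relevant circle-pairs listed in Proposition~\ref{Prop:2circle}; this is a finite linear-algebra computation. For instance, for $(234)$ one intersects $\tF_{2,2} = \{[a,b,0,0],[0,0,a,b]\}$ with $\tF_{3,3} = \{[a,0,b,0],[0,a,0,b]\}$: a point on a circle in $\tF_{2,2}$ has its last two or first two coordinates zero, and imposing membership in a circle of $\tF_{3,3}$ forces a second pair of coordinates to vanish, leaving exactly the four coordinate points $[1,0,0,0], [0,1,0,0], [0,0,1,0], [0,0,0,1]$. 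The other five cases are handled the same way, each time solving the two defining systems of linear equations together with the normalization $\sum a_i^2 = 1$; the solutions are the eight listed $\pm$-classes, i.e.\ four points in $\R P^3$.

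I would organize this as: (1) a short paragraph proving the equalities via the group law and the identities $t_{i_2}(i_3) = i_4$, $t_2(3) = 4$; (2) a remark that it therefore suffices to intersect two of the three circle-pairs in each case; (3) a table or enumeration verifying the six explicit answers by solving the linear systems. The only mild subtlety — and the step most prone to sign errors — is keeping track of which circle in each disjoint pair meets which, since $\tF_{i,j}$ is a \emph{disjoint} union of two circles; one must check that for each of the six assignments the intersection genuinely picks up two points from each of the two circles (or the appropriate distribution) rather than collapsing. Concretely one verifies the four listed points do lie in all three sets $\tF_{i_2,2}, \tF_{i_3,3}, \tF_{i_4,4}$ simultaneously, which also re-confirms the equality of the intersections.

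I expect no genuine obstacle here: the content is entirely the group-theoretic observation that two involutions generate the third nontrivial element of the Klein four group, plus bookkeeping with the explicit circles from Proposition~\ref{Prop:2circle}. The main thing to get right is the indexing — verifying $t_i(j) = t_j(i)$ is used correctly so that, e.g., $\sigma_{2,2} \circ \sigma_{3,3}$ really is $\sigma_{4,4}$ (not some $\sigma_{i,j}$ with $i \neq j$), which holds because $t_2 t_3 = t_{t_2(3)} = t_4$ in both slots.
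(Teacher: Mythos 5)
Your proposal is correct and follows essentially the same route as the paper, whose proof is simply that everything follows from the explicit descriptions of the $\tF_{i,j}$ in Proposition~\ref{Prop:2circle} by intersecting the listed circles. The Klein-group observation that $\sigma_{i_2,2}\circ\sigma_{i_3,3}=\sigma_{i_4,4}$ on $\R P^3$ (since the sign $\varepsilon(i_2,i_3)\varepsilon(2,3)$ acts trivially) is a nice conceptual shortcut for the chain of equalities, which the paper leaves implicit, but the computational core is identical.
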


\begin{proof}
This follows from Proposition~\ref{Prop:2circle}. 
\end{proof}

\begin{proposition}
For each $i_2,i_3,i_4$ with $\{i_2,i_3,i_4  \}= \{2,3,4\}$,
$F_{(i_2 i_3 i_4)}$ consists of one point. 
\end{proposition}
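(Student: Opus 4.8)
The plan is to show that for each fixed choice of $\{i_2,i_3,i_4\}=\{2,3,4\}$, the four points listed in Proposition~\ref{Prop:4points} as the elements of $\tF_{(i_2i_3i_4)}$ all lie in a single orbit of the action $\sigma$ of $K\times K$ on $\R P^3$; then $F_{(i_2i_3i_4)}=\pi(\tF_{(i_2i_3i_4)})$ is the image of one orbit and hence a single point in $X=\R P^3/(K\times K)$.

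First I would observe that $\sigma$ restricts to an action of $K\times K$ on the four-point set $\tF_{(i_2i_3i_4)}$: indeed if $[a]\in\tF_{i_2,2}\cap\tF_{i_3,3}\cap\tF_{i_4,4}$ and $(t_k,t_l)\in K\times K$, then $\sigma_{k,l}([a])$ is again fixed by each $\sigma_{i_m,m}$ because $\sigma_{i_m,m}$ and $\sigma_{k,l}$ commute up to an element of $K\times K$ acting trivially on $\tF$-type points — concretely, $U_{i_m,m}U_{k,l}=\pm U_{k,l}U_{i_m,m}$ by Proposition~\ref{Prop:URu}, so conjugating the fixed-point condition moves it within the set. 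Alternatively, and more cleanly, since $\tF_{(i_2i_3i_4)}$ is intrinsically the common fixed-point set of the Klein subgroup generated by $(t_{i_2},t_2),(t_{i_3},t_3),(t_{i_4},t_4)$ inside $K\times K$, and $K\times K$ is abelian, the whole group $K\times K$ permutes this set. So it suffices to check that the action on these four points is transitive.

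Next I would verify transitivity by an explicit computation in the single representative case, say $\tF_{(234)}=\{[1,0,0,0],[0,1,0,0],[0,0,1,0],[0,0,0,1]\}$: applying $\sigma_{i,j}$ to $[1,0,0,0]$ gives $[c_i e_{1}^{\mathrm T} c_j^*]$ read off from the first column of $U_{i,j}$ listed after Definition~\ref{Def:Uij}, and inspecting those columns (e.g. $U_{1,2}$ sends $e_1$ to $-e_2$, $U_{1,3}$ to $-e_3$, $U_{1,4}$ to $-e_4$) shows the orbit of $[1,0,0,0]$ is all four points. For the other five cases one can either repeat this with the explicit point lists in Proposition~\ref{Prop:4points} and the matrices $U_{i,j}$, or — preferably — note that $K\times K$ acts transitively on the set of the six subsets $\{\tF_{(i_2i_3i_4)}\}$ (since $\sigma_{k,l}(\tF_{i,j})=\tF_{t_k(i),t_l(j)}$) and hence all six orbit-computations are conjugate, so transitivity in one case yields it in all. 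I would use this reduction to avoid the five routine repetitions.

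The main obstacle I anticipate is purely bookkeeping: correctly tracking how $\sigma_{k,l}$ permutes the indices of the $\tF_{i,j}$ — i.e. verifying $\sigma_{k,l}(\tF_{i,j})=\tF_{t_k(i),t_j(l)}$ — and making sure the Klein-four subgroup structure behind each $\tF_{(i_2i_3i_4)}$ is identified correctly so that the ``conjugate cases'' argument is valid. Once that index calculus is pinned down (it follows from $\beta_{k,l}(P_{i,j})=P_{t_k(i),t_l(j)}$ in Proposition~\ref{Prop:equivariant} together with Proposition~\ref{Prop:Udef}), the transitivity is immediate from a single four-element orbit computation, and the proposition follows since a quotient of a single orbit is one point.
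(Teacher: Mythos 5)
Your overall strategy is the right one and matches what the paper implicitly does: by Proposition~\ref{Prop:4points} each $\tF_{(i_2i_3i_4)}$ is an explicit four-point set, and one must check that these four points form a single $\sigma$-orbit, so that their image under $\pi$ is one point. Your verification for $\tF_{(234)}$ is correct (the first columns of $U_{1,2}$, $U_{1,3}$, $U_{1,4}$ are $-e_2$, $-e_3$, $-e_4$, so the orbit of $[1,0,0,0]$ is all four points).

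However, the reduction you say you would actually use for the remaining five cases is based on a false claim. You assert that $K\times K$ acts transitively on the six subsets $\tF_{(i_2i_3i_4)}$ via $\sigma_{k,l}(\tF_{i,j})=\tF_{t_k(i),t_l(j)}$. In fact $\sigma_{k,l}(\tF_{i,j})=\tF_{i,j}$ for all $k,l$: since $U_{k,l}U_{i,j}U_{k,l}^{-1}=\pm U_{i,j}$ by Proposition~\ref{Prop:URu} and signs are invisible on $\R P^3$, the map $\sigma_{k,l}$ conjugates $\sigma_{i,j}$ to itself and therefore preserves its fixed-point set. Consequently each $\tF_{(i_2i_3i_4)}$ is $\sigma$-invariant (your own first paragraph proves exactly this), and the six four-point sets are each preserved rather than permuted; a transitive action on six invariant sets is impossible. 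So the ``conjugate cases'' shortcut collapses, and since you explicitly commit to it (``I would use this reduction to avoid the five routine repetitions''), the proof as written does not establish the other five cases. The fix is simply to carry out the direct check you mention as the first alternative: for each of the six sets in Proposition~\ref{Prop:4points}, apply a few of the listed $U_{i,j}$ to one representative and observe that the orbit exhausts the four points (for instance, for $\tF_{(342)}$ the matrices $U_{1,2},U_{1,3},U_{1,4}$ send $[\tfrac12,\tfrac12,\tfrac12,\tfrac12]$ to the other three classes). This is exactly the content the paper compresses into ``This follows from Proposition~\ref{Prop:4points}.''
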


\begin{proof}
This follows from Proposition~\ref{Prop:4points}. 
\end{proof}

\begin{definition}
For each $i_2,i_3,i_4$ with $\{i_2,i_3,i_4  \}= \{2,3,4\}$,
we set $x_{(i_2 i_3 i_4)} \in X$ 
by $F_{(i_2 i_3 i_4)}=\{x_{(i_2 i_3 i_4)}\}$. 
\end{definition}

\begin{proposition}
For each $i,j=2,3,4$, 
$F_{i,j}$ is homeomorphic to a closed interval 
whose endpoints are $x_{(i_2 i_3 i_4)}$ with $i_j=i$, 
\end{proposition}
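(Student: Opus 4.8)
The plan is to realize $F_{i,j}$ as the quotient of a single circle by a finite group. By Proposition~\ref{Prop:2circle}, $\tF_{i,j}$ is a disjoint union of two circles $C \sqcup C'$, and since $\tF_{i,j} = \pi^{-1}(F_{i,j})$ it is $(K\times K)$-invariant, so $F_{i,j}$ is the quotient space $\tF_{i,j}/(K\times K)$. First I would check, using the explicit matrices $U_{k,l}$ listed after Definition~\ref{Def:Uij} together with the parametrizations in Proposition~\ref{Prop:2circle}, that some element of $K\times K$ interchanges $C$ and $C'$ (for instance, when $i=j=2$ the map $\sigma_{1,3}$ sends $[a,b,0,0]$ to $[0,0,a,b]$). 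Then the stabilizer $H$ of $C$ in $K\times K$ has index $2$, hence order $8$, the two circles form a single orbit, and the inclusion $C \hookrightarrow \tF_{i,j}$ descends to a homeomorphism $C/H \xrightarrow{\cong} F_{i,j}$.

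Next I would pin down the action of $H$ on $C\cong S^1$. By the very definition of $\tF_{i,j}$ the element $(t_i,t_j)\in H$ acts trivially, so the action factors through $H/\langle(t_i,t_j)\rangle$, a group of order $4$. From the explicit formulas I would verify that this quotient is isomorphic to $(\Z/2\Z)^2$, acts effectively on $S^1$, and that one may choose generators so that one acts as a fixed-point-free involution (a ``half rotation'' of the projective line) and the other as a reflection with exactly two fixed points; their product is then again a reflection with exactly two fixed points. A group $(\Z/2\Z)^2$ acting on $S^1$ in this way has quotient a closed interval: writing $S^1 = \R/\Z$ with the two involutions $x\mapsto x+\tfrac12$ and $x\mapsto -x$, a fundamental domain is $[0,\tfrac14]$, interior orbits have size $4$, and the two endpoints are the images of the fixed-point sets $\{0,\tfrac12\}$ and $\{\tfrac14,\tfrac34\}$. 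This shows $F_{i,j}$ is homeomorphic to a closed interval whose endpoints are the images in $X$ of the (four) points of $C$ having a nontrivial stabilizer beyond $\langle(t_i,t_j)\rangle$.

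Finally I would identify those endpoints. A point of $C$ with extra stabilizer is fixed by some $\sigma_{k,l}$ with $(k,l)\notin\{(1,1),(i,j)\}$; by Proposition~\ref{Prop:4points} the four such points of $\tF_{i,j}$ are exactly those lying in $\tF_{(k_2k_3k_4)}$ for the permutations $(k_2k_3k_4)$ of $(2,3,4)$ with $k_j=i$, of which there are exactly two, and $C$ contains two points of each of these sets (the remaining two lying on $C'$), as one reads off from the explicit point lists in Proposition~\ref{Prop:4points}. Since $\pi$ collapses each $\tF_{(k_2k_3k_4)}$ to the single point $x_{(k_2k_3k_4)}$, the two endpoints of $F_{i,j}$ are precisely the $x_{(k_2k_3k_4)}$ with $k_j=i$, as claimed.

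I expect the main obstacle to be the bookkeeping in the middle step: reading off, for each pair $(i,j)$ — or at least for representatives of the two types in Proposition~\ref{Prop:2circle}, the ``$a^2+b^2=1$'' circles and the ``$2(a^2+b^2)=1$'' circles — which eight of the sixteen $U_{k,l}$ preserve $C$, and confirming that the induced order-$4$ group really acts by one free involution and two reflections, rather than through an order-$4$ rotation (which would give $S^1$ back) or trivially on some factor. Once this local picture is fixed, the rest is formal.
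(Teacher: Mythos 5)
Your argument is correct, and it is a genuinely different (and more self-contained) route than the one the paper takes. The paper's proof is essentially a pointer: it invokes the explicit parametrizations of Proposition~\ref{Prop:2circle} together with Figure~\ref{fig:EF}, and the real substance only appears later, in Section~\ref{Sec:StI}, where the fundamental domain $\overline{V}$ is introduced and the arcs $E_{i,j}=\tF_{i,j}\cap\overline{V}$ are shown to map onto $F_{i,j}$ (homeomorphically for $i\neq j$, and $2$-to-$1$ off the midpoint for $i=j$); the interval structure and the endpoints are then read off from that picture. You instead quotient a single circle $C\subset\tF_{i,j}$ by its setwise stabilizer $H$ of order $8$, observe that the effective action is by $(\Z/2\Z)^2$ generated by a free involution and a reflection, and identify the endpoints as the images of the points with enlarged stabilizer, which Proposition~\ref{Prop:4points} locates inside the two sets $\tF_{(k_2k_3k_4)}$ with $k_j=i$. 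I checked the representative case $(i,j)=(2,2)$: the stabilizer of $C=\{[a,b,0,0]\}$ is $\{(t_k,t_l)\}$ for $(k,l)\in\{(1,1),(1,2),(2,1),(2,2),(3,3),(3,4),(4,3),(4,4)\}$, acting on $\R P^1$ through $\{\mathrm{id},\,[a,b]\mapsto[b,-a],\,[a,b]\mapsto[a,-b],\,[a,b]\mapsto[b,a]\}$, exactly as you predict, with fixed points $\{[1,0],[0,1]\}\subset\tF_{(234)}$ and $\{[1,1],[1,-1]\}/\sqrt2\subset\tF_{(243)}$. Your approach buys a conceptual explanation of \emph{why} the quotient is an interval and why the endpoints are the six vertices, at the cost of the case-by-case verification you flag (which does reduce to the two circle types, by symmetry under the $\fS_3$-action permuting the indices $2,3,4$); the paper's approach defers everything to the single global fundamental domain $\overline{V}$, which it needs anyway for the K-theory computation. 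One small point worth making explicit in your write-up: a point of $C$ fixed by some $\sigma_{k,l}$ with $l=j$ but $k\neq i$ would also be fixed by $\sigma_{k,j}\sigma_{i,j}=\sigma_{t_k(i),1}$, which has no fixed points by the proposition preceding Proposition~\ref{Prop:2circle}; this is what guarantees that the "extra stabilizer" really forces the point into one of the two sets $\tF_{(k_2k_3k_4)}$ with $k_j=i$ and nothing else.
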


\begin{proof}
This follows from Proposition~\ref{Prop:2circle}. 
See also Figure~\ref{fig:EF} and the remark around it. 
\end{proof}

Note that $F \subset X$ is the complete bipartite graph 
between $\{x_{(234)},x_{(342)},x_{(423)}\}$ 
and $\{x_{(243)},x_{(432)},x_{(324)},\}$. 
See Figure~\ref{fig:EF}. 

\begin{definition}
For $i,j=2,3,4$, 
we define 
\begin{equation*}
F_{i,j}^\circ \coloneqq F_{i,j} \setminus \{x_{(i_2i_3i_4)} \mid i_j =i  \},
\end{equation*}
and define 
\begin{equation*}
F^{\circ} \coloneqq \bigcup_{i_j =2}^4 F_{i,j}^\circ, \qquad 
F^{\text{\textbullet}} \coloneqq 
\{x_{(234)},x_{(243)},x_{(324)},x_{(342)},x_{(423)},x_{(432)} \}.
\end{equation*}
\end{definition}

\begin{definition}
We set $\tF_{i,j}^{\circ} \coloneqq \pi^{-1}(F_{i,j}^{\circ})$ 
for $i,j=2,3,4$, 
$\tF^{\circ} \coloneqq \pi^{-1}(F^{\circ})$ and 
$\tF^{\text{\textbullet}} \coloneqq \pi^{-1}(F^{\text{\textbullet}})$. 
\end{definition}

\section{Exact sequences}\label{Sec:Exact}

For a locally compact subset $Y$ of $\R P^3$ 
which is invariant under the action $\sigma$, 
the action $\beta\colon K\times K \curvearrowright M_4(C(\R P^3))$ 
induces the action $K\times K \curvearrowright M_4(C_0(Y))$ 
which is also denoted by $\beta$. 
We use the following lemma many times. 

\begin{lemma}\label{Lem:exseq}
Let $Y$ be a locally compact subset of $\R P^3$ 
which is invariant under the action $\sigma$. 
Let $Z$ be a closed subset of $Y$ 
which is invariant under the action $\sigma$. 
Then we have a a short exact sequence 
\begin{equation*}
\begin{xy}
\xymatrix{
0  \ar[r]  & M_4(C_0(Y\setminus Z))^{\beta} \ar[r] 
& M_4(C_0(Y))^{\beta} \ar[r] 
& M_4(C_0(Z))^{\beta} \ar[r]  & 0 }
\end{xy}
\end{equation*}
\end{lemma}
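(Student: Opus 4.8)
The plan is to start from the well-known short exact sequence of $C^*$-algebras associated to the open/closed decomposition of a locally compact Hausdorff space, namely
\begin{equation*}
0 \longrightarrow C_0(Y\setminus Z) \longrightarrow C_0(Y) \xrightarrow{\ \rho\ } C_0(Z) \longrightarrow 0,
\end{equation*}
where $\rho$ is restriction of functions and $C_0(Y\setminus Z)$ is identified with the ideal of functions in $C_0(Y)$ vanishing on $Z$ (this uses that $Z$ is closed in $Y$, so $Y\setminus Z$ is open in $Y$ and hence locally compact). Tensoring with $M_4(\C)$ (an exact, indeed nuclear and finite-dimensional, operation) preserves exactness, giving the short exact sequence
\begin{equation*}
0 \longrightarrow M_4(C_0(Y\setminus Z)) \longrightarrow M_4(C_0(Y)) \xrightarrow{\ \rho\ } M_4(C_0(Z)) \longrightarrow 0.
\end{equation*}
All three algebras carry the action $\beta$ of the finite group $K\times K$: the hypothesis that $Y$ and $Z$ are $\sigma$-invariant guarantees that $\beta$ restricts to $M_4(C_0(Z))$ and to the ideal $M_4(C_0(Y\setminus Z))$, and the maps in the sequence are visibly $\beta$-equivariant (restriction of functions commutes with the point transformations $\sigma_{i,j}$ and with $\Ad U_{i,j}$).

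The second step is to pass to fixed point algebras. The key general fact is that taking the fixed point algebra under an action of a \emph{finite} group $G$ is an exact functor on the category of $G$-$C^*$-algebras: if $0\to I\to B\to B/I\to 0$ is a $G$-equivariant short exact sequence, then
\begin{equation*}
0 \longrightarrow I^G \longrightarrow B^G \longrightarrow (B/I)^G \longrightarrow 0
\end{equation*}
is exact. Exactness at $I^G$ and $B^G$ is immediate since $I^G = I\cap B^G$. The only nontrivial point is surjectivity of $B^G \to (B/I)^G$: given $x\in B$ whose image $\bar x$ is $G$-fixed, lift it arbitrarily to $b\in B$ and then average, setting $b' \coloneqq \frac{1}{|G|}\sum_{g\in G} g\cdot b$; this $b'$ lies in $B^G$ and still maps to $\bar x$ because $\bar x$ was already fixed. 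Applying this with $G = K\times K$, $B = M_4(C_0(Y))$, $I = M_4(C_0(Y\setminus Z))$, and using the identification $I^{\beta} = M_4(C_0(Y\setminus Z))^{\beta}$ and $(B/I)^{\beta} = M_4(C_0(Z))^{\beta}$ yields exactly the claimed short exact sequence.

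There is essentially no hard part here; the argument is standard and the finiteness of $K\times K$ makes the averaging trick available. The only thing worth being careful about is the bookkeeping of the $\beta$-invariance: one must check that the continuous maps $\sigma_{i,j}\colon \R P^3\to\R P^3$ carry $Y$ into $Y$ and $Z$ into $Z$ (which is precisely the $\sigma$-invariance hypothesis), so that $\beta$ genuinely descends to actions on $M_4(C_0(Y))$ and $M_4(C_0(Z))$ via $\beta_{i,j}(F) = \Ad U_{i,j}\circ F\circ\sigma_{i,j}$, and that these are compatible with restriction; once this is in place the rest is formal.
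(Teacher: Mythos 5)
Your proposal is correct and matches the paper's argument: the paper likewise reduces to surjectivity of $M_4(C_0(Y))^{\beta}\to M_4(C_0(Z))^{\beta}$ and obtains an invariant lift by averaging, setting $g_0=\frac{1}{16}\sum_{i,j=1}^4\beta_{i,j}(g)$ for an arbitrary lift $g$, which is exactly your $\frac{1}{|G|}\sum_{g\in G}g\cdot b$ with $|K\times K|=16$. The extra framing via exactness of the fixed-point functor for finite groups is fine but adds nothing beyond the same averaging trick.
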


\begin{proof}
It suffices to show that $M_4(C_0(Y))^{\beta} \to 
M_4(C_0(Z))^{\beta}$ is surjective. 
The other assertions are easy to see. 

Take $f \in M_4(C_0(Z))^{\beta}$. 
Since $M_4(C_0(Y)) \to M_4(C_0(Z))$ is surjective, 
there exists $g\in M_4(C_0(Y))$ with $g|_{Z}=f$. 
Set $g_0 \in M_4(C_0(Y))$ by 
\[
g_0 \coloneqq \frac{1}{16}\sum_{i,j=1}^4 \beta_{i,j}(g).
\]
Then $g_0 \in M_4(C_0(Y))^{\beta}$ 
and $g_0|_{Z}=f$. 
This completes the proof. 
\end{proof}

We also use the following lemma many times. 

\begin{lemma}\label{Lem:isom1}
Let $Y$ be a locally compact subset of $\R P^3$ 
which is invariant under the action $\sigma$. 
Let $Z$ be a closed subset of $Y$ 
such that $Y = \bigcup_{i,j=1}^4 \sigma_{i,j}(Z)$
and that $\sigma_{i,j}(Z)\cap Z =\emptyset$ 
for $i,j=1,2,3,4$ with $(i,j)\neq (1,1)$. 
Then we have $M_4(C_0(Y))^{\beta} \cong M_4(C_0(Z))$. 
\end{lemma}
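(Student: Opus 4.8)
The plan is to produce the isomorphism $M_4(C_0(Y))^{\beta} \cong M_4(C_0(Z))$ by restriction. Since $Z$ is closed in $Y$ and $Y = \bigcup_{i,j} \sigma_{i,j}(Z)$ with the translates $\sigma_{i,j}(Z)$ disjoint from $Z$ for $(i,j) \neq (1,1)$, one first checks that $Z$ is \emph{clopen} in $Y$: indeed $Z$ is closed, and its complement $Y \setminus Z = \bigcup_{(i,j)\neq(1,1)} \sigma_{i,j}(Z \setminus (Z\cap\sigma_{i,j}^{-1}(Z)))$ is a finite union of closed translates intersected with... actually more cleanly, $Y\setminus Z = \bigcup_{(i,j)\neq(1,1)}\sigma_{i,j}(Z)\setminus Z$; each $\sigma_{i,j}(Z)$ is compact (if $Z$ is compact) or at least closed in $Y$, and since the finitely many translates that could accumulate onto $Z$ are all disjoint from $Z$, the set $Z$ has open complement. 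So $Z$ is a clopen $K\times K$-"fundamental domain": every $\sigma$-orbit in $Y$ meets $Z$ in exactly one point in its interior sense — more precisely each point of $Y$ lies in some $\sigma_{i,j}(Z)$, and a point in $Z$ cannot also lie in $\sigma_{i,j}(Z)$ for $(i,j)\neq(1,1)$.

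Next I would define the map. Restriction gives a $*$-homomorphism $\rho\colon M_4(C_0(Y))^{\beta} \to M_4(C_0(Z))$, $f \mapsto f|_Z$. (Here I use that $Z$ is clopen, so restriction of a $C_0$-function on $Y$ to $Z$ again lies in $C_0(Z) = C(Z)$ when $Z$ is compact, or $C_0(Z)$ in general; no equivariance condition survives on $Z$ because $Z$ meets no nontrivial translate of itself.) For surjectivity: given $h \in M_4(C_0(Z))$, extend it by $0$ to $\tilde h \in M_4(C_0(Y))$ — legitimate since $Z$ is clopen — and set $g := \frac{1}{16}\sum_{i,j=1}^4 \beta_{i,j}(\tilde h)$, which lies in $M_4(C_0(Y))^{\beta}$ exactly as in Lemma~\ref{Lem:exseq}. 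On $Z$, the only term of the sum supported near $Z$ is the $(1,1)$-term (the supports $\sigma_{i,j}(\operatorname{supp}\tilde h)\subset \sigma_{i,j}(Z)$ are disjoint from $Z$ for $(i,j)\neq(1,1)$), and $\beta_{1,1} = \operatorname{id}$, so $g|_Z = \frac{1}{16}\tilde h|_Z \neq h$ — so I should instead normalize by the orbit size, or better: note each point of $Z$ has $\sigma$-stabilizer possibly nontrivial, which spoils a naive count. The clean fix is to use that $g|_Z$ equals $\frac{1}{16}\sum\{\beta_{i,j}(\tilde h)|_Z : \sigma_{i,j}(Z)\cap Z \neq\emptyset\} = \frac{1}{16}|\operatorname{Stab}|\cdot h$ pointwise only if stabilizers are constant; since they need not be, the right statement is that $\rho$ is surjective because one can instead take any $*$-homomorphic section — concretely, map $h\in M_4(C_0(Z))$ to the unique $\beta$-invariant element of $M_4(C_0(Y))$ whose restriction to $Z$ is $h$, defined on $\sigma_{i,j}(Z)$ by $\operatorname{Ad}U_{i,j}\circ h\circ\sigma_{i,j}$; this is well-defined precisely when, on overlaps $\sigma_{i,j}(Z)\cap\sigma_{k,l}(Z)$, the two prescriptions agree, which holds by Lemma~\ref{Lem:PaPa}-type reasoning (the relevant cocycle/stabilizer identities for the $U$'s on fixed-point sets).

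For injectivity of $\rho$: if $f \in M_4(C_0(Y))^{\beta}$ vanishes on $Z$, then for each $(i,j)$, $f|_{\sigma_{i,j}(Z)} = \beta_{i,j}(f)|_{\sigma_{i,j}(Z)} = \operatorname{Ad}U_{i,j}\circ(f\circ\sigma_{i,j})|_{\sigma_{i,j}(Z)}$, and $f\circ\sigma_{i,j}$ on $\sigma_{i,j}(Z)$ is $f|_Z\circ\sigma_{i,j} = 0$; since the $\sigma_{i,j}(Z)$ cover $Y$, $f = 0$. Thus $\rho$ is an isomorphism. The main obstacle I anticipate is the \emph{well-definedness of the section} on the overlaps $\sigma_{i,j}(Z)\cap\sigma_{k,l}(Z)$: a point there has a nontrivial coincidence $\sigma_{i,j} = \sigma_{k,l}$ along that point's orbit, i.e. lies in a $\sigma$-fixed-point set, and one must verify $\operatorname{Ad}U_{i,j}\circ h\circ\sigma_{i,j}$ and $\operatorname{Ad}U_{k,l}\circ h\circ\sigma_{k,l}$ coincide there — this is exactly where the identities $P_{k,l}([a]) = P_{t_i(k),t_j(l)}([a])$ on fixed points (Lemma~\ref{Lem:PaPa}) and, more generally, the compatibility $\beta_{i,j}\beta_{k,l} = \beta_{i,j} \cdot (\text{cocycle}) \cdot \beta_{t_i(k),t_j(l)}$ get used. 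Everything else is the same averaging and support bookkeeping already deployed in Lemma~\ref{Lem:exseq}.
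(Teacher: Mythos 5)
Your map is the paper's map — restriction $M_4(C_0(Y))^{\beta}\to M_4(C_0(Z))$ with inverse $h\mapsto\sum_{i,j=1}^4\beta_{i,j}(\tilde h)$, $\tilde h$ the extension by zero — and your injectivity and surjectivity arguments are the right ones. But you have manufactured an obstacle that the hypotheses rule out. From $\sigma_{i,j}(Z)\cap Z=\emptyset$ for $(i,j)\neq(1,1)$ and the fact that the $\sigma_{i,j}$ compose according to the group law of $K\times K$, one gets $\sigma_{i,j}(Z)\cap\sigma_{k,l}(Z)=\sigma_{i,j}\bigl(Z\cap\sigma_{t_i(k),t_j(l)}(Z)\bigr)=\emptyset$ whenever $(i,j)\neq(k,l)$; so $Y$ is the \emph{disjoint} union of the $16$ clopen translates $\sigma_{i,j}(Z)$ and every point of $Y$ has trivial stabilizer. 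Consequently there are no overlaps on which to check compatibility, the piecewise prescription $\Ad U_{i,j}\circ h\circ\sigma_{i,j}$ on $\sigma_{i,j}(Z)$ is trivially well defined, and the "$\frac{1}{16}$ versus stabilizer size'' discussion is moot: just drop the $\frac{1}{16}$ and the sum $\sum_{i,j}\beta_{i,j}(\tilde h)$ restricts to $h$ on $Z$ because only the $(1,1)$ term has support meeting $Z$. Your fallback appeal to "Lemma~\ref{Lem:PaPa}-type reasoning'' for the overlaps is not a proof and, fortunately, is not needed; the one sentence you should add instead is the disjointness of the translates. With that observation inserted, your argument is complete and coincides with the paper's.
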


\begin{proof}
The restriction map $M_4(C_0(Y))^{\beta} \to M_4(C_0(Z))$ 
is an isomorphism because its inverse is given by 
\[
M_4(C_0(Z)) \ni f \mapsto \sum_{i,j=1}^4 \beta_{i,j}(f) \in 
M_4(C_0(Y))^{\beta}. \qedhere
\]
\end{proof}

Under the situation of the lemma above, 
$\pi\colon Z \to \pi(Z)=\pi(Y)$ is a homeomorphism. 
Hence we have 
$M_4(C_0(Y))^{\beta} \cong M_4(C_0(Z)) \cong M_4\big(C_0(\pi(Z))\big)
=M_4\big(C_0(\pi(Y))\big)$. 

The following lemma generalize Lemma~\ref{Lem:isom1}. 

\begin{lemma}
Let $G$ be a subgroup of $K \times K$. 
Let $Y$ be a locally compact subset of $\R P^3$ 
which is invariant under the action $\sigma$. 
Suppose that each point of $Y$ is fixed by 
$\sigma_{i,j}$ for all $(t_i,t_j) \in G$. 
Let $Z$ be a closed subset of $Y$ 
such that $Y = \bigcup_{i,j=1}^4 \sigma_{i,j}(Z)$
and that $\sigma_{i,j}(Z)\cap Z =\emptyset$ 
for $i,j=1,2,3,4$ with $(t_i,t_j) \not\in G$. 
Then we have $M_4(C_0(Y))^{\beta} \cong C_0(Z,D)$ 
where 
\[
D \coloneqq 
\{T \in M_4(\C) \mid \text{$\Ad U_{i,j}(T)=T$ for all 
$(t_i,t_j) \in G$}\}. 
\]
\end{lemma}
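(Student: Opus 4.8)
The plan is to generalize the argument of Lemma~\ref{Lem:isom1} by replacing the scalar matrix algebra $M_4(\C)$, which appears as the fibre when the stabilizer is trivial, by the fixed point algebra $D$ of $\Ad U_{i,j}$ for $(t_i,t_j)\in G$. First I would define a candidate isomorphism. Given $f \in C_0(Z,D)$, I would set
\[
\Theta(f)\coloneqq \sum_{i,j=1}^4 \beta_{i,j}(\widetilde f)
\]
where $\widetilde f \in M_4(C_0(Y))$ is the function supported on $Z$ agreeing with $f$ there (well-defined since $Z$ is closed and, by the disjointness hypothesis, the various $\sigma_{i,j}(Z)$ for $(t_i,t_j)\notin G$ meet $Z$ only in points fixed by $G$; one should check the pieces glue continuously on the overlaps, which is exactly where the condition $f([a])\in D$ is used). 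The key structural point is that for $(t_i,t_j)\in G$ and a point $z\in Z$ (hence $\sigma_{i,j}(z)=z$), the summand $\beta_{i,j}(\widetilde f)$ evaluated at $z$ equals $\Ad U_{i,j}(f(z)) = f(z)$, so the $|G|$ group elements of $G$ contribute the same thing and the formula is a genuine averaging that lands in $M_4(C_0(Y))^{\beta}$.

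Next I would verify that $\Theta$ is a $*$-homomorphism and that it is inverse to the restriction map $\rho\colon M_4(C_0(Y))^{\beta}\to C_0(Z,D)$, $g\mapsto g|_Z$. That $\rho$ is well-defined requires noting that for $g\in M_4(C_0(Y))^\beta$ and $z\in Z$, applying $\beta_{i,j}$ for $(t_i,t_j)\in G$ gives $g(z) = \Ad U_{i,j}(g(\sigma_{i,j}(z))) = \Ad U_{i,j}(g(z))$, so indeed $g(z)\in D$; here I use the hypothesis that every point of $Y$, in particular of $Z$, is fixed by all of $G$. The composite $\rho\circ\Theta = \id$ is immediate from $\sigma_{i,j}(Z)\cap Z=\emptyset$ off $G$ together with the $G$-averaging normalization; the composite $\Theta\circ\rho = \id$ follows because a $\beta$-invariant function on $Y$ is determined by its restriction to $Z$ via $g = \sum_{i,j}\beta_{i,j}(g|_Z$-extension$)$, using $Y=\bigcup_{i,j}\sigma_{i,j}(Z)$ to cover all of $Y$ and $\beta$-invariance of $g$ to identify the summands.

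I expect the main obstacle to be the continuity of the extension $\widetilde f$ and of $\Theta(f)$, i.e. checking that the locally-defined pieces actually assemble into a continuous function on $Y$. The subtlety is that points of $Z$ fixed by $G$ may be limits of points of $Z$ not fixed by $G$ and simultaneously limits of points in $\sigma_{i,j}(Z)$ for $(t_i,t_j)\in G$; at such a point two a priori different formulas for the value must agree, and this is precisely guaranteed by $f$ taking values in $D$ rather than all of $M_4(\C)$. Once this compatibility is in place the rest is the same bookkeeping as in Lemma~\ref{Lem:isom1}, just carrying the fibre algebra $D$ along. Everything else — that $D$ is a unital $C^*$-subalgebra of $M_4(\C)$, that $\Theta$ is multiplicative and $*$-preserving, and that it is isometric — is routine and can be stated without detailed computation.
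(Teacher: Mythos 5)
Your proposal is correct and follows essentially the same route as the paper: the isomorphism is the restriction map $g\mapsto g|_Z$ (well defined into $C_0(Z,D)$ because $Z$ is pointwise fixed by $G$), with inverse given by summing translates $\beta_{i,j}$ of the extension of $f$, using the disjointness hypothesis off $G$ and the fact that $f$ takes values in $D$ so the terms indexed by $G$ agree on $Z$. The only discrepancy is that the paper sums over a set of coset representatives of $(K\times K)/G$ rather than over all sixteen pairs, which makes $\rho\circ\Theta=\id$ exact on the nose; your displayed formula would instead produce $|G|\cdot f$ on $Z$ and needs the $1/|G|$ normalization that your prose alludes to but the formula omits.
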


\begin{proof}
We have a restriction map $M_4(C_0(Y))^{\beta} \to C_0(Z,D)$ 
which is an isomorphism because its inverse is given by 
\[
C_0(Z,D) \ni f \mapsto \sum_{(i,j)\in I} \beta_{i,j}(f) \in 
M_4(C_0(Y))^{\beta},
\]
where an index set $I$ was chosen so that 
$\{(t_i,t_j)\in K\times K \mid (i,j)\in I\}$ 
becomes a complete representative of the quotient $(K\times K)/G$. 
\end{proof}

Under the situation of the lemma above, 
$\pi\colon Z \to \pi(Z)=\pi(Y)$ is a homeomorphism. 
Hence we have 
$M_4(C_0(Y))^{\beta} \cong C_0(Z,D) \cong C_0(\pi(Z),D)=C_0(\pi(Y),D)$.

\begin{definition}
We set 
$I \coloneqq M_4(C_0(\tO))^{\beta}$ and
$B \coloneqq M_4(C(\tF))^{\beta}$. 
\end{definition}

By Lemma~\ref{Lem:exseq} we get the short exact sequence 
\[
0 \longrightarrow I \longrightarrow A \longrightarrow B \longrightarrow 0,
\]
From this sequence, 
we get a six-term exact sequence 
\[
\xymatrix@C=25pt@R=20pt{
K_0(I) \ar[r] &  K_0(A) \ar[r] & K_0(B) \ar[d]^{\delta_0} \\
K_{1}(B) \ar[u]^{\delta_1} & \ar[l] K_{1}(A) & \ar[l] K_{1}(I).
}
\] 
From next section, 
we compute $K_i(B)$, $K_i(I)$ and $\delta_i$ for $i=0,1$. 
Consult \cite{RLL} for basics of K-theory.

\section{The Structure of the Quotient $B$}\label{Sec:StB}

\begin{definition}
For $i,j=2,3,4$, 
let $D_{i,j}$ be the fixed algebra of $\Ad U_{i,j}$ on $M_4(\C)$. 
\end{definition}

From the direct computation, 
we have the following. 

\begin{proposition}\label{Prop:M2M2}
For each $i,j =2,3,4$, 
$D_{i,j}$ is isomorphic to $M_2(\C) \oplus M_2(\C )$. 
More precisely, we have
\begin{align*}
&D_{2,2}=\left \{ 
\begin{pmatrix}
a &b &0 & 0\\
c & d & 0 & 0 \\
0 & 0 &e& f \\
0 & 0 & g &h
\end{pmatrix}
\right \}, &
&D_{2,3}=\left \{ 
\begin{pmatrix}
a &b &c & d\\
e & f & g & h \\
-h & g &f& -e \\
d & -c & -b &a
\end{pmatrix}
\right \}, \\
&D_{2,4}=\left \{ 
\begin{pmatrix}
a &b &c & d\\
e & f & g & h \\
c & d &a& b \\
g & h & e &f
\end{pmatrix}
\right \}, &
&D_{3,2}=\left \{ 
\begin{pmatrix}
a &b &c & d\\
e & f & g & h \\
h & g &f& e \\
d & c &b &a
\end{pmatrix}
\right \}, \\
& D_{3,3}=\left \{ 
\begin{pmatrix}
a &0 &b & 0\\
0 & c & 0 & d \\
e & 0 &f& 0 \\
0 & g &0 &h
\end{pmatrix}
\right \}, &
&D_{3,4}=\left \{ 
\begin{pmatrix}
a &b &c & d\\
b & a & -d & -c \\
e & f &g& h \\
-f & -e &h &g
\end{pmatrix}
\right \}, \\
&D_{4,2}=\left \{ 
\begin{pmatrix}
a &b &c & d\\
e & f & g & h \\
c & -d &a& -b \\
-g & h & -e &f
\end{pmatrix}
\right \}, &
&D_{4,3}=\left \{ 
\begin{pmatrix}
a &b &c & d\\
b & a & d & c \\
e & f &g& h \\
f & e & h &g
\end{pmatrix}
\right \}, \\
&D_{4,4}=\left \{ 
\begin{pmatrix}
a &0 &0 & b\\
0 & c & d & 0 \\
0 & e &f& 0 \\
g & 0 &0 &h
\end{pmatrix}
\right \}, 
\end{align*}
where $a,b,c,d,e,f,g,h$ run through $\C$. 
\end{proposition}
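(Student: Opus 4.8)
The statement asks to identify, for each of the nine pairs $(i,j)$ with $i,j \in \{2,3,4\}$, the fixed point algebra $D_{i,j} = \{T \in M_4(\C) \mid \Ad U_{i,j}(T) = T\}$, and to verify it equals the stated $8$-dimensional subalgebra, which is in turn isomorphic to $M_2(\C)\oplus M_2(\C)$. The central observation is that each $U_{i,j}$ with $i,j \geq 2$ is (up to the phase $\varepsilon$-factors) a signed permutation matrix, hence a unitary with $U_{i,j}^2 = \pm 1$; in fact from Proposition~\ref{Prop:URu} we have $U_{i,j}U_{i,j} = \varepsilon(i,i)\varepsilon(j,j)U_{t_i(i),t_j(j)} = \varepsilon(i,i)\varepsilon(j,j)U_{1,1} = \varepsilon(i,i)\varepsilon(j,j)\cdot 1$, and reading off Table~\ref{table:cocycle} one sees $\varepsilon(i,i)=-1$ for $i=2,3,4$, so $U_{i,j}^2 = (-1)(-1) = 1$ for all $i,j \in \{2,3,4\}$. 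Thus each $U_{i,j}$ is a self-adjoint unitary (a symmetry), with eigenvalues $\pm 1$.

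First I would record the eigenspace decomposition: since $U_{i,j}^* = U_{i,j}$ and $U_{i,j}^2 = 1$, we can write $U_{i,j} = P^+_{i,j} - P^-_{i,j}$ where $P^\pm_{i,j} = \tfrac12(1 \pm U_{i,j})$ are the spectral projections onto the $\pm 1$ eigenspaces. One checks from the explicit matrices after Definition~\ref{Def:Uij} that for each $(i,j)$ with $i,j\geq 2$ both eigenspaces are $2$-dimensional (equivalently, $\operatorname{tr} U_{i,j} = 0$, which is visible from the listed matrices since each has zero diagonal or cancelling diagonal entries). Then the commutant fact $\{T \mid \Ad U_{i,j}(T) = T\} = \{T \mid U_{i,j}T = TU_{i,j}\} = P^+_{i,j}M_4(\C)P^+_{i,j} \oplus P^-_{i,j}M_4(\C)P^-_{i,j}$ gives immediately that $D_{i,j} \cong M_2(\C)\oplus M_2(\C)$, an $8$-dimensional algebra. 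This handles the abstract isomorphism claim uniformly in one stroke.

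Next I would verify the "more precisely" part: that the explicitly displayed set of matrices is exactly $D_{i,j}$. For each fixed $(i,j)$ this is a routine linear-algebra check — write $T = (T_{k,l})$, impose $U_{i,j}T = TU_{i,j}$, and solve the resulting linear system. Since $U_{i,j}$ is a signed permutation matrix, conjugation by it simply permutes the entries of $T$ with signs, so the equation $\Ad U_{i,j}(T) = T$ forces certain entries to be equal (up to sign) and others to vanish; the displayed $8$-parameter family is the solution set. One could organize this by noting that $U_{i,j}$ corresponds to a fixed-point-free involution $\tau$ on $\{1,2,3,4\}$ (the pattern of its nonzero entries), pairing up the four basis vectors into two pairs, and the surviving entries of $T$ are those linking vectors within the same $\tau$-orbit — yielding a block structure conjugate to $M_2 \oplus M_2$. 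A cardinality check ($8$ free complex parameters, matching $2\cdot 2^2$) confirms nothing has been missed.

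The main obstacle is purely bookkeeping: there are nine cases, and while each is mechanical, one must be careful with the signs coming from the $\varepsilon$-factors in $U_{i,j} = \sum_k \varepsilon(i,k)\varepsilon(k,j)e_{t_i(k),t_j(k)}$, and with correctly transcribing which off-diagonal entries survive. I would present the argument by giving the uniform eigenspace/commutant reasoning in full for the abstract isomorphism, then state that the explicit description in each case follows by a direct computation of $U_{i,j}T U_{i,j}^{-1} = T$ using the matrices listed after Definition~\ref{Def:Uij}, perhaps working one representative case (say $D_{2,2}$, the simplest, where $U_{2,2} = \operatorname{diag}(1,1,-1,-1)$ visibly commutes exactly with block-diagonal $2+2$ matrices) in detail and leaving the remaining eight to the reader as "similar direct computations," which is consistent with the paper's style ("From the direct computation, we have the following").
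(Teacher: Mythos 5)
Your proposal is correct, and it actually supplies more argument than the paper does: the paper's entire proof is ``From the direct computation, we have the following.'' Your uniform spectral argument for the abstract isomorphism is a genuine improvement over that. Indeed, Proposition~\ref{Prop:URu} gives $U_{i,j}^2=\varepsilon(i,i)\varepsilon(j,j)U_{1,1}$, and Table~\ref{table:cocycle} gives $\varepsilon(i,i)=-1$ for $i=2,3,4$, so $U_{i,j}^2=1$ and $U_{i,j}^*=U_{i,j}^{-1}=U_{i,j}$; since each of the nine matrices has trace $0$, the spectral projections $P^{\pm}_{i,j}=\tfrac12(1\pm U_{i,j})$ both have rank $2$, and the commutant $\{T\mid U_{i,j}T=TU_{i,j}\}=P^{+}_{i,j}M_4(\C)P^{+}_{i,j}\oplus P^{-}_{i,j}M_4(\C)P^{-}_{i,j}\cong M_2(\C)\oplus M_2(\C)$ handles all nine cases at once. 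The explicit descriptions then follow, as you say, by solving $U_{i,j}T=TU_{i,j}$ entrywise, which is exactly what the paper intends. One caveat on a side remark: your suggested organizing principle that ``the surviving entries of $T$ are those linking vectors within the same $\tau$-orbit'' is valid only for the three diagonal cases $U_{i,i}$, which are honest diagonal matrices. For the six off-diagonal cases the nonzero pattern of $U_{i,j}$ is a fixed-point-free involution $\tau$ with signs $s_m$, and the fixed-point condition reads $T_{\tau(m),\tau(n)}=s_ms_nT_{m,n}$; this kills no entries but identifies the $16$ entries in $8$ signed pairs (compare the displayed $D_{2,3}$, in which all $16$ entries may be nonzero). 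The parameter count of $8$ still comes out right, and since your actual verification is the entrywise linear system, this slip does not affect the validity of the proof --- but the heuristic as stated would mislead you in six of the nine cases.
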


\begin{definition}
For each $i_2,i_3,i_4$ with $\{i_2,i_3,i_4  \}= \{2,3,4\}$,
define $D_{(i_2 i_3 i_4)} \subset \R P^3$ by
\begin{equation*}
D_{(i_2 i_3 i_4)} \coloneqq D_{i_2,2} \cap D_{i_3, 3} \cap D_{i_4,4}.
\end{equation*}
\end{definition}

\begin{proposition}\label{Prop:C4}
For each $i_2,i_3,i_4$ with $\{i_2,i_3,i_4  \}= \{2,3,4\}$,
we have 
\begin{equation*}
D_{(i_2 i_3 i_4)}= D_{i_2,2} \cap D_{i_3,3}= D_{i_2,2} \cap D_{i_4,4}= D_{i_3,3} \cap D_{i_4,4}, 
\end{equation*}
and $D_{(i_2 i_3 i_4)}$ is isomorphic to $\C^4$. 
More precisely, we have 
\begin{align*}
&D_{(234)}=\left \{ 
\begin{pmatrix}
a &0 &0 & 0\\
0 & b & 0 & 0 \\
0 & 0 &c& 0 \\
0 & 0 & 0 &d
\end{pmatrix}
\right \} &
&D_{(423)}=\left \{ 
\begin{pmatrix}
a &b &c & d\\
b & a & -d & -c \\
c & -d &a& -b \\
d & -c & -b &a
\end{pmatrix}
\right \} \\
&D_{(342)}=\left \{ 
\begin{pmatrix}
a &b &c & d\\
b & a & d & c \\
c & d &a& b \\
d & c & b &a
\end{pmatrix}
\right \} &
&D_{(243)}=\left \{ 
\begin{pmatrix}
a &b &0 & 0\\
b & a & 0 & 0 \\
0 & 0 &c& d \\
0 & 0 &d &c
\end{pmatrix}
\right \} \\
&D_{(432)}=\left \{ 
\begin{pmatrix}
a &0 &b & 0\\
0 & c & 0 & d \\
b & 0 &a& 0 \\
0 & d & 0 &c
\end{pmatrix}
\right \} &
&D_{(324)}=\left \{ 
\begin{pmatrix}
a &0 &0 & d\\
0 & b & c & 0 \\
0 & c &b& 0 \\
d & 0 & 0 &a
\end{pmatrix}
\right \} 
\end{align*}
where $a,b,c,d$ run through $\C$. 
\end{proposition}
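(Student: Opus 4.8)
The plan is to prove the four‑fold equality first by a short group–theoretic observation, and then to read off the explicit matrix forms — and with them the isomorphism with $\C^4$ — by intersecting the descriptions already recorded in Proposition~\ref{Prop:M2M2}.

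For the equalities, the key point is that $\Ad$ turns the twisted product of Proposition~\ref{Prop:URu} into an honest composition of automorphisms. Fix $\{i_2,i_3,i_4\}=\{2,3,4\}$. Since $t_it_j=t_{t_i(j)}$, two distinct non‑trivial elements of $K$ multiply to the third non‑trivial element, so $t_{i_2}(i_3)=i_4$; and from $t_2=(2143)$ we have $t_2(3)=4$. Hence by Proposition~\ref{Prop:URu},
\[
U_{i_2,2}U_{i_3,3}=\varepsilon(i_2,i_3)\varepsilon(2,3)\,U_{t_{i_2}(i_3),\,t_2(3)}=\pm\,U_{i_4,4}.
\]
As $\Ad(\pm U_{i_4,4})=\Ad U_{i_4,4}$ and $\Ad U_{i_2,2}\circ\Ad U_{i_3,3}=\Ad(U_{i_2,2}U_{i_3,3})$, every $T\in M_4(\C)$ fixed by both $\Ad U_{i_2,2}$ and $\Ad U_{i_3,3}$ is fixed by $\Ad U_{i_4,4}$ as well; that is, $D_{i_2,2}\cap D_{i_3,3}\subseteq D_{i_4,4}$, so $D_{i_2,2}\cap D_{i_3,3}=D_{(i_2i_3i_4)}$. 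Running the same computation with the roles of the three pairs permuted — using $t_2(4)=3$, $t_{i_2}(i_4)=i_3$ for $D_{i_2,2}\cap D_{i_4,4}$, and $t_3(4)=2$, $t_{i_3}(i_4)=i_2$ for $D_{i_3,3}\cap D_{i_4,4}$ — gives the remaining two equalities.

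For the explicit description of $D_{(i_2i_3i_4)}$ I would simply intersect the two relevant matrix forms from Proposition~\ref{Prop:M2M2}. For instance $D_{(234)}=D_{2,2}\cap D_{3,3}$: lying in $D_{2,2}$ forces a matrix to be block diagonal for the basis partition $\{1,2\}\sqcup\{3,4\}$, while lying in $D_{3,3}$ forces block diagonality for the partition $\{1,3\}\sqcup\{2,4\}$, and together these leave only the diagonal matrices, which is the displayed form and is patently $\cong\C^4$. Each of the other five cases is handled the same way: intersecting two of the nine algebras of Proposition~\ref{Prop:M2M2} cuts them down to exactly the four‑parameter family displayed, which is readily seen to be a commutative $*$‑subalgebra of $M_4(\C)$ of dimension $4$ over $\C$ — for example it is spanned by $1$ together with a commuting triple of self‑adjoint symmetries, which one may take to be the constant functions $U_{i_2,2},U_{i_3,3},U_{i_4,4}$, pairwise commuting by Table~\ref{table:cocycle2} — and is therefore isomorphic to $\C^4$.

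I expect no genuine obstacle here: everything reduces to the finite arithmetic of the Klein group $K$ and to finite‑dimensional linear algebra. The only place that requires care is the combinatorial bookkeeping — knowing which of $2,3,4$ is $i_4$ for given $i_2,i_3$, and the values of $t_i(j)$ — together with correctly reading the nine matrix patterns of Proposition~\ref{Prop:M2M2} and the explicit $U_{i,j}$ listed after Definition~\ref{Def:Uij}; it is worth cross‑checking the six intersections against those explicit matrices. One could, alternatively, skip the group‑theoretic step entirely and verify all six pairwise intersections — and hence the equalities — directly from Proposition~\ref{Prop:M2M2}, at the cost of a longer but entirely routine computation.
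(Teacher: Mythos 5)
Your proposal is correct. The paper offers no written proof of this proposition at all --- it is presented as the outcome of ``direct computation'', i.e.\ as a brute-force intersection of the nine matrix patterns of Proposition~\ref{Prop:M2M2} --- so your second half (reading off each $D_{(i_2i_3i_4)}$ by intersecting two of those patterns, e.g.\ diagonal matrices for $D_{(234)}=D_{2,2}\cap D_{3,3}$) coincides with what the paper intends. What you add is the group-theoretic first half: since $t_{i_2}t_{i_3}=t_{i_4}$ and $t_2(3)=4$, Proposition~\ref{Prop:URu} gives $U_{i_2,2}U_{i_3,3}=\pm U_{i_4,4}$, hence $\Ad U_{i_2,2}\circ \Ad U_{i_3,3}=\Ad U_{i_4,4}$ and any matrix fixed by two of the three automorphisms is fixed by the third. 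This cleanly yields the three equalities without checking them entry-by-entry, and it also explains structurally why each $D_{(i_2i_3i_4)}$ is the commutant-type algebra spanned by $1$ and the three pairwise commuting symmetries $U_{i_2,2},U_{i_3,3},U_{i_4,4}$ (commutativity being visible from Table~\ref{table:cocycle2}), whence $D_{(i_2i_3i_4)}\cong\C^4$. Both routes are valid; yours trades a little Klein-group bookkeeping for a substantial reduction in the amount of matrix checking, while the paper's implicit route is pure finite linear algebra.
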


\begin{definition}
We set 
$B^{\circ} \coloneqq M_4(C_0(\tF^{\circ}))^\beta$ and 
$B^{\text{\textbullet}} \coloneqq M_4(C(\tF^{\text{\textbullet}}))^\beta$. 
We also set 
$B^{\circ}_{i,j} \coloneqq M_4(C_0(\tF^{\circ}_{i,j}))^\beta$ 
for $i,j=2,3,4$ and 
$B_{(i_2i_3i_4)} \coloneqq M_4(C_0(\tF_{(i_2i_3i_4)}))^\beta$ 
for $i_2,i_3,i_4$ with $\{i_2,i_3,i_4\}= \{2,3,4\}$. 
\end{definition}

From the discussion up to here, 
we have the following proposition. 

\begin{proposition}
We have 
\begin{align*}
B^{\circ} &\cong \bigoplus_{i,j=2}^4 B^{\circ}_{i,j}, & 
B^{\text{\textbullet}} &\cong 
\bigoplus_{\{i_2,i_3,i_4\}=\{2,3,4\}} B_{(i_2i_3i_4)}. 
\end{align*}
We also have 
\begin{align*}
B^{\circ}_{i,j}&\cong C_0(F^{\circ}_{i,j},D_{i,j}) 
\cong C_0\big((0,1),M_2(\C)\oplus M_2(\C)\big),\\
\end{align*}
for $i,j=2,3,4$ and 
\begin{align*}
B_{(i_2i_3i_4)}&\cong 
C(F_{(i_2i_3i_4)},D_{(i_2i_3i_4)}) 
\cong \C^4
\end{align*}
for $i_2,i_3,i_4$ with $\{i_2,i_3,i_4\}= \{2,3,4\}$. 
\end{proposition}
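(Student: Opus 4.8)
The plan is to deduce the two direct sum decompositions and all four isomorphisms from the generalized form of Lemma~\ref{Lem:isom1} (the lemma with a subgroup $G\subset K\times K$ and the associated fixed matrix algebra $D$, together with the remark following it), combined with the explicit descriptions of fixed point sets and fixed algebras in Propositions~\ref{Prop:2circle}, \ref{Prop:4points}, \ref{Prop:M2M2} and \ref{Prop:C4}.

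For the decompositions: since $F$ is the complete bipartite graph, two distinct edges $F_{i,j}$ and $F_{k,l}$ meet only at the vertices $x_{(i_2i_3i_4)}$, and $F^{\circ}_{i,j}$ is $F_{i,j}$ with its two endpoint-vertices deleted; hence the nine sets $F^{\circ}_{i,j}$ ($i,j=2,3,4$) are pairwise disjoint, and each is relatively clopen in $F^{\circ}$ (it is open because near an interior point of $F_{i,j}$ there are no other edges, and it is closed as the complement of the union of the other eight). Therefore $\tF^{\circ}_{i,j}=\pi^{-1}(F^{\circ}_{i,j})$ is relatively clopen in $\tF^{\circ}$ and $\sigma$-invariant, so $C_0(\tF^{\circ})$ decomposes as the direct sum of the $C_0(\tF^{\circ}_{i,j})$ and, since $\beta$ respects this decomposition, $B^{\circ}=M_4(C_0(\tF^{\circ}))^{\beta}\cong\bigoplus_{i,j=2}^4 B^{\circ}_{i,j}$. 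The decomposition of $B^{\text{\textbullet}}$ is the same argument for the discrete six-point set $F^{\text{\textbullet}}$.

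To identify $B^{\circ}_{i,j}$, I would apply the generalized lemma with $Y=\tF^{\circ}_{i,j}$, with $G=\langle(t_i,t_j)\rangle$ of order two, and with $Z$ any connected component of $\tF^{\circ}_{i,j}$. By Proposition~\ref{Prop:2circle}, $\tF_{i,j}$ is two circles; every point of $\tF^{\circ}_{i,j}\subset\tF_{i,j}$ is fixed by $\sigma_{i,j}$, so the hypothesis on $G$ holds and the algebra $D$ in the lemma is exactly $D_{i,j}$. The remaining hypotheses follow once one knows that the quotient group $(K\times K)/G$ acts \emph{freely} on $\tF^{\circ}_{i,j}$: for $(t_k,t_l)\notin G$, a point of $\tF^{\circ}_{i,j}$ fixed by $\sigma_{k,l}$ would lie in $\tF_{i,j}\cap\tF_{k,l}$, whereas for such $(k,l)$ one has $\tF_{i,j}\cap\tF_{k,l}\subset\tF^{\text{\textbullet}}$, which is disjoint from $\tF^{\circ}_{i,j}$. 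Freeness makes $\pi\colon\tF^{\circ}_{i,j}\to F^{\circ}_{i,j}$ a covering map onto the simply connected space $F^{\circ}_{i,j}\cong(0,1)$, hence trivial, so $\tF^{\circ}_{i,j}$ is a disjoint union of copies of $F^{\circ}_{i,j}$ permuted freely and transitively by $(K\times K)/G$; any component $Z$ then satisfies $\tF^{\circ}_{i,j}=\bigcup_{k,l}\sigma_{k,l}(Z)$ and $\sigma_{k,l}(Z)\cap Z=\emptyset$ for $(t_k,t_l)\notin G$. The lemma and the remark after it give $B^{\circ}_{i,j}\cong C_0(F^{\circ}_{i,j},D_{i,j})$, and one finishes with $F^{\circ}_{i,j}\cong(0,1)$ and $D_{i,j}\cong M_2(\C)\oplus M_2(\C)$ (Proposition~\ref{Prop:M2M2}).

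The identification of $B_{(i_2i_3i_4)}$ is the same but easier: apply the generalized lemma with $Y=\tF_{(i_2i_3i_4)}=\pi^{-1}(x_{(i_2i_3i_4)})$, which is a four-point set by Proposition~\ref{Prop:4points}, with $G=\langle(t_{i_2},t_2),(t_{i_3},t_3)\rangle$ (of order four and index four, and fixing all four points of $Y$), and with $Z$ a single point; here $\sigma_{k,l}(Z)\cap Z=\emptyset$ for $(t_k,t_l)\notin G$ simply says that $G$ is the full stabilizer. The algebra $D$ is then $D_{i_2,2}\cap D_{i_3,3}=D_{(i_2i_3i_4)}\cong\C^4$ by Proposition~\ref{Prop:C4}, so $B_{(i_2i_3i_4)}\cong C(F_{(i_2i_3i_4)},D_{(i_2i_3i_4)})\cong\C^4$. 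I expect the one genuinely computational point to be the inclusion $\tF_{i,j}\cap\tF_{k,l}\subset\tF^{\text{\textbullet}}$ for $(t_k,t_l)\notin\langle(t_i,t_j)\rangle$ (equivalently, the freeness used above) together with the identification of the stabilizers in the four-point case; these are finite but slightly tedious checks built on Propositions~\ref{Prop:}, \ref{Prop:2circle} and \ref{Prop:4points}.
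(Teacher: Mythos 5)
Your argument is correct and takes essentially the route the paper intends: the paper's own ``proof'' is just the sentence ``From the discussion up to here,'' and what you have written is precisely the verification of the hypotheses of the lemma following Lemma~\ref{Lem:isom1} (with $G$ of order $2$ for the edge pieces and of order $4$ for the vertex pieces) that the paper leaves implicit. The stabilizer and freeness checks you isolate, via $\tF_{i,j}\cap\tF_{k,l}\subset\tF^{\text{\textbullet}}$ and Propositions~\ref{Prop:}, \ref{Prop:2circle}, \ref{Prop:4points}, are exactly the finite computations needed.
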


From this proposition, we get 
\begin{align*}
B^{\circ} &\cong C_0\big((0,1),M_2(\C)\oplus M_2(\C)\big)^9
\cong C_0\big((0,1),M_2(\C))^{18}, & 
B^{\text{\textbullet}} &\cong (\C^4)^6\cong \C^{24}.
\end{align*}

\section{K-groups of the quotient $B$}\label{Sec:KB}

From the short exact sequence 
\[
0 \longrightarrow B^{\circ} \longrightarrow B 
\longrightarrow B^{\text{\textbullet}} \longrightarrow 0,
\]
we get a six-term exact sequence 
\[
\xymatrix@C=25pt@R=20pt{
0=K_0(B^{\circ}) \ar[r] &  K_0(B) \ar[r] & K_0(B^{\text{\textbullet}}) 
\cong \Z^{24} \ar[d]^{\delta} \\
0=K_{1}(B^{\text{\textbullet}}) \ar[u] & \ar[l] K_{1}(B) & 
\ar[l]  K_{1}(B^{\circ})\cong \Z^{18}.}
\] 

Next we compute $\delta\colon K_0(B^{\text{\textbullet}}) 
\to K_{1}(B^{\circ})$. 

\begin{proposition}\label{Prop:AabB}
Under the isomorphism $\varPhi\colon A(4) \to A$, 
the \Ca $A^{\text{ab}}(4)$ is canonically 
isomorphic to $B^{\text{\textbullet}}$ .
\end{proposition}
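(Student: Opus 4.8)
The plan is to realize the asserted isomorphism as the map induced on abelianizations by the restriction \shoM onto $B^{\text{\textbullet}}$, and then to finish with a dimension count; in particular no delicate analysis will be needed, since the whole statement follows from material already set up.

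First I would consider the restriction \shoM $\rho\colon A=M_4(C(\R P^3))^{\beta}\to M_4(C(\tF^{\text{\textbullet}}))^{\beta}=B^{\text{\textbullet}}$. Since $\tF^{\text{\textbullet}}$ is closed and $\sigma$-invariant, $\rho$ is surjective by Lemma~\ref{Lem:exseq} applied with $Y=\R P^3$ and $Z=\tF^{\text{\textbullet}}$. Hence $\rho\circ\varPhi\colon A(4)\to B^{\text{\textbullet}}$ is a surjective \shoM. Now $B^{\text{\textbullet}}\cong(\C^4)^6$ is commutative, and $A^{\text{ab}}(4)$ is the abelianization of $A(4)$, so $\rho\circ\varPhi$ factors as $A(4)\twoheadrightarrow A^{\text{ab}}(4)\to B^{\text{\textbullet}}$; write $\bar\varPhi$ for the second arrow, which is then a surjective \shoM. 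By construction $\bar\varPhi$, the isomorphism $\varPhi$, the canonical surjection $A(4)\twoheadrightarrow A^{\text{ab}}(4)$, and $\rho$ fit into a commuting square, and this is the precise sense of the ``canonical'' isomorphism claimed in the proposition.

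It then remains only to see that $\bar\varPhi$ is injective. By Proposition~\ref{Prop:Aab}, $A^{\text{ab}}(4)\cong C(\fS_4)$, which has dimension $|\fS_4|=24$, while $B^{\text{\textbullet}}\cong(\C^4)^6\cong\C^{24}$ also has dimension $24$; a surjective linear map between vector spaces of the same finite dimension is bijective, so $\bar\varPhi$ is an isomorphism. Equivalently, $\ker(\rho\circ\varPhi)$ is exactly the kernel of the abelianization map of $A(4)$.

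I do not expect a genuine obstacle, since the only inputs are the surjectivity of $\rho$ (Lemma~\ref{Lem:exseq}) and the already-established identifications $A^{\text{ab}}(4)\cong\C^{24}$ and $B^{\text{\textbullet}}\cong\C^{24}$. If a more explicit description is wanted---and it will be convenient for the K-theory computations that follow---one can instead track the minimal projections: by Proposition~\ref{Prop:Aabp} those of $A^{\text{ab}}(4)$ are the $p_\sigma=p_{\sigma(1),1}p_{\sigma(2),2}p_{\sigma(3),3}p_{\sigma(4),4}$ for $\sigma\in\fS_4$, and using the explicit matrices $P_{i,j}$ of Section~\ref{Sec:P} together with the list of points of $\tF^{\text{\textbullet}}$ in Proposition~\ref{Prop:4points} one checks that their images $(P_{\sigma(1),1}P_{\sigma(2),2}P_{\sigma(3),3}P_{\sigma(4),4})|_{\tF^{\text{\textbullet}}}$ are precisely the $24$ minimal projections of $B^{\text{\textbullet}}$. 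The only mildly fiddly point in that route is keeping track of which point of $\tF^{\text{\textbullet}}$ lies over which permutation.
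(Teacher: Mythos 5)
Your argument is correct and is essentially the paper's own proof: the composite $A(4)\cong A\twoheadrightarrow B\twoheadrightarrow B^{\text{\textbullet}}$ is surjective and lands in a commutative algebra, hence factors through $A^{\text{ab}}(4)$, and the induced surjection between the two $24$-dimensional algebras $A^{\text{ab}}(4)\cong\C^{24}$ and $B^{\text{\textbullet}}\cong\C^{24}$ must be an isomorphism. The extra remark about tracking minimal projections is a fine supplement but not needed for the proof itself.
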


\begin{proof}
Since $B^{\text{\textbullet}} \cong \C^{24}$ is commutative, 
the surjection 
$A(4) \cong A \twoheadrightarrow B \twoheadrightarrow B^{\text{\textbullet}}$ 
factors through the surjection $A(4) \twoheadrightarrow A^{\text{ab}}(4)$. 
The induced surjection 
$A^{\text{ab}}(4) \twoheadrightarrow B^{\text{\textbullet}}$ is an isomorphism 
because $A^{\text{ab}}(4)\cong \C^{24}$. 
\end{proof}

For $i,j=1,2,3,4$, 
we denote the image of $P_{i,j} \in A$ under a surjection 
by the same symbol $P_{i,j}$. 
By Proposition~\ref{Prop:Aabp} and Proposition~\ref{Prop:AabB}, 
the 24 minimal projections of $B^{\text{\textbullet}}$ are 
\[
P_{(i_1i_2i_3i_4)} \coloneqq 
P_{i_1,1}P_{i_2,2}P_{i_3,3}P_{i_4,4} \in B^{\text{\textbullet}}
\]
for $(i_1i_2i_3i_4) \in \mathfrak{S}_4$. 

\begin{definition}
For $\sigma \in \mathfrak{S}_4$, we define 
$q_\sigma \coloneqq [P_{\sigma}]_0 \in K_0(B^{\text{\textbullet}})$. 
\end{definition}

Note that $\{q_\sigma\}_{\sigma \in \mathfrak{S}_4}$ is a basis of 
$K_0(B^{\text{\textbullet}}) \cong \Z^{24}$. 

\begin{proposition}
For each $i_2,i_3,i_4$ with $\{i_2,i_3,i_4  \}= \{2,3,4\}$,
the $4$ minimal projections of 
$\C^4 \cong B_{(i_2i_3i_4)} 
\subset B^{\text{\textbullet}}$ 
are $P_{\sigma t_k}$ for $k=1,2,3,4$ 
where $\sigma \coloneqq (1i_2i_3i_4)$. 
\end{proposition}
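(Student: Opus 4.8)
The plan is to work through the identification $B^{\text{\textbullet}} \cong A^{\text{ab}}(4) \cong C(\fS_4)$ coming from Proposition~\ref{Prop:AabB} and Proposition~\ref{Prop:Aab}, under which the $24$ minimal projections of $B^{\text{\textbullet}}$ are exactly the $P_\tau$ for $\tau\in\fS_4$, and the corresponding characters are the $\chi_\tau$. Since $B^{\text{\textbullet}}=\bigoplus_{\{j_2,j_3,j_4\}=\{2,3,4\}}B_{(j_2j_3j_4)}$ with each summand isomorphic to $\C^4$, these $24$ minimal projections are partitioned into $6$ blocks of $4$, the block belonging to $B_{(i_2i_3i_4)}$ consisting of those $P_\tau$ for which $\chi_\tau$ factors through the quotient map $B^{\text{\textbullet}}\twoheadrightarrow B_{(i_2i_3i_4)}$. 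So it suffices to show this block is $\{P_{\sigma t_k}\}_{k=1}^4$, where $\sigma=(1i_2i_3i_4)$.

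The first step is a reduction to a single case. In $C(\fS_4)\cong A^{\text{ab}}(4)$ one has $P_{\sigma t_k}=\alpha_{(t_1,t_k)}(P_\sigma)$, by reindexing the product defining $P_\sigma=p_\sigma$ by the permutation $t_k$ and using commutativity. Under the isomorphism $A(4)\cong A$ given by $\varPhi$ the automorphism $\alpha_{(t_1,t_k)}$ corresponds to $\Ad U_{1,k}$, because $\varPhi(u_{1,k})=U_{1,k}$ and $\alpha_{(t_1,t_k)}=\Ad u_{1,k}$ on $A(4)$; passing to $B^{\text{\textbullet}}$, $\Ad U_{1,k}$ becomes pointwise conjugation by the constant unitary $U_{1,k}$, which does not change the support of a function on $\tF^{\text{\textbullet}}$ and therefore preserves each summand, in particular $B_{(i_2i_3i_4)}$. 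Since $\sum_{k=1}^4 P_{\sigma t_k}$ is the indicator of the $4$-point set $\{\sigma t_1,\dots,\sigma t_4\}\subset\fS_4$, the projections $P_{\sigma t_1},\dots,P_{\sigma t_4}$ are $4$ distinct minimal projections of $B^{\text{\textbullet}}$; as $\C^4$ has exactly $4$ minimal projections, it now suffices to prove that $P_\sigma\in B_{(i_2i_3i_4)}$, i.e.\ that $\chi_\sigma$ factors through $B^{\text{\textbullet}}\twoheadrightarrow B_{(i_2i_3i_4)}$.

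For this, recall that $B_{(i_2i_3i_4)}=M_4(C(\tF_{(i_2i_3i_4)}))^\beta$, that $\tF_{(i_2i_3i_4)}$ is a single $K\times K$-orbit of $4$ points (Proposition~\ref{Prop:4points}), so the stabiliser of a point $[a]\in\tF_{(i_2i_3i_4)}$ has order $4$ and equals $G\coloneqq\{(t_1,t_1),(t_{i_2},t_2),(t_{i_3},t_3),(t_{i_4},t_4)\}=\{(t_{\sigma(l)},t_l)\}_{l=1}^4$, and that evaluation at $[a]$ identifies $B_{(i_2i_3i_4)}$ with $D_{(i_2i_3i_4)}$, a maximal abelian subalgebra of $M_4(\C)$ whose $4$ minimal projections are the rank-one orthogonal projections onto the $4$ mutually orthogonal orbit points (all of which is visible from Proposition~\ref{Prop:C4} and Proposition~\ref{Prop:4points}). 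By the definitions of $P_{1,1}$, $P_{i,j}$ and $\sigma_{i,j}$, the value $P_{k,l}([a])$ is the rank-one projection onto $U_{k,l}a$, and $[U_{k,l}a]=\sigma_{k,l}([a])$ is again an orbit point; hence the character of $D_{(i_2i_3i_4)}$ attached to the line $[a]$ sends $P_{k,l}([a])$ to $1$ when $\sigma_{k,l}([a])=[a]$, i.e.\ when $(t_k,t_l)\in G$, i.e.\ when $k=\sigma(l)$, and to $0$ otherwise. Precomposed with evaluation at $[a]$ this character sends $P_{k,l}$ to $\delta_{k,\sigma(l)}$, which is exactly $\chi_\sigma$; thus $\chi_\sigma$ factors through $B_{(i_2i_3i_4)}$, and the proof is complete.

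I expect the main obstacle to be the precise bookkeeping in the third paragraph: pinning down the stabiliser of an orbit point exactly (which uses the count $|\tF_{(i_2i_3i_4)}|=4$ from Proposition~\ref{Prop:4points} together with the fact that $F_{(i_2i_3i_4)}$ is a single point), and describing the characters of $D_{(i_2i_3i_4)}\cong\C^4$ concretely as evaluation of the common eigenvalue at an orbit point. A more computational alternative sidesteps this: using the explicit matrices for the $U_{i,j}$ listed after Definition~\ref{Def:Uij} and the formula for $P_{1,1}$, one can directly evaluate the four products $P_{\sigma t_k}=P_{(\sigma t_k)(1),1}P_{(\sigma t_k)(2),2}P_{(\sigma t_k)(3),3}P_{(\sigma t_k)(4),4}$ at a chosen point of $\tF_{(i_2i_3i_4)}$ and check that these are the four minimal projections of $D_{(i_2i_3i_4)}$ that can be read off from Proposition~\ref{Prop:C4}.
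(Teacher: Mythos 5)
Your argument is correct, but it is organized differently from the paper's proof, which is a one-shot algebraic computation: the paper invokes Lemma~\ref{Lem:PaPa} (points of $\tF_{(i_2i_3i_4)}$ are fixed by $\sigma_{i_2,2},\sigma_{i_3,3},\sigma_{i_4,4}$) to get the identities $P_{k,l}=P_{t_{i_j}(k),t_j(l)}$ inside $B_{(i_2i_3i_4)}$, observes that the sixteen generators collapse into the four classes represented by $P_{1,1},P_{1,2},P_{1,3},P_{1,4}$, which are mutually orthogonal with sum $1$, and reads off that the four products $P_{\sigma t_k}$ are exactly these four projections. You instead first reduce to the single projection $P_\sigma$ by transporting along $\Ad U_{1,k}=\varPhi\circ\alpha_{(t_1,t_k)}\circ\varPhi^{-1}$ (which preserves each summand because it is conjugation by a constant unitary), and then identify the character of $B_{(i_2i_3i_4)}\cong D_{(i_2i_3i_4)}$ attached to an orbit point $[a]$ as $\chi_\sigma$ by computing the stabiliser $\{(t_{\sigma(l)},t_l)\}_{l=1}^4$. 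Both proofs rest on the same geometric input; yours makes the spectral picture (orbit points $\leftrightarrow$ minimal projections $\leftrightarrow$ characters $\chi_\sigma$) and the role of the stabiliser subgroup explicit, which explains \emph{why} the answer is the coset $\sigma K$, at the cost of two extra layers of bookkeeping, while the paper's route is shorter and needs no case reduction. Two small points you should make explicit: that the rank-one projection onto $a$ lies in $D_{(i_2i_3i_4)}$ (it equals $P_{1,1}([a])$, so this is immediate and gives the character you use), and that the inclusion $G\subseteq\mathrm{Stab}([a])$ is an equality by orbit--stabiliser since the orbit has $4$ points --- alternatively you can avoid the exact stabiliser altogether by noting that $\chi(P_{k,l})\in\{0,1\}$ and $\sum_{k}\chi(P_{k,l})=1$ forces $\chi(P_{k,l})=\delta_{k,\sigma(l)}$ once $\chi(P_{\sigma(l),l})=1$.
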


\begin{proof}
Take $i_2,i_3,i_4$ with $\{i_2,i_3,i_4  \}= \{2,3,4\}$. 
Since the 4 points in $\tF_{(i_2i_3i_4)}$ 
are fixed by $\sigma_{i_2,2}$, $\sigma_{i_3,3}$ and $\sigma_{i_4,4}$, 
we have $P_{k,l}=P_{t_{i_j}(k),t_{j}(l)}$ 
in $B_{(i_2i_3i_4)}$
for $k,l=1,2,3,4$ and $j=2,3,4$ by Lemma~\ref{Lem:PaPa}. 
More concretely we have 
\begin{align*}
&P_{1,1}=P_{i_2,2}=P_{i_3,3}=P_{i_4,4},\\
&P_{i_2,1}=P_{1,2}=P_{i_4,3}=P_{i_3,4},\\
&P_{i_3,1}=P_{i_4,2}=P_{1,3}=P_{i_2,4},\\
&P_{i_4,1}=P_{i_3,2}=P_{i_2,3}=P_{1,4} 
\end{align*}
in $B_{(i_2i_3i_4)}$. 
These four projections are mutually orthogonal, 
and their sum equals to $1$. 
Thus the 4 minimal projections of $B_{(i_2i_3i_4)}$ 
are $P_{(1i_2i_3i_4)}$, $P_{(i_21i_4i_3)}$, 
$P_{(i_3i_41i_2)}$ and $P_{(i_4i_3i_21)}$. 
\end{proof}

Take $i,j=2,3,4$, and fix them for a while. 
Let $(1m_2m_3m_4) \in \fS_4$ be the unique even permutation 
with $m_j=i$, and 
$(1n_2n_3n_4) \in \fS_4$ be the unique odd permutation 
with $n_j=i$. 
We set $\sigma=(1m_2m_3m_4)$ and $\tau = (1n_2n_3n_4)$. 
Then we have the following commutative diagram with exact rows; 
\begin{equation*}
\xymatrix@C=10pt@R=20pt{
0 \ar[r] & B^{\circ} \ar[r] \ar@{->>}[d]  & B \ar[r]  \ar@{->>}[d] 
& B^{\text{\textbullet}} \ar[r]  \ar@{->>}[d]   & 0 \\
0 \ar[r] & B^{\circ}_{i,j} \ar[r] & B_{i,j} \ar[r] 
& B_{(m_2 m_3 m_4)} \oplus 
B_{(n_2 n_3 n_4)} \ar[r] & 0.
}
\end{equation*}
By Lemma~\ref{Lem:PaPa}, 
we have $P_{k,l}=P_{t_i(k),t_j(l)}$ in $B_{i,j}$ 
for $k,l=1,2,3,4$. 
Let $\omega=(1342) \in \fS_4$. 
Note that 
we have $t_i(\omega(i))=\omega^2(i)$ and $t_i(\omega^2(i))=\omega(i)$. 
One can see that 
$B_{i,j}$ is a direct sum of two \CsA s $B_{i,j}^{\cap}$ and $B_{i,j}^{\cup}$
where $B_{i,j}^{\cap}$ is generated by 
\begin{align*}
P_{1,1}&=P_{i,j},& P_{1,j}&=P_{i,1},& 
P_{\omega(i),\omega(j)}&=P_{\omega^2(i),\omega^2(j)},& 
P_{\omega(i),\omega^2(j)}&=P_{\omega^2(i),\omega(j)}
\end{align*}
and $B_{i,j}^{\cup}$ is generated by 
\begin{align*}
P_{1,\omega(j)}&=P_{i,\omega^2(j)},& 
P_{1,\omega^2(j)}&=P_{i,\omega(j)},& 
P_{\omega(i),1}&=P_{\omega^2(i),j},& 
P_{\omega(i),j}&=P_{\omega^2(i),1}. 
\end{align*}
Note that $P_{1,1}+P_{1,j}=P_{\omega(i),\omega(j)}+P_{\omega(i),\omega^2(j)}$ 
is the unit of $B_{i,j}^{\cap}$, and 
$P_{1,\omega(j)}+P_{1,\omega^2(j)}=P_{\omega(i),1}+P_{\omega(i),j}$ 
is the unit of $B_{i,j}^{\cup}$. 
It turns out that both $B_{i,j}^{\cap}$ and $B_{i,j}^{\cup}$ 
are isomorphic to the universal unital \Ca 
generated by two projections, which is isomorphic to 
\[
\Big\{f\in C([0,1],M_2(\C))\ \Big|\ 
f(0)=\begin{pmatrix} * & 0\\ 0 & *\end{pmatrix}, 
f(1)=\begin{pmatrix} * & 0\\ 0 & *\end{pmatrix}\Big\}.
\]
This fact can be proved directly, but we do not prove it here 
because we do not need it. 
The image of $B_{i,j}^{\cap}$ under 
the surjection $B_{i,j} \to B_{(m_2 m_3 m_4)} \oplus B_{(n_2 n_3 n_4)}$
is $(\C p_{\sigma}+\C p_{\sigma t_j}) \oplus (\C p_{\tau}+\C p_{\tau t_j})$. 
Therefore, the image of $B_{i,j}^{\cup}$ under 
the surjection $B_{i,j} \to B_{(m_2 m_3 m_4)} \oplus B_{(n_2 n_3 n_4)}$
is $(\C p_{\sigma t_{\omega(j)}}+\C p_{\sigma t_{\omega^2(j)}}) \oplus 
(\C p_{\tau t_{\omega(j)}}+\C p_{\tau t_{\omega^2(j)}})$. 
We set $v_{i,j}^{\cap},v_{i,j}^{\cup} \in K_1(B^{\circ}_{i,j})$ 
by $v_{i,j}^{\cap} \coloneqq \delta'(q_\sigma)$ 
and $v_{i,j}^{\cup} \coloneqq \delta'(q_{\sigma t_{\omega(j)}})$ 
where 
\[
\delta'\colon K_0(B_{(m_2 m_3 m_4)} \oplus B_{(n_2 n_3 n_4)})
\to K_1(B^{\circ}_{i,j})
\]
is the exponential map. 
Then we have the following. 

\begin{lemma}\label{Lem:dq=v}
The set $\{v_{i,j}^{\cap},v_{i,j}^{\cup}\}$ is 
a generator of $K_1(B^{\circ}_{i,j}) \cong \Z^2$, 
and we have 
\begin{align*}
\delta'(q_\sigma)&=\delta'(q_{\sigma t_j})=v_{i,j}^{\cap}, & 
\delta'(q_{\sigma t_{\omega(j)}})&=\delta'(q_{\sigma t_{\omega^2(j)}})=v_{i,j}^{\cup}, \\
\delta'(q_\tau)&=\delta'(q_{\tau t_j})=-v_{i,j}^{\cap}, & 
\delta'(q_{\tau t_{\omega(j)}})&=\delta'(q_{\tau t_{\omega^2(j)}})=-v_{i,j}^{\cup}. \\
\end{align*}
\end{lemma}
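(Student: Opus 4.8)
The plan is to restrict the short exact sequence
\[
0 \longrightarrow B^{\circ}_{i,j} \longrightarrow B_{i,j} \longrightarrow
B_{(m_2m_3m_4)}\oplus B_{(n_2n_3n_4)} \longrightarrow 0
\]
(whose exponential map is $\delta'$) to the two direct summands $B_{i,j}^{\cap}$ and $B_{i,j}^{\cup}$ and to evaluate the resulting exponential maps by hand, using explicit self-adjoint lifts. Recall $B_{i,j}=B_{i,j}^{\cap}\oplus B_{i,j}^{\cup}$; correspondingly $B^{\circ}_{i,j}=(B^{\circ}_{i,j})^{\cap}\oplus (B^{\circ}_{i,j})^{\cup}$ with each summand $\cong C_0\big((0,1),M_2(\C)\big)$, and under the quotient map the image of $B_{i,j}^{\cap}$ is $(\C P_{\sigma}+\C P_{\sigma t_j})\oplus(\C P_{\tau}+\C P_{\tau t_j})$ while that of $B_{i,j}^{\cup}$ is $(\C P_{\sigma t_{\omega(j)}}+\C P_{\sigma t_{\omega^2(j)}})\oplus(\C P_{\tau t_{\omega(j)}}+\C P_{\tau t_{\omega^2(j)}})$. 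Hence the six-term exact sequence splits as a direct sum of a ``$\cap$'' sequence and a ``$\cup$'' sequence; in particular $K_1(B^{\circ}_{i,j})=K_1((B^{\circ}_{i,j})^{\cap})\oplus K_1((B^{\circ}_{i,j})^{\cup})\cong\Z\oplus\Z$, the classes $\delta'(q_\sigma),\delta'(q_{\sigma t_j}),\delta'(q_\tau),\delta'(q_{\tau t_j})$ lie in the first summand and the remaining four in the second, and it suffices to prove the four displayed equalities together with the assertion that $v_{i,j}^{\cap}$ generates $K_1((B^{\circ}_{i,j})^{\cap})$ and $v_{i,j}^{\cup}$ generates $K_1((B^{\circ}_{i,j})^{\cup})$.

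For the $\cap$-summand, by Proposition~\ref{Prop:2circle} the space $F_{i,j}$ is a closed interval with endpoints $x_{(m_2m_3m_4)}$ and $x_{(n_2n_3n_4)}$, so after trivializing the underlying $M_2(\C)$-bundle we may regard $B_{i,j}^{\cap}$ as a subalgebra of $C([0,1],M_2(\C))$, with values in a fixed maximal abelian subalgebra at each endpoint; here $P_\sigma,P_{\sigma t_j}$ are the two minimal projections of that subalgebra over $x_{(m_2m_3m_4)}$, $P_\tau,P_{\tau t_j}$ those over $x_{(n_2n_3n_4)}$, and $(B^{\circ}_{i,j})^{\cap}$ is the ideal of functions vanishing at both endpoints. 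To compute $\delta'(q_\sigma)$, lift the rank-one projection $P_\sigma$ in the fibre over $x_{(m_2m_3m_4)}$ to a self-adjoint $a\in B_{i,j}^{\cap}$ with $0\le a\le1$ that equals $P_\sigma$ over $x_{(m_2m_3m_4)}$ and vanishes over $x_{(n_2n_3n_4)}$ (scale $P_\sigma$ linearly down to $0$ along the interval). Then $\delta'(q_\sigma)$ is, up to sign, the class of the unitary $\exp(2\pi i a)$ in the unitization of $(B^{\circ}_{i,j})^{\cap}$, and this class in $K_1\big(C_0((0,1),M_2(\C))\big)\cong\Z$ is the winding number of $\det\exp(2\pi i a(t))=\exp\big(2\pi i\,\mathrm{Tr}\,a(t)\big)$, which equals $\mathrm{Tr}\,a|_{x_{(n_2n_3n_4)}}-\mathrm{Tr}\,a|_{x_{(m_2m_3m_4)}}=0-1=-1$; thus $\delta'(q_\sigma)$ is a generator. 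The other minimal projection $P_{\sigma t_j}$ over the same endpoint gives the identical computation, so $\delta'(q_\sigma)=\delta'(q_{\sigma t_j})=v_{i,j}^{\cap}$. For $q_\tau$ and $q_{\tau t_j}$ the relevant rank-one projection sits in the fibre over $x_{(n_2n_3n_4)}$, so the analogous lift has trace $1$ there and $0$ over $x_{(m_2m_3m_4)}$, giving winding number $+1$ and hence $\delta'(q_\tau)=\delta'(q_{\tau t_j})=-v_{i,j}^{\cap}$.

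The $\cup$-summand is treated by the identical argument with $P_\sigma,P_{\sigma t_j},P_\tau,P_{\tau t_j}$ replaced by $P_{\sigma t_{\omega(j)}},P_{\sigma t_{\omega^2(j)}},P_{\tau t_{\omega(j)}},P_{\tau t_{\omega^2(j)}}$; it yields a generator $v_{i,j}^{\cup}=\delta'(q_{\sigma t_{\omega(j)}})=\delta'(q_{\sigma t_{\omega^2(j)}})$ of $K_1((B^{\circ}_{i,j})^{\cup})$ with $\delta'(q_{\tau t_{\omega(j)}})=\delta'(q_{\tau t_{\omega^2(j)}})=-v_{i,j}^{\cup}$. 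Since $v_{i,j}^{\cap}$ and $v_{i,j}^{\cup}$ generate the two $\Z$-summands of $K_1(B^{\circ}_{i,j})$, the set $\{v_{i,j}^{\cap},v_{i,j}^{\cup}\}$ is a basis of $K_1(B^{\circ}_{i,j})\cong\Z^2$, and all the asserted equalities hold. The only real subtlety is the bookkeeping: keeping straight which minimal projection of $B^{\text{\textbullet}}$ lies over which endpoint of $F_{i,j}$ and in which of the summands $B_{i,j}^{\cap}$, $B_{i,j}^{\cup}$ — precisely the correspondences recorded in the discussion preceding the lemma; once those are in place, the winding-number evaluation of the exponential map is routine.
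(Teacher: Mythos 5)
Your argument is correct, but it proves the lemma by a genuinely different mechanism than the paper. The paper never computes an exponential explicitly: it maps the defining extension into the extension $0 \to B^{\circ}_{i,j} \to C_0(Z\setminus\{z_0\},D_{i,j}) \to D_{i,j}\to 0$, whose middle term has vanishing $K$-theory, and concludes that $\delta'$ factors as the map $K_0(B_{(m_2m_3m_4)})\to K_0(D_{i,j})$ followed by an \emph{isomorphism} $K_0(D_{i,j})\xrightarrow{\ \cong\ }K_1(B^{\circ}_{i,j})$. The generator statement is then automatic, the equalities $\delta'(q_\sigma)=\delta'(q_{\sigma t_j})$ come from Murray--von Neumann equivalence of the two rank-one projections inside the same $M_2$-summand of $D_{i,j}$, and the sign relations come from the fact that $P_{1,1}$ (resp.\ $P_{1,\omega(j)}$) is a genuine projection in $B_{i,j}$ lifting $P_\sigma+P_\tau$ (resp.\ $P_{\sigma t_{\omega(j)}}+P_{\tau t_{\omega(j)}}$), so that $\delta'(q_\sigma+q_\tau)=0$. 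You instead evaluate $\delta'$ on each class by the standard formula $\delta'([p]_0)=-[\exp(2\pi i a)]_1$ with an explicit positive lift and read off the answer as the winding number of the determinant in $K_1\big(C_0((0,1),M_2(\C))\big)\cong\Z$; this makes all eight values and their relative signs completely transparent in one computation, at the price of two points you should make explicit: (i) the linear lift $t\mapsto(1-t)p_0$ really does lie in $B_{i,j}$ (this follows from the surjectivity statement of Lemma~\ref{Lem:exseq}, or by extending equivariantly from the fundamental interval $Z$, using that $p_0\in D_{(m_2m_3m_4)}$ is fixed by the larger stabilizer at the endpoint), and (ii) the determinant winding number implements the isomorphism $K_1\big(C_0((0,1),M_2(\C))\big)\cong\Z$. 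Both are routine, and your bookkeeping of which minimal projections sit over which endpoint and in which of the summands $B_{i,j}^{\cap}$, $B_{i,j}^{\cup}$ agrees with the discussion preceding the lemma, so the proof is sound.
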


\begin{proof}
Choose a closed interval $Z \subset \R P^3$ 
such that $\pi\colon Z \to F_{i,j}$ is a homeomorphism 
(see Figure~\ref{fig:EF} and the remark around it 
for an example of such a space). 
Let $z_0,z_1 \in Z$ be the point such that $\pi(z_0)=v_{(m_2 m_3 m_4)}$
$\pi(z_1)=v_{(n_2 n_3 n_4)}$. 
Then we have $B^{\circ}_{i,j} \cong C_0(Z\setminus\{z_0,z_1\},D_{i,j})$. 
Let $B_{i,j}'$ be the inverse image of $B_{(m_2 m_3 m_4)}$ 
under the surjection $B_{i,j} \to B_{(m_2 m_3 m_4)} \oplus B_{(n_2 n_3 n_4)}$. 
Then we have the following commutative diagram with exact rows; 
\[
\xymatrix@C=10pt@R=20pt{
0 \ar[r] & B^{\circ}_{i,j} \ar[r] \ar@{=}[d]  & B_{i,j}' \ar[r]  \ar@{->}[d] 
& B_{(m_2 m_3 m_4)} \ar[r]  \ar[d]   & 0 \\
0 \ar[r] & B^{\circ}_{i,j} \ar[r] & C_0(Z\setminus\{z_0\},D_{i,j}) \ar[r] 
& D_{i,j} \ar[r] & 0.
}
\]
Let us denote by $\varphi$ the homomorphism from $K_0(B_{(m_2 m_3 m_4)})$ 
to $K_0(D_{i,j})$ induced by 
the vertical map from $B_{(m_2 m_3 m_4)} \cong D_{(m_2 m_3 m_4)}$ to $D_{i,j}$.
Then $K_0(D_{i,j}) \cong \Z^2$ is spanned by 
$\varphi(q_\sigma)=\varphi(q_{\sigma t_j})$ and 
$\varphi(q_{\sigma t_{\omega(j)}})=\varphi(q_{\sigma t_{\omega^2(j)}})$. 
Since $K_l(C_0(Z\setminus\{z_0\},D_{i,j}))=0$ for $l=0,1$, 
$K_0(D_{i,j}) \to K_1(B^{\circ}_{i,j})$ is an isomorphism. 
This shows that $\{v_{i,j}^{\cap},v_{i,j}^{\cup}\}$ is 
a generator of $K_1(B^{\circ}_{i,j}) \cong \Z^2$. 
We also have $\delta'(q_\sigma)=\delta'(q_{\sigma t_j})$ 
and $\delta'(q_{\sigma t_{\omega(j)}})=\delta'(q_{\sigma t_{\omega^2(j)}})$. 
Similarly, we have 
$\delta'(q_\tau)=\delta'(q_{\tau t_j})$ and 
$\delta'(q_{\tau t_{\omega(j)}})=\delta'(q_{\tau t_{\omega^2(j)}})$. 

Since the image of the projection $P_{1,1} \in B_{i,j}$ under 
the surjection $B_{i,j} \to B_{(m_2 m_3 m_4)} \oplus B_{(n_2 n_3 n_4)}$ 
is $P_{\sigma}+P_{\tau}$, 
we have $\delta'(q_{\sigma}+q_{\tau})=0$. 
Hence $\delta'(q_\tau)=-v_{i,j}^{\cap}$. 
Similarly 
we have $\delta'(q_{\sigma t_{\omega(j)}}+q_{\tau t_{\omega(j)}})=0$ 
because the image of $P_{1,\omega(j)} \in B_{i,j}$ under 
the surjection $B_{i,j} \to B_{(m_2 m_3 m_4)} \oplus B_{(n_2 n_3 n_4)}$ 
is $P_{\sigma t_{\omega(j)}}+P_{\tau t_{\omega(j)}}$. 
We are done. 
\end{proof}

From these computation, we get the following proposition. 

\begin{proposition}
The exponential map $\delta\colon K_0(B^{\text{\textbullet}}) \to K_1(B^{\circ})$ is as Table \ref{table:exp_def}. 
\end{proposition}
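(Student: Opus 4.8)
The plan is to reduce the computation of $\delta$ to the local computation already carried out in Lemma~\ref{Lem:dq=v}, via naturality of the exponential map. Since $B^{\circ}\cong\bigoplus_{i,j=2}^4 B^{\circ}_{i,j}$, we have $K_1(B^{\circ})\cong\bigoplus_{i,j=2}^4 K_1(B^{\circ}_{i,j})$, so it suffices to determine, for every $\sigma\in\fS_4$ and every pair $i,j\in\{2,3,4\}$, the component of $\delta(q_\sigma)$ in the summand $K_1(B^{\circ}_{i,j})\cong\Z^2$ spanned by $v_{i,j}^{\cap}$ and $v_{i,j}^{\cup}$.

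First I would apply naturality of the exponential map to the commutative diagram with exact rows appearing just before the statement of Lemma~\ref{Lem:dq=v}. Its left vertical arrow $B^{\circ}\twoheadrightarrow B^{\circ}_{i,j}$ induces on $K_1$ exactly the projection onto the $(i,j)$-summand, and its right vertical arrow $B^{\text{\textbullet}}\twoheadrightarrow B_{(m_2m_3m_4)}\oplus B_{(n_2n_3n_4)}$ induces on $K_0$ the projection onto the two coordinates indexed by the points $x_{(m_2m_3m_4)}$ and $x_{(n_2n_3n_4)}$. Hence the $(i,j)$-component of $\delta$ equals $\delta'$ precomposed with this coordinate projection, where $\delta'$ is the local exponential map of Lemma~\ref{Lem:dq=v}.

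It then remains to turn this into a combinatorial recipe. By the description of the minimal projections of the blocks $B_{(i_2i_3i_4)}$ obtained above, the image of $q_\sigma=[P_\sigma]_0$ under the coordinate projection $K_0(B^{\text{\textbullet}})\to K_0(B_{(m_2m_3m_4)}\oplus B_{(n_2n_3n_4)})$ is $q_\sigma$ if $\sigma$ lies in one of the cosets $(1m_2m_3m_4)K$, $(1n_2n_3n_4)K$, and is $0$ otherwise. Equivalently, letting $\tau=(1\tau_2\tau_3\tau_4)$ be the unique one of the six permutations $(1234),(1243),(1324),(1342),(1423),(1432)$ lying in the coset $\sigma K$, the component of $\delta(q_\sigma)$ in the $(i,j)$-summand is nonzero precisely for the three pairs $(i,j)=(\tau_2,2),(\tau_3,3),(\tau_4,4)$. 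For each such pair, write $\sigma=\tau t_k$ for the unique $t_k\in K$; since $\omega$ acts on $\{2,3,4\}$ as a $3$-cycle we have $\{t_1,t_j,t_{\omega(j)},t_{\omega^2(j)}\}=K$, and Lemma~\ref{Lem:dq=v} gives the value: if $\tau$ is even it is $+v_{i,j}^{\cap}$ when $t_k\in\{t_1,t_j\}$ and $+v_{i,j}^{\cup}$ when $t_k\in\{t_{\omega(j)},t_{\omega^2(j)}\}$, while if $\tau$ is odd it is $-v_{i,j}^{\cap}$, respectively $-v_{i,j}^{\cup}$, in the same two cases. Running this over all $24$ permutations $\sigma$ produces Table~\ref{table:exp_def}.

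There is no real conceptual obstacle here; the substantive content is Lemma~\ref{Lem:dq=v}, and what remains is bookkeeping. The one point requiring care is that a fixed permutation $\sigma$ generally lies in the ``$\cap$''-pair of the coset $\sigma K$ with respect to one of the three relevant pairs $(i,j)$ but in the ``$\cup$''-pair with respect to another, because the splitting of $\sigma K$ into a $v_{i,j}^{\cap}$-pair and a $v_{i,j}^{\cup}$-pair depends on the second index $j$ through $\{t_1,t_j\}$ versus $\{t_{\omega(j)},t_{\omega^2(j)}\}$; so each of the three nonzero components of $\delta(q_\sigma)$ has to be read off from the decomposition attached to that particular $(i,j)$, and the signs tracked via the parity of $\tau$.
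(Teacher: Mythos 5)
Your proposal is correct and follows essentially the same route as the paper: the paper derives this proposition from the commutative diagram relating $B^{\circ}\to B\to B^{\text{\textbullet}}$ to $B^{\circ}_{i,j}\to B_{i,j}\to B_{(m_2m_3m_4)}\oplus B_{(n_2n_3n_4)}$ together with Lemma~\ref{Lem:dq=v}, which is exactly the naturality argument you spell out. Your explicit combinatorial recipe (locating the coset representative fixing $1$, splitting $\sigma K$ into the $\{t_1,t_j\}$ and $\{t_{\omega(j)},t_{\omega^2(j)}\}$ pairs for each of the three relevant $(i,j)$, and tracking the sign by parity) correctly reproduces Table~\ref{table:exp_def}.
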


\begin{table}[hbtp]
\caption{Computation of the exponential map $\delta$}\label{table:exp_def}
\begin{tabular}{|c||cc|cc|cc|cc|cc|cc|cc|cc|cc|} \hline 
& \multicolumn{2}{|c|}{2,2} & \multicolumn{2}{|c|}{3,3} & \multicolumn{2}{|c|}{4,4} & \multicolumn{2}{|c|}{4,3} & \multicolumn{2}{|c|}{2,4} & \multicolumn{2}{|c|}{3,2} & \multicolumn{2}{|c|}{3,4} & \multicolumn{2}{|c|}{4,2} & \multicolumn{2}{|c|}{2,3} \\ \hline
$q\diagdown v$ & $\cap$ & $\cup$ & $\cap$ & $\cup$ & $\cap$ & $\cup$ & $\cap$ & $\cup$ & $\cap$ & $\cup$ & $\cap$ & $\cup$ & $\cap$ & $\cup$ & $\cap$ & $\cup$ & $\cap$ & $\cup$  \\ \hline\hline
(1234) & 1 & 0 & 1 & 0 & 1 & 0 & 0 & 0 & 0 & 0 & 0 & 0 & 0 & 0 & 0 & 0 & 0 & 0  \\ \hline
(2143) & 1 & 0 & 0 & 1 & 0 & 1 & 0 & 0 & 0 & 0 & 0 & 0 & 0 & 0 & 0 & 0 & 0 & 0  \\ \hline
(3412) & 0 & 1 & 1 & 0 & 0 & 1 & 0 & 0 & 0 & 0 & 0 & 0 & 0 & 0 & 0 & 0 & 0 & 0  \\ \hline 
(4321) & 0 & 1 & 0 & 1 & 1 & 0 & 0 & 0 & 0 & 0 & 0 & 0 & 0 & 0 & 0 & 0 & 0 & 0  \\ \hline \hline
(1342) & 0 & 0 & 0 & 0 & 0 & 0 & 1 & 0 & 1 & 0 & 1 & 0 & 0 & 0 & 0 & 0 & 0 & 0  \\ \hline 
(2431) & 0 & 0 & 0 & 0 & 0 & 0 & 0 & 1 & 1 & 0 & 0 & 1 & 0 & 0 & 0 & 0 & 0 & 0  \\ \hline
(3124) & 0 & 0 & 0 & 0 & 0 & 0 & 0 & 1 & 0 & 1 & 1 & 0 & 0 & 0 & 0 & 0 & 0 & 0  \\ \hline 
(4213) & 0 & 0 & 0 & 0 & 0 & 0 & 1 & 0 & 0 & 1 & 0 & 1 & 0 & 0 & 0 & 0 & 0 & 0  \\ \hline \hline
(1423) & 0 & 0 & 0 & 0 & 0 & 0 & 0 & 0 & 0 & 0 & 0 & 0 & 1 & 0 & 1 & 0 & 1 & 0  \\ \hline
(2314) & 0 & 0 & 0 & 0 & 0 & 0 & 0 & 0 & 0 & 0 & 0 & 0 & 0 & 1 & 0 & 1 & 1 & 0  \\ \hline
(3241) & 0 & 0 & 0 & 0 & 0 & 0 & 0 & 0 & 0 & 0 & 0 & 0 & 1 & 0 & 0 & 1 & 0 & 1  \\ \hline
(4132) & 0 & 0 & 0 & 0 & 0 & 0 & 0 & 0 & 0 & 0 & 0 & 0 & 0 & 1 & 1 & 0 & 0 & 1  \\ \hline \hline
(1243) &\mo& 0 & 0 & 0 & 0 & 0 &\mo& 0 & 0 & 0 & 0 & 0 &\mo& 0 & 0 & 0 & 0 & 0  \\ \hline
(2134) &\mo& 0 & 0 & 0 & 0 & 0 & 0 &\mo& 0 & 0 & 0 & 0 & 0 &\mo& 0 & 0 & 0 & 0  \\ \hline
(3421) & 0 &\mo& 0 & 0 & 0 & 0 & 0 &\mo& 0 & 0 & 0 & 0 &\mo& 0 & 0 & 0 & 0 & 0  \\ \hline
(4312) & 0 &\mo& 0 & 0 & 0 & 0 &\mo& 0 & 0 & 0 & 0 & 0 & 0 &\mo& 0 & 0 & 0 & 0  \\ \hline \hline 
(1432) & 0 & 0 &\mo& 0 & 0 & 0 & 0 & 0 &\mo& 0 & 0 & 0 & 0 & 0 &\mo& 0 & 0 & 0  \\ \hline
(2341) & 0 & 0 & 0 &\mo& 0 & 0 & 0 & 0 &\mo& 0 & 0 & 0 & 0 & 0 & 0 &\mo& 0 & 0  \\ \hline
(3214) & 0 & 0 &\mo& 0 & 0 & 0 & 0 & 0 & 0 &\mo& 0 & 0 & 0 & 0 & 0 &\mo& 0 & 0  \\ \hline
(4123) & 0 & 0 & 0 &\mo& 0 & 0 & 0 & 0 & 0 &\mo& 0 & 0 & 0 & 0 &\mo& 0 & 0 & 0  \\ \hline \hline
(1324) & 0 & 0 & 0 & 0 &\mo& 0 & 0 & 0 & 0 & 0 &\mo& 0 & 0 & 0 & 0 & 0 &\mo& 0  \\ \hline
(2413) & 0 & 0 & 0 & 0 & 0 &\mo& 0 & 0 & 0 & 0 & 0 &\mo& 0 & 0 & 0 & 0 &\mo& 0  \\ \hline
(3142) & 0 & 0 & 0 & 0 & 0 &\mo& 0 & 0 & 0 & 0 &\mo& 0 & 0 & 0 & 0 & 0 & 0 &\mo \\ \hline
(4231) & 0 & 0 & 0 & 0 &\mo& 0 & 0 & 0 & 0 & 0 & 0 &\mo& 0 & 0 & 0 & 0 & 0 &\mo \\ \hline  
\end{tabular}
\end{table}

We will see that $K_1(B) \cong \coker \delta$ is isomorphic 
to $\Z^4 \oplus \Z/2\Z$ in Proposition~\ref{Prop:KB}. 
This implies $K_0(B) \cong \ker \delta$ is isomorphic to $\Z^{10}$ 
because $\ker \delta$ is a free abelian group with dimension $24-18+4=10$. 
Below, we examine the generator of $K_0(B) \cong \ker \delta$. 

For $i,j=1,2,3,4$, 
we have 
\begin{align*}
P_{i,j}=P_{i,j}\sum_{k\neq i}\sum_{l=1}^nP_{k,l}
=\sum_{i=\sigma(j)}P_\sigma
\end{align*}
in $B$. 
Hence $[P_{i,j}]_0=\sum_{i=\sigma(j)}q_\sigma$ 
in $K_0(B)$. 

\begin{proposition}\label{Prop:kerdelta}
The group $\ker \delta$ 
is generated by $\{[P_{i,j}]_0\mid i,j=1,2,3,4\}$. 
\end{proposition}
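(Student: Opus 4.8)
The plan is to prove the two inclusions separately; the trivial one is $\{[P_{i,j}]_0\}\subseteq\ker\delta$, and the real content is that these $16$ elements already generate all of $\ker\delta$.

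First I would record that $M:=\sum_{i,j}\Z\cdot[P_{i,j}]_0$ is contained in $\ker\delta$. Since each $P_{i,j}$ is (the image in $B$ of) a genuine projection and $K_0(B^{\circ})=0$, exactness of the six-term sequence at $K_0(B^{\text{\textbullet}})$ identifies the image of $K_0(B)\to K_0(B^{\text{\textbullet}})$ with $\ker\delta$; as $[P_{i,j}]_0=\sum_{\sigma(j)=i}q_\sigma$ lies in that image, $M\subseteq\ker\delta$. Alternatively one checks $\delta\big(\sum_{\sigma(j)=i}q_\sigma\big)=0$ directly from Table~\ref{table:exp_def}.

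For the reverse inclusion I would first reformulate $M$ group-theoretically. Identify $K_0(B^{\text{\textbullet}})=\Z^{24}$ with $\Z^{\mathfrak{S}_4}$ via $q_\sigma\leftrightarrow\delta_\sigma$; then $[P_{i,j}]_0$ becomes the function $\chi_{i,j}\colon\sigma\mapsto\delta_{i,\sigma(j)}$, so $M$ is the image of the homomorphism $\phi\colon\Z^{4\times4}\to\Z^{\mathfrak{S}_4}$, $\phi(C)(\sigma)=\sum_{j=1}^4 C_{\sigma(j),j}$ (indeed $\phi$ carries the matrix unit $E_{i,j}$ to $\chi_{i,j}$). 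The heart of the argument is the claim that $M$ is a direct summand of $\Z^{\mathfrak{S}_4}$, i.e. $\Z^{\mathfrak{S}_4}/M$ is torsion-free. Since a matrix and its transpose have the same Smith normal form, $\operatorname{coker}\phi$ is torsion-free if and only if $\operatorname{coker}\phi^{\mathrm{T}}$ is, where $\phi^{\mathrm{T}}\colon\Z^{\mathfrak{S}_4}\to\Z^{4\times4}$, $\phi^{\mathrm{T}}(w)_{i,j}=\sum_{\sigma(j)=i}w_\sigma$. Now $\phi^{\mathrm{T}}(\delta_\sigma)$ is exactly the permutation matrix $P_\sigma$ of $\sigma$, so $\operatorname{image}\phi^{\mathrm{T}}=\operatorname{span}_\Z\{P_\sigma:\sigma\in\mathfrak{S}_4\}$, and I would prove this lattice equals $L:=\{C\in\Z^{4\times4}:\text{all row sums of }C\text{ agree and all column sums of }C\text{ agree}\}$. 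The inclusion $\operatorname{span}_\Z\{P_\sigma\}\subseteq L$ is immediate; conversely, given $C\in L$ with common line-sum $t$, choosing $N$ large makes $C+NJ$ a non-negative integer matrix all of whose line sums equal $t+4N$, hence (the classical theorem of K\"onig/Birkhoff: a non-negative integer matrix with all row and column sums equal to $m$ is a sum of $m$ permutation matrices) $C+NJ\in\operatorname{span}_\Z\{P_\sigma\}$; since $J=I+C_0+C_0^2+C_0^3$ is the sum of the four cyclic-shift permutation matrices attached to a $4$-cycle $C_0$, we get $C=(C+NJ)-NJ\in\operatorname{span}_\Z\{P_\sigma\}$. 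Finally $L$ is a direct summand of $\Z^{4\times4}$, being the kernel of the map $\Z^{4\times4}\to\Z^{6}$ recording the differences of consecutive row sums and of consecutive column sums; hence $\operatorname{coker}\phi^{\mathrm{T}}=\Z^{4\times4}/L$ is torsion-free, and therefore so is $\Z^{\mathfrak{S}_4}/M$.

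To finish I would combine this with a rank count. We have $M\subseteq\ker\delta$ with both free abelian, so $\ker\delta/M$ embeds in the torsion-free group $\Z^{\mathfrak{S}_4}/M$ and is torsion-free. On the other hand $\operatorname{rk}M=\operatorname{rk}L=(4-1)^2+1=10$ (equivalently, over $\C$ the $\chi_{i,j}$ are the matrix coefficients of the permutation representation $\mathbf{1}\oplus\rho_{\mathrm{std}}$ of $\mathfrak{S}_4$, which by Schur orthogonality span a space of dimension $1^2+3^2=10$), while $\operatorname{rk}\ker\delta=10$ since $\ker\delta\cong K_0(B)\cong\Z^{10}$ (see Proposition~\ref{Prop:KB}). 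Hence $\ker\delta/M$ has rank $0$ and no torsion, so it vanishes and $\ker\delta=M$, which is the assertion. I expect the main obstacle to be the direct-summand lemma — specifically, recognizing that passing to $\phi^{\mathrm{T}}$ trivializes the Smith form question and that $\operatorname{span}_\Z\{P_\sigma\}$ is exactly the lattice of integer ``magic arrays'' $L$ (the K\"onig/Birkhoff input, together with the observation that $J$ is a sum of four permutation matrices); the rest is bookkeeping.
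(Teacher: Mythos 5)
Your proof is correct, but it takes a genuinely different route from the paper's. The paper argues entirely by hand inside $K_0(B^{\text{\textbullet}})\cong\Z^{24}$: given $x=\sum_\sigma n_\sigma q_\sigma\in\ker\delta$, it subtracts integer multiples of the $[P_{i,j}]_0$ to force ten of the coefficients to vanish, and then extracts from Table~\ref{table:exp_def} a chain of linear relations that kills the remaining fourteen coefficients one by one (including a step where $2n_{(1234)}=0$ forces $n_{(1234)}=0$). That computation is self-contained and, as a by-product, yields the bound $\operatorname{rk}\ker\delta\le 10$ without appealing to Proposition~\ref{Prop:KB}. You instead prove the structural statement that $M=\sum_{i,j}\Z[P_{i,j}]_0$ is a primitive sublattice of $\Z^{\fS_4}$: the Smith-normal-form duality between $\phi$ and $\phi^{\mathrm{T}}$ correctly reduces this to identifying $\operatorname{span}_\Z\{P_\sigma\}$ with the lattice of integer matrices having constant row and column sums, which is the integer Birkhoff--K\"onig theorem together with the observation that the all-ones matrix is a sum of four permutation matrices; the rank count $\operatorname{rk}M=(4-1)^2+1=10=\operatorname{rk}\ker\delta$ then finishes the argument. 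Your route imports two external inputs --- the Birkhoff--K\"onig decomposition and the value $\operatorname{rk}\ker\delta=10$ --- but the latter causes no circularity, since Proposition~\ref{Prop:KB} is deduced from Proposition~\ref{Prop:kernel=image}, whose proof is independent of the present proposition. What your argument buys is conceptual transparency and generality: it shows for every $n$ that the lattice generated by the classes $\sum_{\sigma(j)=i}q_\sigma$ is a direct summand of $\Z^{\fS_n}$ of rank $(n-1)^2+1$, with no case-by-case bookkeeping. What the paper's computation buys is independence from the cokernel calculation and an explicit reduction procedure for elements of $\ker\delta$, which is reused in the proof of Proposition~\ref{Prop:poscone}.
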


\begin{proof}
It is straightforward to check that $[P_{i,j}]_0$ is in $\ker \delta$ 
for $i,j = 1,2,3,4$. 

Take $x \in \ker \delta$, and we will show that $x$ is in the subgroup 
generated by $\{[P_{i,j}]_0\mid i,j=1,2,3,4\}$. 
Write $x=\sum_{\sigma \in \fS_4}n_\sigma q_\sigma$ 
with $n_\sigma \in \Z$. 
Subtracting 
$n_{(4213)}[P_{2,2}]_0+n_{(4132)}[P_{1,2}]_0$
from $x$, 
we may assume $n_{(4213)}=n_{(4132)}=0$ 
without loss of generality. 
Subtracting 
$n_{(4312)}[P_{3,2}]_0+n_{(4123)}[P_{2,3}]_0+n_{(4231)}[P_{1,4}]_0$
from $x$, 
we may further assume $n_{(4312)}=n_{(4123)}=n_{(4231)}=0$ 
without loss of generality. 
Subtracting 
$n_{(2341)}[P_{2,1}]_0+n_{(3142)}[P_{3,1}]_0$
from $x$, 
we may further assume $n_{(2341)}=n_{(3142)}=0$ 
without loss of generality. 
Subtracting 
$n_{(2413)}[P_{4,2}]_0+n_{(3214)}[P_{4,4}]_0+n_{(1324)}[P_{1,1}]_0$
from $x$, 
we may further assume $n_{(2413)}=n_{(3214)}=n_{(1324)}=0$ 
without loss of generality. 
Now we will show $x=0$ using $x \in \ker \delta$. 

Since $n_{(3241)}+n_{(4132)}=n_{(3142)}+n_{(4231)}$, 
we have $n_{(3241)}=0$. 

Since $n_{(2314)}+n_{(3241)}=n_{(2341)}+n_{(3214)}$, 
we have $n_{(2314)}=0$. 

Since $n_{(1423)}+n_{(2314)}=n_{(1324)}+n_{(2413)}$, 
we have $n_{(1423)}=0$. 

Since $n_{(1423)}+n_{(4132)}=n_{(1432)}+n_{(4123)}$, 
we have $n_{(1432)}=0$. 

Since $n_{(3124)}+n_{(4213)}=n_{(3214)}+n_{(4123)}$, 
we have $n_{(3124)}=0$. 

Since $n_{(2431)}+n_{(4213)}=n_{(2413)}+n_{(4231)}$, 
we have $n_{(2431)}=0$. 

Since $n_{(1342)}+n_{(2431)}=n_{(1432)}+n_{(2341)}$, 
we have $n_{(1342)}=0$. 

Since $n_{(2314)}+n_{(4132)}=n_{(2134)}+n_{(4312)}$, 
we have $n_{(2134)}=0$. 

Since $n_{(2431)}+n_{(3124)}=n_{(2134)}+n_{(3421)}$, 
we have $n_{(3421)}=0$. 

Since $n_{(1423)}+n_{(3241)}=n_{(1243)}+n_{(3421)}$, 
we have $n_{(1243)}=0$. 

Since $n_{(1234)}+n_{(2143)}=n_{(1243)}+n_{(2134)}=0$, 
$n_{(1234)}+n_{(3412)}=n_{(1432)}+n_{(3214)}=0$ 
and $n_{(2143)}+n_{(3412)}=n_{(2413)}+n_{(3142)}=0$, 
we have $2 n_{(1234)}=0$. 
Hence $n_{(1234)}=0$. 
This implies $n_{(2143)}=n_{(3412)}=0$. 
Finally, since $n_{(1234)}+n_{(4321)}=n_{(1324)}+n_{(4231)}$, 
we have $n_{(4321)}=0$. 
We have shown that $x=0$. 
This completes the proof. 
\end{proof}

From Proposition~\ref{Prop:kerdelta} 
(or its proof), 
we see that $K_0(B) \cong \ker \delta$ is isomorphic to $\Z^{n}$ 
with $n \leq 10$. 
Note that the group generated by $\{[P_{i,j}]_0\mid i,j=1,2,3,4\}$ 
is in fact generated by $10$ elements
\[
[P_{1,1}]_0, [P_{1,2}]_0, [P_{1,3}]_0, [P_{1,4}]_0, [P_{2,1}]_0, 
[P_{2,2}]_0, [P_{2,3}]_0, [P_{3,1}]_0, [P_{3,2}]_0, [P_{3,3}]_0. 
\]
We will show that $K_0(B) \cong \ker \delta$ is isomorphic to $\Z^{10}$ 
in Proposition~\ref{Prop:KB}.

\begin{table}[hbtp]
\caption{Computation of $[P_{i,j}]_0$}\label{table:Pij}
\begin{tabular}{|c||cccc|cccc|cccc|cccc|} \hline
$i$ & \multicolumn{4}{|c|}{1} &  \multicolumn{4}{|c|}{2} &  \multicolumn{4}{|c|}{3} &  \multicolumn{4}{|c|}{4}  \\ \hline
$q\diagdown j$
       & 1 & 2 & 3 & 4 & 1 & 2 & 3 & 4 & 1 & 2 & 3 & 4 & 1 & 2 & 3 & 4  \\ \hline\hline
(1234) & 1 & 0 & 0 & 0 & 0 & 1 & 0 & 0 & 0 & 0 & 1 & 0 & 0 & 0 & 0 & 1  \\ \hline
(2143) & 0 & 1 & 0 & 0 & 1 & 0 & 0 & 0 & 0 & 0 & 0 & 1 & 0 & 0 & 1 & 0  \\ \hline
(3412) & 0 & 0 & 1 & 0 & 0 & 0 & 0 & 1 & 1 & 0 & 0 & 0 & 0 & 1 & 0 & 0  \\ \hline 
(4321) & 0 & 0 & 0 & 1 & 0 & 0 & 1 & 0 & 0 & 1 & 0 & 0 & 1 & 0 & 0 & 0  \\ \hline \hline
(1342) & 1 & 0 & 0 & 0 & 0 & 0 & 0 & 1 & 0 & 1 & 0 & 0 & 0 & 0 & 1 & 0  \\ \hline 
(2431) & 0 & 0 & 0 & 1 & 1 & 0 & 0 & 0 & 0 & 0 & 1 & 0 & 0 & 1 & 0 & 0  \\ \hline
(3124) & 0 & 1 & 0 & 0 & 0 & 0 & 1 & 0 & 1 & 0 & 0 & 0 & 0 & 0 & 0 & 1  \\ \hline 
(4213) & 0 & 0 & 1 & 0 & 0 & 1 & 0 & 0 & 0 & 0 & 0 & 1 & 1 & 0 & 0 & 0  \\ \hline \hline
(1423) & 1 & 0 & 0 & 0 & 0 & 0 & 1 & 0 & 0 & 0 & 0 & 1 & 0 & 1 & 0 & 0  \\ \hline
(2314) & 0 & 0 & 1 & 0 & 1 & 0 & 0 & 0 & 0 & 1 & 0 & 0 & 0 & 0 & 0 & 1  \\ \hline
(3241) & 0 & 0 & 0 & 1 & 0 & 1 & 0 & 0 & 1 & 0 & 0 & 0 & 0 & 0 & 1 & 0  \\ \hline
(4132) & 0 & 1 & 0 & 0 & 0 & 0 & 0 & 1 & 0 & 0 & 1 & 0 & 1 & 0 & 0 & 0  \\ \hline \hline
(1243) & 1 & 0 & 0 & 0 & 0 & 1 & 0 & 0 & 0 & 0 & 0 & 1 & 0 & 0 & 1 & 0  \\ \hline
(2134) & 0 & 1 & 0 & 0 & 1 & 0 & 0 & 0 & 0 & 0 & 1 & 0 & 0 & 0 & 0 & 1  \\ \hline
(3421) & 0 & 0 & 0 & 1 & 0 & 0 & 1 & 0 & 1 & 0 & 0 & 0 & 0 & 1 & 0 & 0  \\ \hline
(4312) & 0 & 0 & 1 & 0 & 0 & 0 & 0 & 1 & 0 & 1 & 0 & 0 & 1 & 0 & 0 & 0  \\ \hline \hline 
(1432) & 1 & 0 & 0 & 0 & 0 & 0 & 0 & 1 & 0 & 0 & 1 & 0 & 0 & 1 & 0 & 0  \\ \hline
(2341) & 0 & 0 & 0 & 1 & 1 & 0 & 0 & 0 & 0 & 1 & 0 & 0 & 0 & 0 & 1 & 0  \\ \hline
(3214) & 0 & 0 & 1 & 0 & 0 & 1 & 0 & 0 & 1 & 0 & 0 & 0 & 0 & 0 & 0 & 1  \\ \hline
(4123) & 0 & 1 & 0 & 0 & 0 & 0 & 1 & 0 & 0 & 0 & 0 & 1 & 1 & 0 & 0 & 0  \\ \hline \hline
(1324) & 1 & 0 & 0 & 0 & 0 & 0 & 1 & 0 & 0 & 1 & 0 & 0 & 0 & 0 & 0 & 1  \\ \hline
(2413) & 0 & 0 & 1 & 0 & 1 & 0 & 0 & 0 & 0 & 0 & 0 & 1 & 0 & 1 & 0 & 0  \\ \hline
(3142) & 0 & 1 & 0 & 0 & 0 & 0 & 0 & 1 & 1 & 0 & 0 & 0 & 0 & 0 & 1 & 0  \\ \hline
(4231) & 0 & 0 & 0 & 1 & 0 & 1 & 0 & 0 & 0 & 0 & 1 & 0 & 1 & 0 & 0 & 0  \\ \hline  
\end{tabular}
\end{table}

The positive cone $K_0(B^{\text{\textbullet}})_+$ of 
$K_0(B^{\text{\textbullet}})$ is the set of sums of 
$q_\sigma$'s. 
In other words, we have 
\[
K_0(B^{\text{\textbullet}})_+ = 
\Big\{\sum_{\sigma \in \fS_4}n_\sigma q_\sigma\ \Big|\ 
n_\sigma = 0,1,2,\ldots \Big\}
\]

\begin{proposition}\label{Prop:poscone}
The intersection $K_0(B^{\text{\textbullet}})_+ \cap \ker \delta$ 
is the set of sums of $[P_{i,j}]_0$'s. 
\end{proposition}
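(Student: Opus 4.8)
For ``$\supseteq$'', note that each $[P_{i,j}]_0$ lies in $K_0(B^{\text{\textbullet}})_+$ (it is the class of a projection) and in $\ker\delta$ (Proposition~\ref{Prop:kerdelta}); since both $K_0(B^{\text{\textbullet}})_+$ and $\ker\delta$ are closed under addition, so is their intersection, which therefore contains every sum of $[P_{i,j}]_0$'s. The real content is ``$\subseteq$'', and the plan is to reduce it to the integrality of the dual assignment problem.

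Let $x=\sum_{\sigma\in\fS_4}n_\sigma q_\sigma$ lie in $K_0(B^{\text{\textbullet}})_+\cap\ker\delta$, so $n_\sigma\ge0$ for all $\sigma$. By Proposition~\ref{Prop:kerdelta} we may write $x=\sum_{i,j=1}^4 m_{i,j}[P_{i,j}]_0$ with $m_{i,j}\in\Z$; using $[P_{i,j}]_0=\sum_{\sigma(j)=i}q_\sigma$ and comparing the coefficients of $q_\sigma$, this becomes $n_\sigma=\sum_{j=1}^4 m_{\sigma(j),j}$, i.e.\ $n_\sigma=\langle M,\Pi_\sigma\rangle$ where $M=(m_{i,j})$ and $\Pi_\sigma$ is the permutation matrix of $\sigma$. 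Thus $n_\sigma\ge0$ for all $\sigma$ says precisely that $T\mapsto\langle M,T\rangle$ is nonnegative on all $4\times4$ permutation matrices, hence nonnegative on every doubly stochastic matrix. Put $d:=\min_\sigma n_\sigma\ge0$. By linear programming duality for the assignment problem, $d$ equals $\max\{\sum_i y_i+\sum_j z_j:\ y_i+z_j\le m_{i,j}\ \forall i,j\}$; since $M$ is integral and the constraint matrix here is the (totally unimodular) incidence matrix of the complete bipartite graph $K_{4,4}$, this maximum is attained at integers $y_1,\dots,y_4,z_1,\dots,z_4$.

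Now set $m'_{i,j}:=m_{i,j}-y_i-z_j$, so that $M'=(m'_{i,j})$ is a nonnegative integer matrix. Subtracting this potential changes $x$ only by a multiple of $[1]_0$:
\[
\sum_{i,j=1}^4(m_{i,j}-m'_{i,j})[P_{i,j}]_0
=\sum_{i=1}^4 y_i\sum_{j=1}^4[P_{i,j}]_0+\sum_{j=1}^4 z_j\sum_{i=1}^4[P_{i,j}]_0
=\Big(\sum_i y_i+\sum_j z_j\Big)[1]_0=d\,[1]_0,
\]
where we used $\sum_{j=1}^4[P_{i,j}]_0=[1]_0=\sum_{i=1}^4[P_{i,j}]_0$. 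Since $\sum_{i=1}^4 P_{i,1}=1$ gives $[1]_0=\sum_{i=1}^4[P_{i,1}]_0$ and $d\ge0$, we conclude that $x=\sum_{i,j=1}^4 m'_{i,j}[P_{i,j}]_0+d\sum_{i=1}^4[P_{i,1}]_0$ is a sum of $[P_{i,j}]_0$'s, completing ``$\subseteq$''. The one non-elementary input is the integrality of the optimal dual solution to the assignment problem (equivalently, that the Hungarian method terminates with integral potentials); as everything is $4\times4$ one could even check this by hand, but it is standard. The step I expect to be fussiest is the bookkeeping translating $x=\sum m_{i,j}[P_{i,j}]_0$ into $n_\sigma=\langle M,\Pi_\sigma\rangle$ and verifying that the correction is exactly $d[1]_0$; an equivalent but no easier alternative is to argue by induction on $\sum_\sigma n_\sigma$, the inductive step amounting to showing that a nonzero $x$ of this form dominates some $[P_{i,j}]_0$ coordinatewise.
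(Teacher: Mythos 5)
Your proof is correct, but it takes a genuinely different route from the paper's. The paper argues by descent: writing $x=\sum_{i,j}n_{i,j}[P_{i,j}]_0$ and measuring the failure of nonnegativity by $n=\sum_{n_{i,j}<0}(-n_{i,j})$, it shows via a combinatorial case analysis (Lemma~\ref{Lem:3x3}) that whenever some $n_{i_0,j_0}<0$, the positivity of the coefficients $n_\sigma$ forces a strictly positive row, column, or $2\times 2$ block among the complementary entries, which permits an adjustment by the relations $\sum_i[P_{i,j}]_0=\sum_j[P_{i',j}]_0$ that strictly decreases $n$; iterating kills all negative entries. You instead observe that the condition $n_\sigma=\sum_j m_{\sigma(j),j}\ge 0$ for all $\sigma\in\fS_4$ is exactly nonnegativity of the objective of an assignment problem, and that a single integral optimal dual potential $(y,z)$ (Egerv\'ary's theorem, i.e.\ total unimodularity of the incidence matrix of $K_{4,4}$) converts $M$ into a nonnegative integral matrix at the cost of the correction term $d[1]_0$ with $d=\min_\sigma n_\sigma\ge 0$, which is itself a sum of $[P_{i,j}]_0$'s. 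The two arguments are really the same mechanism seen from opposite ends --- the paper's descent is a hand-rolled dual adjustment step of the Hungarian method --- but yours is shorter and more conceptual at the price of importing a standard external integrality result, whereas the paper's is self-contained. One small point you should tidy if you write this up: the dual feasible region $\{(y,z):y_i+z_j\le m_{i,j}\}$ has a nontrivial lineality space (shift all $y_i$ by $t$ and all $z_j$ by $-t$), so it has no vertices and the bare ``optimum at an integral vertex'' argument needs the usual normalization (e.g.\ fix $z_4=0$) before invoking total unimodularity; this is routine and does not affect correctness.
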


\begin{proof}
It is clear that $[P_{i,j}]_0$ is in 
$K_0(B^{\text{\textbullet}})_+ \cap \ker \delta$ 
for $i,j=1,2,3,4$. 
Thus the set of sums of $[P_{i,j}]_0$'s 
is contained in $K_0(B^{\text{\textbullet}})_+ \cap \ker \delta$. 

Take $x \in K_0(B^{\text{\textbullet}})_+ \cap \ker \delta$. 
By Proposition~\ref{Prop:kerdelta}, 
we have $n_{i,j} \in \Z$ for $i,j=1,2,3,4$ such that 
$x=\sum_{i,j=1}^4 n_{i,j}[P_{i,j}]_0$. 
We set $n \coloneqq \sum_{n_{i,j}<0}(-n_{i,j})$. 
If $n=0$, then $x$ is in the set of sums of $[P_{i,j}]_0$'s. 
If $n >0$, then we will show that there exists 
$n'_{i,j} \in \Z$ for $i,j=1,2,3,4$ such that 
$x=\sum_{i,j=1}^4 n'_{i,j}[P_{i,j}]_0$ and that 
$n' \coloneqq \sum_{n'_{i,j}<0}(-n'_{i,j})$ satisfies 
$0 \leq n' <n$. 
Repeating this argument at most $n$ times, 
we will find $n''_{i,j} \in \Z$ for $i,j=1,2,3,4$ such that 
$x=\sum_{i,j=1}^4 n''_{i,j}[P_{i,j}]_0$ and that 
$n'' \coloneqq \sum_{n''_{i,j}<0}(-n''_{i,j})$ satisfies 
$n''=0$. 
This shows that $x$ is in the set of sums of $[P_{i,j}]_0$'s. 

Since $n>0$ we have $i_0,j_0\in \{1,2,3,4\}$ 
such that $n_{i_0,j_0}<0$. 
To simplify the notation, we assume $i_0=3$ and $j_0=1$. 
The other $15$ cases can be shown similarly. 
Since $x \in K_0(B^{\text{\textbullet}})_+$, 
the coefficient of $v_\sigma$ is non-negative 
for all $\sigma \in \fS_4$. 
In particular, so is for $\sigma \in \fS_4$ 
with $i_0=\sigma(j_0)$. 
Since the coefficient of $v_{(3,1,2,4)}$ is non-negative 
we have $n_{3,1}+n_{1,2}+n_{2,3}+n_{4,4}\geq 0$. 
Since $n_{3,1}<0$, 
we have $n_{1,2}+n_{2,3}+n_{4,4} > 0$. 
Hence either $n_{1,2}$, $n_{2,3}$ or $n_{4,4}$ 
is positive. 
Similarly, since the coefficients of 
\[
v_{(3,1,4,2)}, v_{(3,2,1,4)}, v_{(3,2,4,1)}, v_{(3,4,1,2)}, v_{(3,4,2,1)} 
\] 
are non-negative, 
we obtain that either $n_{1,2}$, $n_{4,3}$ or $n_{2,4}$ 
is positive etc. 
Then by Lemma~\ref{Lem:3x3} below 
we have either 
\begin{enumerate}
\rom
\item $n_{i_1,2}$ $n_{i_1,3}$ and $n_{i_1,4}$ 
are positive for some $i_1 \in \{1,2,4\}$, 
\item $n_{1,j_1}$ $n_{2,j_1}$ and $n_{4,j_1}$ 
are positive for some $j_1 \in \{2,3,4\}$, 
or 
\item $n_{i_1,j_1}$, $n_{i_1,j_2}$, $n_{i_2,j_1}$ and $n_{i_2,j_2}$ 
are positive for some distinct $i_1,i_2 \in \{1,2,4\}$ 
and distinct $j_1,j_2 \in \{2,3,4\}$. 
\end{enumerate}
In the case (i), 
we set $n'_{i,j}$ by 
\[
n'_{i,j}=
\begin{cases}
n_{i,j}+1& \text{for $i\in \{1,2,3,4\}\setminus\{i_1\}$ and $j=1$,}\\ 
n_{i,j}-1& \text{for $i=i_1$ and $j=2,3,4$} \\
n_{i,j}& \text{otherwise.}
\end{cases}
\]
Then since $n'_{3,1}=n_{3,1}+1$, 
$n' \coloneqq \sum_{n'_{i,j}<0}(-n'_{i,j})$ satisfies 
$0 \leq n' <n$. 
We also have $x=\sum_{i,j=1}^4 n'_{i,j}[P_{i,j}]_0$ 
because 
$\sum_{i=1}^4[P_{i,1}]_0=\sum_{j=1}^4[P_{i_1,j}]_0$. 
In the case (ii), 
we get the same conclusion for $n'_{i,j}$ defined by 
\[
n'_{i,j}=
\begin{cases}
n_{i,j}+1& \text{for $i=3$ and $j\in \{1,2,3,4\}\setminus\{j_1\}$,}\\ 
n_{i,j}-1& \text{for $i=1,2,4$ and $j=j_1$} \\
n_{i,j}& \text{otherwise.}
\end{cases}
\]
In the case (iii), 
we define $n'_{i,j}$ by 
\[
n'_{i,j}=
\begin{cases}
n_{i,j}+1& \text{for $i\in \{1,2,3,4\}\setminus\{i_1,i_2\}$ 
and $j\in \{1,2,3,4\}\setminus\{j_1,j_2\}$,}\\ 
n_{i,j}-1& \text{for $i=i_1,i_2$ and $j=j_1,j_2$} \\
n_{i,j}& \text{otherwise.}
\end{cases}
\]
Since $n'_{3,1}=n_{3,1}+1$, 
$n' \coloneqq \sum_{n'_{i,j}<0}(-n'_{i,j})$ satisfies 
$0 \leq n' <n$. 
We also have $x=\sum_{i,j=1}^4 n'_{i,j}[P_{i,j}]_0$ 
because 
\[
\sum_{i=1}^4[P_{i,j_1}]_0
+\sum_{i=1}^4[P_{i,j_2}]_0
=\sum_{j=1}^4[P_{i_3,j}]_0+\sum_{j=1}^4[P_{i_4,j}]_0. 
\]
where $\{i_3,i_4\}=\{1,2,3,4\}\setminus\{i_1,i_2\}$. 
This completes the proof. 
\end{proof}

\begin{lemma}\label{Lem:3x3}
Let $a,b,c$ and $d,e,f$ are distinct three numbers, respectively. 
Suppose $n_{i,j}\in \Z$ for $i=a,b,c$ and $j=d,e,f$ 
satisfy that either $n_{\omega(d),d}$, 
$n_{\omega(e),e}$ or $n_{\omega(f),f}$ is positive 
for all bijection $\omega\colon \{d,e,f\} \to \{a,b,c\}$. 
Then we have either 
\begin{enumerate}
\rom
\item $n_{i_1,d}$ $n_{i_1,e}$ and $n_{i_1,f}$ 
are positive for some $i_1 \in \{a,b,c\}$, 
\item $n_{a,j_1}$ $n_{b,j_1}$ and $n_{c,j_1}$ 
are positive for some $j_1 \in \{d,e,f\}$, 
or 
\item $n_{i_1,j_1}$, $n_{i_1,j_2}$, $n_{i_2,j_1}$ and $n_{i_2,j_2}$ 
are positive for some distinct $i_1,i_2 \in \{a,b,c\}$ 
and distinct $j_1,j_2 \in \{d,e,f\}$. 
\end{enumerate}
\end{lemma}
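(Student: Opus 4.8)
The plan is to reformulate the hypothesis as a statement about perfect matchings in a bipartite graph and then invoke Hall's marriage theorem. Form the bipartite graph $G$ with vertex set $\{a,b,c\}\sqcup\{d,e,f\}$ in which $i$ is joined to $j$ exactly when $n_{i,j}\leq 0$. A bijection $\omega\colon\{d,e,f\}\to\{a,b,c\}$ with all three of $n_{\omega(d),d},n_{\omega(e),e},n_{\omega(f),f}$ non-positive is precisely a perfect matching of $G$; hence the hypothesis ``for every $\omega$, at least one of these three entries is positive'' says exactly that $G$ has no perfect matching.

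Next, apply Hall's marriage theorem: since $G$ has no perfect matching, there is a subset $R\subseteq\{a,b,c\}$ with $|N_G(R)|<|R|$, where $N_G(R)\subseteq\{d,e,f\}$ denotes the set of $j$ such that $n_{i,j}\leq 0$ for some $i\in R$. The proof then proceeds by a case distinction on $|R|\in\{1,2,3\}$. If $|R|=1$, say $R=\{i_1\}$, then $N_G(R)=\emptyset$, so $n_{i_1,j}>0$ for every $j\in\{d,e,f\}$, which is alternative (i). If $|R|=3$, then $|N_G(R)|\leq 2$, so some $j_1\in\{d,e,f\}$ lies outside $N_G(R)$, whence $n_{i,j_1}>0$ for every $i\in\{a,b,c\}$, which is alternative (ii). If $|R|=2$, say $R=\{i_1,i_2\}$ with $i_1\neq i_2$, then $|N_G(R)|\leq 1$, so there are two distinct $j_1,j_2\in\{d,e,f\}$ outside $N_G(R)$, and then $n_{i_1,j_1},n_{i_1,j_2},n_{i_2,j_1},n_{i_2,j_2}$ are all positive, which is alternative (iii).

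I do not expect any genuine obstacle here; the lemma is purely combinatorial. The only points requiring care are to interpret the hypothesis correctly as the non-existence of a perfect matching of $G$ (it is genuinely needed: otherwise one can exhibit a $3\times 3$ sign pattern with a non-positive entry in every row and every column and no all-positive $2\times 2$ block) and to pair the three Hall cases with alternatives (i), (ii), (iii) in the correct order. As an alternative one could run the same argument through K\"onig's minimum-vertex-cover theorem: the non-positive positions are then covered by at most two lines, and one reads off (i), (ii), or (iii) according to whether those lines are two rows, two columns, or one row and one column. In the final write-up I would present whichever of the two formulations is shorter.
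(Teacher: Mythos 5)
Your proof is correct, and it takes a genuinely different route from the paper's. Your translation of the hypothesis into the non-existence of a perfect matching in the bipartite graph $G$ (edges placed at non-positive entries) is exact, and the three Hall cases $|R|=1,3,2$ do line up with alternatives (i), (ii), (iii) precisely as you describe. The paper instead argues by contradiction with a bare-hands case analysis: assuming (i)--(iii) all fail, it uses the failure of (ii) to build a map $\omega\colon\{d,e,f\}\to\{a,b,c\}$ with $n_{\omega(j),j}\le 0$ for all $j$, and then, splitting on the cardinality of the image of $\omega$, repeatedly invokes the failure of (i) and (iii) to modify $\omega$ into a bijection selecting three non-positive entries, contradicting the hypothesis. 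In effect the paper re-proves the $3\times 3$ instance of Hall/K\"onig from scratch, which keeps the lemma self-contained but makes the argument considerably longer; your version buys brevity and conceptual clarity at the cost of citing Hall's theorem as a black box, and either of your two formulations (Hall, or K\"onig's covering form in which the non-positive entries are covered by at most two lines and the line types dictate which of (i), (ii), (iii) occurs) would serve equally well.
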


\begin{proof}
To the contrary, assume that the conclusion does not hold. 
Then for $j=d,e,f$, 
either $n_{a,j}$, $n_{b,j}$ or $n_{c,j}$ 
is non-positive. 
Thus we obtain a map $\omega\colon \{d,e,f\} \to \{a,b,c\}$ 
such that $n_{\omega(j),j}$ is non-positive for $j=d,e,f$. 
If the cardinality of the image of $\omega$ is three, 
then $\omega$ is a bijection and it contradicts the assumption. 
If the cardinality of the image of $\omega$ is two, 
let $i_1$ be the element in $\{a,b,c\}$ which is not in the image of $\omega$. 
Then we have either $n_{i_1,d}$ $n_{i_1,e}$ or $n_{i_1,f}$ 
is non-positive. 
Let $j_1 \in \{d,e,f\}$ be an element such that $n_{i_1,j_1}$ 
is non-positive. 
If the cardinality of $\omega^{-1}(\omega(j_1))$ is two, 
we get a bijection $\omega'\colon \{d,e,f\} \to \{a,b,c\}$ 
such that $n_{\omega(d),d}$, 
$n_{\omega(e),e}$ and $n_{\omega(f),f}$ are non-positive. 
This is a contradiction. 
If the cardinality of $\omega^{-1}(\omega(j_1))$ is one, 
we have either $n_{i_1,j_2}$, $n_{i_1,j_3}$, $n_{i_2,j_2}$ or $n_{i_2,j_3}$ 
is non-positive 
where $i_2=\omega(j_1)$ and $\{j_2,j_3\}=\{d,e,f\}\setminus \{j_1\}$. 
In this case, 
we can find a bijection $\omega'\colon \{d,e,f\} \to \{a,b,c\}$ 
such that $n_{\omega(d),d}$, 
$n_{\omega(e),e}$ and $n_{\omega(f),f}$ are non-positive. 
This is a contradiction. 
Finally, 
if the cardinality of the image of $\omega$ is one, 
let $i_1$ be the unique element of the image of $\omega$, 
and $i_2$ and $i_3$ be the other two elements in $\{a,b,c\}$. 
We have $j_2,j_3 \in \{d,e,f\}$ such that 
$n_{i_2,j_2}$ and $n_{i_3,j_3}$ are non-positive. 
If $j_2 \neq j_3$, then we can find a bijection $\omega'\colon \{d,e,f\} \to \{a,b,c\}$ 
such that $n_{\omega(d),d}$, 
$n_{\omega(e),e}$ and $n_{\omega(f),f}$ are non-positive. 
This is a contradiction. 
If $j_2=j_3$, then 
we have either $n_{i_2,j_1}$, $n_{i_2,j'_1}$, $n_{i_3,j'_1}$ or $n_{i_3,j_1}$ 
is non-positive 
where $\{j_1,j'_1\}=\{d,e,f\}\setminus \{j_2\}$. 
In this case, we can find a bijection $\omega'\colon \{d,e,f\} \to \{a,b,c\}$ 
such that $n_{\omega(d),d}$, 
$n_{\omega(e),e}$ and $n_{\omega(f),f}$ are non-positive. 
This is a contradiction.
We are done. 
\end{proof}

\section{The Structure of the Ideal $I$}\label{Sec:StI}

\begin{definition}
Define a subspace $V$ of $\R P^3$ by 
\begin{equation*}
V\coloneqq \{ [a_1,a_2, a_3, a_4] \in \R P^3 \mid a_1,a_2,a_3 > |a_4| \}.
\end{equation*}
\end{definition}

The next proposition gives us a motivation to compute 
the subspace $V$ and its closure $\overline{V}$ in $\R P^3$.

\begin{proposition}
 We have the following facts.
\begin{enumerate}
\rom
\item For each $i,j=1,2,3,4$ with $(i,j) \neq (1,1)$, 
we have $\sigma_{i,j}(V) \cap V = \emptyset$ 
\item 
The restriction of $\pi$ to $V$ is a homeomorphism onto $\pi(V) \subset X$ .
\item $\overline{V}=\{ [a_1,a_2, a_3, a_4] \in \R P^3 \mid a_1,a_2,a_3 \geq |a_4| \}$ and $\pi(\overline{V})=X$.
\end{enumerate}
\end{proposition}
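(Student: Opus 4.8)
The plan is to prove the three assertions in turn, using that $\sigma_{i,j}$ acts on a representative $(a_1,a_2,a_3,a_4)\in S^3$ by the signed permutation matrix $U_{i,j}$ listed after Definition~\ref{Def:Uij}. Note first that a class lies in $V$ exactly when it has a (then unique) representative with $a_1,a_2,a_3>|a_4|$; for that representative, $|a_4|$ is the \emph{strict} minimum of $|a_1|,|a_2|,|a_3|,|a_4|$, and $a_1,a_2,a_3$ share a common sign.

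For (i), I would split according to whether the underlying permutation of $U_{i,j}$, namely $t_{t_i(j)}\in K$, fixes the index $4$; since among $t_1,t_2,t_3,t_4$ only $t_1$ fixes $4$, and $t_i(j)=1$ iff $i=j$, this happens exactly when $i=j$. If $i\neq j$, the fourth coordinate of $\sigma_{i,j}([a])$ is $\pm a_c$ with $c\in\{1,2,3\}$, so its absolute value $|a_c|>|a_4|$ is not the minimum of the multiset $\{|a_1|,|a_2|,|a_3|,|a_4|\}$; since that minimum is strict, no representative of $\sigma_{i,j}([a])$ can have all of its first three coordinates larger in absolute value than the fourth, so $\sigma_{i,j}([a])\notin V$. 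If $i=j\neq1$, then $U_{i,j}$ is one of $\mathrm{diag}(1,1,-1,-1)$, $\mathrm{diag}(1,-1,1,-1)$, $\mathrm{diag}(1,-1,-1,1)$, and in either representative of $\sigma_{i,j}([a])$ the first three coordinates have mixed signs, so again $\sigma_{i,j}([a])\notin V$. This gives (i). Assertion (ii) then follows at once: $V$ is open and $\pi$ is an open continuous surjection (quotient by a group action), so $\pi|_V$ is continuous and open onto $\pi(V)$, and it is injective because $\pi([a])=\pi([b])$ with $[a],[b]\in V$ forces $[b]=\sigma_{i,j}([a])\in\sigma_{i,j}(V)\cap V$, hence $(i,j)=(1,1)$ by (i).

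For (iii) I would first identify $\overline V$. Writing $\widetilde V=\{(a_1,a_2,a_3,a_4)\in S^3\mid a_1,a_2,a_3>|a_4|\}$, the quotient map $S^3\to\R P^3$ is closed, so $\overline V$ is the image of $\overline{\widetilde V}$, and $\overline{\widetilde V}=\{(a_1,a_2,a_3,a_4)\in S^3\mid a_1,a_2,a_3\geq|a_4|\}$: the inclusion $\subseteq$ is clear, and for $\supseteq$ one perturbs a point of the right-hand side to the normalization of $(a_1+t,a_2+t,a_3+t,a_4)$, which lies in $\widetilde V$ for all small $t>0$. This yields the stated formula for $\overline V$. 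To obtain $\pi(\overline V)=X$ it then suffices to show $\bigcup_{i,j}\sigma_{i,j}(\overline V)=\R P^3$. By Proposition~\ref{Prop:URu} the set $\{\pm U_{i,j}\}$ is a group, and from the explicit list (each $U_{i,j}$ has determinant $1$, equivalently an even number of $-1$ entries, and underlying permutation in $K$) one checks that it is precisely the group of all signed permutation matrices whose permutation part lies in $K$ and which involve an even number of sign changes; it contains the eight ``even'' diagonal sign matrices, generated by $U_{1,1},U_{2,2},U_{3,3},U_{4,4}$ and $-U_{1,1}$, together with a representative of each of the four permutations in $K$. Since $K$ acts transitively on $\{1,2,3,4\}$, given any $(a_1,a_2,a_3,a_4)\in S^3$ I can apply such a matrix which first carries an index of smallest absolute value to position $4$ and then makes the first three coordinates nonnegative by an even sign change (flipping also the fourth coordinate when an odd number of the first three were negative). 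The resulting vector lies in $\overline{\widetilde V}$; since the matrix used equals $\pm U_{i_0,j_0}$ for some $(i_0,j_0)$, we get $\sigma_{i_0,j_0}([a])\in\overline V$, i.e.\ $[a]$ lies in the translate $\sigma_{i_0,j_0}^{-1}(\overline V)$, and (iii) follows.

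I expect the last step --- pinning down the group generated by the $U_{i,j}$ precisely enough and running the normalization argument --- to be the main obstacle, while the closure computation and the determinant/parity bookkeeping are routine but must be done with care. An alternative route to the surjectivity, avoiding the explicit covering: by (i) the sixteen sets $\sigma_{i,j}(V)$ are pairwise disjoint and each has volume $\tfrac{1}{16}\,\mathrm{vol}(\R P^3)$ (under $S^3\to\R P^3$, $V$ corresponds to $\widetilde V$, and the $32$ signed-permutation translates of $\widetilde V$ tile $S^3$ up to a null set), so $\bigcup_{i,j}\sigma_{i,j}(V)$ is dense, and passing to closures gives $\R P^3=\bigcup_{i,j}\sigma_{i,j}(\overline V)$.
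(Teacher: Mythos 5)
Your proof is correct, and since the paper disposes of this proposition with ``(i) and (iii) can be checked directly, and (ii) follows from (i),'' your argument is exactly the kind of direct verification the authors had in mind: the strict-minimum/sign analysis for (i), openness of the quotient map for (ii), and the closure computation plus the normalization-by-the-group-of-signed-permutation-matrices for the covering in (iii) all go through. The only part I would not lean on is the parenthetical volume-tiling alternative at the end, since the claim that the $32$ translates of $\widetilde V$ tile $S^3$ up to a null set secretly requires the same covering argument you are trying to avoid; but your main line of proof is complete without it.
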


\begin{proof}
(i) and (iii) can be checked directly, and 
(ii) follows from (i). 
\end{proof}

In the next proposition, 
when we write $[a_1,a_2,a_3,a_4] \in \overline{V}$, 
we mean $(a_1,a_2,a_3,a_4)$ satisfies $a_1,a_2,a_3 \geq |a_4|$. 

\begin{proposition}\label{Prop:314}
The map 
\[
h\colon \overline{V} \ni [a_1,a_2,a_3,a_4] 
\mapsto (3a_1^2+a_4^2+4a_4|a_4|,3a_2^2+a_4^2+4a_4|a_4|,3a_3^2+a_4^2+4a_4|a_4|)
\in \R^3
\]
is a homeomorphism onto the hexahedron 
whose 6 faces are isosceles right triangles 
and whose vertices are 
$(0,0,0)$, $(3,0,0)$, $(0,3,0)$, $(0,0,3)$ and $(2,2,2)$. 
This map sends $V$ onto the interior of the hexahedron. 
\end{proposition}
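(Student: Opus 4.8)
The plan is to split $\overline{V}$ according to the sign of the last coordinate, compute $h$ explicitly on each half, and glue. First note that every point of $\overline{V}$ has a \emph{unique} representative $(a_1,a_2,a_3,a_4)\in S^3$ with $a_1,a_2,a_3\ge|a_4|$: if $\pm(a_1,a_2,a_3,a_4)$ both satisfied this, then $a_1=a_2=a_3=a_4=0$, impossible on $S^3$. So $h$ is well defined. The delicate point — the one I expect to be the main obstacle — is the continuity of $h$, since $a_4\mapsto a_4|a_4|$ involves $\operatorname{sgn}(a_4)$, which is not even a well-defined function on $\R P^3$. To get around this I would observe that the representative above is also the unique representative with $a_1+a_2+a_3>0$ (it is $\ge 3|a_4|\ge0$, and equality forces the zero vector). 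Since the quotient map $S^3\to\R P^3$ is a two-fold covering, it restricts to a homeomorphism of the open set $\{a_1+a_2+a_3>0\}$ onto its open image, yielding a continuous section $s\colon \overline{V}\to S^3$; composing $s$ with the manifestly continuous map $(a_1,a_2,a_3,a_4)\mapsto\big(3a_i^2+a_4^2+4a_4|a_4|\big)_{i=1,2,3}$ shows $h$ is continuous. As $\overline{V}$ is compact and $\R^3$ is Hausdorff, it then remains to prove that $h$ is injective with image the stated hexahedron.

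For injectivity I would use the key identity: writing $(x,y,z)=h([a])$ and $\sigma=x+y+z$, the relation $\sum_{k=1}^4 a_k^2=1$ gives $\sigma = 3+12\,a_4|a_4|$. Hence $a_4|a_4| = (\sigma-3)/12$, which determines $a_4^2=|\sigma-3|/12$ and $\operatorname{sgn}(a_4)=\operatorname{sgn}(\sigma-3)$, and then $a_1^2 = \big(x - a_4^2 - 4a_4|a_4|\big)/3$ and similarly $a_2^2,a_3^2$ are determined; since $a_1,a_2,a_3\ge 0$ in the canonical representative, $[a]$ is recovered completely. This shows $h$ is injective and exhibits the inverse explicitly.

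To identify the image I would analyse the two cases $a_4=\pm s$, $s\ge0$. A short computation shows that $\sum_k a_k^2=1$ together with $a_i\ge|a_4|$ ($i=1,2,3$) is equivalent, when $a_4\le0$, to $x,y,z\ge0$ and $\sigma\le3$, and, when $a_4\ge0$, to $x,y,z\ge\tfrac23(\sigma-3)$ and $\sigma\ge3$; conversely every such $(x,y,z)$ arises. The first region $R_-$ is the tetrahedron with vertices $(0,0,0),(3,0,0),(0,3,0),(0,0,3)$, and the second $R_+$ is the tetrahedron with vertices $(2,2,2),(3,0,0),(0,3,0),(0,0,3)$ — here each inequality such as $x\ge\tfrac23(\sigma-3)$ is exactly the half-space bounded by the plane through $(2,2,2),(0,3,0),(0,0,3)$. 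These two tetrahedra meet precisely along the common face $\{\sigma=3,\ x,y,z\ge0\}$, which corresponds to $a_4=0$. Their union $H$ is therefore a hexahedron with the five listed vertices; its six faces are the three triangles in the coordinate planes and the three triangles through $(2,2,2)$, each an isosceles right triangle — with the right angle at $(0,0,0)$ for the first three (legs along two coordinate axes) and at $(2,2,2)$ for the last three, since, e.g., $(1,-2,-2)\cdot(-2,1,-2)=0$ and $\lvert(1,-2,-2)\rvert=\lvert(-2,1,-2)\rvert=3$. By the opening remarks, $h\colon\overline{V}\to H$ is then a homeomorphism.

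Finally, for the statement about interiors: $\operatorname{int}\overline{V}=V$, because each point of $\overline{V}\setminus V$ (where some $a_i$ equals $|a_4|$) is a limit of points of $\R P^3\setminus\overline{V}$; likewise the topological interior of $H$ is the open hexahedron. A homeomorphism between compact subsets of manifolds carries interior points to interior points (invariance of domain), so $h(V)=\operatorname{int}H$. Alternatively one checks directly that the strict inequalities $a_1,a_2,a_3>|a_4|$ correspond, under the computation above, to $\operatorname{int}R_+\cup\operatorname{int}R_-$ together with the relative interior of the common face $\{\sigma=3,\ x,y,z\ge0\}$, which is exactly $\operatorname{int}H$. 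Apart from the continuity issue flagged above, the remaining work is the bookkeeping of the case split by $\operatorname{sgn}(a_4)$ and the matching of the resulting linear inequalities with the faces of $H$.
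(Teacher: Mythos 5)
Your proof is correct. It rests on the same basic decomposition as the paper's proof --- split $\overline{V}$ by the sign of $a_4$ and identify the image of each half as a tetrahedron, glued along the equilateral triangle $\{x+y+z=3,\ x,y,z\ge 0\}$ --- but the way you execute each step is genuinely different and, in places, tighter. The paper slices $\overline{V}$ by the level sets $a_4=t$, shows each slice maps onto an equilateral triangle of the appropriate size, and obtains the upper tetrahedron by observing that $h([a_1,a_2,a_3,a_4])$ is the reflection of $h([a_1,a_2,a_3,-a_4])$ in the plane $x+y+z=3$; injectivity is then read off from this slice-by-slice picture. You instead use the identity $x+y+z=3+12\,a_4|a_4|$ to write down an explicit inverse (recovering first $a_4$, then $a_1,a_2,a_3\ge 0$), and you describe the two tetrahedra by explicit linear inequalities ($x,y,z\ge 0$, $\sigma\le 3$ versus $x,y,z\ge\tfrac23(\sigma-3)$, $\sigma\ge 3$), which makes both the injectivity and the identification of the image more mechanical to verify. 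You also address two points the paper passes over in silence: the well-definedness and continuity of $h$ (via the continuous section of the double cover over $\{a_1+a_2+a_3>0\}$), and the justification that $h(V)=\operatorname{int}H$ (via invariance of domain, or the direct inequality check). Both proofs are complete for the purposes of the paper; yours trades the paper's geometric reflection argument for explicit formulas and is the more self-contained of the two.
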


\begin{proof}
First note that we have $|a_4| \leq 1/2$ 
for $[a_1,a_2,a_3,a_4] \in \overline{V}$. 
When $|a_4| = 1/2$, we have $a_1=a_2=a_3=1/2$. 
We have $h([1/2,1/2,1/2,1/2])=(2,2,2)$ and $h([1/2,1/2,1/2,-1/2])=(0,0,0)$. 
When $|a_4| = 0$, we have $a_1,a_2,a_3 \geq 0$ and $a_1^2+a_2^2+a_3^2=1$. 
Thus 
\[
\{h([a_1,a_2,a_3,0]) \mid [a_1,a_2,a_3,0] \in \overline{V}\} 
=\{(x,y,z) \in \R^3 \mid x,y,z\geq 0,\ x+y+z=3\}
\]
which is the equilateral triangle 
whose vertices are $(3,0,0)$, $(0,3,0)$ and $(0,0,3)$. 
For each $t$ with $-1/2<t<0$, we have 
\begin{align*}
\{h([a_1,a_2,a_3,t]) &\mid [a_1,a_2,a_3,t] \in \overline{V}\} \\
&=\{(x,y,z) \in \R^3 \mid x,y,z\geq 0,\ x+y+z=3(1-4t^2)\}
\end{align*}
which is the equilateral triangle 
whose vertices are $(3(1-4t^2),0,0)$, $(0,3(1-4t^2),0)$ and $(0,0,3(1-4t^2))$. 
Thus 
\[
\{h([a_1,a_2,a_3,a_4]) \mid [a_1,a_2,a_3,a_4] \in \overline{V}, a_4 \leq 0\} 
\]
is the tetrahedron 
whose vertices are $(0,0,0)$, $(3,0,0)$, $(0,3,0)$ and $(0,0,3)$. 
Note that for each $[a_1,a_2,a_3,a_4] \in \overline{V}$ with $a_4 \geq 0$, 
the point $h([a_1,a_2,a_3,a_4])$ is the reflection point of 
$h([a_1,a_2,a_3,-a_4])$ with respect to the plane $x+y+z=3$ 
because the vector $(8a_4^2,8a_4^2,8a_4^2)$ 
is orthogonal to the plane $x+y+z=3$
and the point $(3a_1^2+a_4^2,3a_2^2+a_4^2,3a_3^2+a_4^2)$ 
is on the plane $x+y+z=3$. 
Thus 
\[
\{h([a_1,a_2,a_3,a_4]) \mid [a_1,a_2,a_3,a_4] \in \overline{V}, a_4 \geq 0\} 
\]
is the reflection of the tetrahedron above 
with respect to the plane $x+y+z=3$, 
which in turn is the tetrahedron 
whose vertices are $(3,0,0)$, $(0,3,0)$, $(0,0,3)$ and $(2,2,2)$. 
From the discussion above, 
we see that $h$ is injective. 
Therefore we see that $h$ is a homeomorphism from $\overline{V}$ 
onto the hexahedron whose vertices are 
$(0,0,0)$, $(3,0,0)$, $(0,3,0)$, $(0,0,3)$ and $(2,2,2)$. 
We can also see that 
the map $h$ sends $V$ onto the interior of the hexahedron. 
\end{proof}

\begin{figure}[hbtp]
\caption{$\overline{V}$}\label{fig:V}
\begin{center}
\input{V.tpc}
\end{center}
\end{figure}

\begin{definition}
Define $O_0 \coloneqq \pi(V) \subset O$. 
\end{definition}

By Proposition~\ref{Prop:314}, $O_0 \cong V$ is homeomorphic to $\R^3$. 

\begin{definition}
We set $E \coloneqq \tF \cap \overline{V}$ and 
$E_{i,j} \coloneqq \tF_{i,j} \cap \overline{V}$ for $i,j=2,3,4$. 
\end{definition}

We have $E=\bigcup_{i,j=2}^4 E_{i,j}$. 
For $i,j=2,3,4$ with $i \neq j$, 
the map $\pi\colon E_{i,j} \to F_{i,j}$ is a homeomorphism. 
For $i=2,3,4$ 
the map $\pi\colon E_{i,i} \to F_{i,i}$ is a $2$-to-$1$ map 
except the middle point. 

\begin{figure}[hbtp]
\caption{$\pi\colon E\to F$ ($t=1/\sqrt{2}$)}\label{fig:EF}
\begin{center}
\input{E.tpc}
\end{center}
\end{figure}

We have 
\begin{align*}
&E_{2,2}= \{[a,b,0,0] \in \overline{V} \mid a,b \geq 0,\ a^2 +b^2=1\}, \\
&E_{2,3}= \{ [a,b,b,-a] \in \overline{V} \mid 0 \leq a\leq b,\ 2(a^2 +b^2)=1  \}, \\
&E_{2,4}= \{[a,b,a,b]  \in \overline{V} \mid 0 \leq b\leq a,\ 2(a^2 +b^2)=1  \}, \\
&E_{3,2}= \{[a,b,b,a]  \in \overline{V}  \mid 0 \leq a\leq b,\ 2(a^2 +b^2)=1  \}, \\
&E_{3,3}= \{[a,0,b,0]  \in \overline{V} \mid a,b \geq 0,\ a^2 +b^2=1\}, \\
&E_{3,4}= \{[a,a,b,-b] \in \overline{V} \mid 0 \leq b\leq a,\ 2(a^2 +b^2)=1  \}, \\
&E_{4,2}= \{[a,b,a,-b]  \in \overline{V} \mid 0 \leq b\leq a,\ 2(a^2 +b^2)=1  \}, \\
&E_{4,3}= \{[a,a,b,b]  \in \overline{V} \mid 0 \leq b\leq a,\ 2(a^2 +b^2)=1   \}, \\
&E_{4,4}= \{ [0,a,b,0]  \in \overline{V} \mid a,b \geq 0,\ a^2 +b^2=1\}.
\end{align*}

\begin{definition}
We set $R^+_x, R^+_y, R^+_z, R^-_x, R^-_y, R^-_z \subset \overline{V}$ by 
\begin{align*}
R^\pm_x&\coloneqq \{[\sqrt{1-3t^2},t,t,\pm t] \in \overline{V} \mid 0<t<1/2\}\\
R^\pm_y&\coloneqq \{[t,\sqrt{1-3t^2},t,\pm t] \in \overline{V} \mid 0<t<1/2\}\\
R^\pm_z&\coloneqq \{[t,t,\sqrt{1-3t^2},\pm t] \in \overline{V} \mid 0<t<1/2\}
\end{align*}
\end{definition}

We see that $R^+_x \cup R^+_y \cup R^+_z \cup R^-_x \cup R^-_y \cup R^-_z$ 
is the space obtained by subtracting $E$ from the ``edges'' of $\overline{V}$. 

\begin{definition}
We set $R^+,R^- \subset O$ by 
\begin{align*}
R^\pm &\coloneqq \pi(R^\pm_x) =\pi(R^\pm_y)=\pi(R^\pm_z)
\end{align*}
\end{definition}

Note that $\pi$ induces a homeomorphism 
from $R^\pm_x$ (or $R^\pm_y$, $R^\pm_z$) to $R^\pm$. 
Hence both $R^+$ and $R^-$ are homeomorphic to $\R$. 

\begin{definition}
We set 
\begin{align*}
\hT_{2,3} &\coloneqq \{[t,a,b,-t] \in \overline{V} \mid 0<t<1/2,\ a,b>t,\ 
a^2+b^2=1-2t^2\},\\
\hT_{3,4} &\coloneqq \{[a,b,t,-t] \in \overline{V} \mid 0<t<1/2,\ a,b>t,\ 
a^2+b^2=1-2t^2\},\\
\hT_{4,2} &\coloneqq \{[b,t,a,-t] \in \overline{V} \mid 0<t<1/2,\ a,b>t,\ 
a^2+b^2=1-2t^2\},\\
\hT_{3,2} &\coloneqq \{[t,a,b,t] \in \overline{V} \mid 0<t<1/2,\ a,b>t,\ 
a^2+b^2=1-2t^2\},\\
\hT_{4,3} &\coloneqq \{[a,b,t,t] \in \overline{V} \mid 0<t<1/2,\ a,b>t,\ 
a^2+b^2=1-2t^2\},\\
\hT_{2,4} &\coloneqq \{[b,t,a,t] \in \overline{V} \mid 0<t<1/2,\ a,b>t,\ 
a^2+b^2=1-2t^2\}.
\end{align*}
\end{definition}

These 6 spaces are the interiors of the 6 ``faces'' of $\overline{V}$. 

\begin{definition}
We set 
\begin{align*}
\hT_{2,3}^r &\coloneqq \{[t,a,b,-t] \in \hT_{2,3}\mid a>b\}, & 
\hT_{2,3}^l &\coloneqq \{[t,a,b,-t] \in \hT_{2,3}\mid a<b\} \\
\hT_{3,4}^r &\coloneqq \{[a,b,t,-t] \in \hT_{3,4}\mid a>b\}, & 
\hT_{3,4}^l &\coloneqq \{[a,b,t,-t] \in \hT_{3,4}\mid a<b\} \\
\hT_{4,2}^r &\coloneqq \{[b,t,a,-t] \in \hT_{4,2}\mid a>b\}, & 
\hT_{4,2}^l &\coloneqq \{[b,t,a,-t] \in \hT_{4,2}\mid a<b\} \\
\hT_{3,2}^r &\coloneqq \{[t,a,b,t] \in \hT_{3,2}\mid a>b\}, & 
\hT_{3,2}^l &\coloneqq \{[t,a,b,t] \in \hT_{3,2}\mid a<b\} \\
\hT_{4,3}^r &\coloneqq \{[a,b,t,t] \in \hT_{4,3}\mid a>b\}, & 
\hT_{4,3}^l &\coloneqq \{[a,b,t,t] \in \hT_{4,3}\mid a<b\} \\
\hT_{2,4}^r &\coloneqq \{[b,t,a,t] \in \hT_{2,4}\mid a>b\}, & 
\hT_{2,4}^l &\coloneqq \{[b,t,a,t] \in \hT_{2,4}\mid a<b\}. 
\end{align*}
\end{definition}

For $i,j=2,3,4$ with $i\neq j$, 
the set $\hT_{i,j}\setminus (\hT_{i,j}^r \cup \hT_{i,j}^l)$ 
is the interior of $E_{i,j}$. 

\begin{definition}
For $i,j=2,3,4$ with $i\neq j$, 
we set 
\begin{align*}
T_{i,j} \coloneqq \pi(\hT_{i,j}^r) = \pi(\hT_{i,j}^l). 
\end{align*}
\end{definition}

Note that $\pi$ induces a homeomorphism 
from $\hT_{i,j}^r$ (or $\hT_{i,j}^l$) to $T_{i,j}$. 
Hence $T_{i,j}$ is homeomorphic to $\R^2$. 

The space $O$ is a disjoint union (as a set) of 
\[
O_0,\ T_{2,3},\ T_{3,4},\ T_{4,2},\ R^-,\ T_{3,2},\ T_{4,3},\ T_{2,4},\ R^+. 
\]
We use these spaces to compute the K-groups of $I =M_4(C_0(\tO))^{\beta}$.

\section{K-groups of the ideal $I$}\label{Sec:KI}

\begin{definition}
We set $I_0 \coloneqq M_4\big(C_0(\pi^{-1}(O_0))\big)^{\beta}$ 
and $I^{\star} \coloneqq M_4\big(C_0(\pi^{-1}(O\setminus O_0))\big)^{\beta}$. 
\end{definition}

We have a short exact sequence
\begin{equation*}
0 \longrightarrow I_0 \longrightarrow I \longrightarrow 
I^{\star} \longrightarrow 0.
\end{equation*}
We have $I_0 \cong M_4(C_0(V)) \cong M_4(C_0(O_0)) \cong M_4(C_0(\R^3))$. 

\begin{definition}
We set $T \coloneqq 
T_{2,3} \cup T_{3,4} \cup T_{4,2} \cup T_{3,2} \cup T_{4,3} \cup T_{2,4}$ 
and $R \coloneqq R^- \cup R^+$. 
We set $I^{\circ} \coloneqq M_4\big(C_0(\pi^{-1}(T))\big)^{\beta}$ 
and $I^{\text{\textbullet}} \coloneqq M_4\big(C_0(\pi^{-1}(R))\big)^{\beta}$. 
\end{definition}

We have $I^{\circ} \cong M_4(C_0(T)) \cong \bigoplus_{i,j} M_4(C_0(T_{i,j}))$
and 
\[
I^{\text{\textbullet}} \cong M_4(C_0(R)) 
\cong M_4(C_0(R^-))\oplus M_4(C_0(R^+)). 
\]
We have a short exact sequence
\begin{equation*}
0 \longrightarrow I^{\circ} \longrightarrow I^{\star} \longrightarrow 
I^{\text{\textbullet}} \longrightarrow 0.
\end{equation*}
This induces a six-term exact sequence 
\begin{equation*}
\xymatrix@C=10pt@R=20pt{
\Z^6 \cong K_0(I^\circ) \ar[r] &  K_0(I^{\star}) \ar[r] & K_0(I^{\text{\textbullet}}) \ar[d] \ar@{=}[r] & 0\\
\Z^2 \cong K_{1}( I^{\text{\textbullet}} ) \ar[u] & \ar[l] K_{1}(I^{\star}) & \ar[l] K_{1}(I^\circ) \ar@{=}[r] & 0.
}
\end{equation*}
We set $r^- \in K_1\big(M_4(C_0(R^-)) \big)$ 
and $r^+ \in K_1\big(M_4(C_0(R^+))\big)$ 
to be the images of $v_{(1234)} \in K_0(B_{(234)})\subset K_0(B^{\text{\textbullet}})$ 
under the exponential maps 
coming from the exact sequences 
\begin{equation*}
0 \longrightarrow M_4(C_0(R^{\pm})) \longrightarrow 
M_4\big(C_0(\pi^{-1}(R^{\pm}\cup\{x_{(234)}\}))\big)^\beta 
\longrightarrow B_{(234)} \longrightarrow 0.
\end{equation*}
Then similarly as the proof of Lemma~\ref{Lem:dq=v}, 
we see that $r^-$ and $r^+$ are the generators of 
$K_1\big(M_4(C_0(R^-))\big) \cong \Z$ 
and $K_1\big(M_4(C_0(R^+)) \big) \cong \Z$, respectively. 

Let $\omega=(1342) \in \fS_4$. 
For $i=2,3,4$, 
we set $w_{i,\omega(i)} \in K_0\big(M_4(C_0(T_{i,\omega(i)}))\big)$ 
to be the image of the generator $r^-$ of $K_1\big(M_4(C_0(R^-))\big)$ 
under the index map coming from the exact sequences 
\begin{equation*}
0 \longrightarrow M_4(C_0(T_{i,\omega(i)})) \longrightarrow 
M_4\big(C_0(\pi^{-1}(T_{i,\omega(i)}\cup R^{-}))\big)^\beta 
\longrightarrow M_4(C_0(R^{-})) \longrightarrow 0.
\end{equation*}
Since 
\[
M_4\big(C_0(\pi^{-1}(T_{2,3}\cup R^{-}))\big)^\beta 
\cong M_4\big(C_0(\hT_{2,3}^r \cup R_y^-)\big)
\cong M_4\big(C_0((0,1)\times (0,1])\big)
\]
whose K-groups are $0$, 
$w_{2,3}$ is a generator of $K_0\big(M_4(C_0(T_{2,3}))\big) \cong \Z$. 
Similarly, $w_{3,4}$ and $w_{4,2}$ are 
generators of $K_0\big(M_4(C_0(T_{3,4}))\big) \cong \Z$ 
and $K_0\big(M_4(C_0(T_{4,2}))\big) \cong \Z$, respectively. 

Similarly for $i=2,3,4$, 
we set the generator $w_{\omega(i),i}$ of 
$K_0\big(M_4(C_0(T_{\omega(i),i}))\big) \cong \Z$ 
to be the image of the generator $r^+$ of $K_1\big(M_4(C_0(R^+))\big)$ 
under the index map coming from the exact sequences 
\begin{equation*}
0 \longrightarrow M_4(C_0(T_{\omega(i)},i)) \longrightarrow 
M_4\big(C_0(\pi^{-1}(T_{\omega(i),i}\cup R^{+}))\big)^\beta 
\longrightarrow M_4(C_0(R^{+})) \longrightarrow 0.
\end{equation*}

Then the index map from 
\[
K_{1}(I^{\text{\textbullet}}) \cong K_1\big(M_4(C_0(R^-))\big)\oplus K_1\big(M_4(C_0(R^+)) \big) \cong \Z^2
\]
to 
\begin{align*}
K_0(I^\circ) \cong K_0\big(&M_4(C_0(T_{2,3}))\big) \oplus 
K_0\big(M_4(C_0(T_{3,4}))\big)\oplus 
K_0\big(M_4(C_0(T_{4,2}))\big)\\
&\oplus 
K_0\big(M_4(C_0(T_{3,2}))\big)\oplus 
K_0\big(M_4(C_0(T_{4,3}))\big)\oplus 
K_0\big(M_4(C_0(T_{2,4}))\big) \cong \Z^6
\end{align*}
becomes $\Z^2\ni (a,b) \mapsto (a,a,a,b,b,b) \in \Z^6$. 
Thus we have the following. 

\begin{proposition}
We have $K_0(I^{\star}) \cong \Z^4$ and $K_1(I^{\star})=0$. 
\end{proposition}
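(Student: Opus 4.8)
The plan is to extract both groups directly from the six-term exact sequence of the extension $0 \to I^{\circ} \to I^{\star} \to I^{\text{\textbullet}} \to 0$ set up just above. Since $K_1(I^{\circ}) = 0$ and $K_0(I^{\text{\textbullet}}) = 0$, this sequence collapses to
\[
0 \longrightarrow K_1(I^{\star}) \longrightarrow K_1(I^{\text{\textbullet}}) \xrightarrow{\ \partial\ } K_0(I^{\circ}) \longrightarrow K_0(I^{\star}) \longrightarrow 0,
\]
so that $K_1(I^{\star}) \cong \ker\partial$ and $K_0(I^{\star}) \cong \coker\partial$, where $\partial$ is the index map $K_1(I^{\text{\textbullet}}) \to K_0(I^{\circ})$. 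The whole statement therefore reduces to the shape of $\partial$, which is essentially already in hand.

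First I would recall the identifications fixed above: $K_1(I^{\text{\textbullet}}) \cong K_1(M_4(C_0(R^{-}))) \oplus K_1(M_4(C_0(R^{+}))) \cong \Z^2$ with free generators $r^{-}, r^{+}$, and $K_0(I^{\circ}) \cong \bigoplus_{i\ne j} K_0(M_4(C_0(T_{i,j}))) \cong \Z^{6}$ with free generators $w_{2,3}, w_{3,4}, w_{4,2}, w_{3,2}, w_{4,3}, w_{2,4}$ (these are free generators precisely because the six algebras $M_4(C_0(\pi^{-1}(T_{i,\omega(i)}\cup R^{-})))^{\beta}$ and $M_4(C_0(\pi^{-1}(T_{\omega(i),i}\cup R^{+})))^{\beta}$ are $K$-contractible, as shown above). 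Next I would assemble the six small index-map computations from above — by naturality of the boundary maps under restricting the big extension to the subquotients supported on $T_{i,\omega(i)}\cup R^{-}$ and $T_{\omega(i),i}\cup R^{+}$, and noting that a face disjoint from a given edge contributes a vanishing component — to conclude that, in these bases, $\partial$ is the homomorphism $\Z^2 \ni (a,b) \mapsto (a,a,a,b,b,b) \in \Z^{6}$; here $w_{i,\omega(i)}$ is by construction the image of $r^{-}$ and $w_{\omega(i),i}$ the image of $r^{+}$.

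It then remains only to do elementary linear algebra over $\Z$. The map $(a,b) \mapsto (a,a,a,b,b,b)$ is injective, so $K_1(I^{\star}) \cong \ker\partial = 0$; and its image is the subgroup of $\Z^{6}$ generated by $(1,1,1,0,0,0)$ and $(0,0,0,1,1,1)$, which is a rank-$2$ direct summand (extend it to a basis of $\Z^{6}$ by adjoining four of the standard basis vectors), so $K_0(I^{\star}) \cong \coker\partial \cong \Z^{4}$. I anticipate no genuine obstacle here: the single non-formal ingredient — the $K$-contractibility of the six ``face-plus-edge'' fixed-point algebras — was already established above from explicit homeomorphisms onto $M_4$ over $(0,1)\times(0,1]$, and the remainder is a diagram chase together with the computation of a $6\times 2$ integer matrix.
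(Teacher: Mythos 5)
Your proof is correct and follows essentially the same route as the paper: the paper also reads off $K_1(I^{\star})=\ker\partial=0$ and $K_0(I^{\star})=\coker\partial\cong\Z^4$ from the six-term sequence of $0\to I^{\circ}\to I^{\star}\to I^{\text{\textbullet}}\to 0$, using $K_1(I^{\circ})=K_0(I^{\text{\textbullet}})=0$ and the identification of the index map as $(a,b)\mapsto(a,a,a,b,b,b)$ in the bases $\{r^{\pm}\}$ and $\{w_{i,j}\}$. The concluding linear algebra over $\Z$ is as you describe.
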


We denote by $s_1,s_2,s_3,s_4 \in K_0(I^{\star})$ the images of 
$w_{2,3},w_{3,4},w_{3,2},w_{4,3} \in K_0(I^\circ)$. 
Then $\{s_1,s_2,s_3,s_4\}$ becomes a basis of $K_0(I^{\star}) \cong \Z^4$. 
Note that the images of $w_{4,2},w_{2,4} \in K_0(I^\circ)$ are 
$-s_1-s_2\in K_0(I^{\star})$ and $-s_3-s_4\in K_0(I^{\star})$, respectively. 

We have a six-term exact sequence 
\begin{equation}
\xymatrix@C=12pt@R=20pt{
0=K_0(I_0) \ar[r] & K_0(I) \ar[r] & K_0(I^{\star}) \cong \Z^4 \ar[d] \\
0= K_{1}(I^{\star}) \ar[u] & \ar[l]  K_{1}(I) & \ar[l]  K_{1}(I_0) \cong \Z.
}
\end{equation}

To compute the index map $K_0(I^{\star}) \to K_{1}(I_0)$, 
we need the following lemma. 

\begin{lemma}
The index map from $K_0(I^\circ) \cong \Z^6$ to $K_{1}(I_0) \cong \Z$ 
coming from the short exact sequence 
\begin{equation*}
0 \longrightarrow I_0 \longrightarrow 
M_4\big(C_0(\pi^{-1}(O_0 \cup T))\big)^\beta \longrightarrow 
I^{\circ} \longrightarrow 0.
\end{equation*}
is $0$. 
\end{lemma}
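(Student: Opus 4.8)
The plan is to reduce the statement to a purely topological computation of a connecting map in $K$-theory, and then to make that map vanish for orientation reasons. The starting observation is that $\pi^{-1}(O_0\cup T)$ lies inside $\tO$, on which $K\times K$ acts freely; hence for every $\sigma$-invariant locally compact $Y\subseteq O_0\cup T$ the algebra $M_4(C_0(\pi^{-1}(Y)))^\beta$ is locally (apply Lemma~\ref{Lem:isom1} over small balls, where $\pi^{-1}$ splits into $16$ disjoint copies) isomorphic to $M_4\otimes C_0(\,\cdot\,)$, so it is the section algebra of a locally trivial $M_4(\C)$-bundle over $\pi(Y)$. Over a contractible base such a bundle is trivial, and any bundle automorphism over a contractible space (conjugation by a $PU(4)$-valued function) is homotopic to the identity and so acts trivially in $K$-theory. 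I will use this for $O_0\cong\R^3$, for $T=\bigsqcup T_{i,j}$ with $T_{i,j}\cong\R^2$, for an open bicollar $N$ of $T$ in $O_0\cup T$ (each component $N_{i,j}\cong T_{i,j}\times\R\cong\R^3$), and for $N\setminus T=\bigsqcup_{i,j}(N_{i,j}^{+}\sqcup N_{i,j}^{-})$.

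Since $N$ is open in $O_0\cup T$ and $N\setminus T\subseteq O_0$, the inclusions of ideals give a morphism from the extension $0\to M_4(C_0(\pi^{-1}(N\setminus T)))^\beta\to M_4(C_0(\pi^{-1}(N)))^\beta\to M_4(C_0(\pi^{-1}(T)))^\beta\to 0$ to the extension $0\to I_0\to M_4(C_0(\pi^{-1}(O_0\cup T)))^\beta\to I^\circ\to 0$ of the Lemma; it is the identity on the quotients $I^\circ$ and an ideal inclusion $\iota\colon M_4(C_0(\pi^{-1}(N\setminus T)))^\beta\hookrightarrow I_0$ on the kernels. By naturality of the connecting map, the map $K_0(I^\circ)\to K_1(I_0)$ of the Lemma equals $\iota_*\circ\delta_N$, where $\delta_N$ is the connecting map of the first extension. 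By the previous paragraph both factors may be computed after stabilising and discarding the twist, so on the $(i,j)$-summand $\delta_N$ is the exponential map of $0\to C_0(N_{i,j}\setminus T_{i,j})\to C_0(N_{i,j})\to C_0(T_{i,j})\to 0$ with $N_{i,j}\cong T_{i,j}\times\R$; it carries a generator $x$ of $K_0(C_0(T_{i,j}))$ to $(x,-x)$ in $K_1(C_0(T_{i,j}\times\R_{>0}))\oplus K_1(C_0(T_{i,j}\times\R_{<0}))$, the two suspension isomorphisms coming with opposite signs. Meanwhile $\iota_*$ is the sum of the maps $(j_\pm)_*$ induced by the open inclusions $j_\pm\colon T_{i,j}\times\R_{\pm}\hookrightarrow O_0$. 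So the Lemma reduces to the identity $(j_+)_*=(j_-)_*$ on $K_1$ for each $(i,j)$.

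This is where orientation enters. Each $U_{i,j}$ is a signed permutation matrix with $\det U_{i,j}=1$, so $K\times K$ acts on $\R P^3$ by orientation-preserving diffeomorphisms; as it acts freely on $\tO$, the quotient $O=X\setminus F$, and hence its open subset $O_0\cup T$, is an orientable $3$-manifold. Fix an orientation of $O_0\cup T$; it restricts to orientations of $O_0\cong\R^3$ and of the bicollar $N\cong T\times\R$, and the latter is the product of the chosen orientation of $T$ with a trivialisation of the normal line bundle (trivial because each $T_{i,j}$ is contractible). Hence $j_+$ and $j_-$ are orientation-preserving open embeddings of $\R^3$ into the connected oriented $3$-manifold $O_0\cong\R^3$, and for any such $j$ the image $j_*(g)$ of the generator $g$ of $K_1(C_0(\R^3))\cong\Z$ equals the push-forward of $1\in K_0(\C)$ along the point inclusion $\{j(0)\}\hookrightarrow O_0$ (factor it as $\{j(0)\}\hookrightarrow\R^3\hookrightarrow O_0$: the first Gysin map is the Bott isomorphism, the second is extension by zero). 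This class depends only on the component of $j(0)$, so $(j_+)_*(g)=(j_-)_*(g)$; therefore $\iota_*\circ\delta_N$ sends $x$ to $(j_+)_*(x)-(j_-)_*(x)=0$ on each summand, and the connecting map of the Lemma is $0$.

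The main difficulty is the orientation bookkeeping. One must genuinely verify that $O_0\cup T$ is orientable — which rests on $\det U_{i,j}=1$ together with freeness of the action on $\tO$, so that the quotient really is a manifold and orientability descends — and one must check that the two sheets of the bicollar of $T$ inherit co-orientations compatible with a single orientation of $O_0$, so that the two occurrences in $\delta_N$, which always carry opposite signs, cancel under $\iota_*$ rather than doubling up to $\pm 2$. Everything else (the naturality reduction, triviality of the $M_4$-bundles over contractible pieces, the vanishing of $K$-theory effect of $PU(4)$-valued conjugations there, and the elementary computation of $\delta_N$) is routine.
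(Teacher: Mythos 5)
Your argument is correct, and at bottom it runs on the same mechanism as the paper's proof: each face $T_{i,j}$ is approached from two sides by $O_0$, and the two resulting contributions to the connecting map cancel for orientation reasons. The packaging differs. The paper compares the extension of the Lemma with the untwisted extension $0 \to M_4(C_0(V)) \to M_4(C_0(V\cup\hT)) \to M_4(C_0(\hT)) \to 0$ obtained by restricting to the fundamental hexahedron $\overline{V}$; there the double contribution appears as the two boundary faces $\hT_{i,j}^r,\hT_{i,j}^l$ of $\overline{V}$ lying over the same $T_{i,j}$, and the cancellation is disposed of in one sentence (``the side to $V$ from $\hT_{i,j}^r$ and the one from $\hT_{i,j}^l$ differ''). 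You instead stay in the quotient and compare with the extension of a bicollar $N\cong T\times\R$ of $T$, so the double contribution appears as the two halves of $N\setminus T$, with $\delta_N(x)=(x,-x)$, and the cancellation becomes the statement that both half-collar inclusions $T_{i,j}\times\R_{\pm}\hookrightarrow O_0$ induce the same map on $K_1$. What your version buys is that the sign bookkeeping the paper compresses into that one sentence is made fully explicit and justified: you verify that $\det U_{i,j}=1$ so the action is orientation-preserving, that $O_0\cup T$ is therefore an orientable $3$-manifold, and that a single ambient orientation makes both half-collar embeddings orientation-preserving, whence they agree on $K_1(C_0(\R^3))\cong\Z$. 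The price is some extra machinery (triviality of the $M_4(\C)$-bundles over contractible pieces, existence of the bicollar, invariance of $j_*$ under orientation-preserving open embeddings of $\R^3$), all of which is standard and which you correctly flag as the points needing care. Both proofs are naturality arguments followed by a two-term cancellation; yours is the ``open inclusion'' dual of the paper's ``restriction to a fundamental domain''.
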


\begin{proof}
We set $\hT \coloneqq \bigcup_{i,j} (\hT_{i,j}^r \cup \hT_{i,j}^l)$ 
where $i,j$ run $2,3,4$ with $i \neq j$. 
We have the following commutative diagram with exact rows; 
\begin{equation*}
\xymatrix@C=12pt@R=16pt{
0 \ar[r] & I_0 \ar[r]\ar[d]^{\cong} & 
M_4\big(C_0(\pi^{-1}(O_0 \cup T))\big)^\beta \ar[r]\ar[d] &
I^{\circ} \ar[r]\ar[d] & 0\phantom{.}\\
0 \ar[r] & M_4(C_0(V)) \ar[r] & 
M_4(C_0(V \cup \hT))\big) \ar[r] &
M_4(C_0(\hT)) \ar[r] & 0.
}
\end{equation*}
Note that $V \cup \hT = \pi^{-1}(O_0 \cup T)\cap \overline{V}$.
From this diagram, 
we see that the index map $K_0(I^\circ) \to K_{1}(I_0)$ 
factors through $K_0(M_4(C_0(\hT)))$. 

Take $i,j=2,3,4$ with $i\neq j$. 
Let $a_{i,j}^r \in K_0\big(M_4(C_0(\hT_{i,j}^r))\big)$ 
and $a_{i,j}^l \in K_0\big(M_4(C_0(\hT_{i,j}^l))\big)$ be 
the images of the generator $w_{i,j}$ of $K_0\big(M_4(C_0(T_{i,j}))\big)$ 
under the homomorphism induced by $\pi$. 
Under the map $K_0(I^{\circ}) \to K_0(M_4(C_0(\hT)))$, 
the generator $w_{i,j}$ of $K_0\big(M_4(C_0(T_{i,j}))\big)$ 
goes to $a_{i,j}^r + a_{i,j}^l$. 
Under the index map $K_0(M_4(C_0(\hT))) \to K_1\big(M_4(C_0(V))\big)$ 
the element $a_{i,j}^r + a_{i,j}^l$ goes to $0$ 
because the side to $V$ from $\hT_{i,j}^r$ and the one from $\hT_{i,j}^l$ 
differ if $\hT_{i,j}^r$ and $\hT_{i,j}^l$ are identified 
through the map $\pi$ to $T_{i,j}$. 
Thus we see that the map 
$K_0(I^\circ) \to K_1\big(M_4(C_0(V))\big) \cong K_{1}(I_0)$ is $0$. 
\end{proof}

By this lemma, 
the composition of the map $K_0(I^\circ) \to K_0(I^{\star})$ 
and the index map $K_0(I^{\star}) \to K_{1}(I_0)$ is $0$. 
Since the map $\Z^6 \cong K_0(I^\circ) \to K_0(I^{\star}) \cong \Z^4$ 
is a surjection, 
we see that the index map $K_0(I^{\star}) \to K_{1}(I_0)$ is $0$. 
Thus we have the following. 

\begin{proposition}\label{Prop:KI}
We have $K_0(I) \cong K_0(I^{\star}) \cong \Z^4$ and 
$K_1(I) \cong K_1(I_0) \cong \Z$. 
\end{proposition}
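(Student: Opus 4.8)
The plan is to feed everything into the six‑term exact sequence attached to the extension $0 \to I_0 \to I \to I^{\star} \to 0$. First I would record the input data: $I_0 \cong M_4(C_0(\R^3))$, so by stability and the standard computation $K_0(C_0(\R^3))=0$, $K_1(C_0(\R^3))\cong\Z$, while $K_0(I^{\star})\cong\Z^4$ and $K_1(I^{\star})=0$ were already obtained. Plugging these in, the six‑term sequence degenerates to the short exact sequence
\[
0 \longrightarrow K_0(I) \longrightarrow K_0(I^{\star}) \xrightarrow{\ \partial\ } K_1(I_0) \longrightarrow K_1(I) \longrightarrow 0 .
\]
So the whole statement comes down to showing that the index map $\partial\colon K_0(I^{\star})\cong\Z^4 \to K_1(I_0)\cong\Z$ is zero.

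To see $\partial=0$ I would use naturality of the index map for the morphism of extensions
\[
\begin{CD}
0 @>>> I_0 @>>> M_4\big(C_0(\pi^{-1}(O_0\cup T))\big)^{\beta} @>>> I^{\circ} @>>> 0\\
@. @| @VVV @VVV @.\\
0 @>>> I_0 @>>> I @>>> I^{\star} @>>> 0 ,
\end{CD}
\]
in which the middle and right vertical maps are the ideal inclusions coming from the open subsets $\pi^{-1}(O_0\cup T)\subset\tO$ and $\pi^{-1}(T)\subset\pi^{-1}(O\setminus O_0)$. Naturality identifies the composite $K_0(I^{\circ})\to K_0(I^{\star})\xrightarrow{\partial}K_1(I_0)$ with the index map of the top row, which vanishes by the preceding lemma. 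Since $K_0(I^{\circ})\cong\Z^6\to K_0(I^{\star})\cong\Z^4$ is surjective — it is the cokernel map of $(a,b)\mapsto(a,a,a,b,b,b)$, as was shown when computing $K_0(I^{\star})$ — the composite being zero forces $\partial=0$. Then the displayed sequence splits into $K_0(I)\cong K_0(I^{\star})\cong\Z^4$ and $K_1(I)\cong K_1(I_0)\cong\Z$, as claimed.

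The point requiring care, and where I would spend the effort, is verifying that the diagram of extensions above is legitimate and commutes: one must check that $O_0\cup T$ is open in $O$ (equivalently $R$ is closed in $O$), that $O_0$ is open in $O_0\cup T$, and that $T$ is open in $O\setminus O_0$, so that all four algebras appearing are bona fide ideals and quotients, and that the square of quotient/inclusion maps commutes. All of this is immediate from the description of $O_0$, $T$, and $R$ as the interior, the open faces, and the punctured edges of the hexahedron $\overline{V}$ of Proposition~\ref{Prop:314}: the closures of the edges in $\R P^3$ only add vertices, which lie in $F^{\text{\textbullet}}$ outside $O$, and the closure of a face never meets the interior. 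So in the end there is no serious obstacle — the substantive work was isolated in the lemma, and the proposition is a formal consequence of naturality.
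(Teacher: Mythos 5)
Your proposal is correct and follows essentially the same route as the paper: both reduce the statement to showing that the index map $K_0(I^{\star})\to K_1(I_0)$ vanishes, and both deduce this from the vanishing of the index map of the extension $0\to I_0\to M_4\big(C_0(\pi^{-1}(O_0\cup T))\big)^{\beta}\to I^{\circ}\to 0$ (the preceding lemma) together with the surjectivity of $K_0(I^{\circ})\to K_0(I^{\star})$, via naturality of the boundary maps. The extra point-set checks you flag are harmless and implicit in the paper's setup.
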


\section{K-groups of $A$}\label{Sec:KA}

Recall the six-term exact sequence 
\begin{equation*}
\xymatrix@C=25pt@R=20pt{
	K_0(I) \ar[r] &  K_0(A) \ar[r] & K_0(B) \ar[d]^{\delta_0} \\
	K_{1}(B) \ar[u]^{\delta_1} & \ar[l]   K_{1}(A) & \ar[l]  K_{1}(I).
}
\end{equation*}

In this section, we calculate the exponential map $\delta_0\colon K_0(B) \to K_1(I)$ and the index map $\delta_1\colon K_1(B) \to K_0(I)$.

\begin{proposition}
The exponential map $\delta_0\colon K_0(B) \to K_1(I)$ is $0$. 
\end{proposition}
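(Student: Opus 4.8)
The plan is to deduce the vanishing of $\delta_0$ directly from exactness of the six-term sequence. Exactness at $K_0(B)$ says that $\ker\delta_0$ equals the image of the map $K_0(A)\to K_0(B)$ induced by the quotient $A\twoheadrightarrow B$; so it suffices to show that this map is surjective.

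For this, the first step is to record that $K_0(B)$ is generated by the classes $\{[P_{i,j}]_0\}_{i,j=1}^4$. This follows from what is already in hand: the six-term sequence attached to $0\to B^{\circ}\to B\to B^{\text{\textbullet}}\to 0$, together with $K_0(B^{\circ})=0$ and $K_1(B^{\text{\textbullet}})=0$, shows that $B\to B^{\text{\textbullet}}$ induces an isomorphism $K_0(B)\cong\ker\delta\subseteq K_0(B^{\text{\textbullet}})$, and Proposition~\ref{Prop:kerdelta} states precisely that $\ker\delta$ is generated by the $[P_{i,j}]_0$. The second step is to observe that each generator lifts to $A$: the element $P_{i,j}$ is a projection in $M_4(C(\R P^3))$ by Proposition~\ref{Prop:Pproj}, and it is $\beta$-invariant by the identity $\beta_{i,j}(P_{k,l})=P_{k,l}$ proved inside Proposition~\ref{Prop:equivariant}, hence $P_{i,j}\in A=M_4(C(\R P^3))^{\beta}$; its image under $A\twoheadrightarrow B$ is the projection in $B$ denoted by the same symbol, whose $K_0$-class is $[P_{i,j}]_0$. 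Thus every generator $[P_{i,j}]_0\in K_0(B)$ lies in the image of $K_0(A)\to K_0(B)$.

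Putting these together, the image of $K_0(A)\to K_0(B)$ contains a generating set of $K_0(B)$, so this map is surjective and $\delta_0=0$ by exactness. I do not anticipate a genuine obstacle: the argument is a two-line consequence of exactness once one notices that the $P_{i,j}$ are honest $\beta$-fixed projections. The only bookkeeping point is to keep the three avatars of ``$[P_{i,j}]_0$'' — in $K_0(B^{\text{\textbullet}})$, in $K_0(B)$, and in $K_0(A)$ — compatibly identified via functoriality, so that ``surjective on a generating set'' is a legitimate conclusion.
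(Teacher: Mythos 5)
Your argument is correct and is essentially the paper's own proof: the paper likewise observes that $K_0(B)$ is generated by the classes $\{[P_{i,j}]_0\}_{i,j=1}^4$ (via the identification $K_0(B)\cong\ker\delta$ and Proposition~\ref{Prop:kerdelta}), so that $K_0(A)\to K_0(B)$ is surjective and $\delta_0=0$ by exactness. Your write-up merely makes explicit the lifting of each $P_{i,j}$ to a $\beta$-fixed projection in $A$, which the paper leaves implicit.
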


\begin{proof}
Since $K_0(B)$ is generated 
by 16 elements $\{[P_{i,j}]_0\}_{i,j=1}^4$, 
the map $K_0(A) \to K_0(B)$ is surjective. 
Hence the exponential map $\delta_0\colon K_0(B) \to K_1(I)$ is $0$. 
\end{proof}

By the definitions of the generators of $K$-groups we did so far,
we have the following. 
(see Figure~\ref{fig:EF} for the relation between $T$ and $F$.) 

\begin{proposition}
The index map 
$\delta''\colon K_1(B^{\circ})\cong \Z^{18} \to K_0(I^\circ) \cong \Z^{6}$ 
coming from the short exact sequence
\begin{equation*}
0 \longrightarrow I^\circ \longrightarrow 
M_4\big(C_0(\pi^{-1}(T \cup F^{\circ}))\big)^\beta \longrightarrow 
B^{\circ} \longrightarrow 0.
\end{equation*}
is as Table \ref{table:ind_def}.
\end{proposition}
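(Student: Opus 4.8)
The plan is to compute $\delta''$ one summand at a time. By the structure results of Sections~\ref{Sec:StB} and~\ref{Sec:StI} we have $B^{\circ}\cong\bigoplus_{i,j=2}^{4}B^{\circ}_{i,j}$ with $K_1(B^{\circ}_{i,j})=\Z v_{i,j}^{\cap}\oplus\Z v_{i,j}^{\cup}$, and $I^{\circ}\cong\bigoplus M_4(C_0(T_{i,j}))$, the sum running over $i,j\in\{2,3,4\}$ with $i\neq j$, where $K_0(M_4(C_0(T_{i,j})))=\Z w_{i,j}$. So $\delta''$ is determined by its components $K_1(B^{\circ}_{i,j})\to\Z w_{k,l}$, and the first step is to observe that such a component vanishes unless the arc $F^{\circ}_{i,j}$ lies in the closure of $T_{k,l}$ inside $X$: this is naturality of the index map for the inclusion of $M_4\big(C_0(\pi^{-1}(T_{k,l}\cup F^{\circ}))\big)^{\beta}$ (or of a still smaller invariant neighbourhood of $T_{k,l}$) into $M_4\big(C_0(\pi^{-1}(T\cup F^{\circ}))\big)^{\beta}$, since the class coming from $B^{\circ}_{i,j}$ is pulled back from a subquotient supported near $F^{\circ}_{i,j}$. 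Reading off the explicit parametrizations of $\hT_{k,l}$, $\hT_{k,l}^{r}$, $\hT_{k,l}^{l}$ and of the sets $\tF_{i,j}$ in Section~\ref{Sec:StI} (equivalently, Figure~\ref{fig:EF}), one finds that for $i\neq j$ the arc $F^{\circ}_{i,j}$ is precisely the curve along which $\pi$ folds the triangular face $\hT_{i,j}$ (identifying $\hT_{i,j}^{r}$ with $\hT_{i,j}^{l}$), so it borders only $T_{i,j}$, while for $i=j$ the arc $F^{\circ}_{i,i}$ is the image of an edge of the hexahedron $\overline{V}$ and hence, after the $2$-to-$1$ folding, borders the two faces adjacent to that edge. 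This already forces most entries of Table~\ref{table:ind_def} to be $0$.

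For each remaining component I would build a commutative ladder of short exact sequences connecting the defining sequence of $\delta''$ — restricted, by the naturality above, to a small invariant neighbourhood of the relevant cell — with the short exact sequences through which the target generators were defined in Section~\ref{Sec:KI}: the index map producing $w_{k,l}$ out of the generator $r^{\pm}$ of $K_1\big(M_4(C_0(R^{\pm}))\big)$, and, one step earlier, the exponential map producing $r^{\pm}$ out of $v_{(1234)}\in K_0(B_{(234)})$. Naturality of the connecting maps then identifies the component under study with a single boundary map between two C*-algebras built from pieces of $\overline{V}$. As in the proof of Lemma~\ref{Lem:dq=v}, one fills in the open cell $T_{k,l}$ (respectively an adjacent edge) so that the ambient algebra acquires vanishing $K$-theory; the boundary map then becomes an isomorphism onto a rank-one summand, and carrying the two Bott generators $v_{i,j}^{\cap},v_{i,j}^{\cup}$ through it — under $B^{\circ}_{i,j}\cong C_0\big((0,1),M_2(\C)\big)^{\oplus 2}$ these correspond to the two $M_2$-summands of $D_{i,j}\cong M_2(\C)\oplus M_2(\C)$ from Proposition~\ref{Prop:M2M2}, that is, to $B^{\cap}_{i,j}$ and $B^{\cup}_{i,j}$ — yields the entries $\pm w_{k,l}$.

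The main obstacle is the sign bookkeeping and making it uniform. Because $\pi$ identifies $\hT_{i,j}^{r}$ with $\hT_{i,j}^{l}$, the arc $F^{\circ}_{i,j}$ borders $T_{i,j}$ ``from both sides'', so with the orientations of the $R^{\pm}$, of the cells $T_{k,l}$ and of the arcs $F^{\circ}_{i,j}$ fixed once and for all one must track how each of the two $M_2$-blocks of $D_{i,j}$ extends across the fold; the analogous analysis is needed at the three diagonal arcs $F^{\circ}_{i,i}$, where two faces meet. Each individual verification is a routine computation of a $K$-theory boundary map for a split interval or a split half-open square, exactly of the kind already performed for $\delta'$, for the $w_{i,j}$, and for the $r^{\pm}$; the substance of the proposition is only that all of them, carried out consistently, assemble into Table~\ref{table:ind_def}.
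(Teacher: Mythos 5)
Your proposal is correct and takes essentially the same route as the paper, which in fact offers no argument beyond the remark that the table ``follows from the definitions of the generators'' together with Figure~\ref{fig:EF}: your reduction by naturality to the adjacency of each arc $F^{\circ}_{i,j}$ with the faces $T_{k,l}$ (fold curve of $T_{i,j}$ for $i\neq j$, edge between two faces for $i=j$), followed by ladder arguments identifying the surviving components with the connecting maps that defined $v_{i,j}^{\cap},v_{i,j}^{\cup}$, $w_{k,l}$ and $r^{\pm}$, is exactly the intended verification and reproduces the zero pattern and $\pm1$ entries of Table~\ref{table:ind_def}. The only step you leave unexecuted, the explicit sign bookkeeping, is likewise left implicit in the paper, so nothing is missing relative to the source.
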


\begin{table}[hbtp]
\caption{Computation of the index map $\delta''$}\label{table:ind_def}
\begin{tabular}{|c||cc|cc|cc|cc|cc|cc|cc|cc|cc|} \hline
& \multicolumn{2}{|c|}{2,2} &  \multicolumn{2}{|c|}{3,3} &  \multicolumn{2}{|c|}{4,4} &  \multicolumn{2}{|c|}{2,3} &  \multicolumn{2}{|c|}{3,4} &  \multicolumn{2}{|c|}{4,2} &  \multicolumn{2}{|c|}{3,2} &  \multicolumn{2}{|c|}{4,3} &  \multicolumn{2}{|c|}{2,4} \\ \hline
$w\diagdown v$ & $\cap$ & $\cup$ & $\cap$ & $\cup$ & $\cap$ & $\cup$ & $\cap$ & $\cup$ & $\cap$ & $\cup$ & $\cap$ & $\cup$ & $\cap$ & $\cup$ & $\cap$ & $\cup$ & $\cap$ & $\cup$  \\ \hline\hline
2,3 & 0 & 0 & 0 & 0 &\mo &\mo & 1 & 1 & 0 & 0 & 0 & 0 & 0 & 0 & 0 & 0 & 0 & 0  \\ \hline
3,4 &\mo &\mo & 0 & 0 & 0 & 0 & 0 & 0 & 1 & 1 & 0 & 0 & 0 & 0 & 0 & 0 & 0 & 0  \\ \hline
4,2 & 0 & 0 &\mo &\mo & 0 & 0 & 0 & 0 & 0 & 0 & 1 & 1 & 0 & 0 & 0 & 0 & 0 & 0  \\ \hline
3,2 & 0 & 0 & 0 & 0 &\mo &\mo & 0 & 0 & 0 & 0 & 0 & 0 & 1 & 1 & 0 & 0 & 0 & 0  \\ \hline
4,3 &\mo &\mo & 0 & 0 & 0 & 0 & 0 & 0 & 0 & 0 & 0 & 0 & 0 & 0 & 1 & 1 & 0 & 0  \\ \hline
2,4 & 0 & 0 &\mo &\mo & 0 & 0 & 0 & 0 & 0 & 0 & 0 & 0 & 0 & 0 & 0 & 0 & 1 & 1  \\ \hline
\end{tabular}
\end{table}

\begin{definition}
The composition of the index map 
$\delta''\colon K_1(B^{\circ}) \to K_0(I^\circ)$ 
and the map $K_0(I^\circ) \to K_0(I^\star)$ is 
denoted by $\eta\colon K_1(B^{\circ}) \to K_0(I^\star)$ 

We set $\widetilde{\eta}\colon K_1(B^{\circ}) \to K_0(I^\star) \oplus \Z/2\Z$ 
by $\widetilde{\eta}(w_{i,j}^\cap)=(\eta(w_{i,j}^\cap),0)$ and 
$\widetilde{\eta}(w_{i,j}^\cup)=(\eta(w_{i,j}^\cup),1)$ for $i,j=2,3,4$. 
\end{definition}

We denote the generator of $\Z/2\Z$ in $K_0(I^\star) \oplus \Z/2\Z$ 
by $s_5$. 

\begin{table}[hbtp]
\caption{Computation of $\widetilde{\eta}$}
\begin{tabular}{|c||cc|cc|cc|cc|cc|cc|cc|cc|cc|} \hline
& \multicolumn{2}{|c|}{2,2} &  \multicolumn{2}{|c|}{3,3} &  \multicolumn{2}{|c|}{4,4} &  \multicolumn{2}{|c|}{2,3} &  \multicolumn{2}{|c|}{3,4} &  \multicolumn{2}{|c|}{4,2} &  \multicolumn{2}{|c|}{3,2} &  \multicolumn{2}{|c|}{4,3} &  \multicolumn{2}{|c|}{2,4} \\ \hline
$s\diagdown v$ & $\cap$ & $\cup$ & $\cap$ & $\cup$ & $\cap$ & $\cup$ & $\cap$ & $\cup$ & $\cap$ & $\cup$ & $\cap$ & $\cup$ & $\cap$ & $\cup$ & $\cap$ & $\cup$ & $\cap$ & $\cup$  \\ \hline\hline
1 & 0 & 0 & 1 & 1 &\mo &\mo & 1 & 1 & 0 & 0 &\mo &\mo & 0 & 0 & 0 & 0 & 0 & 0  \\ \hline
2 &\mo &\mo & 1 & 1 & 0 & 0 & 0 & 0 & 1 & 1 &\mo &\mo & 0 & 0 & 0 & 0 & 0 & 0  \\ \hline
3 & 0 & 0 & 1 & 1 &\mo &\mo & 0 & 0 & 0 & 0 & 0 & 0 & 1 & 1 & 0 & 0 &\mo &\mo  \\ \hline
4 &\mo &\mo & 1 & 1 & 0 & 0 & 0 & 0 & 0 & 0 & 0 & 0 & 0 & 0 & 1 & 1 &\mo &\mo  \\ \hline
5 & 0 & 1 & 0 & 1 & 0 & 1 & 0 & 1 & 0 & 1 & 0 & 1 & 0 & 1 & 0 & 1 & 0 & 1  \\ \hline
\end{tabular}
\end{table}

\begin{proposition}\label{Prop:kernel=image}
The map $\widetilde{\eta}\colon K_1(B^{\circ}) \to K_0(I^\star) \oplus \Z/2\Z$ 
is surjective, and its kernel coincides with the image of 
$\delta\colon K_0(B^{\text{\textbullet}}) \to K_1(B^{\circ})$. 
\end{proposition}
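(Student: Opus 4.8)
The plan is to establish the two assertions in turn: that $\widetilde\eta$ is surjective, and that $\ker\widetilde\eta=\operatorname{im}\delta$. Surjectivity will be read off directly from the table defining $\widetilde\eta$: the columns $w_{2,3}^{\cap}$, $w_{3,4}^{\cap}$, $w_{3,2}^{\cap}$, $w_{4,3}^{\cap}$ are sent to $s_1,s_2,s_3,s_4$, and $w_{i,j}^{\cup}-w_{i,j}^{\cap}$ is sent to $s_5$ for every pair $i,j$, so $s_1,\dots,s_5$ all lie in the image. For the second assertion I would first prove the easy inclusion $\operatorname{im}\delta\subseteq\ker\widetilde\eta$, i.e.\ $\widetilde\eta\circ\delta=0$, and then upgrade it to an equality by a Hopfian-group argument.

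To see $\widetilde\eta\circ\delta=0$ I would treat its two components separately. Put $C\coloneqq M_4\big(C_0(\pi^{-1}(T\cup F))\big)^{\beta}$ and $C'\coloneqq M_4\big(C_0(\pi^{-1}(T\cup F^{\circ}))\big)^{\beta}$; then $I^{\circ}$ is an ideal of $C'$ with $C'/I^{\circ}=B^{\circ}$, and $C'$ is an ideal of $C$ with $C/C'=B^{\text{\textbullet}}$, while the restriction maps form a morphism of extensions from $0\to C'\to C\to B^{\text{\textbullet}}\to 0$ onto $0\to B^{\circ}\to B\to B^{\text{\textbullet}}\to 0$ (identity on $B^{\text{\textbullet}}$, and the quotient maps $C'\to B^{\circ}$, $C\to B$). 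Naturality of the exponential map then factors $\delta$ as $K_0(B^{\text{\textbullet}})\to K_1(C')\to K_1(B^{\circ})$, where the second arrow is induced by $C'\to C'/I^{\circ}=B^{\circ}$; composing with the index map $\delta''\colon K_1(B^{\circ})\to K_0(I^{\circ})$ of $0\to I^{\circ}\to C'\to B^{\circ}\to 0$ yields two consecutive arrows of the six-term exact sequence of that extension, so $\delta''\circ\delta=0$, and hence also $\eta\circ\delta=0$. That disposes of the $K_0(I^{\star})$-component. For the $\Z/2\Z$-component I would simply observe, from Table~\ref{table:exp_def}, that in each vector $\delta(q_{\sigma})$ the ``$\cup$''-type generators occur with coefficients whose sum is even; hence $\widetilde\eta(\delta(q_{\sigma}))$ has trivial $\Z/2\Z$-component as well. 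Combining, $\widetilde\eta$ annihilates $\operatorname{im}\delta$ and therefore descends to a homomorphism $\overline{\widetilde\eta}\colon K_1(B)\cong\coker\delta\to K_0(I^{\star})\oplus\Z/2\Z$, which is surjective by the first paragraph.

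Finally I would close the gap by showing $\overline{\widetilde\eta}$ is injective. Both sides are finitely generated abelian and, in fact, abstractly isomorphic: $K_0(I^{\star})\oplus\Z/2\Z\cong\Z^4\oplus\Z/2\Z$ because $K_0(I^{\star})\cong\Z^4$ by Proposition~\ref{Prop:KI}, and $\coker\delta\cong\Z^4\oplus\Z/2\Z$ by a routine Smith-normal-form computation on the integer matrix recorded in Table~\ref{table:exp_def}. Since a surjective homomorphism between two isomorphic finitely generated abelian groups is automatically an isomorphism, $\overline{\widetilde\eta}$ is an isomorphism, hence in particular injective; this is exactly the statement $\ker\widetilde\eta=\operatorname{im}\delta$. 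I expect the main obstacle to be the bookkeeping in the middle step: while the naturality argument dispatches the $K_0(I^{\star})$-component without computation, the parity claim for the $\Z/2\Z$-component and the Smith-form computation of $\coker\delta$ both require a careful cross-reading of Table~\ref{table:exp_def} against the table defining $\widetilde\eta$.
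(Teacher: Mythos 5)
Your overall strategy is genuinely different from the paper's and is viable, so let me compare before flagging two points. The paper verifies $\widetilde{\eta}\circ\delta=0$ by a direct table check and then proves $\ker\widetilde{\eta}\subseteq\operatorname{im}\delta$ by explicitly reducing an arbitrary kernel element to $0$ modulo $\operatorname{im}\delta$ (a hands-on elimination using specific $\delta(q_\sigma)$'s); as a by-product it \emph{deduces} $\coker\delta\cong\Z^4\oplus\Z/2\Z$ in Proposition~\ref{Prop:KB}. You instead propose to compute $\coker\delta$ independently by Smith normal form and then invoke surjectivity plus the Hopfian property of finitely generated abelian groups. This is not circular (the paper only establishes $\coker\delta\cong\Z^4\oplus\Z/2\Z$ \emph{after} this proposition, but your computation does not rely on that), and the Hopfian step is correct. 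Be aware, though, that the Smith-form computation of the $24\times 18$ matrix is exactly where the combinatorial content of the paper's reduction argument lives; until it is actually carried out, the second half of your proof is a well-posed plan rather than a proof. Your parity observation for the $\Z/2\Z$-component is correct: by Lemma~\ref{Lem:dq=v} each $\delta(q_\sigma)$ is $\pm$ a sum of three generators, and inspection of Table~\ref{table:exp_def} shows the number of $\cup$-type generators among them is always $0$ or $2$.

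The one step that fails as written is the naturality argument. The set $T\cup F$ is \emph{not} locally compact: $R^{\pm}$ lies in the closure of $T$ (e.g.\ letting $b\to t$ in $\hT_{2,3}^{r}$ produces points of $\pi(R^{-}_y)=R^-$), so $\overline{T\cup F}=T\cup R\cup F$, and $R$ is not closed there because its endpoints lie in $F^{\text{\textbullet}}\subseteq T\cup F$. Hence $T\cup F$ is not open in its closure, your algebra $C=M_4\big(C_0(\pi^{-1}(T\cup F))\big)^{\beta}$ is not of the form covered by Lemma~\ref{Lem:exseq}, and the extension $0\to C'\to C\to B^{\text{\textbullet}}\to 0$ does not exist as stated. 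The conclusion $\eta\circ\delta=0$ is nevertheless true and your idea can be salvaged: replace $C$ by $M_4\big(C_0(\pi^{-1}((O\setminus O_0)\cup F))\big)^{\beta}$ and $C'$ by the ideal $C''=M_4\big(C_0(\pi^{-1}(T\cup R\cup F^{\circ}))\big)^{\beta}$ (both sets are locally compact and $\sigma$-invariant). Naturality of the exponential map for the morphism onto $0\to B^{\circ}\to B\to B^{\text{\textbullet}}\to 0$ factors $\delta$ through $K_1(C'')$, and naturality of the index map for the inclusion of $0\to I^{\circ}\to M_4\big(C_0(\pi^{-1}(T\cup F^{\circ}))\big)^{\beta}\to B^{\circ}\to 0$ into $0\to I^{\star}\to C''\to B^{\circ}\to 0$ identifies $\eta$ with the index map of the latter extension; the composition of two consecutive arrows in its six-term sequence then gives $\eta\circ\delta=0$. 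With that repair, and with the Smith-form computation actually performed, your proof closes.
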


\begin{proof}
Since 
\begin{align*}
\widetilde{\eta}(w_{2,3}^\cap)=s_1, \quad 
\widetilde{\eta}(w_{3,4}^\cap)=s_2, \quad
\widetilde{\eta}(w_{3,2}^\cap)=s_3, \quad
\widetilde{\eta}(w_{4,3}^\cap)=s_4, 
\end{align*}
$s_1,s_2,s_3,s_4$ are in the image of $\widetilde{\eta}$. 
Since $\widetilde{\eta}(w_{2,2}^\cup+w_{3,3}^\cup+w_{4,4}^\cup) = s_5$, 
$s_5$ is also in the image of $\widetilde{\eta}$. 
Thus $\widetilde{\eta}$ is surjective. 

It is straightforward to check $\widetilde{\eta} \circ \delta = 0$
Hence the image of $\delta$ is 
contained in the kernel of $\widetilde{\eta}$. 
Suppose 
\[
x= \sum_{i,j=2}^4 n_{i,j}^{\cap}w_{i,j}^{\cap}
+\sum_{i,j=2}^4 n_{i,j}^{\cup}w_{i,j}^{\cup}
\]
is in the kernel of $\widetilde{\eta}$ 
where $n_{i,j}^{\cap},n_{i,j}^{\cup} \in \Z$ for $i,j=2,3,4$. 
We will show that $x$ is in the image of $\delta$. 
By adding 
\[
n_{2,3}^{\cup}\delta(q_{(3142)})+
n_{3,4}^{\cup}\delta(q_{(4312)})+
n_{4,2}^{\cup}\delta(q_{(2341)})+
n_{3,2}^{\cup}\delta(q_{(2413)})+
n_{4,3}^{\cup}\delta(q_{(3421)})+
n_{2,4}^{\cup}\delta(q_{(4123)})
\]
we may assume 
\[
n_{2,3}^{\cup}=n_{3,4}^{\cup}=n_{4,2}^{\cup}=n_{3,2}^{\cup}=n_{4,3}^{\cup}
=n_{2,4}^{\cup}=0
\]
without loss of generality. 
By subtracting $n_{3,3}^{\cup}\delta(q_{(4321)})
+n_{4,4}^{\cup}\delta(q_{(3412)})$, 
we may further assume $n_{3,3}^{\cup}=n_{4,4}^{\cup}=0$ 
without loss of generality. 
Then $n_{2,2}^{\cup}$ is even since 
the coefficient of $c_5$ for $\widetilde{\eta}(x)$ is $0$. 
Hence by adding 
\[
\frac{n_{2,2}^{\cup}}{2}
\big(\delta(q_{(2143)})-\delta(q_{(3412)})-\delta(q_{(4321)})\big)
\]
we may further assume $n_{2,2}^{\cup}=0$ 
without loss of generality. 
Thus we may assume 
$x = \sum_{i,j=2}^4 n_{i,j}^{\cap}w_{i,j}^{\cap}$. 
By adding $n_{2,2}^{\cap}\delta(q_{(1243)})+n_{3,3}^{\cap}\delta(q_{(1432)})+n_{4,4}^{\cap}\delta(q_{(1324)})$, 
we may further assume $n_{2,2}^{\cap}=n_{3,3}^{\cap}=n_{4,4}^{\cap}=0$
without loss of generality. 
By subtracting 
$n_{4,2}^{\cap}\delta(q_{(1423)})+n_{2,4}^{\cap}\delta(q_{(1342)})$, 
we may further assume $n_{4,2}^{\cap}=n_{2,4}^{\cap}=0$
without loss of generality. 
Thus we may assume 
\[
x=n_{2,3}^{\cap}w_{2,3}^{\cap}+n_{3,4}^{\cap}w_{3,4}^{\cap}
+n_{3,2}^{\cap}w_{3,2}^{\cap}+n_{4,3}^{\cap}w_{4,3}^{\cap}. 
\]
Then we have $n_{2,3}^{\cap}=n_{3,4}^{\cap}=n_{3,2}^{\cap}=n_{4,3}^{\cap}=0$ 
because 
\[
\widetilde{\eta}(x)
=n_{2,3}^{\cap}s_1+n_{3,4}^{\cap}s_2
+n_{3,2}^{\cap}s_3+n_{4,3}^{\cap}s_4. 
\]
Thus $x=0$. 
We have shown that $x$ is in the image of $\delta$. 
Hence the image of $\delta$ coincides with the kernel of $\widetilde{\eta}$. 
\end{proof}

As a corollary of this proposition, 
we have the following as predicted. 

\begin{proposition}\label{Prop:KB}
We have $K_0(B) \cong \Z^{10}$ 
and $K_1(B) \cong \Z^4 \oplus \Z/2\Z$. 
\end{proposition}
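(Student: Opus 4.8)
The plan is to extract both K-groups directly from the six-term exact sequence attached to the short exact sequence $0 \to B^{\circ} \to B \to B^{\text{\textbullet}} \to 0$ of Section~\ref{Sec:KB}. Since $K_0(B^{\circ}) = 0$ and $K_1(B^{\text{\textbullet}}) = 0$, exactness collapses that sequence to $0 \to K_0(B) \to K_0(B^{\text{\textbullet}}) \xrightarrow{\delta} K_1(B^{\circ}) \to K_1(B) \to 0$, where $\delta$ is the exponential map of Table~\ref{table:exp_def}. Hence $K_0(B) \cong \ker \delta$ and $K_1(B) \cong \coker \delta$, and everything reduces to understanding $\operatorname{im}\delta \subseteq K_1(B^{\circ}) \cong \Z^{18}$.

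First I would treat $K_1(B)$. By Proposition~\ref{Prop:kernel=image} the image of $\delta$ equals the kernel of $\widetilde{\eta}\colon K_1(B^{\circ}) \to K_0(I^{\star}) \oplus \Z/2\Z$, so $\coker\delta = K_1(B^{\circ})/\ker\widetilde{\eta}$, which is isomorphic to $\operatorname{im}\widetilde{\eta}$. Since the same proposition asserts that $\widetilde{\eta}$ is surjective, and $K_0(I^{\star}) \cong \Z^4$ by Proposition~\ref{Prop:KI}, we obtain $\operatorname{im}\widetilde{\eta} = K_0(I^{\star}) \oplus \Z/2\Z \cong \Z^4 \oplus \Z/2\Z$, whence $K_1(B) \cong \Z^4 \oplus \Z/2\Z$.

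Next, for $K_0(B) \cong \ker\delta$: as a subgroup of the finitely generated free abelian group $K_0(B^{\text{\textbullet}}) \cong \Z^{24}$, the group $\ker\delta$ is itself free abelian, so it suffices to compute its rank. From $\Z^{24}/\ker\delta \cong \operatorname{im}\delta$ and additivity of rank we get $\operatorname{rank}(\ker\delta) = 24 - \operatorname{rank}(\operatorname{im}\delta)$. By the previous paragraph $\operatorname{im}\delta = \ker\widetilde{\eta}$, and from $\Z^{18}/\ker\widetilde{\eta} \cong \operatorname{im}\widetilde{\eta} \cong \Z^4 \oplus \Z/2\Z$ we get $\operatorname{rank}(\ker\widetilde{\eta}) = 18 - 4 = 14$. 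Therefore $\operatorname{rank}(\ker\delta) = 24 - 14 = 10$ and $K_0(B) \cong \Z^{10}$; in particular the bound $n \leq 10$ noted after Proposition~\ref{Prop:kerdelta} is attained. There is no real obstacle left: all the substantive work — the entries of $\delta$ and $\widetilde{\eta}$, and the identification $\operatorname{im}\delta = \ker\widetilde{\eta}$ — has already been carried out in Proposition~\ref{Prop:kernel=image}, so what remains is only the routine bookkeeping that subgroups of finitely generated free abelian groups are free and that rank is additive along short exact sequences of finitely generated abelian groups.
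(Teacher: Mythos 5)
Your proposal is correct and follows essentially the same route as the paper: both rest entirely on Proposition~\ref{Prop:kernel=image} to identify $\operatorname{im}\delta=\ker\widetilde{\eta}$, deduce $K_1(B)\cong\coker\delta\cong\Z^4\oplus\Z/2\Z$ from the surjectivity of $\widetilde{\eta}$ onto $K_0(I^\star)\oplus\Z/2\Z\cong\Z^4\oplus\Z/2\Z$, and then get $K_0(B)\cong\ker\delta\cong\Z^{10}$ by the same rank count $24-18+4=10$ for a free abelian kernel. You have merely spelled out the bookkeeping the paper leaves implicit.
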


\begin{proof}
By Proposition~\ref{Prop:kernel=image}, 
we see that $K_1(B) \cong \coker \delta$ is isomorphic 
to $\Z^4 \oplus \Z/2\Z$. 
This implies $K_0(B) \cong \ker \delta$ is isomorphic to $\Z^{10}$ 
because $\ker \delta$ is a free abelian group with dimension $24-18+4=10$. 
\end{proof}

We also have the following. 

\begin{proposition}
The index map $\delta_1\colon K_1(B) \to K_0(I)$ is as $K_1(B) \cong \Z^4 \oplus \Z/ 2\Z \ni (n,m) \mapsto n \in \Z^4 \cong K_0(I)$. 
\end{proposition}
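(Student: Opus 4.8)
The plan is to compute $\delta_1$ by comparing the extension $0\to I\to A\to B\to 0$ with the extensions already analysed in Sections~\ref{Sec:Exact}--\ref{Sec:KA}, using only naturality of the index map together with the tables already obtained; the upshot will be that, after the obvious identifications, $\delta_1$ is nothing but the map $\eta$ of Section~\ref{Sec:KA}. First, since $\tF^{\text{\textbullet}}$ is a closed $\sigma$--invariant subset of $\R P^3$ disjoint from $\tO$ with $(\R P^3\setminus\tF^{\text{\textbullet}})\setminus\tO=\tF^{\circ}$, restriction of functions gives a morphism of extensions
\begin{equation*}
\begin{xy}
\xymatrix@C=14pt@R=16pt{
0 \ar[r] & I \ar[r]\ar@{=}[d] & M_4\big(C_0(\R P^3\setminus\tF^{\text{\textbullet}})\big)^{\beta} \ar[r]\ar[d] & B^{\circ} \ar[r]\ar[d] & 0 \\
0 \ar[r] & I \ar[r] & A \ar[r] & B \ar[r] & 0,
}
\end{xy}
\end{equation*}
in which the middle and right downward maps are the canonical inclusions of ideals. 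Writing $q\colon K_1(B^{\circ})\to K_1(B)$ for the map from the six--term sequence of $0\to B^{\circ}\to B\to B^{\text{\textbullet}}\to 0$, which is surjective since $K_1(B^{\text{\textbullet}})=0$, naturality of the index map applied to this diagram yields $\delta_1\circ q=\partial$, where $\partial\colon K_1(B^{\circ})\to K_0(I)$ is the index map of the top row.

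Next I would identify $\partial$ with $\eta$. Cutting the top row down successively by the open $\sigma$--invariant sets $\pi^{-1}(O_0)$ and $\pi^{-1}(T)$ inside $\R P^3\setminus\tF^{\text{\textbullet}}$ — here $\pi^{-1}(R)$ is closed in the relevant spaces because the endpoints of the edges of $\overline V$ lie in $\tF^{\text{\textbullet}}$ (Section~\ref{Sec:StI}), so that $\pi^{-1}(T\cup F^{\circ})=\pi^{-1}(T)\cup\tF^{\circ}$ is open there — one obtains morphisms of extensions, all carrying $\mathrm{id}_{B^{\circ}}$ on the quotients, relating the top row to the extension $0\to I^{\circ}\to M_4(C_0(\pi^{-1}(T\cup F^{\circ})))^{\beta}\to B^{\circ}\to 0$ of Section~\ref{Sec:KA} (the one defining $\delta''$) through the intermediate extension $0\to I^{\star}\to M_4(C_0(\pi^{-1}(O\setminus O_0)\cup\tF^{\circ}))^{\beta}\to B^{\circ}\to 0$. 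The left vertical maps are the ideal inclusion $I^{\circ}\hookrightarrow I^{\star}$ and the quotient map $I\twoheadrightarrow I^{\star}$, the latter being an isomorphism on $K_0$ by the proof of Proposition~\ref{Prop:KI}. Naturality therefore shows that the index map of the intermediate extension equals, on one hand, the composite of $\delta''$ with $K_0(I^{\circ})\to K_0(I^{\star})$, i.e.\ equals $\eta$ by the definition of $\eta$; and, on the other hand, the image of $\partial$ under $K_0(I)\xrightarrow{\ \cong\ }K_0(I^{\star})$. Hence $\partial$ is identified with $\eta$ under $K_0(I)\cong K_0(I^{\star})$.

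Finally I would assemble the identifications. We identify $K_0(I)\cong\Z^4$ via $K_0(I)\cong K_0(I^{\star})$ (Proposition~\ref{Prop:KI}; basis $s_1,\dots,s_4$) and $K_1(B)=\coker\delta\cong\Z^4\oplus\Z/2\Z$ via the isomorphism induced by $\widetilde\eta$ of Proposition~\ref{Prop:kernel=image}. Since $\widetilde\eta$ has $\eta$ as its $K_0(I^{\star})$--component by construction, combining $\delta_1\circ q=\partial$ with $\partial=\eta$ (transported through $K_0(I)\cong K_0(I^{\star})$) gives $\delta_1\circ q=\mathrm{pr}\circ\widetilde\eta$, where $\mathrm{pr}\colon\Z^4\oplus\Z/2\Z\to\Z^4$ is the projection. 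As $q$ is surjective and $\widetilde\eta$ factors through $q$ as the chosen isomorphism $K_1(B)\xrightarrow{\cong}\Z^4\oplus\Z/2\Z$, this says exactly $\delta_1(n,m)=n$. (The torsion summand is killed for free because $K_0(I)$ is torsion free; the naturality argument is what pins down the free part.)

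The main obstacle is not any new K--theoretic input — Tables~\ref{table:exp_def} and \ref{table:ind_def} and Propositions~\ref{Prop:kernel=image}--\ref{Prop:KB} already contain everything hard — but the bookkeeping needed to make the comparison argument rigorous: checking that each comparison square is genuinely a morphism of $C^*$--algebra extensions (in particular the openness/closedness claims for $\pi^{-1}(R)$ and $\pi^{-1}(T\cup F^{\circ})$ inside $\R P^3\setminus\tF^{\text{\textbullet}}$, which follow from the explicit description of $\overline V$ in Section~\ref{Sec:StI}), and verifying that the chain of naturality squares transports the chosen generators and identifications without introducing a sign or an unexpected automorphism, so that the free part of $\delta_1$ comes out as the literal identity rather than merely an isomorphism.
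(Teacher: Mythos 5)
Your argument is correct and is essentially the paper's own proof: both reduce $\delta_1$ to $\eta=(K_0(I^\circ)\to K_0(I^\star))\circ\delta''$ by naturality of the index map along restriction maps between the invariant subsets, and then read off the answer from the identification $K_1(B)\cong\Z^4\oplus\Z/2\Z$ via $\widetilde{\eta}$. The only (immaterial) difference is the order of the comparison diagrams — the paper first replaces the ideal $I$ by $I^{\star}$ keeping the quotient $B$, and then passes to $B^{\circ}$, whereas you first pass to the quotient $B^{\circ}$ keeping the ideal $I$.
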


\begin{proof}
From the commutative diagram with exact rows
\begin{equation*}
\xymatrix@C=12pt@R=16pt{
0 \ar[r] & I \ar[r]\ar[d] & 
A \ar[r]\ar[d] &
B \ar[r]\ar@{=}[d] & 0\phantom{.}\\
0 \ar[r] & I^{\star} \ar[r] & 
M_4\big(C_0(\pi^{-1}((O\setminus O_0) \cup F))\big)^\beta \ar[r] &
B \ar[r] & 0,
}
\end{equation*}
the index map $\delta_1\colon K_1(B) \to K_0(I)$ coincides with 
the map $K_1(B) \to K_0(I^{\star})$ 
if we identify $K_0(I) \cong K_0(I^{\star})$ 
as we did in Proposition~\ref{Prop:KI}. 

From the commutative diagram with exact rows
\begin{equation*}
\xymatrix@C=12pt@R=16pt{
0 \ar[r] & I^{\circ} \ar[r]\ar[d] & 
M_4\big(C_0(\pi^{-1}(T \cup F^{\circ}))\big)^\beta \ar[r]\ar[d] &
B^{\circ} \ar[r]\ar[d] & 0\phantom{.}\\
0 \ar[r] & I^{\star} \ar[r] & 
M_4\big(C_0(\pi^{-1}((O\setminus O_0) \cup F))\big)^\beta \ar[r] &
B \ar[r] & 0,
}
\end{equation*}
we have the commutative diagram
\begin{align*}
\xymatrix@C=25pt@R=20pt{
K_1(B^\circ) \ar[d] \ar[r] & K_0(I^{\circ}) \ar[d] \\
K_1(B) \ar[r] & K_{0}(I^{\star}) .
}
\end{align*}
From this diagram, we see that the map $K_1(B) \to K_0(I^{\star})$ is 
as $K_1(B) \cong \Z^4 \oplus \Z/ 2\Z \ni (n,m) \mapsto 
n \in \Z^4 \cong K_0(I^{\star})$. 
This completes the proof. 
\end{proof}

\begin{definition}
Define a unitary $w \in C(S^3,M_2(\C))$ by 
\begin{align*}
w(a_1,a_2,a_3,a_4) &= a_1 c_1 +a_2 c_2 +a_3 c_3 +a_4 c_4 \\
&= \begin{pmatrix}
a_1 +a_2 \sqrt{-1} & a_3 +a_4 \sqrt{-1} \\
-a_3 + a_4 \sqrt{-1} & a_1 - a_2\sqrt{-1}
\end{pmatrix}
\end{align*}
for $(a_1,a_2,a_3,a_4) \in S^3$. 
\end{definition}

Then $[w]_1$ is the generator of 
$K_1\big(C(S^3,M_2(\C))\big) \cong K_1\big(M_4(C(S^3))\big) \cong \Z$. 

Let $\varphi \colon A \to M_4(C(S^3))$ be the composition of 
the embedding $A \to M_4(C(\R P^3))$ and 
the map $M_4(C(\R P^3)) \to M_4(C(S^3))$ 
induced by $[\cdot]\colon S^3 \to \R P^3$. 
Let $\widetilde{\pi}\colon S^3 \to X$ be the composition of 
$[\cdot]\colon S^3 \to \R P^3$ and $\pi\colon \R P^3 \to X$. 
We set $V'$ of $S^3$ by 
\begin{equation*}
V'\coloneqq \{ (a_1,a_2, a_3, a_4) \in S^3 \mid a_1,a_2,a_3 > |a_4| \}.
\end{equation*}
Then $V'$ is homeomorphic to $V$ via $[\cdot]$, 
and hence to $O_0$ via $\widetilde{\pi}$. 
Note that the map 
$M_4(C_0(V')) \hookrightarrow M_4(C(S^3))$ 
induces the isomorphism 
\[
K_1\big(M_4(C_0(V'))\big) \to K_1\big(M_4(C(S^3))\big). 
\]
Since $I_0 \cong M_4(C_0(O_0)) \cong M_4(C_0(V'))$ canonically, 
we set a generator $y$ of $K_1(I_0)$ which corresponds 
to the generator $[w]_1$ of $K_1\big(M_4(C(S^3))\big)$ 
via the isomorphism $K_1\big(M_4(C_0(V'))\big) \to K_1\big(M_4(C(S^3))\big)$. 
We denote by the same symbol $y$ the generator of 
$K_1(I) \cong K_1(I_0)$ corresponding $y \in K_1(I_0)$. 

\begin{proposition}\label{Prop:32}
The image of $y \in K_1(I)$ 
under the map $K_1(I) \to K_1(A) \to K_1\big(M_4(C(S^3))\big)$ 
is $32[w]_1$. 
\end{proposition}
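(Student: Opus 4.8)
The plan is to recognise $\varphi$ as the inclusion of a fixed point algebra of $M_4(C(S^3))$ and then to compute the relevant image by a degree calculation on $S^3$. Let $\kappa$ denote the order two automorphism of $M_4(C(S^3))$ induced by the antipodal map, so that $M_4(C(\R P^3)) = M_4(C(S^3))^{\kappa}$ and $\varphi$ is the composition $A = M_4(C(\R P^3))^{\beta} \hookrightarrow M_4(C(\R P^3)) = M_4(C(S^3))^{\kappa} \hookrightarrow M_4(C(S^3))$. Each $\sigma_{i,j}$ lifts to the isometry $\widetilde{\sigma}_{i,j}\colon a \mapsto U_{i,j}a$ of $S^3 \subset \R^4$, and (using Proposition~\ref{Prop:URu} and $U_{1,2}^2 = -1$) the homeomorphisms $\{\widetilde{\sigma}_{i,j}\}_{i,j}$ together with $\kappa$ generate a finite group $G$ of isometries of $S^3$, which acts on $M_4(C(S^3))$ through $\widetilde{\sigma}_{i,j}$ together with $\Ad U_{i,j}$; since $\beta$ is exactly this action restricted to $M_4(C(S^3))^{\kappa}$, we get $A = M_4(C(S^3))^{G}$, and $\varphi$ is the inclusion $M_4(C(S^3))^{G} \hookrightarrow M_4(C(S^3))$.

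Next I would locate $I_0$ inside this picture. Since $\pi^{-1}(O_0) = \bigsqcup_{i,j}\sigma_{i,j}(V)$ has $16$ simply connected components, $\widetilde{\pi}^{-1}(O_0) \subset S^3$ is a disjoint union of $32$ open balls, one of which is $V'$; the group $G$ acts on this set of $32$ components transitively, and freely (freeness follows from $\sigma_{i,j}(V) \cap V = \emptyset$ for $(i,j)\neq(1,1)$, established in Section~\ref{Sec:StI}), so $|G| = 32$. Consequently $I_0 = \bigl(M_4(C_0(\widetilde{\pi}^{-1}(O_0)))^{\kappa}\bigr)^{\beta} = M_4(C_0(\widetilde{\pi}^{-1}(O_0)))^{G}$, restriction to the component $V'$ is the canonical isomorphism $I_0 \cong M_4(C_0(V'))$ used to define $y$, and $\varphi|_{I_0}$ is the composite $M_4(C_0(V')) \xrightarrow{\ \cong\ } M_4(C_0(\widetilde{\pi}^{-1}(O_0)))^{G} \hookrightarrow M_4(C_0(\widetilde{\pi}^{-1}(O_0))) = \bigoplus_{g\in G} M_4(C_0(g(V'))) \hookrightarrow M_4(C(S^3))$. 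On $K_1$, the image of $y$ in $K_1(M_4(C_0(g(V'))))$ is the class $\theta_g$ obtained from $y$ via the isomorphism induced by $g|_{V'}\colon V' \to g(V')$ together with $\Ad U_g$. As $K_1(I_0) \to K_1(I)$ is an isomorphism (Section~\ref{Sec:KI}), it remains to compute the sum over $g\in G$ of the images of the $\theta_g$ in $K_1(M_4(C(S^3)))$.

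For the degrees: because $U(4)$ is path connected, $\Ad U_g$ is homotopic to $\id$ and hence trivial on $K_1$; combining this with the identity $g^{*}\circ \iota_{g(V')} = \iota_{V'}\circ (g|_{V'})^{*}$ of $*$-homomorphisms $C_0(g(V')) \to C(S^3)$ (where $\iota_W\colon C_0(W)\hookrightarrow C(S^3)$) and the defining property $((g|_{V'})^{*})_*(\theta_g) = y$, one finds that the image of $\theta_g$ in $K_1(M_4(C(S^3)))$ equals $(\deg g)\,[w]_1$, where $\deg g$ is the degree of $g\colon S^3 \to S^3$. For $g\colon a \mapsto Ua$ one has $\deg g = \det U$, and $\det U = 1$ for every $U \in \{\pm U_{i,j}\}$: by Proposition~\ref{Prop:Udef}, $U_{i,j}$ implements $X \mapsto c_i X c_j^{*}$ on $\spa_{\R}\{c_1,c_2,c_3,c_4\} \cong \H$, a composition of left and right multiplications by the unit quaternions $c_i, c_j^{*} \in SU(2)$, hence orientation preserving, and $\det(-U_{i,j}) = \det U_{i,j}$. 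Therefore the image of $y$ is $\bigl(\sum_{g\in G}\deg g\bigr)[w]_1 = 32\,[w]_1$.

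I expect the step needing the most care to be the sign bookkeeping underlying the last two paragraphs: confirming that the generators $\theta_g$ are identified correctly and that each of the $32$ summands contributes $+[w]_1$ rather than $\pm[w]_1$, in particular that no spurious sign enters from the twists $\Ad U_g$; everything else is essentially a repackaging of facts already established.
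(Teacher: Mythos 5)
Your proof is correct and follows essentially the same route as the paper's: decompose $\widetilde{\pi}^{-1}(O_0)$ into the $32$ translates of $V'$ under $a \mapsto \pm U_{i,j}a$, observe that each of these homeomorphisms of $S^3$ preserves orientation, and conclude that each component contributes $+[w]_1$. Your additional justifications (the quaternionic argument that $\det U_{i,j}=1$, and the remark that the inner twists $\Ad U_{i,j}$ act trivially on $K_1$) merely make explicit steps that the paper asserts without comment.
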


\begin{proof}
The map $I_0 \to I \to A \to M_4(C(S^3))$ 
is induced by $\widetilde{\pi}\colon \widetilde{\pi}^{-1}(O_0) \to O_0$ 
when we identify $I_0$ with $M_4(C_0(O_0))$. 
We have 
\[
\widetilde{\pi}^{-1}(O_0) = 
\coprod_{i,j=1}^4 \sigma'_{i,j,+}(V') 
\amalg \coprod_{i,j=1}^4 \sigma'_{i,j,-}(V')
\]
where $\sigma'_{i,j,\pm}\colon S^3 \to S^3$ is induced 
by the unitary $\pm U_{i,j}$ 
similarly as $\sigma_{i,j}\colon \R P^3 \to \R P^3$ 
for $i,j=1,2,3,4$. 
These $32$ homeomorphisms preserve the orientation of $S^3$. 
Therefore, the image of $y \in K_1(I_0)$, and hence the one of $y \in K_1(I)$, 
in $K_1\big(M_4(C(S^3))\big)$ is $32[w]_1$. 
\end{proof}

\begin{definition}
Define the linear map $\xi : M_2(\C) \to \C^4$ by 
\begin{align*}
&\xi \left(
\begin{pmatrix}
a_{11} & a_{12} \\
a_{21} & a_{22}
\end{pmatrix}
\right)= \frac{1}{\sqrt{2}}(a_{11}, a_{12}, a_{21}, a_{22}).
\end{align*}
\end{definition}

\begin{definition}
Define unital $*$-homomorphisms $\iota, \iota'\colon M_2(\C) \to M_4(\C)$ by 
\begin{align*}
\iota \left(  
\begin{pmatrix}
a_{11} & a_{12}  \\
a_{21} & a_{22}
\end{pmatrix}
\right)=
\begin{pmatrix}
a_ {11}  & a_{12} & 0 & 0 \\
a_{21} & a_{22} & 0 & 0 \\
0  & 0 & a_{11} & a_{21} \\
0 & 0 & a_{21} & a_{22}  
\end{pmatrix}, \\
\iota' \left(  
\begin{pmatrix}
a_{11} & a_{12}  \\
a_{21} & a_{22}
\end{pmatrix}
\right)=
\begin{pmatrix}
a_ {11}  & 0 & a_{12} & 0 \\
0 & a_{11} & 0 & a_{12}  \\
a_ {21}  & 0 & a_{22} & 0 \\
0 & a_{21} & 0 & a_{22}  
\end{pmatrix}.
\end{align*}
\end{definition}

\begin{lemma}\label{Lem:xi}
For each $M, N \in M_2(\C)$, we have
\begin{align*}
\xi(M) \iota(N) &= \xi(MN),&
\iota'(M) \xi(N)^{\mathrm{T}} &= \xi(MN)^{\mathrm{T}}.
\end{align*}
\end{lemma}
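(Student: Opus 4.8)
The plan is to reduce both identities to a finite check on matrix units. First I would observe that each identity is $\C$-bilinear in the pair $(M,N)$: in the first, $\xi$ and $\iota$ are linear and matrix multiplication is bilinear, while $(M,N)\mapsto\xi(MN)$ is bilinear; the same applies to the second. Hence it suffices to verify $\xi(e_{p,q})\iota(e_{r,s})=\xi(e_{p,q}e_{r,s})$ and $\iota'(e_{p,q})\xi(e_{r,s})^{\mathrm{T}}=\xi(e_{p,q}e_{r,s})^{\mathrm{T}}$ when $e_{p,q}$ ($p,q\in\{1,2\}$) run over the four matrix units of $M_2(\C)$, for which $e_{p,q}e_{r,s}=\delta_{q,r}e_{p,s}$.

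The second step is to unwind the definitions on these inputs. Under the ordering $(1,1),(1,2),(2,1),(2,2)$ of indices implicit in $\xi$, the vector $\xi(e_{p,q})$ is $1/\sqrt2$ times the $(2(p-1)+q)$-th standard basis row vector of $\C^4$. Therefore $\xi(e_{p,q})\iota(e_{r,s})$ is simply $1/\sqrt2$ times the $(2(p-1)+q)$-th row of the matrix $\iota(e_{r,s})$, and reading off that row from the definition of $\iota$ yields $\delta_{q,r}$ times the $(2(p-1)+s)$-th standard basis row vector, up to the factor $1/\sqrt2$; that is, $\xi(\delta_{q,r}e_{p,s})=\xi(e_{p,q}e_{r,s})$. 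The second identity is handled symmetrically, with $\iota'(e_{p,q})$ now acting from the left on the column vector $\xi(e_{r,s})^{\mathrm{T}}$: the product is $1/\sqrt2$ times the $(2(r-1)+s)$-th column of $\iota'(e_{p,q})$, which the explicit form of $\iota'$ shows equals $\delta_{q,r}$ times the $(2(p-1)+s)$-th standard basis column vector, up to $1/\sqrt2$.

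It is worth recording the conceptual content, which I would add as a remark: under the standard identifications $M_4(\C)=M_2(\C)\otimes M_2(\C)$ and $\C^4=\C^2\otimes\C^2$, one has $\iota(N)=1\otimes N$ and $\iota'(M)=M\otimes 1$, while $\sqrt2\,\xi$ is the row-major vectorization $M_2(\C)\to\C^2\otimes\C^2$. The two identities then become instances of the classical formula $\mathrm{vec}(AXB)=(A\otimes B^{\mathrm{T}})\,\mathrm{vec}(X)$, applied with appropriate choices of $A,B$ (and a transposition inserted in the first case to pass between rows and columns).

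I do not anticipate a genuine obstacle; this is a mechanical, finite verification. The only point demanding care is the bookkeeping — aligning the bijection $\{1,2\}^2\to\{1,2,3,4\}$ built into $\xi$ with the block patterns of $\iota$ and $\iota'$, and keeping track of the transpose in the second identity, so that one consistently contracts against a row of $\iota(e_{r,s})$ in the first identity and against a column of $\iota'(e_{p,q})$ in the second.
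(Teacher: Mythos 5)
Your proof is correct and takes essentially the same route as the paper, whose entire proof is the sentence ``It follows from a direct computation''; your reduction to matrix units via bilinearity (and the vectorization remark $\mathrm{vec}(AXB)=(A\otimes B^{\mathrm{T}})\mathrm{vec}(X)$) is simply an organized way of carrying out that computation. One small point worth flagging: the check only closes if $\iota(N)=N\oplus N$, i.e.\ the $(3,4)$ entry of $\iota$ as printed (namely $a_{21}$) is a typo for $a_{12}$ --- with the literal printed matrix the fourth entry of $\xi(M)\iota(N)$ would be $m_{21}n_{21}+m_{22}n_{22}\neq (MN)_{22}$ in general.
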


\begin{proof}
It follows from a direct computation. 
\end{proof}

\begin{definition}
Define $U \in M_4(A)$ by
\begin{equation*}
U=
\begin{pmatrix}
P_{11} & P_{12} & P_{13} & P_{14} \\
P_{21} & P_{22} & P_{23} & P_{24} \\
P_{31} & P_{32} & P_{33} & P_{34} \\
P_{41} & P_{42} & P_{43} & P_{44} 
\end{pmatrix}.
\end{equation*}
\end{definition}

It can be easily checked that $U$ is a unitary. 

\begin{proposition}\label{Prop:16}
The image of $[U]_1 \in K_1(A)$ 
under the map $K_1(A) \to K_1\big(M_4(C(S^3))\big)$ 
is $16[w]_1$. 
\end{proposition}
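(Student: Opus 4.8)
The plan is to pull $U$ back to $S^3$ and factor the resulting function $S^3\to U(16)$ into constant unitaries times two block-constant unitaries built from $w$. By definition the image of $[U]_1$ under $K_1(A)\to K_1(M_4(C(S^3)))$ is the class of the unitary $a\mapsto U([a])\coloneqq\big(P_{i,j}([a])\big)_{i,j=1}^4\in M_{16}(\C)$ on $S^3$. Since $P_{1,1}([a])=aa^{\mathrm T}$ and $P_{i,j}([a])=U_{i,j}(aa^{\mathrm T})U_{i,j}^{\mathrm T}=b^{(i,j)}(b^{(i,j)})^{\mathrm T}$ with $b^{(i,j)}\coloneqq U_{i,j}a\in\R^4$, Proposition~\ref{Prop:Udef} gives $w(b^{(i,j)})=c_i\,w(a)\,c_j^*=:v_{i,j}$.

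First I would introduce the constant matrix $W\in M_4(\C)$ whose $k$-th column is $\xi(c_k)^{\mathrm T}$, so that $\xi(w(a))^{\mathrm T}=Wa$ for all $a\in\R^4$; here $W$ is unitary because the Pauli matrices form an orthonormal basis of $M_2(\C)$ for $\langle X,Y\rangle=\tfrac12\operatorname{tr}(X^*Y)$ and $\xi$ is the associated coordinate isometry. Hence $b^{(i,j)}=W^*\xi(v_{i,j})^{\mathrm T}$, so $P_{i,j}([a])=W^*\,\xi(v_{i,j})^{\mathrm T}\xi(v_{i,j})\,\overline W$. Now apply Lemma~\ref{Lem:xi} twice to each factor, using that $\iota,\iota'$ are homomorphisms: $\xi(v_{i,j})=\xi(c_i)\iota(w(a))\iota(c_j^*)$ and $\xi(v_{i,j})^{\mathrm T}=\iota'(c_i)\iota'(w(a))\xi(c_j^*)^{\mathrm T}$. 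This gives
\[ P_{i,j}([a])=\big(W^*\iota'(c_i)\big)\,\iota'(w(a))\,\big[\xi(c_j^*)^{\mathrm T}\xi(c_i)\big]\,\iota(w(a))\,\big(\iota(c_j^*)\,\overline W\big), \]
and reading this off over all $i,j$ exhibits
\[ U([a])=\Lambda\,\big(I_4\otimes\iota'(w(a))\big)\,C\,\big(I_4\otimes\iota(w(a))\big)\,\Gamma, \]
where $\Lambda=\bigoplus_{i=1}^4\big(W^*\iota'(c_i)\big)$ and $\Gamma=\bigoplus_{j=1}^4\big(\iota(c_j^*)\,\overline W\big)$ are constant block-diagonal unitaries in $M_{16}(\C)$, and $C\in M_{16}(\C)$ is the constant matrix whose $(i,j)$-block is $\xi(c_j^*)^{\mathrm T}\xi(c_i)$.

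Next I would verify that $C$ is unitary: since $\{\xi(c_k)^{\mathrm T}\}_{k=1}^4$ is an orthonormal basis of $\C^4$ and $c_j^*=\pm c_j$, a direct computation gives $(CC^*)_{i,k}=\delta_{i,k}\sum_{j=1}^4\xi(c_j)^{\mathrm T}\big(\xi(c_j)^{\mathrm T}\big)^*=\delta_{i,k}I_4$, and likewise $C^*C=I_{16}$. As $\Lambda$, $C$ and $\Gamma$ are constant unitaries in $M_{16}(\C)$, their $K_1$-classes vanish, so $[U]_1=\big[I_4\otimes\iota'(w(\cdot))\big]_1+\big[I_4\otimes\iota(w(\cdot))\big]_1$ in $K_1(M_4(C(S^3)))$. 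Finally, $\iota$ and $\iota'$ are unital $*$-homomorphisms $M_2(\C)\to M_4(\C)$, hence each induces multiplication by $2$ on $K_1$ under the canonical identification $K_1(M_2(C(S^3)))\cong K_1(M_4(C(S^3)))$; thus $[\iota(w(\cdot))]_1=[\iota'(w(\cdot))]_1=2[w]_1$, and tensoring with $I_4$ multiplies by a further $4$. Therefore $[U]_1=4\cdot2[w]_1+4\cdot2[w]_1=16[w]_1$, as required.

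The step I expect to be the main obstacle is the bookkeeping that produces the factorization in the second paragraph: one must track the row/column conventions ($\xi$ versus $\xi^{\mathrm T}$) and the fact that left multiplication on $M_2(\C)$ corresponds under $\xi$ to $\iota'$ while right multiplication corresponds to $\iota$, so that repeated use of Lemma~\ref{Lem:xi} genuinely concentrates all of the $a$-dependence into the two block-constant factors $I_4\otimes\iota'(w(a))$ and $I_4\otimes\iota(w(a))$. Checking that the middle constant matrix $C$ is unitary is the other slightly delicate point; after that the $K_1$-computation is routine. One should also keep in mind that conjugation $f\mapsto\overline f$ reverses $K_1$, so that $v\mapsto v^{\mathrm T}$ preserves $[w]_1$ — relevant only if one prefers a "transposed" description of $\iota$.
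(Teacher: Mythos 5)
Your argument is correct and follows essentially the same route as the paper: you factor the pullback of $U$ to $S^3$ as a product of constant unitaries (your $\Lambda$, $C$, $\Gamma$ are the paper's $\mathbb{W}'$, $\mathbb{V}$, $\mathbb{W}$ up to the convention of row versus column vectors) with the two block-diagonal terms $I_4\otimes\iota'(w)$ and $I_4\otimes\iota(w)$, exactly via Lemma~\ref{Lem:xi} and Proposition~\ref{Prop:Udef}, and then each block-diagonal factor contributes $8[w]_1$. You even supply the unitarity checks for $W$ and for the middle constant matrix that the paper leaves to the reader.
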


\begin{proof}
Let $\varphi_4\colon M_4(A) \to M_4\big(M_4(C(S^3))\big)$ 
be the \shom induced by $\varphi$. 
Set $\bU \coloneqq \varphi_4(U)$. 
For $i,j=1,2,3,4$, 
the $(i,j)$-entry $\bU_{i,j} \in C(S^3, M_4(\C))$ of $\bU$ 
is given by 
\begin{equation*}
\bU_{i,j}(a_1,a_2,a_3,a_4)= U_{i,j} (a_1,a_2,a_3,a_4)^\mathrm{T} (a_1,a_2,a_3,a_4) U_{i,j}^*
\end{equation*}
for each $(a_1,a_2,a_3,a_4) \in S^3$.

Let $W \in M_4(\C)$ be 
\begin{equation*}
W=\frac{1}{\sqrt{2}}
\begin{pmatrix}
1 & -\sqrt{-1} & 0 & 0 \\
0 & 0 & 1 & -\sqrt{-1} \\
0 & 0 & -1 &  -\sqrt{-1} \\
1 & \sqrt{-1} & 0 & 0 
\end{pmatrix}.
\end{equation*}
Then $W$ is a unitary. 

Take $(a_1,a_2,a_3,a_4) \in S^3$ and $i,j= 1,2,3,4$. 
We set $(b_1,b_2,b_3,b_4) = (a_1,a_2,a_3,a_4) U_{i,j}^*$. 
By Proposition~\ref{Prop:Udef}, we have 
$\sum_{k=1}^4 b_k c_k=c_i \Big(\sum_{k=1}^4 a_k c_k \Big)c_j^*$. 
We also have 
\begin{align*}
\xi\Big(\sum_{k=1}^4 b_k c_k\Big)W 
&=\frac{1}{\sqrt{2}}(b_1+b_2\sqrt{-1}, b_3+b_4\sqrt{-1}, 
-b_3+b_4\sqrt{-1},b_1-b_2\sqrt{-1})W\\
&=(b_1,b_2,b_3,b_4)
\end{align*}
Hence we get 
\begin{align*}
(a_1,a_2,a_3,a_4) U_{i,j}^*
&=\xi\left(c_i \Big(\sum_{k=1}^4 a_k c_k \Big)c_j^*\right)W\\
&=\xi(c_i)\iota\left(\Big(\sum_{k=1}^4 a_k c_k \Big)c_j^*\right)W\\
&=\xi(c_i)\iota(w(a_1,a_2,a_3,a_4))\iota(c_j^*)W
\end{align*}
by Lemma~\ref{Lem:xi}. 
Similarly, we get
\begin{align*}
U_{i,j} (a_1,a_2,a_3,a_4)^{\mathrm{T}} 
&= W^{\mathrm{T}} \xi\left(c_i \Big(\sum_{k=1}^4 a_k c_k \Big)c_j^*\right)^{\mathrm{T}} \\
&= W^{\mathrm{T}}\iota'\left(c_i \Big(\sum_{k=1}^4 a_k c_k \Big)\right)
\xi(c_j^*)^{\mathrm{T}} \\
&= W^{\mathrm{T}}\iota'(c_i)\iota'(w(a_1,a_2,a_3,a_4))\xi(c_j^*)^{\mathrm{T}}
\end{align*}
by Lemma~\ref{Lem:xi}. 
Define $\mathbb{V}, \mathbb{W}, \mathbb{W}' \in M_4(M_{4}(\C))$ by
\begin{align*}
&\mathbb{V}=(\xi(c_j^*)^{\mathrm{T}} \xi(c_i))_{i,j=1}^4,\\
&\mathbb{W}=
\begin{pmatrix}
\iota(c_1^*) W & 0 & 0 & 0 \\
0 & \iota(c_2^*) W & 0 & 0 \\
0 & 0 & \iota(c_3^*) W & 0 \\
0 & 0  & 0 & \iota(c_4^*) W
\end{pmatrix},\\
&\mathbb{W}'= 
\begin{pmatrix}
W^{\mathrm{T}} \iota' (c_1) &  0 & 0 & 0 \\
0 & W^{\mathrm{T}} \iota' (c_2) & 0 & 0 \\
0 & 0 &   W^{\mathrm{T}} \iota' (c_3) & 0 \\
0 & 0 & 0 &  W^{\mathrm{T}} \iota' (c_4)
\end{pmatrix}. 
\end{align*}
One can check that these are unitaries. 
If we consider these as constant functions in $M_4\big(C(S^3,M_4(\C))\big)$, 
we have 
\begin{equation*}
\bU= \mathbb{W}' \iota'_4(w) \mathbb{V} \iota_4(w) \mathbb{W},
\end{equation*}
where $\iota_4(w), \iota'_4(w) \in M_4(C(S^3,M_4(\C))$ are defined as 
\begin{align*}
&\iota_4(w)= 
\begin{pmatrix}
\iota(w(\cdot)) & 0 & 0 & 0 \\
0 & \iota (w(\cdot)) & 0 & 0 \\
0 & 0 & \iota (w(\cdot)) & 0 \\
0 & 0 & 0 &  \iota (w(\cdot))
\end{pmatrix}, \\
&\iota'_4(w)=
\begin{pmatrix}
\iota' (w(\cdot)) & 0 & 0 & 0 \\
0 & \iota' (w(\cdot)) & 0 & 0 \\
0 & 0 & \iota' (w(\cdot)) & 0 \\
0 & 0 & 0 &  \iota' (w(\cdot))
\end{pmatrix}.
\end{align*}
Since $[\iota_4(w)]_1= [\iota'_4(w)]_1=8[w]_1$, 
we obtain $[\bU]_1=16[w]_1$.
\end{proof}

\begin{proposition}\label{Prop:K(A)}
We have $K_0(A) \cong \Z^{10}$ and $K_1(A) \cong \Z$.
More specifically, $K_0(A)$ is generated by $\{[P_{i,j}]_0\}_{i,j=1}^4$, 
and $K_1(A)$ is generated by $[U]_1$.
Moreover, the positive cone $K_0(A)_+$ of $K_0(A)$ is 
generated by $\{[P_{i,j}]_0\}_{i,j=1}^4$ as a monoid. 
\end{proposition}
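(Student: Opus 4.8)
The plan is to feed the computations of the previous sections into the six-term exact sequence attached to $0 \to I \to A \to B \to 0$, and then to use the external map $\varphi_*\colon K_*(A) \to K_*(M_4(C(S^3)))$ to settle the one extension problem the sequence leaves open. First I would record the data already available: $K_0(I) \cong \Z^4$ and $K_1(I) \cong \Z$ (Proposition~\ref{Prop:KI}), $K_0(B) \cong \Z^{10}$ and $K_1(B) \cong \Z^4 \oplus \Z/2\Z$ (Proposition~\ref{Prop:KB}), the exponential map $\delta_0 = 0$, and the index map $\delta_1\colon K_1(B) \cong \Z^4 \oplus \Z/2\Z \to K_0(I) \cong \Z^4$ given by $(n,m)\mapsto n$. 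Since $\delta_0 = 0$, the map $K_0(A) \to K_0(B)$ is surjective and $K_1(I) \to K_1(A)$ is injective; since $\delta_1$ is surjective, exactness at $K_0(I)$ forces $K_0(I)\to K_0(A)$ to be zero, hence $K_0(A) \to K_0(B)$ is injective as well, so $K_0(A) \cong K_0(B) \cong \Z^{10}$. Exactness at $K_1(A)$ and $K_1(B)$ then gives a short exact sequence $0 \to K_1(I) \to K_1(A) \to \ker\delta_1 \to 0$, i.e.\ $0 \to \Z \to K_1(A) \to \Z/2\Z \to 0$, so $K_1(A)$ is either $\Z$ or $\Z \oplus \Z/2\Z$.

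To rule out the torsion case I would invoke Propositions~\ref{Prop:32} and \ref{Prop:16}. Let $y$ be the chosen generator of $K_1(I)$, let $y' \in K_1(A)$ be its image, and recall $\varphi_*(y') = 32[w]_1$ while $\varphi_*([U]_1) = 16[w]_1$, where $[w]_1$ generates $K_1(M_4(C(S^3))) \cong \Z$. If $K_1(A)$ had a torsion subgroup $T \cong \Z/2\Z$, then $\varphi_*$ would factor through the free quotient $K_1(A)/T \cong \Z$; a direct inspection of the extension $0 \to \Z\langle y'\rangle \to K_1(A) \to \Z/2\Z \to 0$ shows that in $\Z \oplus \Z/2\Z$ the only infinite-cyclic subgroups with quotient $\Z/2\Z$ are generated by $\pm$(generator of the free part) or $\pm$(generator) $+$ torsion, so the image of $y'$ in $K_1(A)/T \cong \Z$ would be a generator, hence $\varphi_*(y') = 32[w]_1$ would generate $\varphi_*(K_1(A))$; but then $16[w]_1 = \varphi_*([U]_1)$ could not be an integer multiple of $32[w]_1$, a contradiction. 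Therefore $K_1(A) \cong \Z$. In that case the cokernel $\Z/2\Z$ forces $y' = \pm 2g$ for a generator $g$, so $\varphi_*(g) = \pm 16[w]_1 \neq 0$, hence $\varphi_*$ is injective on $K_1(A) \cong \Z$; since $\varphi_*([U]_1) = 16[w]_1 = \tfrac12\varphi_*(y')$ we get $2[U]_1 = \pm y' = \pm 2g$, so $[U]_1 = \pm g$ is a generator of $K_1(A)$.

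For the remaining $K_0$ assertions I would transport everything along the isomorphism $\pi_*\colon K_0(A) \xrightarrow{\cong} K_0(B)$, which sends $[P_{i,j}]_0$ to $[P_{i,j}]_0$. The surjection $B \to B^{\text{\textbullet}}$ induces an injection $K_0(B) \hookrightarrow K_0(B^{\text{\textbullet}})$ with image $\ker\delta$, again sending $[P_{i,j}]_0$ to $[P_{i,j}]_0$; by Proposition~\ref{Prop:kerdelta}, $\ker\delta$ is generated by $\{[P_{i,j}]_0\}_{i,j=1}^4$, so $K_0(B)$, and hence $K_0(A)$, is generated by these classes. For the positive cone, Proposition~\ref{Prop:poscone} identifies $K_0(B^{\text{\textbullet}})_+ \cap \ker\delta$ with the set of sums of the $[P_{i,j}]_0$; the image of $K_0(B)_+$ lies in this intersection, and conversely every such sum is a positive class in $K_0(B)$, so $K_0(B)_+$ is exactly the set of sums of the $[P_{i,j}]_0$. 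Since $\pi_*^{-1}$ of such a sum is the corresponding sum in $K_0(A)$, the map $\pi_*$ is an order isomorphism and $K_0(A)_+$ is generated by $\{[P_{i,j}]_0\}_{i,j=1}^4$ as a monoid. The main obstacle in this argument is precisely the step in the second paragraph: the six-term sequence alone cannot distinguish $\Z$ from $\Z \oplus \Z/2\Z$, and it is the explicit multiplicities $32$ and $16$ in the maps to $K_1(M_4(C(S^3)))$ — not any purely homological input — that force $K_1(A) \cong \Z$ with $[U]_1$ as generator.
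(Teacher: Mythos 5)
Your proposal is correct and follows essentially the same route as the paper: it feeds $K_*(I)$, $K_*(B)$, $\delta_0=0$ and $\delta_1$ into the six-term sequence, resolves the extension $0\to\Z\to K_1(A)\to\Z/2\Z\to 0$ by comparing the multiplicities $32$ and $16$ from Propositions~\ref{Prop:32} and~\ref{Prop:16}, and deduces the generators and positive cone of $K_0(A)$ from Propositions~\ref{Prop:kerdelta} and~\ref{Prop:poscone} via the isomorphism $K_0(A)\cong K_0(B)\cong\ker\delta$. Your explicit classification of the index-two infinite cyclic subgroups of $\Z\oplus\Z/2\Z$ just spells out the step the paper phrases as ``one can choose an isomorphism so that $y$ goes to $(1,0)$.''
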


\begin{proof}
We have already seen that $K_0(A) \to K_0(B)$ is isomorphic, 
and we have a short exact sequence
\begin{equation*}
0 \longrightarrow K_1(I) \longrightarrow K_1(A) \longrightarrow \Z/ 2 \Z \longrightarrow 0.
\end{equation*}
From this, we see that $K_1(A)$ is isomorphic to either $\Z \oplus \Z/ 2 \Z$ 
or $\Z$. 
If $K_1(A)$ is isomorphic to $\Z \oplus \Z/ 2 \Z$, 
one can choose an isomorphism so that 
$y \in K_1(I)$ goes to $(1,0) \in \Z \oplus \Z/ 2 \Z$. 
Then the image of the map $K_1(A) \to K_1\big(M_4(C(S^3))\big) \cong \Z$ 
is $32\Z$ by Proposition~\ref{Prop:32}. 
This is a contradiction because the image of $[U]_1 \in K_1(A)$ is $16$ 
by Proposition~\ref{Prop:16}. 
Hence $K_1(A)$ is isomorphic to $\Z$ so that 
$y \in K_1(I)$ goes to $2$. 
By Proposition~\ref{Prop:32} and Proposition~\ref{Prop:16}, 
$[U]_1 \in K_1(A)$ corresponds to $1 \in \Z$. 
Thus $[U]_1$ is a generator of $K_1(A) \cong \Z$. 

It is clear that the monoid generated by $\{[P_{i,j}]_0\}_{i,j=1}^4$ 
is contained in the positive cone $K_0(A)_+$. 
The positive cone $K_0(A)_+$ maps 
into the positive cone $K_0(B^{\text{\textbullet}})_+$ 
under the surjection $A \to B^{\text{\textbullet}}$. 
Hence by Proposition~\ref{Prop:poscone}, 
$K_0(A)_+$ is contained in 
the monoid generated by $\{[P_{i,j}]_0\}_{i,j=1}^4$. 
Thus $K_0(A)_+$ is 
the monoid generated by $\{[P_{i,j}]_0\}_{i,j=1}^4$. 
\end{proof}

\begin{definition}\label{Def:defuni}
Define $u \in M_4(A(4))$ by
\begin{equation*}
u=
\begin{pmatrix}
p_{11} & p_{12} & p_{13} & p_{14} \\
p_{21} & p_{22} & p_{23} & p_{24} \\
p_{31} & p_{32} & p_{33} & p_{34} \\
p_{41} & p_{42} & p_{43} & p_{44} 
\end{pmatrix}.
\end{equation*}
\end{definition}

It can be easily checked that $u$ is a unitary. 
This unitary $u$ is called the {\em defining unitary} of 
the magic square C*-algebra $A(4)$. 

By Proposition~\ref{Prop:K(A)}, 
we get the third main theorem. 

\begin{theorem}\label{MainThm3}
We have $K_0(A(4)) \cong \Z^{10}$ and $K_1(A(4)) \cong \Z$.
More specifically, $K_0(A(4))$ is generated by $\{[p_{i,j}]_0\}_{i,j=1}^4$, 
and $K_1(A(4))$ is generated by $[u]_1$.

The positive cone $K_0(A(4))_+$ of $K_0(A(4))$ is generated by $\{[p_{i,j}]_0\}_{i,j=1}^4$ as a monoid. 
\end{theorem}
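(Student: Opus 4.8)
The plan is to obtain Theorem~\ref{MainThm3} as an immediate translation of Proposition~\ref{Prop:K(A)} along the isomorphism $A(4)\cong A$. First I would recall that, by Theorem~\ref{MainThm1}, the map $\varPhi\colon A(4)\rtimes_{\alpha}^{\text{tw}}(K\times K)\to M_4(C(\R P^3))$ is a $*$-isomorphism, and by Proposition~\ref{Prop:equivariant} it intertwines the dual action $\ha$ with the action $\beta$. Hence it restricts to a $*$-isomorphism $\varphi\colon A(4)\to A$ between the fixed point algebras $A(4)=\big(A(4)\rtimes_{\alpha}^{\text{tw}}(K\times K)\big)^{\ha}$ and $A=M_4(C(\R P^3))^{\beta}$ (this is exactly the content of Theorem~\ref{MainThm2}), and on the generators it satisfies $\varphi(p_{i,j})=P_{i,j}$ for all $i,j$.

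Next I would note that the amplification $\varphi\otimes\id_{M_4}\colon M_4(A(4))\to M_4(A)$ sends the defining unitary $u$ of Definition~\ref{Def:defuni} to the unitary $U\in M_4(A)$, simply because $u$ is the matrix $(p_{i,j})_{i,j=1}^4$ and $U$ is the matrix $(P_{i,j})_{i,j=1}^4$. Therefore the induced isomorphisms $\varphi_*\colon K_0(A(4))\to K_0(A)$ and $\varphi_*\colon K_1(A(4))\to K_1(A)$ satisfy $\varphi_*[p_{i,j}]_0=[P_{i,j}]_0$ and $\varphi_*[u]_1=[U]_1$; and since $\varphi$ is a $*$-isomorphism, $\varphi_*$ carries $K_0(A(4))_+$ bijectively onto $K_0(A)_+$.

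Finally I would invoke Proposition~\ref{Prop:K(A)}, which gives $K_0(A)\cong\Z^{10}$ generated by $\{[P_{i,j}]_0\}_{i,j=1}^4$, $K_1(A)\cong\Z$ generated by $[U]_1$, and $K_0(A)_+$ equal to the monoid generated by $\{[P_{i,j}]_0\}_{i,j=1}^4$. Transporting these three facts back through the order isomorphism $\varphi_*$ yields precisely the assertions of Theorem~\ref{MainThm3}. I do not expect any genuine obstacle here: all the substantive work is already packaged in Theorem~\ref{MainThm2} and Proposition~\ref{Prop:K(A)}, and the only point deserving explicit mention is that $\varphi$ matches the named generators $p_{i,j}\mapsto P_{i,j}$ and $u\mapsto U$, so that the generator-level statements and the positive-cone statement pass through unchanged.
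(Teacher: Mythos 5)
Your proposal is correct and is exactly the paper's argument: the paper deduces Theorem~\ref{MainThm3} directly from Proposition~\ref{Prop:K(A)} via the isomorphism $A(4)\cong A$ of Theorem~\ref{MainThm2}, which matches $p_{i,j}$ with $P_{i,j}$ and hence $u$ with $U$. Your write-up merely makes explicit the generator-matching step that the paper leaves implicit.
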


As mentioned in the introduction, 
the computation $K_0(A(4)) \cong \Z^{10}$ and $K_1(A(4)) \cong \Z$ 
and that $K_0(A(4))$ is generated by $\{[p_{i,j}]_0\}_{i,j=1}^4$
were already obtained by Voigt in \cite{V}. 
We give totally different proofs of these facts. 
That $K_1(A(4))$ is generated by $[u]_1$ and the computation 
of the positive cone $K_0(A(4))_+$ of $K_0(A(4))$ are new.


\begin{thebibliography}{RLL}


\bibitem[BB]{BB}
Banica, T.; Bichon, J. {\it Quantum groups acting on 4 points.} J. Reine Angew. Math. {\bf 626} (2009), 75--114.

\bibitem[BC]{BC}
Banica, T.; Collins, B. {\it Integration over the Pauli quantum group.} J. Geom. Phys. {\bf 58} (2008), no. 8, 942--961.0

\bibitem[BM]{BM}
Banica, T.; Moroianu, S. {\it On the structure of quantum permutation groups.} Proc. Amer. Math. Soc. {\bf 135} (2007), no. 1, 21--29.

\bibitem[BO]{BO}
Brown, N. P.; Ozawa, N. {\it $C^*$-algebras and finite-dimensional approximations.} Grad. Stud. Math., {\bf 88}, Amer. Math. Soc., RI, 2008.

\bibitem[O]{osugi}
Osugi, S. {\it The homology theory of quotient spaces of the spheres by the action of the finite groups.} Master thesis, Keio University (2018). 

\bibitem[RLL]{RLL}
R\o rdam, M.; Larsen, F.; Laustsen, N. {\it An introduction to $K$-theory for $C\sp *$-algebras.} London Mathematical Society Student Texts, {\bf 49}. Cambridge University Press, Cambridge, 2000.

\bibitem[V]{V}
Voigt, C. {\it On the structure of quantum automorphism groups.} J. Reine Angew. Math. {\bf 732} (2017), 255--273.

\bibitem[W]{W}
Wang, S. {\it Quantum symmetry groups of finite spaces.} Comm. Math. Phys. {\bf 195} (1998), no. 1, 195--211.

\end{thebibliography}
\end{document}